\newcommand\blfootnote[1]{%
\begingroup
\renewcommand\thefootnote{}\footnote{#1}%
\addtocounter{footnote}{-1}%
\endgroup
}
  \newcommand{\Addresses}{{
  \bigskip
  \footnotesize

\textsc{I.H.E.S., Université Paris-Saclay, CNRS, Laboratoire Alexandre Grothendieck. 35 Route de Chartres, 91440 Bures-sur-Yvette (France)}\par\nopagebreak
  \textit{E-mail address}, G.~Baldi: \texttt{baldi@ihes.fr} 
  

\textsc{I.H.E.S., Université Paris-Saclay, CNRS, Laboratoire Alexandre Grothendieck. 35 Route de Chartres, 91440 Bures-sur-Yvette (France)}\par\nopagebreak
  \textit{E-mail address}, E.~Ullmo: \texttt{ullmo@ihes.fr}
}}
\theoremstyle{plain}
\newtheorem{thm}{Theorem}[subsection]
\newtheorem{conj}[thm]{Conjecture}
\newtheorem{lemma}[thm]{Lemma}
\newtheorem{prop}[thm]{Proposition}
\newtheorem{cor}[thm]{Corollary}
\theoremstyle{definition}
\newtheorem{defi}[thm]{Definition}
\newtheorem{rmk}[thm]{Remark}
\theoremstyle{remark}
\numberwithin{equation}{subsection}
\newcommand{\DT}{\mathbb{S}}
\newcommand{\Bb}{\mathbb{B}}
\newcommand{\an}{\operatorname{an}}
\newcommand{\Zar}{\operatorname{Zar}}
\newcommand{\PU}{\operatorname{PU}}
\newcommand{\SU}{\operatorname{SU}}
\newcommand{\SO}{\operatorname{SO}}
\newcommand{\ad}{\operatorname{ad}}
\newcommand{\der}{\operatorname{der}}
\newcommand{\pr}{\operatorname{pr}}
\newcommand{\MT}{\mathbf{MT}}
\newcommand{\Ad}{\operatorname{Ad}}
\DeclareMathOperator{\End}{End}
\DeclareMathOperator{\Hom}{Hom}
\DeclareMathOperator{\Res}{Res}
\DeclareMathOperator{\im}{Im}
\DeclareMathOperator{\Lie}{Lie}
\DeclareMathOperator{\tr}{tr}
\DeclareMathOperator{\Gl}{GL}
\DeclareMathOperator{\Sl}{SL}
\newcommand{\Gm}{\mathbb{G}_m}
\newcommand{\Aut}{\operatorname{Aut}}
\newcommand{\Pu}{\operatorname{PU}}
\newcommand{\codim}{\operatorname{codim}}
\newcommand{\Comm}{\operatorname{Comm}}
\newcommand{\id}{\operatorname{Id}}
\newcommand{\N}{\mathbb{N}}
\newcommand{\Z}{\mathbb{Z}}
\newcommand{\Q}{\mathbb{Q}}
\newcommand{\R}{\mathbb{R}}
\newcommand{\Oo}{\mathcal{O}}
\newcommand{\D}{\mathcal{D}}
\newcommand{\F}{\mathcal{F}}
\newcommand{\Hh}{\mathbb{H}}
\newcommand{\C}{\mathbb{C}}
\newcommand{\Qbar}{\overline{\mathbb{Q}}}
\begin{document}

\newcommand{\adjunction}[4]{\xymatrix@1{#1{\ } \ar@<-0.3ex>[r]_{ {\scriptstyle #2}} & {\ } #3 \ar@<-0.3ex>[l]_{ {\scriptstyle #4}}}}

\title{Special subvarieties of non--arithmetic ball quotients and Hodge Theory}\date{\today}\blfootnote{\emph{2020 Mathematics Subject Classification}. 14G35, 22E40, 03C64, 14P10, 32H02, 32M15.}\blfootnote{\emph{Key words and phrases}. Non-arithmetic lattices, Rigidity, Hodge theory and Mumford--Tate domains, Functional transcendence, Unlikely intersections.}
\author{GREGORIO BALDI and EMMANUEL ULLMO}

\begin{abstract}
Let $\Gamma \subset \PU(1,n)$ be a lattice and $S_\Gamma$ be the associated ball quotient. We prove that, if $S_\Gamma$ contains infinitely many maximal complex totally geodesic subvarieties, then $\Gamma$ is arithmetic. We also prove an Ax-Schanuel Conjecture for $S_\Gamma$, similar to the one recently proven by Mok, Pila and Tsimerman. One of the main ingredients in the proofs is to realise $S_\Gamma$ inside a period domain for polarised integral variations of Hodge structure and interpret totally geodesic subvarieties as unlikely intersections.
\end{abstract}
\maketitle

\tableofcontents

\section{Introduction}
\subsection{Motivation}
The study of lattices of semisimple Lie groups $G$ is a field rich in open questions and conjectures. Complex hyperbolic lattices and their finite dimensional representations are certainly far from being understood. A lattice $\Gamma \subset G$ is \emph{archimedean superrigid} if for any simple noncompact Lie group $G'$ with trivial centre, every homomorphism $\Gamma \to G'$ with Zariski dense image extends to a homomorphism $G\to G'$. Thanks to the work of Margulis \cite{margulisbook}, Corlette \cite{MR1147961} and Gromov--Schoen \cite{MR1215595}, all lattices in simple Lie groups are archimedean superrigid unless $G$ is isogenous to either $\SO(1,n)$ or $\PU(1,n)$ for some $n\geq 1$. Real hyperbolic lattices are known to be softer and more flexible than their complex counterpart and non-arithmetic lattices in $\SO(1,n)$ can be constructed for every $n$ \cite{MR932135}. Non-arithmetic complex hyperbolic lattices have been found only in $\PU(1,n)$, for $n=1,2,3$ \cite{MR586876, MR849651}.

In both cases one can consider the quotient by $\Gamma$ of the symmetric space $X$ associated to $G$. In the complex hyperbolic case we obtain a ball quotient $S_\Gamma= \Gamma \backslash X$ which has a natural structure of a quasi-projective variety, as proven by Baily-Borel \cite{MR0216035} in the arithmetic case, and by Mok \cite{MR849651} in general. By the commensurability criterion for arithmeticity of Margulis \cite{margulisbook} we can decide whether $\Gamma$ is arithmetic or not by looking at \emph{modular correspondences} in the product $S_\Gamma \times S_\Gamma$. That is $\Gamma$ is arithmetic if and only if $ S_\Gamma$ admits infinitely many totally geodesic correspondences. In this paper, by \emph{totally geodesic subvarieties} we always mean complex subvarieties of $S_\Gamma$ of dimension $>0$, whose smooth locus is totally geodesic with respect to the canonical K\"{a}hler metric. Even if $\Gamma$ is arithmetic, $S_\Gamma$ may have no strict totally geodesic subvarieties. However, the existence of countably many Hecke correspondences implies that, if $S_\Gamma$ contains one totally geodesic subvariety, then it contains countably many of such. The aim of this paper is to study totally geodesic subvarieties of $S_\Gamma$ from multiple point of views, ultimately explaining how often and why they appear.
\subsection{Main results}
Let $G$ be $\PU(1,n)$, for some $n>1$, and $\Gamma \subset G$ be a lattice. Let $X$ be the Hermitian symmetric space associated to $G$ and $S_\Gamma$ be the quotient of $X$ by $\Gamma$. We first look at maximal totally geodesic subvarieties of $S_\Gamma$, that is totally geodesic subvarieties that are not strictly contained in any totally geodesic subvariety different from $S_\Gamma$. Our first main result is the following.
\begin{thm}\label{thmbader}
If $\Gamma \subset G$ is non-arithmetic, then $S_\Gamma$ contains only finitely many maximal totally geodesic subvarieties. 
\end{thm}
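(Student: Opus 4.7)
The plan is to argue by contradiction, assuming $\Gamma$ is non-arithmetic but $S_\Gamma$ contains an infinite sequence $\{Z_i\}_{i\in\NN}$ of distinct maximal complex totally geodesic subvarieties. The overall strategy, as announced in the abstract, is to import the problem into Hodge theory and reinterpret the $Z_i$ as an infinite family of unlikely intersections in a period domain, then obstruct this via a functional--transcendence / Zilber--Pink type statement.

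First, I would construct the Hodge-theoretic setup. The tautological representation $\Gamma \hookrightarrow \PU(1,n) \subset \Gl_N(\CC)$ is cohomologically rigid (this uses local rigidity of complex hyperbolic lattices for $n\geq 2$), so by the Esnault--Groechenig theorem confirming Simpson's integrality conjecture in the rigid case, up to passing to a finite \'etale cover we may assume the associated local system is integral and, moreover, underlies a polarisable $\ZZ$--variation of Hodge structure $\mathbb{V}$ on $S_\Gamma$. This produces a period map $\phi \colon S_\Gamma \to \Gamma_H \backslash D$ whose derivative along $S_\Gamma^{\mathrm{an}}$ is an isomorphism onto the horizontal tangent bundle, so $\phi$ is a closed immersion and we may view $S_\Gamma$ as a horizontal subvariety of the Mumford--Tate domain $\Gamma_H\backslash D$.

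Second, I would translate the geometric hypothesis into a Hodge-theoretic hypothesis. Each maximal totally geodesic $Z_i \subset S_\Gamma$ is itself a ball quotient, and its stabiliser in $\PU(1,n)$ cuts out a proper semisimple subgroup of $\PU(1,n)$; this forces the generic Mumford--Tate group $\MT(Z_i)$ of $\mathbb{V}|_{Z_i}$ to be strictly smaller than the generic Mumford--Tate group $\mathbf{H}:=\MT(S_\Gamma)$ of $\mathbb{V}$. Consequently each $\phi(Z_i)$ is a \emph{weakly special} subvariety of $\Gamma_H\backslash D$ in the sense of Hodge theory, sitting in a Mumford--Tate subdomain $D_{\mathbf{H}_i}\subsetneq D$. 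Maximality of $Z_i$ among totally geodesic subvarieties translates into maximality of $\phi(Z_i)$ among weakly special subvarieties contained in $S_\Gamma$.

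Third, having infinitely many such $\phi(Z_i)$ is an ``unlikely intersection'' statement: each $Z_i$ is an intersection of $S_\Gamma$ with a proper Mumford--Tate subdomain whose codimension, combined with the codimension of $S_\Gamma$ in $D$, generically exceeds the dimension of $Z_i$. The plan is then to invoke the Ax--Schanuel theorem for $\mathbb{V}$ (the second main theorem of the paper, also proved in the excerpt) to analyse the Zariski closure of $\bigcup_i Z_i$ in $S_\Gamma$: by a standard pigeonhole/compactness argument on the countably many conjugacy classes of possible $\mathbf{H}_i \subset \mathbf{H}$, an infinite subfamily lies in a single Hodge-theoretic family, and Ax--Schanuel forces this family to come from a single positive-dimensional $\mathbf{H}$-orbit structure on $D$. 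Bootstrapping, the union of the $Z_i$ is Zariski dense in $S_\Gamma$, and the collection of groups $\mathbf{H}_i$ generates a subgroup of $\PU(1,n)$ forcing a $\QQ$-structure on $\PU(1,n)$ in which $\Gamma$ is commensurable with an arithmetic lattice. By Margulis' commensurability criterion this contradicts non-arithmeticity.

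The hard step is the third one: matching the Hodge-theoretic weak-special structure given by Ax--Schanuel with the Lie-theoretic data of totally geodesic subspaces of the ball, and extracting from the accumulation of the $Z_i$ a genuine arithmetic $\QQ$--form of $\PU(1,n)$. Everything else is essentially setup (integrality, VHS, period map) or a direct appeal to Margulis and to the Ax--Schanuel statement already available in the paper.
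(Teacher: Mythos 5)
Your Steps 1--2 are essentially the paper's setup: integrality via cohomological rigidity and Esnault--Groechenig, the resulting $\Z$-VHS and period map into $\widehat{\mathbf{G}}(\Z)\backslash D_{\widehat{G}}$, and the identification of totally geodesic subvarieties with ($\Z$-)special ones (this identification is itself a substantive theorem in the paper, proved via the monodromy/Mumford--Tate comparison and Cattani--Deligne--Kaplan, but it is legitimately available as an input). The genuine gap is Step 3, and it is twofold. First, your pigeonhole is on the wrong set: there are only finitely many conjugacy classes of candidate subgroups, but uncountably many actual real conjugates $\hat{g}\widehat{H}\hat{g}^{-1}$, each a potential source of a geodesic subvariety, so ``an infinite subfamily lies in a single Hodge-theoretic family'' constrains nothing. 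The actual content at this point is to show that any real conjugate $\hat{g}\widehat{H}\hat{g}^{-1}$ whose orbit meets $\tilde{\psi}(X)$ in dimension at least $m$ (the maximal geodesic dimension) is automatically a $\Q$-subgroup of $\widehat{\mathbf{G}}$; this is exactly where the codimension computation enters (valid only because $\Gamma$ is non-arithmetic: by Mostow--Vinberg the factors $G_{\sigma_i}$, $i>1$, are non-compact, so $\codim \psi(S_\Gamma)>0$ and the intersection is atypical) together with the Bakker--Tsimerman Ax--Schanuel. That gives countability --- and countability alone is not a contradiction, since arithmetic quotients genuinely have countably many geodesic subvarieties. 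The paper closes the argument by showing the parameter set of such groups is also definable in $\R_{\an,\exp}$ (definable fundamental sets from Siegel-set theory, definability of $\tilde{\psi}$ on a compact definable set), so that countable $+$ definable $\Rightarrow$ finite, followed by a downward induction on dimension to handle maximality. None of these mechanisms appears in your proposal.

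Second, your proposed endgame --- ``the collection of groups $\mathbf{H}_i$ generates \ldots forcing a $\Q$-structure in which $\Gamma$ is commensurable with an arithmetic lattice, contradiction by Margulis' commensurability criterion'' --- is not an argument but a restatement of the theorem in the form proved by Bader--Fisher--Miller--Stover via superrigidity and homogeneous dynamics, which is precisely the route the paper avoids. Moreover the commensurability criterion requires $\Gamma$ to have infinite index in $\Comm(\Gamma)$, i.e.\ infinitely many geodesic \emph{correspondences} on $S_\Gamma\times S_\Gamma$, and totally geodesic subvarieties of $S_\Gamma$ do not produce commensurator elements. In the paper non-arithmeticity is never used to be contradicted by an arithmeticity construction: it enters only through the positivity of $\codim\psi(S_\Gamma)$, which makes the geodesic subvarieties unlikely intersections and lets Ax--Schanuel force rationality of the groups; finiteness then follows from o-minimality, not from Margulis. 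As written, the ``hard step'' you flag is left entirely unproved, and the mechanism you sketch for it would not work.
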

The problem was originally proposed informally by Reid and, independently, by McMullen for real hyperbolic lattices \cite[Question 7.6]{delp2015problems}, \cite[Question 8.2]{MR3247048}. For $\SO(1,n)$ this was recently proven by Bader, Fisher, Miller and Stover \cite[Theorem 1.1]{2019arXiv190308467B} and, for closed hyperbolic 3-manifolds, by Margulis and Mohammadi \cite[Theorem 1.1]{margulis2019arithmeticity}. Around the same time Bader, Fisher, Miller and Stover annouced also a proof of the finiteness of the maximal totally geodesic (both real and complex) subvarieties of a ball quotient. The proof has then appeared in \cite{bader2020arithmeticity}. Such approaches build on superrigidity theorems and use results on equidistribution from homogeneous dynamics. As we explain in Section \ref{strategy}, we are able to interpret the problem as a phenomenon of \emph{unlikely intersections} inside a period domain for polarised $\Z$-variations of Hodge structure (VHS from now on) and deduce the finiteness from a strong Ax-Schanuel theorem established by Bakker and Tsimerman \cite{MR3958791}. Indeed we will see that Theorem \ref{thmbader} is predicted by a conjecture of Klingler \cite[Conjecture 1.9]{klingler2017hodge}, which was our main motivation for studying such finiteness statements. The main novelty is the use of $\Z$-Hodge theory, rather than $\R$-Hodge theory. As a by-product of our strategy, in Section \ref{sectionmarguliscomm}, we give a new proof of the Margulis commensurability criterion for arithmeticity for complex hyperbolic lattices in $\PU(1,n)$, at least for $n>1$ (without using any superrigidity techniques). Moreover our approach gives an explicit description of the totally geodesic subvarieties in terms of the Hodge locus of a $\Z$-VHS, giving for example Corollary \ref{cor1} as a simple application.

Our second main result looks at totally geodesic subvarieties from the functional transcendence point of view, in the sense of \cite{MR3821177}. Whenever a complex algebraic variety $S$ has a semi-algebraic universal cover $\pi : X \to S$, one can formulate Ax--Schanuel and Zilber--Pink type conjectures. For more details see \cite[Section 2.2]{MR3728310} and \cite{MR3821177}. For example an \emph{abstract Ax--Lindemann--Weierstrass} would assert the following. Let $V$ be a semialgebraic subvariety of $X$, and let $S'$ be the Zariski closure of $\pi (V)$. Then $S'$ is \emph{bi-algebraic}, that is the irreducible components of $\pi^{-1}(S')$ are semialgebraic in $X$. 

As in the case of Shimura varieties, we prove that totally geodesic subvarieties are the same as bi-algebraic subvarieties and have a natural group theoretical description in terms of \emph{real sub-Shimura datum}, see Definition \ref{defsubshimura}. We prove the non-arithmetic Ax-Schanuel conjecture, generalising the Ax-Schanuel conjecture for Shimura varieties \cite{MR3502100, as} to non-arithmetic ball quotients.
\begin{thm}\label{nonarithasconjfinal}
Let $W \subset X \times  S_\Gamma$ be an algebraic subvariety and $\Pi \subset X \times S_\Gamma$ be the graph of $\pi : X \to S_\Gamma$. Let $U$ be an irreducible component of $W \cap \Pi$ such that
\begin{displaymath}
\codim U < \codim W + \codim \Pi,
\end{displaymath}
the codimension being in $X \times S_\Gamma$ or, equivalently,
\begin{displaymath}
\dim W < \dim U + \dim S_\Gamma.
\end{displaymath}
If the projection of $U$ to $S_\Gamma$ is not zero dimensional, then it is contained in a strict totally geodesic subvariety of $S_\Gamma$.
\end{thm}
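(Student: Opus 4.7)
The plan is to interpret $\pi \colon X \to S_\Gamma$ as the lift of a period map landing in a Hodge variety, and then to invoke the Bakker--Tsimerman Ax--Schanuel theorem \cite{MR3958791} for $\Z$-variations of Hodge structure.

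Building on the Hodge-theoretic realisation of $S_\Gamma$ developed in the earlier sections, $S_\Gamma$ will carry a polarised $\Z$-VHS whose associated period map is a closed immersion $\iota \colon S_\Gamma \hookrightarrow \Gamma_H \backslash D$ into a Hodge variety, and which lifts to a $\PU(1,n)$-equivariant holomorphic embedding $\tilde\iota \colon X \hookrightarrow D$ of the ball onto a semi-algebraic submanifold of the period domain. The algebraic structure on $X$ used in the statement of the theorem is precisely the one inherited from the compact dual of $D$, so the hypothesis that $W \subset X \times S_\Gamma$ is algebraic is exactly the type of input Bakker--Tsimerman requires, with $S_\Gamma$ playing the role of the base of the VHS, $X$ the role of its universal cover, and $\Pi$ the graph of the period map.

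The atypicality hypothesis $\dim W < \dim U + \dim S_\Gamma$ in the statement coincides with the defect inequality appearing in Bakker--Tsimerman for this VHS. Applying their theorem, and under the assumption that $\pi_2(U)$ is not zero-dimensional, we will conclude that $\pi_2(U)$ is contained in a strict weakly special subvariety of $S_\Gamma$ in the Hodge-theoretic sense, that is, in a component of a locus where the generic Mumford--Tate group drops to a proper subgroup.

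The remaining step, which is the main obstacle and the place where the specifically non-arithmetic setting matters, is to identify the strict weakly special subvarieties of $S_\Gamma$ arising from proper Mumford--Tate subdata with the strict totally geodesic subvarieties of $S_\Gamma$ described by real sub-Shimura data (Definition \ref{defsubshimura}). This dictionary, relating Hodge-theoretic subdata on the period-domain side to real sub-Shimura data on the ball-quotient side, is the same translation deployed in the proof of Theorem \ref{thmbader}, and is equivalent to the statement that the bi-algebraic subvarieties for the uniformisation $\pi$ are exactly the totally geodesic ones. Once this dictionary is in place, the Ax--Schanuel statement of the theorem follows formally from Bakker--Tsimerman.
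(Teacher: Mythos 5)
Your overall architecture (realise $S_\Gamma$ inside a period domain for the auxiliary $\Z$-VHS, apply Bakker--Tsimerman, then translate weakly special Hodge loci into totally geodesic subvarieties via the dictionary of Theorem \ref{mainthm}) is the same as the paper's, and the last translation step is indeed exactly Theorem \ref{mainthm}. But there is a genuine gap in the middle: you assert that the hypothesis ``$W \subset X \times S_\Gamma$ algebraic'' is ``exactly the type of input Bakker--Tsimerman requires, with $X$ the role of the universal cover and $\Pi$ the graph of the period map''. It is not. The Bakker--Tsimerman theorem (Theorem \ref{zasthm} in the paper) takes as input an algebraic subvariety $\widehat{W}$ of $D_{\widehat{G}} \times S_\Gamma$, and when $\Gamma$ is non-arithmetic the ball $X$ is \emph{not} the period domain of $\widehat{\mathbb{V}}$, nor is $\pi$ its period map: $D_{\widehat{G}}$ is strictly larger, and the lift $\tilde{\psi} : X \to D_{\widehat{G}}$ is only holomorphic, with transcendental components at the conjugate places. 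In particular $\tilde{\psi}(X)$ is not an algebraic (nor, as you claim, a semi-algebraic) submanifold of $D_{\widehat{G}}$, so the algebraic structure on $X$ used in the statement is \emph{not} induced from the compact dual of $D_{\widehat{G}}$, and your $W$ simply does not live in the space to which Bakker--Tsimerman applies. This mismatch between the two bi-algebraic structures is precisely the nontrivial point the paper flags in the introduction.

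The paper bridges it as follows, and this is the step missing from your proposal. After the monodromy/Mumford--Tate computation (Corollary \ref{gammadense} and the reduction from $K'$ to $K$ in Section \ref{monodromymtsection}), one may write $D_{\widehat{G}} \cong X \times D_{>1}$ with the first component of $\tilde{\psi}$ equal to the identity, and the square relating $\pi$ and $\pi_{\Z}$ is cartesian. One then sets $\widehat{W} := W \times D_{>1}$, which \emph{is} algebraic in $D_{\widehat{G}} \times S_\Gamma$, identifies $W \cap \Pi$ with $\widehat{W} \cap \left( D_{\widehat{G}} \times_{\widehat{\mathbf{G}}(\Z)\backslash D} S_\Gamma \right)$ via $w=(w_0,w_1) \mapsto (w_0,\tilde{\psi}(w_1))$, and verifies by a codimension count that the atypicality inequality $\dim W < \dim U + \dim S_\Gamma$ transfers to the inequality required by Theorem \ref{zasthm} for $\widehat{U}$ and $\widehat{W}$. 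Only after this construction does Bakker--Tsimerman apply; your ``the defect inequality coincides'' is a claim that must be, and in the paper is, checked against this specific fattening. (The arithmetic case is handled separately by citing the Shimura-variety Ax--Schanuel.) Without this reduction your argument does not get off the ground, so as written the proof is incomplete at its key step.
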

Mok \cite{mokalw} has developed a different perspective on functional transcendence problems for ball quotients, and using methods of several complex variables, algebraic geometry and K\"{a}hler geometry recently managed to prove a form of Ax-Lindemann-Weierstrass, which follows from the above theorem. See also the earlier work of Chan and Mok \cite{chan2018asymptotic} regarding compact ball quotients.

Finally notice that both statements depend only on the commensurability class of $\Gamma$. Therefore, we may and do replace $\Gamma$ by a finite index subgroup to assume that all lattices we consider are torsion free.
\subsection{Strategy}\label{strategy}
For both Theorem \ref{thmbader} and \ref{nonarithasconjfinal}, the starting point is to embed $S_\Gamma$ in a period domain for polarised integral VHS. Let $n$ be an integer $>1$, and $G=\PU(1,n)$. We start with the key observation that lattices in $G$ satisfy a form of \emph{infinitesimal rigidity} (this is known to fail when $n=1$). Namely Garland and Raghunathan \cite{MR0267041}, building on the work initiated by Calabi-Vesentini and Weil \cite{MR0111058, MR137792, MR137793}, proved that the first Eilenberg--MacLane cohomology group of a lattice $\Gamma$ in $G$ with respect to the adjoint representation is trivial. Therefore, thanks to the work of Simpson \cite{MR1179076}, $\Gamma$ determines a totally real number field $K$ and a $K$-form of $G$, which we denote by $\mathbf{G}$. Since $X$ is a Hermitian symmetric space, $S_\Gamma$ supports a natural polarised variation of $\C$-Hodge structure which we denote by $\mathbb{V}$. Using a recent work of Esnault and Groechenig \cite{MR3874695}, and Simpson's theory \cite{MR1179076}, we prove.
\begin{thm}\label{strategy1}
For every $\gamma \in \Gamma$, the trace of $\Ad (\gamma)$ lies in the ring of integers of a totally real number field $K$. As a consequence, up to conjugation by $G$, $\Gamma$ lies in $\mathbf{G}(\Oo_K)$. Moreover $\mathbb{V}$ induces a $\Z$-variation of Hodge structure $\widehat{\mathbb{V}}$ on $S_\Gamma$.
\end{thm}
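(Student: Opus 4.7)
The plan is to combine the Garland--Raghunathan infinitesimal rigidity theorem with Simpson's theory and the recent integrality result of Esnault--Groechenig. Composing the inclusion $\rho\colon \Gamma \hookrightarrow G$ with the adjoint representation of $G$ on its complexified Lie algebra gives a semisimple local system $L_{\Ad}$ on $S_\Gamma$ with values in $\Gl(\mathfrak{g}_\C)$. Garland--Raghunathan asserts $H^1(\Gamma, \Ad) = 0$, so $L_{\Ad}$ is \emph{cohomologically rigid}, which is the input required by the integrality machinery.

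First, I would invoke the theorem of Esnault and Groechenig \cite{MR3874695}: cohomologically rigid local systems on a smooth quasi-projective variety have algebraic integer traces. This gives $\tr \Ad(\gamma) \in \overline{\Z}$ at once, and since $\rho$ factors through the real form $G(\R)$ the trace is also real. To upgrade the trace field $K$ generated by these values to a \emph{totally real} number field, I would invoke Simpson's theorem \cite{MR1179076}: every Galois conjugate of a rigid semisimple local system underlies a polarized $\C$-VHS, and so has monodromy contained in some real form of $\Pgl_{n+1,\C}$. The adjoint character of any real form of $\Pgl_{n+1,\C}$ is real-valued on real points, so $\sigma \chi(\gamma) \in \R$ for every embedding $\sigma\colon K \hookrightarrow \C$, and $K$ is forced to be totally real.

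Second, to produce the $K$-form $\mathbf{G}$ and the containment $\Gamma \subset \mathbf{G}(\Oo_K)$ up to conjugation by $G$, I would use the classical fact that an absolutely semisimple representation of a finitely generated group is determined by its character up to conjugation. Since the character of $\Ad\circ\rho$ takes values in $K$, the representation $\rho$ descends, up to conjugation inside $\Pgl_{n+1,\C}$, to a representation landing in a $K$-form $\mathbf{G}$ of $G$ (pinned down at the distinguished archimedean place by the fixed real form $G(\R) \subset G_\C$). The integrality of the character then allows one to conjugate further into $\mathbf{G}(\Oo_K)$ by a standard argument on integral representations of finitely generated groups (choose an $\Oo_K$-order inside the group algebra generated by $\rho(\Gamma)$ and use it as the fiber of the integral local system).

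For the final assertion, the $\C$-VHS $\mathbb{V}$ coming from the Hermitian symmetric structure of $X$ factors through a representation of $G_\C$. Lifting this representation to the $K$-form $\mathbf{G}$, choosing a monodromy-invariant $\Oo_K$-lattice in the fiber, and applying Weil restriction of scalars from $K$ to $\Q$, produces a $\Z$-local system on $S_\Gamma$. The polarization is induced by the Killing form of $\mathbf{G}$, and the total reality of $K$ is exactly what is needed to guarantee that its restriction of scalars is positive on the Hodge decomposition coming from the various archimedean places. Combined with the Hodge filtration already carried by $\mathbb{V}$, this yields the desired $\Z$-VHS $\widehat{\mathbb{V}}$. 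I expect the totally real statement of Step~2 to be the most delicate point, since it requires controlling not only the reductive structure but also the polarization of every Galois conjugate produced by Simpson's theorem.
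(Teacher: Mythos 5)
There is a genuine gap at the very first step, in the verification of the hypothesis of Esnault--Groechenig (Theorem \ref{esgro}) for your chosen local system. You apply their integrality theorem to the adjoint local system $L_{\Ad}$, claiming it is cohomologically rigid because $H^1(\Gamma,\Ad)=0$ (Garland--Raghunathan, Theorem \ref{garalndragvanishing}). But cohomological rigidity in the sense required by Theorem \ref{esgro} is the vanishing of $\Hh^1(\overline{S}, i_{!*}\End^0(L_{\Ad}))$, i.e.\ of first cohomology with coefficients in the \emph{traceless endomorphisms of} $\mathfrak{g}_\C$, not in $\mathfrak{g}_\C$ itself: it controls deformations of $\Ad\circ\rho$ inside $\Gl(\mathfrak{g}_\C)$, not deformations of $\rho$ inside $G$. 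Garland--Raghunathan gives only the adjoint summand. For lattices in $\SU(1,n)$ the vanishing of $H^1(\Gamma,V)$ is emphatically \emph{not} automatic for all finite-dimensional $V$ (the exceptional modules in Raghunathan-type vanishing theorems are symmetric powers of the standard representation, and such summands do occur inside $\End^0(\mathfrak{g}_\C)$ --- e.g.\ for $n=2$ one has $\mathfrak{g}\otimes\mathfrak{g}\supset \sym^3\C^3$), so cohomological rigidity of $L_{\Ad}$ does not follow and may fail. The paper sidesteps exactly this issue by lifting $\Gamma$ to $\SU(1,n)$ and applying Esnault--Groechenig to the \emph{standard} rank-$(n+1)$ local system $\mathbb{V}$: for that choice $\End^0(\mathbb{V})$ \emph{is} the complexified adjoint representation, so $H^1(\Gamma,\Ad)=0$ is precisely the needed input (Theorem \ref{integrality}); integrality of $\tr\Ad(\gamma)$ is then deduced from integrality of the standard traces.

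Two further points are skipped or blurred. First, condition (2) of Theorem \ref{esgro} (quasi-unipotent local monodromy along a normal crossings compactification) is a substantive matter for non-cocompact, non-arithmetic $\Gamma$: the paper needs Mok's toroidal compactification, passage to a finite-index subgroup with unipotent parabolics, Proposition \ref{quasiunipot} identifying the boundary monodromy, and the injection of $\Hh^1(\overline{S_\Gamma}, i_{!*}\End^0(\mathbb{V}))$ into $H^1(\Gamma,\Ad)$; you do not address any of this. Second, in your construction of $\widehat{\mathbb{V}}$, total reality of $K$ and an invariant $\Oo_K$-lattice are not by themselves enough: one must know that \emph{each} Galois conjugate $\mathbb{V}^\sigma$ of the chosen local system underlies a polarized $\C$-VHS (this is where the rigidity argument is rerun for every twist $\rho_\sigma$, as in Theorems \ref{totreal} and \ref{thmzvhs}, using the Corlette--Simpson theorem on the quasi-projective base), and only then does the direct sum over embeddings acquire the Hodge filtration and $\Q$-polarization making it a $\Z$-VHS. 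Your total-reality argument via unitarity of the conjugates is in the right spirit (it parallels the paper's $\overline{\sigma}\rho\cong\sigma\rho^*\cong\sigma\rho$ trick), but it rests on the same unproved rigidity of the conjugated systems.
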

Let $\widehat{\mathbf{G}}/\Q$ be the generic Mumford--Tate group of $\widehat{\mathbb{V}}$. By the theory of period domains and period maps of Griffiths \cite{MR2918237}, $\widehat{\mathbb{V}}$ induces a commutative diagram in the complex analytic category
\begin{center}
\begin{tikzpicture}[scale=2]
\node (A) at (-1,1) {$X$};
\node (B) at (1,1) {$D=D_{\widehat{G}}$};
\node (C) at (-1,0) {${S_\Gamma}^{\an}$};
\node (D) at (1,0) {$\widehat{\mathbf{G}}(\Z)\backslash D$};
\path[right hook->,font=\scriptsize]
(A) edge node[above]{$\tilde{\psi}$} (B);
\path[->,font=\scriptsize,>=angle 90]
(C) edge node[above]{$\psi$} (D)
(A) edge node[right]{$\pi$} (C)
(B) edge node[right]{$\pi_\Z$} (D);
\end{tikzpicture},
\end{center}
where $\psi : {S_\Gamma}^{\an} \to \widehat{\mathbf{G}}(\Z)\backslash D$ is the period map associated to $\widehat{\mathbb{V}}$. Corollary \ref{gammadense} shows that $\widehat{\mathbf{G}}$ is the Weil restriction from $K$ to $\Q$ of $\mathbf{G}$. Such a map $\psi$ generalises the theory of modular embeddings of triangle groups \cite{MR931211, modularemb1} and Deligne--Mostow lattices \cite{modularemb2}.

From a functional transcendence point of view, there are now two \emph{bi-algebraic structures} on $S_\Gamma$ (in the sense of \cite[Definition 7.2]{klingler2017hodge}): on the one hand, the one coming from the fact that its universal covering $X$ is semialgebraic and on the other, the one coming from the fact that $S_\Gamma$ supports a polarised integral VHS. Depending on the special structure we choose, we have two, a priori different, definitions of \emph{bi-algebraic subvarieties}. For example a totally geodesic subvariety is bi-algebraic with respect to $\pi$, that is, it is the image of algebraic subvariety of $X$ along $\pi$. More precisely, we will see that totally geodesic subvarieties are $\Gamma$-\emph{special}, that is sub-locally hermitian spaces of $S_\Gamma$. From a Hodge theoretical point of view it is natural to consider the image in $S_\Gamma$ of some algebraic subvarieties of $D$. Namely let $D'$ be a Mumford--Tate sub-domain of $D$, then any analytic irreducible component of $\psi^{-1} (\pi_\Z(D'))$ is an algebraic subvariety of $S_\Gamma$ by a famous theorem of Cattani, Deligne and Kaplan \cite{MR1273413} (for the fact that $\psi^{-1} (\pi_\Z(D'))$ contains finitely many connected components, see also the work of Bakker, Klingler and Tsimerman \cite{bakker2018tame}). We refer to such irreducible algebraic subvarieties $\psi^{-1} (\pi_\Z(D'))^0$ as $\Z$-\emph{special subvarieties}. We remark here that the definition of $\Z$-special subvarieties is non-trivial even for zero dimensional subvarieties, whereas every point of $S_\Gamma$ could be regarded as both a totally geodesic and a $\Gamma$-special subvariety. We will come back to this in Section \ref{pointsection}. Until then it is understood that totally geodesic, $\Z$- and $\Gamma$-special subvarieties are of dimension $>0$.

What is the relation between the two special/bi-algebraic structures? Since $\tilde{\psi}$ is just a holomorphic map (rather than, a priori, a totally geodesic immersion), such a question is far from being trivial. To prove Theorems \ref{thmbader} and \ref{nonarithasconjfinal} we need the following.
\begin{thm}\label{mainthm}
The totally geodesic subvarieties of $S_\Gamma$ are precisely the $\Z$-special ones, equivalently the $\Gamma$-special ones.
\end{thm}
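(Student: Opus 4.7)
The plan is to establish the two equivalences totally geodesic $\Leftrightarrow$ $\Gamma$-special and $\Gamma$-special $\Leftrightarrow$ $\Z$-special independently. The first is essentially classical hermitian symmetric space theory: a complex totally geodesic subvariety of the ball $X$ lifts to a sub-ball $X_H \subset X$ that is the orbit of a real Lie subgroup $H \subset G = \PU(1,n)$ preserving the complex structure, and, up to isogeny, any such $H$ is of the form $\PU(1,m)$ for some $m \leq n$. This is precisely a real sub-Shimura datum in the sense of Definition \ref{defsubshimura}, and passing to the quotient by $\Gamma$ yields the asserted equivalence with $\Gamma$-special subvarieties, using that the image in $S_\Gamma$ is automatically closed algebraic (via Baily-Borel/Mok).

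The substantive content is the second equivalence. The key geometric input is that $\tilde\psi : X \hookrightarrow D$ realises $X$ as a Mumford--Tate sub-domain of $D$: under the identification $\widehat{\mathbf{G}} \otimes_\Q \R = \prod_v \mathbf{G}_v$ over the real places $v$ of $K$, the image $\tilde\psi(X)$ is the orbit, on a base point, of the distinguished factor $G = \mathbf{G}_{v_0}(\R)$, and in particular $\tilde\psi$ is a totally geodesic holomorphic embedding.

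For the direction $\Z$-special $\Rightarrow$ totally geodesic, let $D_M \subset D$ be the MT sub-domain attached to a $\Q$-subgroup $M \subset \widehat{\mathbf{G}}$, and let $Y$ be an irreducible component of $\psi^{-1}(\pi_\Z(D_M))$. Its preimage in $X$ is a union of components of $\tilde\psi^{-1}(D_M)$, and each such component is an orbit, on a point of $X$, of the real Lie subgroup $G \cap M(\R)$ (intersection taken inside the distinguished factor of $\widehat{\mathbf{G}}(\R)$). Such an orbit is a totally geodesic complex submanifold of $X$ by homogeneity and preservation of the complex structure, so $Y$ is totally geodesic in $S_\Gamma$. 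Conversely, for totally geodesic $\Rightarrow$ $\Z$-special, lift $Y$ to $X_H = H \cdot x_0 \subset X$. The stabiliser $\Gamma_H := \Gamma \cap H$ is a lattice in $H$ (since $Y$ is closed algebraic and hence of finite hyperbolic volume), so the argument underlying Theorem \ref{strategy1}, applied to $(H, \Gamma_H)$, produces a totally real number field $K'$ and a $K'$-form $\mathbf{H}$ of $H$ with $\Gamma_H \subset \mathbf{H}(\mathcal{O}_{K'})$. The chain $\Gamma_H \subset \Gamma \subset \mathbf{G}(\mathcal{O}_K)$ together with Simpson's rigidity then forces $K' \subset K$ and a $K$-rational embedding $\mathbf{H}_K := \mathbf{H} \otimes_{K'} K \hookrightarrow \mathbf{G}$, after which Weil restriction produces the desired $\Q$-subgroup $M := \Res_{K/\Q}\mathbf{H}_K \subset \widehat{\mathbf{G}}$ whose sub-domain $D_M$ contains $\tilde\psi(X_H)$.

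The hardest step is this last compatibility: that the $K'$-form $\mathbf{H}$ coming from the integrality of $\Gamma_H$ embeds $K$-rationally into $\mathbf{G}$. It requires carefully relating the two trace fields and verifying that the integral local system underlying $\widehat{\mathbb{V}}|_Y$ is genuinely the restriction of $\widehat{\mathbb{V}}$, rather than some independently-arising $\Z$-structure on $Y$.
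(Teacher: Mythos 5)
Your plan hinges on the ``key geometric input'' that $\tilde{\psi}(X)$ is the orbit of a base point under the distinguished factor $G=G_{\sigma_1}$ of $\widehat{G}$, hence a totally geodesic (Mumford--Tate) sub-domain of $D$. This is false precisely in the case of interest: if the components $x\mapsto x_{\sigma_i}$ ($i\geq 2$) of $\tilde{\psi}$ were constant, then by $\Gamma$-equivariance $\rho_{\sigma_i}(\Gamma)$ would fix a point of $D_{G_{\sigma_i}}$ and hence lie in a compact subgroup, while it is Zariski dense in $G_{\sigma_i}$; this forces every $G_{\sigma_i}$, $i\geq 2$, to be compact, i.e.\ (by the Mostow--Vinberg criterion, Theorem \ref{arithcrit}) $\Gamma$ arithmetic. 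For non-arithmetic $\Gamma$, $\tilde{\psi}$ is only a $\Gamma$-equivariant holomorphic graph over $X$, not a $G$-orbit, and this is exactly why the theorem is non-trivial. As a result, your step that each component of $\tilde{\psi}^{-1}(D_M)$ is an orbit of $M(\R)\cap G$ is unsupported --- it is essentially the statement being proved. The paper obtains it only after first pinning down the generic Mumford--Tate group of an arbitrary positive-dimensional subvariety $W$: Theorem \ref{thmmonodromy} (proved with Goursat's lemma and a trace-field/places argument) shows the $\Z$-monodromy of $W$ is the Weil restriction from $K_W$ of its $\Gamma$-monodromy, Deligne--Andr\'e then gives Corollary \ref{cor123}, and only then does the orbit description $X_M=\tilde{\psi}^{-1}(D_{\widehat{M}})=M.w$ make sense; the lattice property of $\Gamma_M$ is not ``closed algebraic hence finite hyperbolic volume'' for free, but rests on Cattani--Deligne--Kaplan algebraicity plus Griffiths' finiteness of the Kobayashi--Eisenman volume (Lemma \ref{finallemma}).

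Your converse direction has a second, independent gap: re-running the argument of Theorem \ref{strategy1} on $(H,\Gamma_H)$ needs cohomological rigidity of $\Gamma_H$ (Garland--Raghunathan), which fails when $X_H$ is one-dimensional, i.e.\ $H\cong\PU(1,1)$ --- and geodesic curves are the main case (all totally geodesic subvarieties when $n=2$; this is why Definition \ref{defsubshimura} drops RSD0). Moreover the compatibility you yourself flag as the hardest step --- that the integral structure produced on $Y$ is the restriction of $\widehat{\mathbb{V}}$ and that the resulting field embeds in $K$ compatibly --- is exactly where an argument is needed and none is given. The paper avoids both problems by never re-doing the integrality on the subvariety: it restricts the already constructed $\widehat{\mathbb{V}}$ and computes the monodromy and Mumford--Tate groups of the restriction (valid for any positive-dimensional $W$, rigid or not), with the possibly smaller trace field $K_W\subset K$ handled by Remark \ref{rmknew} and Remark \ref{twofields}. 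So while the overall shape (Weil restriction of a $K$-subgroup, sub-domain $D_{\widehat{H}}$) matches the paper's conclusion, the proposal as written is missing the mechanism --- the monodromy comparison plus Deligne--Andr\'e --- that makes both directions work.
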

Theorem \ref{mainthm} reduces Theorem \ref{thmbader} to counting the $\Z$-special subvarieties in $S_\Gamma$ and we will see that this is actually an \emph{unlikely intersection problem} (see Section \ref{sketch} for a sketch of the proof of this fact). A similar strategy appeared also in the work of Wolfart \cite{MR931211}, where certain Riemann surfaces $C$, associated to non-arithmetic lattices, are embedded in Shimura varieties and the André--Oort conjecture is used to deduce that $C$ contains only finitely many \emph{CM-points}, see also \cite{MR1973057}, where Wolfart's programme is completed.

Theorem \ref{mainthm} is crucial to relate our Ax--Schanuel conjecture (Theorem \ref{nonarithasconjfinal}) to the Ax--Schanuel known in Hodge theory by the work of Bakker and Tsimerman \cite{MR3958791}, applied to the pair $(S_\Gamma, \widehat{\mathbb{V}})$. Finally, in Section \ref{sectioncor}, we prove some consequences of Theorem \ref{mainthm} which may be of independent interest. 
\begin{cor}\label{cor1}
Let $H$ be a subgroup of $G=\PU(1,n)$ of hermitian type. If $\Gamma \cap H$ is Zariski dense in $H$, then $\Gamma \cap H$ is a lattice in $H$.
\end{cor}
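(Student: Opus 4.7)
The plan is to realise $\pi(X_H) \subseteq S_\Gamma$ as dense in a $\Z$-special subvariety of $S_\Gamma$, and then invoke Theorem \ref{mainthm} to conclude it is totally geodesic ($\Gamma$-special); the lattice property for $\Gamma \cap H$ in $H$ will follow immediately from that.

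By Theorem \ref{strategy1}, we view $\Gamma \subseteq \widehat{\mathbf{G}}(\Q)$ where $\widehat{\mathbf{G}} = \Res_{K/\Q}\mathbf{G}$. Let $\mathbf{H}$ denote the $\Q$-Zariski closure of $\Gamma \cap H$ in $\widehat{\mathbf{G}}$. Using the Zariski density hypothesis and the fact that $H$ is reductive (hermitian type), one checks that $\mathbf{H}$ is a reductive $\Q$-algebraic group whose real projection to the $v_0$-factor of $\widehat{\mathbf{G}}(\R) = G \times \prod_{v \neq v_0}\mathbf{G}_v(\R)$ is precisely $H$, the other projections landing in the compact factors. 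The hermitian datum on $H$ extends to $\mathbf{H}$, defining a Mumford--Tate subdomain $D_\mathbf{H} \subseteq D$ whose associated symmetric submanifold is $X_H \subseteq X$, and in particular $\tilde{\psi}(X_H) \subseteq D_\mathbf{H}$.

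By the theorem of Cattani--Deligne--Kaplan, $Z := \psi^{-1}(\pi_\Z(D_\mathbf{H}))^0 \subseteq S_\Gamma$ is an algebraic $\Z$-special subvariety, and by construction it contains $\pi(X_H)$ as a dense subset. By Theorem \ref{mainthm}, $Z$ is therefore totally geodesic ($\Gamma$-special), so $Z = \pi(X_{H'})$ for some real hermitian $H' \subseteq G$ with $\Gamma \cap H'$ a lattice in $H'$. Lifting the inclusion $\pi(X_H) \subseteq \pi(X_{H'})$ to $X$ and choosing the correct component, we may assume $X_H \subseteq X_{H'}$. A dimension comparison --- $\dim_\C Z = \dim_\C D_\mathbf{H} = \dim_\C X_H$ --- then forces $X_H = X_{H'}$. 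Hence $H$ and the semisimple part of $H'$ have the same symmetric space, so they coincide up to a compact factor, and the lattice $\Gamma \cap H'$ of $H'$ restricts to a lattice $\Gamma \cap H$ of $H$.

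The main technical point is identifying the Mumford--Tate subdomain $D_\mathbf{H}$ with $X_H$ and verifying $\tilde{\psi}(X_H) \subseteq D_\mathbf{H}$: this rests on Deligne's theorem that the connected algebraic monodromy group of a $\Z$-VHS is normal in the derived Mumford--Tate group, combined with the fact that the totally geodesic holomorphic embedding $X_H \hookrightarrow X$ underlies a morphism of Mumford--Tate/Shimura data. Once this is in place, the corollary becomes a clean application of Theorem \ref{mainthm}.
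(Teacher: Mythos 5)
Your skeleton is close to the paper's: take the $\Q$-Zariski closure of $\Gamma\cap H$, form the associated Mumford--Tate subdomain, apply Cattani--Deligne--Kaplan, and then use the $\Z$-special/$\Gamma$-special dictionary (the paper concludes directly with Lemma \ref{finallemma} instead of routing through Theorem \ref{mainthm}, but that detour is not circular). The gap is at the crucial step. You only verify the inclusion $\tilde{\psi}(X_H)\subseteq D_{\mathbf H}$; what actually controls the component $Z$ is the reverse containment, i.e.\ the equality $\tilde{\psi}^{-1}(D_{\mathbf H})=X_H$, and you replace it by two assertions that do not hold. First, the claim that the projections of $\mathbf H(\R)$ to the factors $G_{\sigma_i}$, $i\neq 1$, land in compact groups is unwarranted: by the Mostow--Vinberg criterion (Theorem \ref{arithcrit}) this kind of compactness is exactly an arithmeticity condition (already for $H=G$ it would say $\Gamma$ is arithmetic), so in general $\dim_\C D_{\mathbf H}>\dim_\C X_H$. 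Second, the equality $\dim_\C Z=\dim_\C D_{\mathbf H}$ is unjustified: $Z$ is a component of $\psi^{-1}(\pi_\Z(D_{\mathbf H}))$, locally modelled on $\tilde{\psi}^{-1}$ of $\widehat{\mathbf G}(\Z)$-translates of $D_{\mathbf H}$, and a priori the intersection of $\tilde{\psi}(X)$ with such a translate can have excess dimension. The paper gets the needed equality $\tilde{\psi}^{-1}(D_{\widehat F})=X_F$ from the normalisation of Section \ref{monodromymtsection}: the first component of $\tilde{\psi}$ is the identity on $X$ and the $X$-projection of $D_{\widehat F}$ is exactly $X_F$. Without this, your $Z$ may strictly contain the closure of $\pi(X_H)$, so the dimension comparison only gives $X_H\subseteq X_{H'}$, possibly strictly; and a Zariski dense subgroup of $H$ contained in a lattice of a strictly larger group need not be a lattice --- this is precisely the thin-subgroup phenomenon (cf.\ Corollary \ref{cor2}) that the corollary is supposed to rule out, so it cannot be waved away.

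Two further points. The density of $\pi(X_H)$ in $Z$ is asserted ``by construction,'' but it is equivalent to the dimension statement above, so it cannot be taken for granted. And the final step, from $X_H=X_{H'}$ to ``the lattice $\Gamma\cap H'$ restricts to a lattice $\Gamma\cap H$,'' needs an argument when $H$ and $H'$ differ by a compact factor: a lattice in $H_1\times C$ with $C$ compact can meet $H_1$ in an infinite-index, still Zariski dense, subgroup (graph of a character into $C$), so ``coincide up to a compact factor'' does not by itself transfer the lattice property. The paper's route avoids both issues: once $\pi(X_H)=\psi^{-1}(\widehat{\mathbf H}(\Z)\backslash D_{\widehat H})^0$ is identified and shown to be algebraic via Theorem \ref{cdk}, Lemma \ref{finallemma} (Griffiths' finiteness of the Kobayashi--Eisenman volume for quasi-projective quotients of bounded domains) yields directly that $\Gamma\cap H$ is a lattice in $H$, with no auxiliary group $H'$.
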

The second application is about \emph{non-arithmetic monodromy groups}, as first studied by Nori \cite{MR832040}. After Sarnak \cite{MR3220897}, a subgroup of a lattice $\Delta$ of a real algebraic group $R$ is called \emph{thin subgroup} if it has infinite-index in $\Delta$ and it is Zariski dense in $R$. A link between non-arithmetic lattices and thin subgroups was also noticed by Sarnak, see indeed \cite[Section 3.2]{MR3220897}. We present a geometric way of constructing many thin subgroups, as long as a non-arithmetic lattice $\Gamma$ is given.
\begin{cor}\label{cor2}
Let $\Gamma$ be a non-arithmetic lattice in $G=\PU(1,n)$, $n>1$. Let $W$ be an irreducible subvariety of $S_\Gamma$ which is not contained in any of the finitely many maximal totally geodesic subvarieties of $S_\Gamma$. The image of 
\begin{displaymath}
\pi_1(W)\to \pi_1(S_\Gamma) \cong \Gamma
\end{displaymath}
gives rise to a thin subgroup of $\widehat{\mathbf{G}}(\Z)$, where $\widehat{\mathbf{G}}$ denotes the Weil restriction from $K$, the (adjoint) trace field of $\Gamma$, to $\Q$ of $\mathbf{G}$ (the $K$-form of $G$ determined by $\Gamma$).
\end{cor}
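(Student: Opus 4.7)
The plan is to check separately that $H := \im(\pi_1(W) \to \Gamma)$, viewed as a subgroup of $\widehat{\mathbf{G}}(\Z) = \mathbf{G}(\Oo_K)$ via Theorem \ref{strategy1}, has infinite index and is Zariski dense in $\widehat{\mathbf{G}}$. The first property is immediate: non-arithmeticity of $\Gamma$ (via the Margulis commensurability criterion, re-proven in Section \ref{sectionmarguliscomm}) forces $\Gamma$ itself to have infinite index in $\widehat{\mathbf{G}}(\Z)$, and a fortiori so does $H \subset \Gamma$.

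For Zariski density, I would denote by $\mathbf{H} \subseteq \widehat{\mathbf{G}}$ the $\Q$-Zariski closure of $H$, which is the algebraic monodromy group of the restricted $\Z$-VHS $\widehat{\mathbb{V}}|_W$. By the classical theorem of Deligne--Andr\'e on algebraic monodromy of polarised VHS, $\mathbf{H}^0$ is normal in the derived group of the generic Mumford--Tate group of $\widehat{\mathbb{V}}|_W$. Let $Z$ be the smallest $\Z$-special subvariety of $S_\Gamma$ containing $W$; by construction the generic Mumford--Tate group of $\widehat{\mathbb{V}}|_W$ coincides with the group $\mathbf{H}_Z$ defining $Z$. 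Invoking Theorem \ref{mainthm}, $Z$ is totally geodesic in $S_\Gamma$. Since the hypothesis on $W$ rules out containment in any strict totally geodesic subvariety---each being enclosed in one of the finitely many maximal ones supplied by Theorem \ref{thmbader}---one forces $Z = S_\Gamma$, and hence $\mathbf{H}_Z = \widehat{\mathbf{G}}$.

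Consequently $\mathbf{H}^0$ is a normal $\Q$-subgroup of $\widehat{\mathbf{G}}^{\der}$. Adjointness of $G = \PU(1,n)$ transfers to $\widehat{\mathbf{G}} = \Res_{K/\Q}\mathbf{G}$, so $\widehat{\mathbf{G}}^{\der} = \widehat{\mathbf{G}}$; moreover the absolute simplicity of $\mathbf{G}$ forces its Weil restriction $\widehat{\mathbf{G}}$ to be $\Q$-simple. Therefore $\mathbf{H}^0$ is either trivial or all of $\widehat{\mathbf{G}}$. The trivial case (assuming, as one must for the conclusion to be non-vacuous, $\dim W > 0$) is excluded: $\mathbf{H}^0 = \{e\}$ means $H$ is finite, so after a finite \'etale cover $W' \to W$ trivialising monodromy, the lift $\widetilde{W}' \subset X$ of $W'$ would be mapped to a single point of $D$ by the injective map $\tilde{\psi}: X \hookrightarrow D$, forcing $\dim W' = 0$, a contradiction. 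Thus $\mathbf{H} = \widehat{\mathbf{G}}$ and $H$ is thin.

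The main conceptual difficulty lies not in the formal algebra above but in having Theorem \ref{mainthm} at our disposal: it is what allows the Hodge-theoretic identification of ``smallest $\Z$-special subvariety containing $W$'' with ``smallest totally geodesic subvariety containing $W$'', the geometric object naturally appearing in the hypothesis. Without this bridge, Deligne--Andr\'e would only bound the algebraic monodromy in terms of an abstract Hodge locus disconnected from the maximal totally geodesic subvarieties governing the assumption on $W$.
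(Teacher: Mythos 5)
Your proof is correct and takes essentially the same route as the paper's: infinite index follows from non-arithmeticity (the paper cites Remark \ref{infiniteindex}), and Zariski density follows from the Deligne--Andr\'e theorem (Theorem \ref{deligneandre}) once the hypothesis is translated, via Theorem \ref{mainthm} and Theorem \ref{thmbader}, into Hodge genericity of $W$ --- exactly as in the paper's restatement of the corollary in Section \ref{sectioncor}. You merely spell out steps the paper leaves implicit, namely the $\Q$-simplicity of $\widehat{\mathbf{G}}$ and the exclusion of finite monodromy (compare Lemma \ref{lemmanoncomp}).
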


\subsection{Sketch of the proof of Theorem \ref{thmbader} (in the simplest case)}\label{sketch}
We conclude the introduction presenting a sketch of the proof of Theorem \ref{thmbader} in the case where $\Gamma$ is a lattice in $\PU(1,2)$ and the (adjoint) trace field $K$ of $\Gamma$ has degree two over $\Q$. We hope this section can clarify the relationships between Hodge theory, functional transcendence and \emph{o-minimality} we employ in the sequel. The case of dimension two is particularly interesting since the maximality condition of Theorem \ref{thmbader} is automatically satisfied, and most of known non-arithmetic lattices were found in $\PU(1,2)$, see indeed Section \ref{known}. Our main theorem reads as follows.
\begin{thm}\label{thmfinal}
If $\Gamma \subset G= \PU(1,2)$ is non-arithmetic, then $S_\Gamma= \Gamma \backslash \mathbb{B}^2$ contains only finitely many one dimensional (complex) totally geodesic subvarieties. 
\end{thm}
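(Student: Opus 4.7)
The plan is to execute the general strategy of Section \ref{strategy} in the simplified setting $n=2$, $[K:\mathbb{Q}]=2$, where the dimensional bookkeeping becomes particularly transparent and where one-dimensional totally geodesic subvarieties are automatically maximal. First I would invoke Theorem \ref{strategy1} to obtain the polarised $\mathbb{Z}$-VHS $\widehat{\mathbb{V}}$ on $S_\Gamma$ and its period map $\psi : S_\Gamma \to \widehat{\mathbf{G}}(\mathbb{Z})\backslash D$, where the generic Mumford--Tate group is $\widehat{\mathbf{G}}=\Res_{K/\mathbb{Q}}\mathbf{G}$. Since $\Gamma$ is non-arithmetic and $[K:\mathbb{Q}]=2$, the non-identity embedding of $K$ into $\mathbb{R}$ must yield a non-compact real form of $\mathbf{G}$ (otherwise $\Gamma$ would be arithmetic), so $\widehat{\mathbf{G}}(\mathbb{R}) \cong \PU(1,2) \times \PU(1,2)$ and the Mumford--Tate domain $D = D_{\widehat{\mathbf{G}}}$ has complex dimension $4$, with $\tilde{\psi}(X) \cong \mathbb{B}^2$ sitting inside $D$ diagonally as a submanifold of complex dimension $2$.

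By Theorem \ref{mainthm}, the one-dimensional complex totally geodesic subvarieties of $S_\Gamma$ coincide with the one-dimensional $\mathbb{Z}$-special subvarieties of the pair $(S_\Gamma, \widehat{\mathbb{V}})$. Each such arises from a proper reductive $\mathbb{Q}$-subgroup $\widehat{\mathbf{H}} = \Res_{K/\mathbb{Q}}\mathbf{H} \subsetneq \widehat{\mathbf{G}}$, where $\mathbf{H}$ is a $K$-form of $\PU(1,1) \subset \mathbf{G}$ whose second archimedean place is again non-compact; the corresponding Mumford--Tate sub-domain $D_{\widehat{\mathbf{H}}} \subset D$ then has complex dimension $2$, being a real form of $\mathbb{B}^1 \times \mathbb{B}^1$. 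The crucial dimensional observation is that
\begin{displaymath}
\dim \tilde{\psi}(X) + \dim D_{\widehat{\mathbf{H}}} - \dim D \;=\; 2 + 2 - 4 \;=\; 0,
\end{displaymath}
so every one-dimensional totally geodesic curve in $S_\Gamma$ is an \emph{atypical} component of the Hodge locus of $\widehat{\mathbb{V}}$, with atypicality gap exactly $1$.

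With atypicality in hand, I would argue by contradiction: assume there exist infinitely many such curves $Z_1,Z_2,\dots$ in $S_\Gamma$. Since $S_\Gamma$ is a smooth quasi-projective surface and the $Z_i$ are pairwise distinct irreducible curves, $\bigcup_i Z_i$ is Zariski dense in $S_\Gamma$. Applying the Ax--Schanuel theorem for $\mathbb{Z}$-VHS of Bakker and Tsimerman \cite{MR3958791} to $(S_\Gamma,\widehat{\mathbb{V}})$, combined with the definability of the period map in $\Ranexp$ from \cite{bakker2018tame}, one derives a Zilber--Pink-style dichotomy: either the atypical components of the Hodge locus are contained in a finite union of proper $\mathbb{Z}$-special subvarieties of $S_\Gamma$, or the generic Mumford--Tate group of $\widehat{\mathbb{V}}$ is strictly smaller than $\widehat{\mathbf{G}}$. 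The latter contradicts the construction of $\widehat{\mathbf{G}}$ via Theorem \ref{strategy1}, so the former must hold; but a proper $\mathbb{Z}$-special subvariety of the surface $S_\Gamma$ has dimension at most $1$, and a finite union of such can contain only finitely many of the pairwise distinct $Z_i$, yielding the desired contradiction.

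I expect the main obstacle to be the extraction of this Zilber--Pink-style consequence from Bakker--Tsimerman Ax--Schanuel. The natural route is an o-minimal counting argument à la Pila--Wilkie, applied to the lifted family of pairs $(\widetilde{Z_i}, D_{\widehat{\mathbf{H}_i}}) \subset X \times D$: if infinitely many $Z_i$ exist, their lifts populate a definable set whose algebraic part cannot be contained in the bi-algebraic locus, producing a positive-dimensional algebraic subvariety of $X \times D$ through which Ax--Schanuel forces $\tilde{\psi}(X)$ itself to lie in a proper Mumford--Tate sub-domain of $D$, contradicting the maximality of $\widehat{\mathbf{G}}$. The two-dimensional, atypicality-gap-one situation should render this implementation tractable and quantitative, sidestepping some of the technicalities required for the general Theorem \ref{thmbader}.
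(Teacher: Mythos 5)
Your first half reproduces the paper's setup faithfully: Theorem \ref{strategy1} to get $\widehat{\mathbb{V}}$ and the period map, Theorem \ref{mainthm} to identify totally geodesic curves with $\Z$-special subvarieties, and the codimension count showing each such curve is an atypical intersection precisely because $\Gamma$ is non-arithmetic (Theorem \ref{arithcrit}). Two small inaccuracies there: $D$ need not have dimension $4$ — the second factor of $\widehat{\mathbf{G}}(\R)\cong\PU(1,2)\times\PU(1,2)$ may act on a non-Hermitian flag domain, so $\dim D\in\{4,5\}$ as the paper notes — and your assertion that the second archimedean place of $\mathbf{H}$ is ``again non-compact'' is unjustified (the trace field of $\Gamma_H$ can be smaller than $K$, cf. Remark \ref{twofields}); neither destroys atypicality, since a smaller $D_{\widehat{H}}$ only makes the intersection more atypical.

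The genuine gap is the finiteness extraction, which is the heart of the theorem and which you leave to a ``Zilber--Pink-style dichotomy'' that is not an available black box: deducing from Ax--Schanuel plus definability that atypical components lie in finitely many proper special subvarieties is exactly the statement to be proven, not a citable consequence of \cite{MR3958791} and \cite{bakker2018tame}. Your fallback Pila--Wilkie sketch also aims at the wrong conclusion: infinitely many atypical geodesic curves cannot force $\tilde{\psi}(X)$ into a proper Mumford--Tate sub-domain of $D$ (for arithmetic $\Gamma$ there are infinitely many geodesic curves while $\widehat{\mathbf{G}}$ remains the full generic Mumford--Tate group, so no such degeneration mechanism exists; non-arithmeticity enters only through the codimension inequality, which you have already used). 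The paper's route is different and avoids counting altogether: one parametrises the candidate curves by the set $\Sigma^1$ of conjugates $\hat{g}\widehat{H}\hat{g}^{-1}$, $\hat g\in\widehat G$, whose orbit through $\tilde{\psi}(x)$ meets $\tilde{\psi}(X)$ in positive dimension for some $x$ in a fixed semialgebraic fundamental set; this set is definable in $\R_{\an,\exp}$ because the restriction of $\tilde\psi$ to a suitable compact/definable piece of the fundamental set is definable (Theorem \ref{defini}, Lemma \ref{623}). Then the $\Z$-Ax--Schanuel Theorem \ref{zasthm}, applied to the atypical intersection you computed, shows that every member of $\Sigma^1$ is in fact a $\Q$-subgroup of $\widehat{\mathbf{G}}$ (Proposition \ref{mainpropwithas}); since $\widehat{\mathbf{G}}$ has only countably many $\Q$-subgroups, $\Sigma^1$ is a countable definable set, hence finite, and finiteness of the geodesic curves follows. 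To repair your proposal you would need to supply this definability-plus-countability argument (or an honest Pila--Wilkie substitute, which would require constructing the definable family and height bounds you have not specified), and to remove the restriction $[K:\Q]=2$ if you intend to prove the theorem as stated rather than the illustrative special case.
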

\begin{proof}[Sketch of the proof]
If $S_\Gamma$ contains no totally geodesic subvarieties, there is nothing to prove. Assume there is a totally geodesic subvariety $W$ of $S_\Gamma$, say associated to a triplet $(H=\PU(1,1),X_H=\mathbb{B}^1, \Gamma_H=\Gamma \cap H)$, such that $\Gamma_H$ is a lattice in $H$. Let $K$ be the field generated by the traces of $\Gamma$, under the adjoint representation. Thanks to Theorem \ref{strategy1}, it is a totally real number field and we assume, for simplicity, that $[K:\Q]=2$. Let $\mathbf{G}$ be the $K$-form of $G$ determined by $\Gamma$ (resp. $\mathbf{H}$ for the $K$-form of $H$), and write $\widehat{\mathbf{G}}$ for the Weil restriction from $K$ to $\Q$ of $\mathbf{G}$ (resp. $\widehat{\mathbf{H}}$). Theorem \ref{mainthm} shows that the totally geodesic subvariety $W$ is an irreducible component of an intersection:
\begin{displaymath}
\psi(W)=  {\psi(S_\Gamma) \cap \widehat{\mathbf{H}}(\mathbb{Z}) \backslash D_{\widehat{H}}}^0,
\end{displaymath}
where $\widehat{\mathbf{H}}(\mathbb{Z}) \backslash D_{\widehat{H}}$ is a subperiod domain of $\widehat{\mathbf{G}}(\Z)\backslash D$. The only possibilities for the dimension of the latter space are 4 or 5, depending on whether it is a Shimura variety or not. Observe that, if $\Gamma$ is non-arithmetic, by the Mostow–Vinberg arithmeticity criterion (Theorem \ref{arithcrit}) $D$ is a homogeneous space under $\widehat{\mathbf{G}}(\R)=G \times G$ and 
\begin{displaymath}
\codim_{\widehat{\mathbf{G}}(\Z)\backslash D} \psi(S_\Gamma) = \dim \widehat{\mathbf{G}}(\Z)\backslash D - \dim S_\Gamma \geq 2.
\end{displaymath}
Moreover we observe that
\begin{displaymath}
\codim_{\widehat{\mathbf{G}}(\mathbb{Z}) \backslash D_{\widehat{G}}} \widehat{\mathbf{H}}(\mathbb{Z}) \backslash D_{\widehat{H}}\geq 2,
\end{displaymath}
\begin{displaymath}
\codim_{\widehat{\mathbf{G}}(\mathbb{Z}) \backslash D_{\widehat{G}}} \psi(W)\geq 3.
\end{displaymath}
The above computations show that, if $\Gamma$ is non-arithmetic, $W$ is an \emph{unlikely intersection}:
\begin{displaymath}
\codim_{\widehat{\mathbf{G}}(\mathbb{Z}) \backslash D_{\widehat{G}}} \psi(W) < \codim_{\widehat{\mathbf{G}}(\mathbb{Z}) \backslash D_{\widehat{G}}} \psi(S_\Gamma) + \codim_{\widehat{\mathbf{G}}(\mathbb{Z}) \backslash D_{\widehat{G}}} \widehat{\mathbf{H}}(\mathbb{Z}) \backslash D_{\widehat{H}}.
\end{displaymath}
To show that unlikely intersections arise in a finite number, we argue using o-minimality and functional transcendence (in the o-minimal structure $\R_{\an, \exp}$, given by expanding the semialgebraic sets with restricted analytic functions and the real exponential). Indeed here we sketch how to show that unlikely intersections can be parametrised by a countable and \emph{definable set}, see Proposition \ref{mainpropsec} for all details. Let $\mathcal{F}$ be a semialgebraic fundamental set for the action of $\Gamma$ on $X$ and consider
\begin{displaymath}
\Pi_0(\mathbf{H}):=\{(x,\hat{g}) \in \mathcal{F} \times \widehat{G}: \im (\tilde{\psi}(x): \DT=\Res^{\C}_{\R}\Gm \to \widehat{G})\subset \hat{g} \widehat{H} \hat{g}^{-1} \},
\end{displaymath}
where $\tilde{\psi}: X \to D_{\widehat{G}}$ is the lift of the period map constructed in Theorem \ref{strategy1}. We will show that $\Pi_0(\mathbf{H})$ is a definable subset of $\mathcal{F} \times \widehat{G}$, and we then study
\begin{displaymath}
\Sigma^1=\Sigma(\mathbf{H})^1 := \{\hat{g} \widehat{H}\hat{g}^{-1}: (x,\widehat{g})	\in \Pi_0(\mathbf{H}) \text{  for some  } x \in \mathcal{F} \text{  such that  }d_X(x,\hat{g})\geq 1 \},
\end{displaymath}
where
\begin{displaymath}
d_X(x,\hat{g}):= \dim_{\tilde{\psi}(x)} \left(\hat{g} \widehat{H} \hat{g}^{-1}. \tilde{\psi}(x)  \cap \tilde{\psi}(X)\right).
\end{displaymath}
The set $\Sigma^1$ parametrises all totally geodesic subvarieties (apart from points, of course) of $S_\Gamma$, and we want to show that it is finite. The fact that it is definable follows from easy observations, at least once the set $\Pi_0(\mathbf{H})$ is shown to be definable. To prove that $\Sigma^1$ is countable we use the codimension computations displayed above. They are indeed essential to invoke the Ax--Schanuel Theorem of Bakker and Tsimerman (Theorem \ref{zasthm}), which is used to show that each
\begin{displaymath}
\hat{g}\widehat{H}\hat{g}^{-1}\in \Sigma^1
\end{displaymath}
is naturally a $\Q$-subgroup of $\widehat{\mathbf{G}}$ (rather than just a real subgroup of $\widehat{G}$). Since $\widehat{\mathbf{G}}$ has only countably many $\Q$-subgroups, the standard fact that a definable set in an o-minimal structure has only finitely many connected components gives the finiteness of $\Sigma^1$ and therefore the finiteness of the totally geodesic subvarieties of $S_\Gamma$.
\end{proof}

\subsection{Outline of paper}
We first fix some notations and discuss preliminary results about lattices in semisimple Lie groups without compact factors. In Section \ref{sectiongeneral}, we extend the theory of Shimura varieties starting from a \emph{real Shimura datum} $(G,X,\Gamma)$, generalising many classical results and defining $\Gamma$-special (and weakly $\Gamma$-special) subvarieties. Section \ref{zhodgesection} is devoted to $\Z$-Hodge theory and period/Mumford-Tate domains. Here is explained how to realise $S_\Gamma$ inside a period domain for $\Z$-VHS. In Section \ref{comparisonsection}, we compare the Mumford--Tate and the monodromy groups associated to the natural $\Oo_K$-VHS on $S_\Gamma$ and the $\Z$-VHS we have constructed, proving a tight relation between the two worlds, namely Theorem \ref{mainthm}. We prove the non-arithmetic Ax-Schanuel conjecture in Section \ref{proofofas}. Building on the results of the previous sections, we prove Theorem \ref{thmbader} in Section \ref{sectionfinit}. Finally, we discuss arithmetic aspects of the theory, and, in Section \ref{pointsection}, we discuss various notions of special points and André--Oort type conjectures.

\subsection{Acknowledgements}
The second author thanks Bader for an inspiring lecture on his work with Fisher, Miller and Stover in Orsay, in October 2019. The authors are grateful to Klingler for several discussion on related topics and for his \emph{cours de l'IHES} on Tame Geometry and Hodge Theory. It is a pleasure to thank Bader, Fisher, Miller and Stover for having explained us their approach and various conversations around superrigidity. We are also grateful to Esnault, Klingler and Labourie for comments on a previous draft of the paper. Finally we thank the anonymous referees for their comments and suggestions which helped improving the exposition and streamlining some arguments.

The first author would like to thank the IHES for two research visits in March 2019 and 2020. His work was supported by the Engineering and Physical Sciences Research Council [EP/ L015234/1], the EPSRC Centre for Doctoral Training in Geometry and Number Theory (The London School of Geometry and Number Theory), University College London.
\section{Notation, terminology and recollections}
In this section we fix some notation, and discuss the first properties we need about lattices in semisimple groups without compact factors. In particular we explain the arithmeticity criteria of Mostow--Vinberg and of Margulis.
\subsection{Notations}\label{notations}
We make free use of the following standard notations. We denote by $G$ real algebraic groups, and by $\mathbf{G}$ algebraic groups defined over some number field, which usually is either $\Q$, or a totally real number field. Let $K$ be a real field. The $K$-forms of $\SU(1,n)$ are known, by the work of Weil \cite{MR136682}, to be obtained as $\SU(h)$ for some Hermitian form $h$ on $F^r$, where $F$ is a division algebra with involution over a quadratic imaginary extension $L$ of $K$ and $n+1=r \deg_L (F)$. 

Regarding algebraic groups, we have:
\begin{itemize}
\item Let $G$ be a connected real algebraic group. By \emph{rank of G} we always mean the real rank of the group $G$, i.e. the dimension of a maximal real split torus of $G$;
\item If $G$ is reductive, which for us requires also that $G$ is connected, we write $G^{\ad}$ for the \emph{adjoint} of $G$, i.e. the quotient of $G$ by its centre (it is a semisimple group with trivial center);
\item The Weil restriction of $\mathbf{G}/K$ from $K$ to $\Q$ is usually denoted by $\widehat{\mathbf{G}}$.
\end{itemize}
Regarding subgroups of real algebraic groups, we have:
\begin{itemize}
\item A discrete subgroup $\Gamma$ of a locally compact group $G$ is a \emph{lattice} if $\Gamma \backslash G$ has a finite invariant measure;
\item All lattices considered in this paper are also assumed to be torsion free. Selberg’s Lemma asserts, if $G$ is semisimple, that $\Gamma$ has a torsion-free subgroup of finite index, see for example \cite[Theorem 4.8.2]{MR3307755};
\item Given $H$ an algebraic subgroup of $G$, we write $\Gamma_H$ for $\Gamma \cap H(\R)$;
\item A lattice $\Gamma \subset G$ in a connected semisimple group without compact factors is \emph{reducible} if $G$ admits infinite connected normal subgroups $H,H'$ such that $HH'=G$, $H \cap H'$ is discrete and $\Gamma / (\Gamma_H \cdot \Gamma_{ H'}) $ is finite. A lattice is \emph{irreducible} if it is not reducible;
\item A subgroup $\Gamma \subset G$ is \emph{arithmetic} if there exists a semisimple linear algebraic group $\mathbf{G}/\Q$ and a surjective morphism with compact kernel $p:\mathbf{G} (\R)\to G$ such that $\Gamma$ lies in the commensurability class of $p(\mathbf{G}(\Z))$. Here we denote by $\mathbf{G}(\Z)$ the group $\mathbf{G}(\Q)\cap v^{-1}(\Gl(V_\Z))$ for some faithful representation $v: \mathbf{G}\to \Gl(V_\Q)$, where $V_\Q$ is a finite dimensional $\Q$-vector space and $V_\Z$ a lattice in $V_\Q$. Torsion free arithmetic subgroups are lattices.
\end{itemize}

\subsection{Local rigidity}\label{localrig}
Let $G$ be a real semisimple algebraic group without compact factors and $\Gamma$ be a subgroup of $G$. Denote by
\begin{displaymath}
\Ad: \Gamma \subset G \xrightarrow{\Ad} \Aut(\mathfrak{g})
\end{displaymath}
the adjoint representation in the automorphisms of the Lie algebra $\mathfrak{g}$ of $G$.
\begin{defi}
We define the (adjoint) \emph{trace field} of $\Gamma$ as the field generated over $\Q$ by the set
\begin{displaymath}
\{\tr \Ad (\gamma) : \gamma \in \Gamma\}.
\end{displaymath}
\end{defi}
If $\Gamma$ is a lattice, its trace field is a finitely generated field extension of $\Q$, which depends only on the commensurability class of $\Gamma$. Indeed it is a well known fact that lattices are finitely generated (or even finitely presented). See for example \cite[Theorem 4.7.10]{MR3307755} and references therein.

\begin{defi}
An irreducible lattice $  \Gamma  \subset G$ is \emph{locally rigid} if there exists a neighbourhood $U$ of the inclusion $i: \Gamma \hookrightarrow G$ in $\Hom (\Gamma , G)$, such that every element of $U$ is conjugated to $i$. 
\end{defi}
The trace field $K$ of a locally rigid lattice is a real number field (which is not, a priori, totally real). By Borel density theorem, lattices in $G$ are Zariski dense. For details see for example \cite[Corollary 4.5.6]{MR3307755}. In particular $\Gamma$ determines a $K$-form of $G$, which we denote by $\mathbf{G}$, such that, up to conjugation by an element in $G$, $\Gamma$ lies in $\mathbf{G}(K)$, and $K$ is minimal with this property. For proofs of such facts we refer to Vinberg's paper \cite{MR0279206}. See also \cite[Chapter VIII, Proposition 3.22]{margulisbook} and \cite[Proposition 12.2.1]{MR849651}.

If $n>1$, lattices in $G=\PU(1,n)$ are known to be locally rigid. For completeness we recall a more general\footnote{More general in the sense that it allows also to consider $G=\Sl_2(\C)$. Even if local rigidity for non-cocompact lattices in $\Sl_2(\C)$ can fail.} result, which builds on the study of lattices initiated by Selberg, Calabi and Weil \cite{MR0169956}.
\begin{thm}[{\cite[Theorem 0.11]{MR0267041}}]\label{coeffinnumbers}
If $G$ is not locally isomorphic to $\Sl_2(\R)$, then, for every irreducible lattice $\Gamma$ in $G$ there exists $g\in G$ and a subfield $K \subset \R$ of finite degree over $\Q$, such that $g\Gamma g^{-1}$ is contained in the set of $K$-rational points of $\mathbf{G}$.
\end{thm}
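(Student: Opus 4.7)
The plan is to combine two classical ingredients: a cohomological vanishing theorem for $\Gamma$ with coefficients in the adjoint representation, and Weil's machinery turning such vanishing into a scheme-theoretic statement on a representation variety, from which the number field $K$ emerges automatically.

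First I would establish that $H^1(\Gamma,\mathfrak{g}_{\Ad})=0$. For cocompact $\Gamma$ this is the classical Matsushima--Murakami vanishing, which identifies this cohomology with the space of harmonic $\mathfrak{g}$-valued $1$-forms on $\Gamma\backslash X$ and uses a Weitzenb\"ock-type argument together with the hypothesis that $G$ has no factor locally isomorphic to $\Sl_2(\R)$. The non-cocompact case is the main content of \cite{MR0267041}: one must carefully control harmonic forms at the cusps and show they are square integrable, whereupon the cocompact argument can still be run.

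Next, I would invoke Weil's deformation argument. Since $\Gamma$ is finitely generated (lattices in semisimple groups are even finitely presented), the functor $R\mapsto \Hom(\Gamma,\mathbf{G}(R))$ is represented by an affine $\Q$-scheme of finite type $\mathcal{R}$, whose $\R$-points carry the inclusion $i\colon \Gamma\hookrightarrow G$ as a distinguished point. The Zariski tangent space to $\mathcal{R}_\R$ at $i$ is identified with $Z^1(\Gamma,\mathfrak{g}_{\Ad})$, while the tangent space along the conjugation orbit $G\cdot i$ is $B^1(\Gamma,\mathfrak{g}_{\Ad})$. Hence $H^1(\Gamma,\mathfrak{g}_{\Ad})=0$ implies that the orbit $G\cdot i$ fills an analytic neighbourhood of $i$, i.e. $\Gamma$ is locally rigid, and the class $[i]$ is an isolated point of the character variety $\chi:=\mathcal{R}/\!/G^{\ad}$, which is again an affine $\Q$-scheme of finite type.

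Now I would extract $K$. An isolated $\R$-point of a finite-type $\Q$-scheme has residue field a finite extension $K\subset\R$ of $\Q$, so $[i]\in \chi(K)$. The traces $\gamma\mapsto \tr\Ad(\gamma)$ for $\gamma\in\Gamma$ are regular $\Q$-valued functions on $\chi$, hence their values at $[i]$ lie in $K$; so $K$ contains (and in fact equals, by minimality) the adjoint trace field of $\Gamma$. Finally, since $\Gamma$ is Zariski dense in $G$ by Borel density, the Zariski closure of $\Ad(\Gamma)$ in the $\Q$-scheme $\Aut(\mathfrak{g})$ base-changed to $K$ is a $K$-form $\mathbf{G}^{\ad}$ of $G^{\ad}$; lifting to simply connected covers gives a $K$-form $\mathbf{G}$ of $G$ with $\Gamma\subset \mathbf{G}(K)$ after conjugation by some $g\in G$ chosen so that $g[i]g^{-1}$ represents the $K$-rational class.

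The hard part is the first step: cohomological vanishing in the non-cocompact case, where the absence of an a priori $L^2$ control at the cusps is the genuine obstacle and requires the careful reduction theory used by Garland and Raghunathan. Everything else — Weil's tangent space computation, finite generation of the coordinate ring of $\chi$, and the descent of $\Gamma$ into $\mathbf{G}(K)$ via Borel density — is essentially formal once the vanishing is in hand.
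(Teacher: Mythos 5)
The paper offers no proof of this statement: it is quoted verbatim from Garland--Raghunathan \cite[Theorem 0.11]{MR0267041}, with Vinberg \cite{MR0279206} cited for the attendant facts about the trace field and the $K$-form. So the comparison is really between your sketch and the classical argument in the literature, and your outline (cohomological vanishing, Weil's identification of the tangent space to $\Hom(\Gamma,G)$ with $Z^1(\Gamma,\mathfrak{g}_{\Ad})$ and of the orbit direction with $B^1$, then extraction of a number field from the rationally defined representation/character scheme and Borel density to get the $K$-form) is indeed the standard Selberg--Calabi--Weil--Garland--Raghunathan route, and it is correct for every case the paper actually uses, namely $G=\PU(1,n)$ with $n>1$.

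There is, however, a genuine gap if you take the statement at face value: it only excludes groups locally isomorphic to $\Sl_2(\R)$, so it covers $G$ locally isomorphic to $\Sl_2(\C)$, and there your first step breaks down. The vanishing $H^1(\Gamma,\mathfrak{g}_{\Ad})=0$ of Garland--Raghunathan (the paper's Theorem \ref{garalndragvanishing}) excludes $\Sl_2(\C)$ as well as $\Sl_2(\R)$, and it is simply false for non-cocompact lattices in $\Sl_2(\C)$: such lattices admit nontrivial deformations and are not locally rigid, a point the paper itself flags in the footnote to Theorem \ref{coeffinnumbers}. Consequently the chain ``$H^1=0\Rightarrow$ the orbit of $i$ is open $\Rightarrow$ $[i]$ is isolated $\Rightarrow$ algebraicity'' cannot prove Theorem 0.11 in that case; Garland--Raghunathan need a finer rigidity statement (rigidity among representations respecting the parabolic/unipotent structure at the cusps) to reach the number-field conclusion for $\Sl_2(\C)$. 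You should either add that refinement or state explicitly that your argument proves the theorem only under the stronger hypothesis that $G$ is locally isomorphic to neither $\Sl_2(\R)$ nor $\Sl_2(\C)$, which is all that is needed for $\PU(1,n)$, $n>1$. A smaller point: isolatedness of $[i]$ in the character variety also uses that the orbit is closed (which holds here because $\Gamma$ has Zariski dense image by Borel density), and the identification of the residue field with the adjoint trace field is Vinberg's theorem rather than a formality.
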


Since lattices are finitely generated, we know that there exist a finite set of finite places $\Sigma$ of $K$ such that $\Gamma$ lies in $\mathbf{G}(\Oo_{K,\Sigma})$ (once a representation is fixed, and up to conjugation by $G$), where $\Oo_{K,\Sigma}$ is the ring of $\Sigma$-integers of $K$. In the next subsection we discuss properties of lattices that are contained in $\mathbf{G}(\Oo_K)$.

\subsection{Lattices and integral points}\label{weilresbasic}
Let $\Gamma$ be a lattice in $G$. Assume that a \emph{totally} real number field\footnote{In Theorem \ref{totreal} we will see that the trace field of a complex hyperbolic lattice is indeed totally real.} $K'$ and a form $\mathbf{G}_{K'}$ of $G$ over $K'$ are given, such that a subgroup of finite index of $\Gamma$ is contained in $\mathbf{G}_{K'} (\Oo_{K'})$, where $\Oo_{K'}$ denotes the ring of integers of $K'$. Then the field $K=\Q\{\tr \Ad \gamma : \gamma \in \Gamma\}$ is contained in $K'$, as remarked above. The following is \cite[Corollary 12.2.8]{MR849651}, see also \cite[Lemma 4.1]{MR586876}. It will be the fundamental criterion to detect arithmeticity in Theorem \ref{thmbader}.
\begin{thm}[Mostow–Vinberg arithmeticity criterion]\label{arithcrit}
Let $\Gamma \subset \mathbf{G}(\Oo_{K'})$ be a lattice in $G$. The lattice $\Gamma$ is arithmetic if and only if for every embedding $\sigma : K' \to \R $, not inducing the identity embedding of $K$ into $\R$, the group $\mathbf{G}_{K'} \otimes_{K',\sigma} \R$ is compact.
\end{thm}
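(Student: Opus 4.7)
The plan is to use Weil restriction of scalars to convert the condition over $K'$ into a condition over $\Q$. For clarity I first sketch the argument in the case $K'=K$ (the trace field), and then indicate how to reduce the general case to this one. Set $\widehat{\mathbf{G}}:=\mathrm{Res}_{K/\Q}\mathbf{G}_{K}$, a semisimple $\Q$-algebraic group satisfying
\begin{displaymath}
\widehat{\mathbf{G}}(\R)=\prod_{\sigma:K\hookrightarrow\R}\mathbf{G}_{K}\otimes_{K,\sigma}\R, \qquad \widehat{\mathbf{G}}(\Z)=\mathbf{G}_{K}(\Oo_K).
\end{displaymath}
Let $\sigma_0$ denote the identity embedding of $K\subset\R$, and write $p:\widehat{\mathbf{G}}(\R)\to G=\mathbf{G}_{K}\otimes_{K,\sigma_0}\R$ for the projection onto the $\sigma_0$-coordinate. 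The entire criterion reduces to detecting when $\ker p$ is compact.

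For the sufficient direction, assume that each $\sigma\neq\sigma_0$ gives a compact factor. Then $\ker p$, being a product of compact groups, is compact. By Borel--Harish--Chandra, $\widehat{\mathbf{G}}(\Z)$ is a lattice in $\widehat{\mathbf{G}}(\R)$, so the data $(\widehat{\mathbf{G}},p)$ exhibits $p(\widehat{\mathbf{G}}(\Z))=p(\mathbf{G}_{K}(\Oo_K))$ as an arithmetic lattice of $G$ in the sense of Section~\ref{notations}; since $\Gamma\subset\mathbf{G}_{K}(\Oo_K)$ is itself a lattice in $G$, it has finite index in $p(\widehat{\mathbf{G}}(\Z))$ and is therefore arithmetic. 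Conversely, suppose $\Gamma$ is arithmetic: there is a $\Q$-simple group $\mathbf{H}$ and a surjection $q:\mathbf{H}(\R)\twoheadrightarrow G$ of compact kernel with $\Gamma$ commensurable with $q(\mathbf{H}(\Z))$. By the Borel--Tits classification of $\Q$-simple groups, $\mathbf{H}=\mathrm{Res}_{F/\Q}\mathbf{H}_F$ for an absolutely almost simple $F$-group $\mathbf{H}_F$ over some number field $F$, and compactness of $\ker q$ forces exactly one archimedean place $\tau_0$ of $F$ to produce the non-compact factor $G$, the remaining ones being compact (so that $F$ is totally real). One then compares the two arithmetic incarnations of $\Gamma$, one inside $\mathbf{G}_{K}(\Oo_K)$ and one inside $\mathbf{H}_F(\Oo_F)$ via $q$: Vinberg's minimality theorem (\cite{MR0279206}) together with Borel density identifies $F$ with $K$ and $(\mathbf{H}_F,\tau_0)$ with $(\mathbf{G}_{K},\sigma_0)$ up to conjugation by $G$. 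Transporting the compactness from the $F$-side to the $K$-side yields the required compactness for every $\sigma\neq\sigma_0$.

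The main obstacle is the identification $F=K$ in the necessity direction: \emph{a priori} $F$ and $K$ are unrelated fields produced by different constructions, and it is not obvious that the field of definition coming from the arithmetic Weil-restricted structure coincides with the intrinsic trace field. This is settled by combining Borel density (so that the Zariski closure of $\Gamma$ in $\widehat{\mathbf{G}}$ is intrinsic and determines the $F$-structure) with Vinberg's theorem, which shows that the pair consisting of the trace field $K$ and the $K$-form $\mathbf{G}_{K}$ is canonical in the commensurability class of $\Gamma$. The general case $K'\supsetneq K$ reduces to the above by descent: minimality of the trace field allows one to descend $\mathbf{G}_{K'}$ to a form $\mathbf{G}_K$ over $K$, possibly after replacing $\Gamma$ by a finite-index subgroup, after which the extra embeddings of $K'$ lying above $K$ play no role.
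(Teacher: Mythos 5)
The paper itself gives no proof of Theorem \ref{arithcrit}: it is quoted from \cite[Corollary 12.2.8]{MR849651} and \cite[Lemma 4.1]{MR586876}, so there is no internal argument to compare yours with; I can only assess your outline on its own. Your sufficiency direction is sound and complete in outline: if all factors $\mathbf{G}_{K}\otimes_{K,\sigma}\R$, $\sigma\neq\sigma_0$, are compact, the projection $p:\widehat{\mathbf{G}}(\R)\to G$ is surjective with compact kernel, $\widehat{\mathbf{G}}(\Z)=\mathbf{G}(\Oo_K)$ is a lattice by Borel--Harish-Chandra, its image under $p$ is discrete, and a lattice contained in a discrete subgroup has finite index in it, so $\Gamma$ is commensurable with $p(\widehat{\mathbf{G}}(\Z))$ and hence arithmetic in the sense of Section \ref{notations}. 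Your handling of the $K'$ versus $K$ distinction is also in the right spirit: the embeddings of $K'$ restricting to the identity of $K$ give factors isomorphic to $G$ and are precisely the ones excluded by the statement.

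The genuine gap is in the necessity direction, at the step you yourself single out as the main obstacle. Vinberg's minimality theorem and Borel density tell you that the trace field of the commensurability class of $\Gamma$ is $K$ and that the adjoint $K$-form it determines is canonical; applied to the model lattice $q(\mathbf{H}_F(\Oo_F))$, whose adjoint traces lie in $\tau_0(F)$, this only yields $K\subseteq\tau_0(F)$ together with an isomorphism $\mathbf{H}_F^{\ad}\cong\mathbf{G}^{\ad}\otimes_K F$ (via $\tau_0$) compatible with the lattices. It does not by itself identify $F$ with $K$, and without that identification the compactness at the places of $F$ other than $\tau_0$ does not transport to all non-identity places of $K$. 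What is missing is the place-counting argument: since $F$ is totally real, if $[F:K]\geq 2$ then the identity embedding of $K$ extends to at least two distinct real embeddings of $F$, and at each of them the real points of $\mathbf{H}_F$ are isogenous to $G$, hence non-compact; this contradicts the fact that compactness of $\ker q$ allows exactly one non-compact archimedean factor of $\Res_{F/\Q}\mathbf{H}_F$. With this inserted the necessity direction closes as you describe. Two smaller caveats: the descent of $\mathbf{G}_{K'}$ to $K$ furnished by Vinberg is only at the adjoint level, which is harmless here because compactness of real forms is an isogeny invariant; and in reducing to a $\Q$-simple $\mathbf{H}$ one should note that the integral points of the compact $\Q$-factors are finite, so discarding them does not change the commensurability class.
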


\begin{rmk}\label{infiniteindex}
If $\Gamma \subset \mathbf{G}(\Oo_K)$ is a non-arithmetic lattice, where $K$ is the trace field of $\Gamma$, then $\mathbf{G}(\Oo_K)$ is not discrete in $G$ and $\Gamma$ has infinite index in $\mathbf{G}(\Oo_K)$. 
\end{rmk}

Let $\mathbf{G}$ be a semisimple algebraic group defined over a totally real number field $K \subset \R$, and write $G$ for its real points.  Denote by $\Omega_\infty$ the set of all archimedean places of $K$ and write $\widehat{\mathbf{G}}$ for the Weil restriction from $K$ to $\Q$ of $\mathbf{G}$, as in \cite[Section 1.3]{MR670072}. It has a natural structure of $\Q$-algebraic group and 
\begin{displaymath}
\widehat{\mathbf{G}}(\R)= \prod _{\sigma \in \Omega_\infty} G_\sigma,
\end{displaymath}
where $G_\sigma$ is the real group $\mathbf{G}\times_{K,\sigma} \R$.

\begin{prop}[{\cite[Section 5.5]{MR3307755}}]
The subgroup $\mathbf{G}(\Oo_K)$ of $G$ embeds as an arithmetic subgroup of $\widehat{\mathbf{G}}$ via the natural embedding
\begin{displaymath}
 \mathbf{G}(\Oo_K) \hookrightarrow \widehat{\mathbf{G}},  \ \ g \mapsto (\sigma (g))_{\sigma \in \Omega_\infty}.
\end{displaymath}
\end{prop}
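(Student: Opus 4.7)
The plan is to unwind the definition of Weil restriction $\widehat{\mathbf{G}} = \Res_{K/\Q}\mathbf{G}$ and verify that, under the canonical identifications coming from a choice of integral model, the claimed diagonal map reproduces the standard embedding $\widehat{\mathbf{G}}(\Z) \hookrightarrow \widehat{\mathbf{G}}(\R)$, which is arithmetic by definition.

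First I would invoke the universal property: for every $\Q$-algebra $A$ one has a functorial isomorphism $\widehat{\mathbf{G}}(A) = \mathbf{G}(A \otimes_\Q K)$. Taking $A=\R$ and using the canonical decomposition $K \otimes_\Q \R \cong \prod_{\sigma \in \Omega_\infty} \R_\sigma$, where $\R_\sigma$ denotes $\R$ with $K$ acting through $\sigma$, yields $\widehat{\mathbf{G}}(\R) \cong \prod_{\sigma} G_\sigma$, as already recalled just above the proposition. The ring map $K \to K \otimes_\Q \R$, $k \mapsto k \otimes 1$ corresponds under this decomposition exactly to $k \mapsto (\sigma(k))_\sigma$. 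Hence the induced composition $\mathbf{G}(\Oo_K) \subseteq \mathbf{G}(K) \hookrightarrow \mathbf{G}(K \otimes_\Q \R) \cong \widehat{\mathbf{G}}(\R)$ agrees with $g \mapsto (\sigma(g))_\sigma$, so the map in the statement is indeed the canonical one. Injectivity is automatic because the inclusion $K \subset \R$ appears as one of the archimedean places.

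Next I would produce the integral model. Starting from a closed $\Oo_K$-immersion $\mathbf{G} \hookrightarrow \Gl_{N,\Oo_K}$ (which exists for any faithful $K$-representation after clearing denominators) and applying the Weil restriction $\Res_{\Oo_K/\Z}$, which is well defined since $\Oo_K/\Z$ is finite locally free, produces an affine flat $\Z$-group scheme whose generic fibre is $\widehat{\mathbf{G}} = \Res_{K/\Q}\mathbf{G}$ and which satisfies $\widehat{\mathbf{G}}(\Z) = \mathbf{G}(\Oo_K)$. The $\Q$-linear structure on the $K$-vector space $K^N$ gives a faithful $\Q$-representation $\widehat{\mathbf{G}} \hookrightarrow \Gl_{N[K:\Q]}$ stabilising the $\Z$-lattice $\Oo_K^N$, and so $\widehat{\mathbf{G}}(\Z)$ is, in the sense of Section \ref{notations}, an arithmetic subgroup of $\widehat{\mathbf{G}}(\R)$.

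Combining the two steps, the map of the proposition is precisely the inclusion $\widehat{\mathbf{G}}(\Z) \hookrightarrow \widehat{\mathbf{G}}(\R)$, proving the claim. The only delicate point is the bookkeeping of integral models, since Weil restriction is most naturally stated over a field; once the existence and compatibility of $\Res_{\Oo_K/\Z}$ with the generic fibre is invoked (a standard fact available in, e.g., \cite{MR3307755}), the remainder is formal and does not require any input specific to complex hyperbolic lattices.
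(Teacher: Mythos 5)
Your proof is correct; note that the paper offers no argument of its own here, delegating everything to the citation of Witte Morris, Section 5.5. Your two steps --- identifying the diagonal map $g \mapsto (\sigma(g))_{\sigma}$ with the canonical inclusion $\mathbf{G}(K)\subset \mathbf{G}(K\otimes_\Q \R)\cong \widehat{\mathbf{G}}(\R)$ via the universal property, and exhibiting arithmeticity through the integral model $\Res_{\Oo_K/\Z}$ together with the $\Z$-lattice $\Oo_K^N$ --- are exactly the standard argument carried out in that reference, so your write-up matches the intended proof.
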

Indeed $\mathbf{G}(\Oo_K)$ becomes $\widehat{\mathbf{G}} (\Z)$. We also have that, if $\mathbf{G}$ is simple, then $ \mathbf{G}(\Oo_K)$ gives rise to an irreducible lattice. Moreover, if for some $\sigma \in \Omega_\infty$, $G_\sigma$ is compact, then $\mathbf{G}(\Oo_K)$ gives rise to a cocompact lattice. We recall here that a closed subgroup $\Gamma$ of $G$ is \emph{cocompact} if $\Gamma\backslash G$ is compact. Every cocompact, torsion-free, discrete subgroup of $G$ is a lattice.

\subsection{Arithmeticity criteria, after Margulis}
The results discussed in this section can be found in \cite{margulisbook} and are due to Margulis.

\begin{thm}[Arithmeticity]\label{margulisrank}
Let $G$ be a semisimple group without compact factors. If $G$ has rank at least $2$ and $\Gamma\subset G$ is an irreducible lattice, then $\Gamma$ is arithmetic.
\end{thm}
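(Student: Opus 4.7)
The plan is to deduce arithmeticity from Margulis's superrigidity theorem, combined with the Mostow--Vinberg criterion (Theorem~\ref{arithcrit}) already at our disposal. Superrigidity asserts that, under the hypotheses of the statement, every linear representation $\rho: \Gamma \to H(k)$ into the $k$-points of an absolutely simple adjoint algebraic group over a local field $k$, with Zariski dense unbounded image, extends (after passing to a finite index subgroup) to a continuous homomorphism $G \to H(k)$ arising from a $k$-rational morphism. I would first establish this superrigidity statement, and only then extract arithmeticity from it.

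For archimedean $k$, the proof proceeds via Furstenberg's theory of boundaries together with Moore's ergodicity theorem: one constructs a $\Gamma$-equivariant measurable map $\phi: G/P \to H/Q$ from the Furstenberg boundary of $G$ (with $P$ a minimal parabolic) to a suitable flag variety of $H$, and then uses the hypothesis $\rk G \geq 2$ to produce enough unipotent flows whose ergodicity forces $\phi$ to be rational and $G$-equivariant. For non-archimedean $k$, one instead uses the Bruhat--Tits building of $H$: either the image of $\rho$ is unbounded, which by the boundary map argument would extend to a continuous homomorphism $G \to H(k)$ whose image is necessarily compact (as $G$ is connected and $H(k)$ is totally disconnected), contradicting unboundedness; or the image is bounded, in which case a Bruhat--Tits fixed point theorem yields relative compactness. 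The rank hypothesis $\rk G\geq 2$ is used essentially at this stage and cannot be dispensed with.

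With superrigidity in hand, one argues as follows. Local rigidity (Theorem~\ref{coeffinnumbers}), which is itself a consequence of superrigidity applied to $H=G$, provides a number field $K$ and a $K$-form $\mathbf{G}$ of $G$ with $\Gamma \subset \mathbf{G}(K)$ after conjugation. The trace field is a finite extension of $\Q$ precisely because, were $K = \Q\{\tr \Ad(\gamma):\gamma \in \Gamma\}$ transcendental over $\Q$, one could find a place of this field placing some Galois conjugate of $\Gamma$ unboundedly into a non-archimedean local group, contradicting non-archimedean superrigidity. Applying non-archimedean superrigidity at each finite place of $K$ to the inclusion $\Gamma \hookrightarrow \mathbf{G}$ forces $\Gamma$ into $\mathbf{G}(\Oo_K)$ after clearing the finitely many denominators. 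Finally, archimedean superrigidity applied to each non-identity embedding $\sigma: K \hookrightarrow \R$ forces the corresponding real group $\mathbf{G}\otimes_{K,\sigma}\R$ to be compact: otherwise $\sigma$ would extend to a continuous homomorphism $G \to \mathbf{G}_\sigma(\R)$ producing, via the diagonal embedding into $\widehat{\mathbf{G}}(\R)$, a decomposition of $\Gamma$ incompatible with its irreducibility in $G$. Theorem~\ref{arithcrit} then concludes that $\Gamma$ is arithmetic.

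The main obstacle is clearly superrigidity itself, and especially its archimedean incarnation: the construction of the measurable boundary map, the rationalisation procedure, and the delicate use of multiple unipotent flows requiring $\rk G\geq 2$ are all highly nontrivial, and together constitute the technical heart of Margulis's argument. The passage from superrigidity to arithmeticity, by contrast, is largely a bookkeeping exercise combining local rigidity, Borel density, and the Mostow--Vinberg criterion.
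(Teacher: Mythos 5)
Your route is the standard one and exactly what the paper has in mind: this theorem is quoted from Margulis (the paper gives no proof, only the remark that it follows by applying superrigidity to the representations obtained from the various embeddings of the trace field of $\Gamma$ into local fields, and then concluding with the Mostow--Vinberg criterion, Theorem~\ref{arithcrit}), so your deduction of arithmeticity from superrigidity matches the intended argument, with the heavy lifting (superrigidity itself) only sketched, as you acknowledge. One small point: at a non-identity archimedean place $\sigma$ the usual contradiction is not via irreducibility but via traces --- if $\mathbf{G}\otimes_{K,\sigma}\R$ were non-compact, $\sigma(\Gamma)$ would be Zariski dense (Borel density) and unbounded, the extension $G\to \mathbf{G}_\sigma(\R)^{\ad}$ would preserve $\tr\Ad$, and hence $\sigma$ would fix the trace field pointwise, contradicting $\sigma\neq\mathrm{id}$.
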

The proof of Theorem \ref{margulisrank} is based on applying the \emph{supperrigidity theorem}, which is recalled below, to representations obtained from different embeddings of the trace field of $\Gamma$ into local fields. 

\begin{thm}[Superrigidity]\label{superrig}
Let $G$ be a semisimple group of rank at least $2$, and $\Gamma \subset  G$ an irreducible lattice. Let $E$ be a local field of characteristic zero, and $\mathcal{G}'$ be an adjoint, absolutely simple $E$-group. Then every homomorphism $\Gamma \to \mathcal{G}'(E)$ with Zariski-dense and unbounded image extends uniquely to a continuous homomorphism $G\to \mathcal{G}'(E)$.
\end{thm}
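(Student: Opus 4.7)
The plan is to follow the classical strategy of Margulis: realise the image $\varphi(\Gamma) \subset \mathcal{G}'(E)$ as coming from an algebraic homomorphism $G \to \mathcal{G}'$ by constructing an equivariant measurable map between boundaries, and then promoting that measurable map to a rational one. The hypothesis that $\varphi(\Gamma)$ is Zariski dense lets us speak of $\mathcal{G}'$-varieties on which it acts, while the unboundedness of $\varphi(\Gamma)$ is what rules out trivial (i.e.\ measure-concentrated) boundary maps; both hypotheses will enter at separate steps.

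First I would fix a minimal parabolic $P \subset G$ with $G = KP$ for a maximal compact $K$, and consider the Furstenberg boundary $B := G/P$ equipped with the $K$-invariant probability measure $\nu$. A well-chosen admissible probability measure $\mu$ on $\Gamma$ makes $\nu$ a $\mu$-stationary measure. Pushing $\mu$ forward under $\varphi$ to $\mathcal{G}'(E)$ and applying Furstenberg's theorem on stationary measures on $E$-analytic $\mathcal{G}'$-varieties, together with the unboundedness of $\varphi(\Gamma)$ (which prevents the stationary measure from being supported on a bounded $\mathcal{G}'(E)$-orbit), I would obtain a measurable $\Gamma$-equivariant map $\psi\colon B \to \mathcal{G}'(E)/Q'(E)$ for some proper parabolic $Q' \subset \mathcal{G}'$ whose choice is dictated by the contraction dynamics of $\varphi(\Gamma)$ on the flag variety.

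The main obstacle, and the step in which the hypothesis $\operatorname{rk} G \geq 2$ is used decisively, is the \emph{rationality} of $\psi$: I need to show that $\psi$ agrees almost everywhere with a $G$-equivariant rational map $B \to \mathcal{G}'/Q'$ of algebraic varieties. The approach would be the ``commuting subgroups'' method of Margulis: when $\operatorname{rk} G \geq 2$ there exist two distinct maximal parabolics $P_1, P_2 \supsetneq P$ such that the unipotent radicals of their opposites $P_i^{-}$ commute with each other in an appropriate sense. Using Moore's ergodicity theorem, the $\Gamma$-action on $G/(P_1 \cap P_2)$ is ergodic, and by standard arguments on quasi-invariant measures on $\mathcal{G}'$-varieties (the Furstenberg lemma on the Zariski closure of the image of a measurable $\Gamma$-equivariant map) the map $\psi$ must factor through rational maps on each factor $G/P_i$. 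Combining these two factorisations forces $\psi$ to be rational on $B$. This is the technical heart of the argument and has no analogue in rank one.

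Once $\psi$ is rational, it is automatically $G$-equivariant by Zariski density of $\Gamma$ in $G$, so the stabiliser of a generic point yields an algebraic homomorphism of reductive $E$-groups $\widetilde{\varphi}\colon G \to \mathcal{G}'$ (after base-changing $G$ to $E$ along the implicit embedding given by $E$ containing $\R$ or a completion thereof). The map $\widetilde{\varphi}$ agrees with $\varphi$ on $\Gamma$ by construction of $\psi$; the continuity in the $E$-topology follows automatically as any $E$-algebraic morphism of $E$-groups is continuous for the $E$-analytic topology. Uniqueness of the extension is immediate from Borel density, since any two continuous extensions agree on the Zariski dense subgroup $\Gamma$.
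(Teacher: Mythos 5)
The paper does not prove this statement at all: it is quoted verbatim from Margulis's book (the section opens with ``The results discussed in this section can be found in \cite{margulisbook} and are due to Margulis''), and superrigidity is then only \emph{used} (via the commensurability and rank criteria) elsewhere in the paper. So there is no in-paper argument to compare yours against; what you have written is an outline of the classical Margulis boundary-map proof, and at the level of strategy it is the right one: Furstenberg boundary $B=G/P$, a $\mu$-stationary measure pushed into an $E$-analytic flag variety of $\mathcal{G}'$ to produce a measurable $\Gamma$-equivariant map $B\to\mathcal{G}'(E)/Q'(E)$, rationality of that map via the higher-rank structure, and then extraction of an algebraic homomorphism, with uniqueness from Borel density.

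However, as a proof your text has genuine gaps rather than merely compressed details. First, the case of non-archimedean $E$ is not handled and your phrase ``base-changing $G$ to $E$ along the implicit embedding given by $E$ containing $\R$ or a completion thereof'' silently assumes $E$ is archimedean; when $E$ is $p$-adic there is no such embedding, and the actual content of the theorem there is that a Zariski-dense representation of $\Gamma$ must have \emph{bounded} image (so that the extension statement is vacuous, since a connected Lie group admits no nontrivial continuous homomorphism into a totally disconnected group). That boundedness argument is a separate and essential part of Margulis's proof which your sketch omits. Second, the rationality step, which you correctly identify as the heart of the matter, is only named: the assertion that the unipotent radicals of the opposite maximal parabolics ``commute with each other in an appropriate sense'' is not accurate as stated, and the real mechanism (ergodicity of $\Gamma$ on products $G/P\times G/P$, invariance of the boundary map under well-chosen root subgroups, and piecing together rationality along a chain of subgroups generating $G$) is not supplied. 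Third, passing from a rational $G$-equivariant map $B\to\mathcal{G}'/Q'$ to a homomorphism $G\to\mathcal{G}'(E)$ extending $\varphi$ requires an argument (e.g.\ via the graph of $\varphi$ or the induced map on function fields and points of the automorphism/quotient construction), not just ``the stabiliser of a generic point yields an algebraic homomorphism.'' Only the uniqueness paragraph is complete as written. In short: correct roadmap, but the decisive steps are asserted, not proved, and one branch of the theorem (non-archimedean $E$) is missing.
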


Recall that two subgroups $\Gamma_1,\Gamma_2$ of $G$ are said to be \emph{commensurable} with each other if $\Gamma_1 \cap \Gamma_2$ has finite index in both $\Gamma_1$ and $\Gamma_2$. The commensurator $\Comm(\Gamma)$ of $\Gamma$ is a subgroup of $G$:
\begin{displaymath}
\Comm(\Gamma):=\{g\in G : \Gamma \text{  and  } g \Gamma g^{-1} \text{  are commensurable with each other}\}.
\end{displaymath} 

The following is known as the \emph{commensurability criterion for arithmeticity}, and it will be reproven, in the special case of complex hyperbolic lattices, in Section \ref{sectionmarguliscomm}, as an ``unlikely intersection phenomenon''.
\begin{thm}\label{commcriterion}
Assume $G$ is without compact factors and $\Gamma$ is an irreducible lattice. Then $\Gamma$ is arithmetic if and only if it has an infinite index in $\Comm (\Gamma)$. 
\end{thm}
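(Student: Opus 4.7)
The plan is to prove the two directions separately, the nontrivial one being ``non-arithmetic $\Rightarrow$ finite index in the commensurator.'' The easy direction is classical: if $\Gamma$ is arithmetic, fix a $\Q$-form $\mathbf{G}$ of $G$ and a surjection $p:\mathbf{G}(\R)\to G$ with compact kernel such that $\Gamma$ is commensurable with $p(\mathbf{G}(\Z))$. Then $p(\mathbf{G}(\Q))\subset \Comm(\Gamma)$ is dense in $G$, so $\Comm(\Gamma)$ is not discrete and in particular is not a lattice; combined with the fact that $\Gamma$ is, this forces $[\Comm(\Gamma):\Gamma]=\infty$.

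For the converse, I would argue by contradiction: suppose $\Gamma$ is non-arithmetic yet $[\Comm(\Gamma):\Gamma]=\infty$, and choose $g_1,g_2,\ldots\in\Comm(\Gamma)$ in distinct cosets modulo $\Gamma$. Each $g_i$ produces a modular correspondence on $S_\Gamma$: setting $\Gamma_i:=\Gamma\cap g_i\Gamma g_i^{-1}$, the two natural finite étale maps $S_{\Gamma_i}\to S_\Gamma$ (the identity and translation by $g_i$) realise $S_{\Gamma_i}$ as a totally geodesic subvariety $Z_i\subset S_\Gamma\times S_\Gamma$ of dimension $n=\dim_\C S_\Gamma$, and these $Z_i$ are pairwise distinct because the cosets $g_i\Gamma$ are. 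Next I would transfer the problem to the Hodge-theoretic setting applied to the product: by Theorem \ref{strategy1}, $\Gamma\times\Gamma$ sits in $(\mathbf{G}\times\mathbf{G})(\Oo_K)$ for the totally real trace field $K$, and the product of period maps
\[
\psi\times\psi:S_\Gamma\times S_\Gamma\longrightarrow (\widehat{\mathbf{G}}\times\widehat{\mathbf{G}})(\Z)\backslash(D\times D)
\]
is at our disposal. By Theorem \ref{mainthm}, each totally geodesic $Z_i$ is $\Z$-special, cut out by a $\Q$-subgroup $\widehat{\mathbf{H}}_i\subset\widehat{\mathbf{G}}\times\widehat{\mathbf{G}}$ arising as the graph of the $\Q$-automorphism of $\widehat{\mathbf{G}}$ induced by conjugation by $g_i$.

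The non-arithmeticity of $\Gamma$ together with the Mostow--Vinberg criterion (Theorem \ref{arithcrit}) guarantees $[K:\Q]\geq 2$ and the presence of non-trivial, non-compact Galois factors in $\widehat{\mathbf{G}}(\R)$, so that $(\psi\times\psi)(S_\Gamma\times S_\Gamma)$ has strictly positive codimension in the ambient period quotient. Reproducing in the product setting the codimension bookkeeping of Section \ref{sketch}, I obtain
\[
\codim Z_i \;<\; \codim(\psi\times\psi)(S_\Gamma\times S_\Gamma)\;+\;\codim\widehat{\mathbf{H}}_i(\Z)\backslash D_{\widehat{H}_i},
\]
exhibiting every $Z_i$ as a genuine unlikely intersection. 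I then parametrise the family of unlikely intersections by a subset of $(\widehat{\mathbf{G}}\times\widehat{\mathbf{G}})(\R)$ that is definable in $\Ranexp$, exactly as with the set $\Sigma^1$ in the sketch of Theorem \ref{thmfinal}. The Ax--Schanuel theorem of Bakker--Tsimerman (Theorem \ref{zasthm}) then forces each parametrising subgroup to be defined over $\Q$; since $\widehat{\mathbf{G}}\times\widehat{\mathbf{G}}$ admits only countably many $\Q$-subgroups and a definable set has only finitely many connected components, only finitely many $Z_i$ can occur, contradicting $[\Comm(\Gamma):\Gamma]=\infty$.

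The main obstacle I anticipate is ensuring that conjugation by $g_i$ really descends to a $\Q$-subgroup $\widehat{\mathbf{H}}_i$ of $\widehat{\mathbf{G}}\times\widehat{\mathbf{G}}$ rather than an a priori merely real one; this is the core arithmetic input, and it should come exactly from the Ax--Schanuel step above (that is the mechanism by which ``real'' is upgraded to ``rational'' in the sketch). A secondary technical point is the reducibility of $\Gamma\times\Gamma$ in $G\times G$, which prevents a direct citation of results stated for irreducible lattices; this is bypassed by applying Theorem \ref{mainthm} factorwise and using that the subvarieties $Z_i$ and their associated $\Q$-groups are themselves irreducible objects in $\widehat{\mathbf{G}}\times\widehat{\mathbf{G}}$.
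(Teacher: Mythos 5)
Your argument for the hard direction proves less than the statement asserts, and this is the genuine gap. Theorem \ref{commcriterion} is Margulis's criterion for an \emph{arbitrary} semisimple $G$ without compact factors; the paper's own proof at this level of generality is the citation to \cite{margulisbook}, and the unlikely-intersection reproof (Section \ref{sectionmarguliscomm}, Theorem \ref{marguliscomm}) is carried out only for $G=\PU(1,n)$ with $n>1$. Every ingredient you invoke --- Theorem \ref{strategy1} (integral traces and the $\Z$-VHS $\widehat{\mathbb{V}}$), Theorem \ref{mainthm}, the period map $\psi$, and the Bakker--Tsimerman Ax--Schanuel (Theorem \ref{zasthm}) --- is available in the paper only for complex ball quotients of dimension $n>1$. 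For $G=\SO(1,n)$, or for a non-Hermitian higher-rank group, $\Gamma\backslash X$ carries no complex structure, so there is no $\C$-VHS, no period map, and no Ax--Schanuel statement of the form you need; and for $\PU(1,1)$ the rigidity inputs (Garland--Raghunathan vanishing, hence cohomological rigidity for Esnault--Groechenig) fail, which is exactly why the remark after Theorem \ref{marguliscomm} concedes the paper cannot recover the criterion for all lattices in $\PU(1,1)$. So, as a proof of the theorem as stated, your argument silently restricts to the complex hyperbolic case $n>1$; to match the statement you must either quote Margulis for the remaining cases or say explicitly that you only reprove the rank-one complex hyperbolic instance, as the paper does.

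Within that restricted setting your proof is essentially the paper's Section \ref{sectionmarguliscomm} argument: correspondences attached to commensurators, viewed as special subvarieties of $S_\Gamma\times S_\Gamma$, shown to be atypical via Mostow--Vinberg (Theorem \ref{arithcrit}), and shown to be finite in number via o-minimal definability plus $\Z$-Ax--Schanuel. Two smaller points. First, correspondences are indexed by double cosets $\Gamma\backslash\Comm(\Gamma)/\Gamma$ rather than left cosets $g_i\Gamma$; infinite index still gives infinitely many of them because each double coset contains only finitely many left cosets, but the justification of ``pairwise distinct'' should be phrased that way. Second, your closing worry is misplaced: for the actual correspondences $Z_i$ the rationality of $\widehat{\mathbf{H}}_i$ does not come from Ax--Schanuel --- since $\Gamma\cap g_i\Gamma g_i^{-1}$ is a lattice, the product version of Theorem \ref{mainthm} already yields the $\Q$-group. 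Ax--Schanuel is needed where the paper uses it, namely to show that every member of the \emph{definable family} of real conjugates $\widehat{\Delta}_{\hat g}$ meeting $\tilde{\psi}\times\tilde{\psi}(X\times X)$ in dimension $n$ is defined over $\Q$; that is what makes the definable parametrising set countable, hence finite, and produces the contradiction.
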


\subsection{Non--arithmetic complex hyperbolic lattices}\label{known} Regarding commensurability classes of non-arithmetic lattices in $\PU(1,n)$, at the time of writing this paper, we have:
\begin{itemize}
\item[$n=2$.] By the work of Deligne, Mostow and Deraux, Parker, Paupert, there are 22 known commensurability classes in $\PU(1,2)$. See \cite{MR3461365, zbMATH07390741} and references therein;
\item[$n=3$.] By the work of Deligne, Mostow and Deraux, there are 2 commensurability classes of non-arithmetic lattices in $\PU(1,3)$. In both cases the trace field is $\Q(\sqrt{3})$ and the lattices are not cocompact. See \cite{zbMATH07195380} and references therein.
\end{itemize}
For $n>3$ non-arithmetic lattices are currently not known to exist. One of the biggest challenges in the study of complex hyperbolic lattices is to understand for each $n$ how many commensurability classes of non-arithmetic lattices exist in $\PU(1,n)$.

\section{Real Shimura data and general properties}\label{sectiongeneral}
We extend the (geometric) theory of Shimura varieties, more precisely as defined by Deligne in terms of Shimura datum \cite{delignetravaux, deligneshimura}, to include quotients of Hermitian symmetric spaces by arbitrary lattices. We then discuss a generalisation of the theory of toroidal compactifications to non-arithmetic lattices and various definability results, proving in the most general form all the results we need in the rest of the paper.
\subsection{Definitions and recollections}\label{31}
Let $G$ be a real reductive group. In this section we let $\Gamma$ be a discrete subgroup of $G$, whose image in the adjoint group of $G$, denoted by $G^{\text{ad}}$, is a lattice. Recall that a normal subgroup $N$ of $G$ induces a decomposition, up to isogeny, of $G$. In symbols we write $G \sim G'\times N$, for some subgroup $G'$ of $G$.

\begin{defi}
A $\Gamma$-\emph{factor} of $G$ is either a one dimensional split torus in the centre of $G$, or a normal subgroup $N$ of $G\sim G'\times N$ such that the image of $\Gamma$ along the projection from $G $ to $N$ is a lattice in $N$. We call a $\Gamma$-factor \emph{irreducible}, if the image of $\Gamma$ in $N$ is an irreducible lattice.
\end{defi}
In the above definition of $\Gamma$-factor, $N$ is not required to be irreducible, and may have compact semisimple factors. Moreover the centre of $G$ is a product of irreducible tori which are $\Gamma$-factors.

\begin{prop}\label{proponproducts}
The group $G$ can be written as a finite product of $\Gamma$-irreducible factors.
\end{prop}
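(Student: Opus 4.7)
The plan is to build the decomposition in three stages: peel off the split part of the centre, use the lattice hypothesis on $\bar\Gamma \subset G^{\ad}$ to decompose the semisimple quotient into pieces carrying irreducible lattices, and finally lift the decomposition back to $G$.

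\medskip

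\noindent\emph{Step 1: Splitting the centre.} Up to isogeny we may write $G \sim Z(G)^0 \cdot G^{\der}$, with $Z(G)^0$ a torus and $G^{\der}$ semisimple. Decompose the torus as $Z(G)^0 \sim S \times T_c$, with $S \cong \Gm^d$ an $\R$-split torus and $T_c$ anisotropic (compact). Each of the $d$ copies of $\Gm$ in $S$ is, by definition, a (necessarily irreducible) one-dimensional split $\Gamma$-factor. After peeling these off, we are reduced to decomposing the complementary subgroup $G_0 := T_c \cdot G^{\der}$, whose adjoint group still equals $G^{\ad}$.

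\medskip

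\noindent\emph{Step 2: Decomposing $G^{\ad}$.} Let $\bar\Gamma$ denote the image of $\Gamma$ in $G^{\ad}$, which is a lattice by hypothesis, and write $G^{\ad} = \prod_{i=1}^n G_i$ as a product of its $\R$-almost simple factors. Since $G^{\ad}$ has trivial centre, its connected normal subgroups are exactly the subproducts $\prod_{i \in I} G_i$. Construct a partition $\{1,\dots,n\} = I_1 \sqcup \cdots \sqcup I_m$ inductively: whenever there is a nontrivial partition $\{1,\dots,n\} = I \sqcup J$ such that $\bar\Gamma$ is reducible with respect to the product decomposition $G^{\ad} \sim \prod_{i \in I} G_i \times \prod_{j \in J} G_j$, split along it; otherwise stop. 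Finiteness of the number of simple factors guarantees termination, and setting $K_l := \prod_{i \in I_l} G_i$ one obtains that the projection of $\bar\Gamma$ to each $K_l$ is an irreducible lattice.

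\medskip

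\noindent\emph{Step 3: Lifting.} For each $l$ let $N_l \subset G_0$ be the preimage of $K_l$ under the projection $G_0 \to G^{\ad}$. The subgroups $N_l$ are normal in $G$, they fit into a decomposition $G_0 \sim \prod_l N_l$ up to isogeny, and combined with the $\Gm$-factors of Step~1 they express $G$ as a product of candidate $\Gamma$-factors. The kernel of $N_l \to K_l$ sits inside $T_c \cdot Z(G^{\der})$, which is compact; since extensions of lattices by compact normal subgroups remain lattices, the image of $\Gamma$ in $N_l$ is a lattice, and its irreducibility is inherited from that of its image in $K_l$.

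\medskip

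\noindent\emph{Main obstacle.} The technical heart is Step~3: establishing discreteness of the projection of $\Gamma$ to each $N_l$. The compactness of the kernel $N_l \to K_l$—which was precisely the reason for separating the split part of the centre in Step~1—is exactly what makes this work. A secondary subtlety is in Step~2, where one must know that every reducibility decomposition of $\bar\Gamma$ descends to a partition of the $\R$-simple factors; this is automatic because normal subgroups of $G^{\ad}$ are forced to be subproducts, so no ``diagonal'' normal subgroup can appear to obstruct the inductive splitting.
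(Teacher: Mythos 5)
Steps 1 and 2 are essentially fine, but Step 3 — which you yourself single out as the technical heart — is where the argument genuinely breaks, in two ways. First, every full preimage $N_l$ of $K_l$ under $G_0 \to G^{\ad}$ contains the compact central torus $T_c$, so as soon as $T_c \neq 1$ and there are at least two factors the multiplication map $\prod_l N_l \to G_0$ has positive-dimensional kernel: the claimed decomposition $G_0 \sim \prod_l N_l$ up to isogeny is false, and you have not written $G$ as a product of your candidate factors at all. Second, and more seriously, the projection of $\Gamma$ to $N_l$ need not be discrete, and compactness of the kernel of $N_l \to K_l$ cannot supply discreteness: the fact you invoke only upgrades a subgroup that is \emph{already} discrete and has lattice image to a lattice. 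Concretely, take $G = \SO(2) \times H_1 \times H_2$ with $H_1=H_2$ simple noncompact, $\Lambda \subset H_i$ a lattice admitting a character $\theta : \Lambda \to \SO(2)$ with dense image, and $\Gamma = \{(\theta(\lambda_1),\lambda_1,\lambda_2) : \lambda_i \in \Lambda\}$. This is discrete with lattice adjoint image $\Lambda \times \Lambda$, so your Step 2 gives $K_1 = H_1^{\ad}$, $K_2 = H_2^{\ad}$; but the projection of $\Gamma$ to $N_2 = \SO(2)\times H_2$ contains the dense subgroup $\theta(\Lambda)\times\{e\}$ of $\SO(2)$, hence is not discrete and $N_2$ is not a $\Gamma$-factor. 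The correct decomposition here is $(\SO(2)\times H_1)\times H_2$: the compact central torus must be attached to the factor that $\Gamma$ entangles it with. Worse, with two dense characters, $\Gamma = \{(\theta_1(\lambda_1)\theta_2(\lambda_2),\lambda_1,\lambda_2)\}$, no decomposition of $G$ compatible with the adjoint one exists at all — the only $\Gamma$-irreducible factor is $G$ itself — so the $\Gamma$-irreducible decomposition of $G$ can be strictly coarser than that of $G^{\ad}$, and any recipe producing one factor of $G$ per $K_l$ must fail.

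For comparison, the paper does not attempt such a lift: its proof is a one-line appeal to the decomposition theorem for lattices (Raghunathan, Theorem 5.22), giving directly the decomposition $G=\prod_i G_i$ with each $\Gamma_{G_i}$ an irreducible lattice and $\prod_i \Gamma_{G_i}$ of finite index in $\Gamma$. If you want a self-contained argument in the reductive setting, the missing ingredient is precisely a rule for how the compact part of the centre (and compact factors in general) is shared out among — or forces the merging of — the semisimple pieces, which your uniform ``full preimage'' construction cannot provide.
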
 

\begin{proof}
Given $\Gamma$ in $G$ there is a unique direct product decomposition $G=\prod _{i\in I} G_i$, where $G_i$ is normal in $G$, such that $\Gamma_ {G_i}$ is an irreducible lattice in $G_i$ and $\prod_i \Gamma_{G_i}$ has finite index in $\Gamma$. See for example \cite[Theorem 5.22, page 86]{MR0507234}.
\end{proof}
From now on, we write $\DT$ for the \emph{Deligne torus}. That is the real torus obtained as Weil restriction from $\C$ to $\R$ of the group $\C^*$. 
\begin{defi}\label{realshimuradef}
A \emph{real Shimura datum} is a triplet $(G,X, \Gamma)$ where $G$ is a real reductive algebraic group, $\Gamma \subset G$ a discrete subgroup such that its image in $G^{\ad}$ is a lattice, and $X$ is a $G$-orbit in the set of morphisms of algebraic groups $\Hom (\DT, G)$ such that for some (all) $x\in X$ the \emph{real Shimura--Deligne axioms} are satisfied:
\begin{itemize}
\item[RSD0.] The image of $\Gamma$ in each $\Gl_2$-irreducible factor is an arithmetic lattice;
\item[RSD1.] The adjoint representation $\Lie(G)$ is of type $\{(-1,1), (0,0), (1,-1)\}$;
\item[RSD2.] The involution $x(\sqrt{-1})$ of $G^{\text{ad}}$ is a Cartan involution;
\item[RSD3.] $G$ has no simple compact $\Gamma$-factors.
\end{itemize}
\end{defi}
Without the axiom RSD0 the above definition would include every Riemann surface.
\begin{rmk}\label{rmkfactorisa}
Let $X$ be a $G$-orbit in the set of morphisms of algebraic groups $\Hom (\DT, G)$ satisfying RSD1, RSD2 and RSD3. Let $x \in X$ be such that $x : \DT \to G$ factorises trough $H$, for some subgroup $H$ of $G$. Then the $H$ orbit of $x$ in $X$ satisfies RSD1 and RSD2. See for example \cite[Proposition 3.2]{MR2180407}.
\end{rmk}

We have a notion of morphism of real Shimura data. 
\begin{defi}
Let $(G_1,X_{1}, \Gamma_{1})$ and $(G_2,X_2, \Gamma_2)$ be real Shimura data. A \emph{morphism of real Shimura data}
\begin{displaymath}
(G_1,X_{1}, \Gamma_{1})\to(G_2,X_2, \Gamma_2)
\end{displaymath}
is a morphism of real algebraic groups $f: G_1 \to G_2$ such that for each $x\in X_1$, the composition $f\circ x : \DT \to G_2$ is in $X_2$ and $f ( \Gamma_{1}) \subset \Gamma_2$.
\end{defi}
We need to work with a more general definition of real sub-Shimura datum, allowing the axiom RSD0 to fail. Indeed, as long as the Hermitian space $X$ has dimension strictly bigger than one, we want to consider arbitrary sub-Shimura data. For example, in Theorem \ref{thmbader}, we want to consider every totally geodesic subvariety of dimension one.

\begin{defi}\label{defsubshimura}
Let $(G,X, \Gamma)$ a real Shimura datum, $H$ a subgroup of $G$ and $\Gamma_H$ a lattice in $H$. A \emph{real sub-Shimura datum} is a triplet $(H,X_H, \Gamma_H)$ where $X_H$ is a $H$-orbit in the set $\Hom (\DT, G)$ satisfying RSD1, RSD2 and RSD3 of Definition \ref{realshimuradef}. 
\end{defi}

\begin{defi}
Let $(G,X, \Gamma)$ be a real Shimura datum. We define its \emph{adjoint real Shimura datum}, simply denoted by
\begin{displaymath}
(G,X, \Gamma)^{\ad},
\end{displaymath}
as the triplet $(G^{\ad},X^{\ad},\Gamma^{\ad})$ where $X^{\ad}$ is the $G^{\ad}$-conjugacy class of morphisms $x : \DT \to G \to G^{\ad}$ and $\Gamma^{\ad}$ is the image of $\Gamma$ in $G^{\ad}$. It is again a real Shimura datum.
\end{defi}

\begin{defi}
Let $(G,X,\Gamma)$ be a real Shimura datum. We denote by $S_\Gamma $ the quotient $\Gamma \backslash X$ and we refer to it as a \emph{Shimura variety}.  
\end{defi}
Every sub-Shimura datum $(H,X_H, \Gamma_H) \subset (G,X, \Gamma)$, as in Definition \ref{defsubshimura}, induces a closed (for the analytic topology) subvariety $S_{\Gamma_H}=\Gamma_H \backslash X_H$ of $S_\Gamma$.

Proposition \ref{proponproducts} can be rephrased as follows.
\begin{prop}\label{irred}
Any real Shimura datum $(G,X,\Gamma)$ can be uniquely written, up to isogeny, as a product of a finite number of real sub-Shimura data $(G_i,X_i, \Gamma_i)$ such that the $\Gamma_i$ are irreducible lattices in $G_i$ and 
\begin{displaymath}
\prod_i S_{\Gamma _i }\to S_\Gamma
\end{displaymath}
is a finite covering (of complex manifolds).
\end{prop}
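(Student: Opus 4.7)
The strategy is to apply Proposition \ref{proponproducts} to the underlying real reductive group $G$ and then transfer the Shimura--Deligne axioms to each factor. Concretely, Proposition \ref{proponproducts} produces an (essentially unique) decomposition $G=\prod_{i\in I} G_i$ into $\Gamma$-irreducible factors, with each $G_i$ normal in $G$, with $\Gamma_{G_i}:=\Gamma\cap G_i(\R)$ an irreducible lattice in $G_i$, and with $\prod_i \Gamma_{G_i}$ of finite index in $\Gamma$. I set $\Gamma_i:=\Gamma_{G_i}$ and define $X_i$ as the $G_i$-orbit of $\mathrm{pr}_i\circ x:\DT\to G_i$ inside $\Hom(\DT,G_i)$ for some (hence any) $x\in X$; this is well defined because $X$ is a single $G$-conjugacy class.

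Next I would verify the axioms for each triple $(G_i,X_i,\Gamma_i)$. Since $x\in X$ has components $(x_i)$ with $x_i\in X_i$ and $G=\prod_i G_i$ acts factor-wise, the orbit map gives a canonical identification $X\cong \prod_i X_i$ of $G$-homogeneous spaces. Axioms RSD1 and RSD2 transfer without difficulty: the adjoint representation of $G$ decomposes as the direct sum of the adjoint representations of the $G_i$ (the latter seen as subrepresentations via the projections), so the Hodge type $\{(-1,1),(0,0),(1,-1)\}$ and the Cartan involution condition restrict to each factor (see Remark \ref{rmkfactorisa}). Axiom RSD3 for $(G_i,X_i,\Gamma_i)$ is immediate since a compact simple $\Gamma_i$-factor of $G_i$ would be a compact simple $\Gamma$-factor of $G$. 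For RSD0, any $\Gl_2$-irreducible factor of $G_i$ is also a $\Gl_2$-irreducible factor of $G$, and on such a factor $\Gamma$ and $\Gamma_i$ agree up to finite index, so arithmeticity is inherited from the axiom RSD0 for $(G,X,\Gamma)$.

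The covering statement then follows: because $\prod_i \Gamma_i$ has finite index in $\Gamma$ and $X=\prod_i X_i$, the natural map
\[
\prod_i S_{\Gamma_i}=\prod_i\bigl(\Gamma_i\backslash X_i\bigr)\;\longrightarrow\;\Gamma\backslash X=S_\Gamma
\]
is a finite étale covering of complex manifolds of degree $[\Gamma:\prod_i\Gamma_i]$. Uniqueness up to isogeny reduces to the uniqueness in Proposition \ref{proponproducts}: any other factorisation into real sub-Shimura data with irreducible $\Gamma$-lattices would yield a normal decomposition of $G$ of the same type, which must coincide with the one above after permuting factors and possibly passing to an isogeny.

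The only point requiring a small amount of care—and thus the main obstacle—is the verification of axiom RSD0 on each factor, since the notion of $\Gl_2$-irreducible factor is defined relative to the decomposition, and one must check that it is stable under passage from $G$ to $G_i$; this is a direct consequence of the characterisation of $\Gamma$-irreducible factors through the decomposition of Proposition \ref{proponproducts}. Everything else is a formal translation of the product structure of $G$ to the orbit $X$ and to the lattice $\Gamma$.
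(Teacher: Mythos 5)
Your proof is correct and takes essentially the same route as the paper: the paper presents Proposition \ref{irred} simply as a rephrasing of Proposition \ref{proponproducts} (Raghunathan's decomposition of $G$ into $\Gamma$-irreducible factors), which is exactly your starting point. Your factor-by-factor verification that RSD0--RSD3 transfer, that $X\cong\prod_i X_i$, and that the index $[\Gamma:\prod_i\Gamma_i]$ gives the finite covering just makes explicit the routine checks the paper leaves implicit.
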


\begin{thm}[{\cite[Proposition 1.1.14 and Corollary 1.1.17]{delignetravaux}}]\label{vhsonsgamma}
Let $(G,X,\Gamma)$ be a real Shimura datum. Then $X$ has a unique structure of a complex manifold such that, for every real representation
$\rho : G \to \Gl (V_\R)$, $(V_\R, \rho \circ h)_{h \in X}$ is a holomorphic family of Hodge structures. For this complex structure, each family $(V_\R, \rho \circ h)_{h \in X}$ is a variation of Hodge structure, and $X$ is a finite disjoint union of Hermitian symmetric domains.
\end{thm}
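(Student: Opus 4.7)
The statement is really about the pair $(G,X)$ alone; the discrete subgroup $\Gamma$ plays no role in constructing the complex or Hermitian-symmetric structure on $X$. So the plan is to adapt Deligne's original construction for Shimura data: axioms RSD1, RSD2, RSD3 are precisely the classical Shimura datum axioms that concern $(G,X)$, and the proof goes through verbatim. The three things to verify are (a) existence and uniqueness of a complex structure on $X$ producing holomorphic families of Hodge structures, (b) Griffiths transversality, and (c) identification with a finite disjoint union of Hermitian symmetric domains.

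First, fix $h \in X$ and consider the Hodge decomposition
\[
\Lie(G)_\C = \mathfrak{g}^{-1,1} \oplus \mathfrak{g}^{0,0} \oplus \mathfrak{g}^{1,-1}
\]
induced by $\Ad \circ h : \DT \to \Aut(\Lie(G))$; by RSD1 these are the only possible bidegrees. Since $X$ is a $G(\R)$-orbit, the tangent space $T_h X$ is identified with $\Lie(G)_\R / \Lie(K_h)_\R$, where $K_h$ is the stabiliser of $h$ and $\Lie(K_h)_\R = \Lie(G)_\R \cap \mathfrak{g}^{0,0}$. Hence $T_h X \otimes_\R \C \cong \mathfrak{g}^{-1,1} \oplus \mathfrak{g}^{1,-1}$, and I define $J_h$ by declaring $\mathfrak{g}^{-1,1}$ to be the $(1,0)$-part. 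This recipe is $G(\R)$-equivariant, so it assembles into a $G(\R)$-invariant almost complex structure on $X$.

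Integrability, hence existence of the complex manifold structure, follows from the bidegree relations $[\mathfrak{g}^{p,q}, \mathfrak{g}^{r,s}] \subset \mathfrak{g}^{p+r, q+s}$: since $[\mathfrak{g}^{-1,1}, \mathfrak{g}^{-1,1}] \subset \mathfrak{g}^{-2,2} = 0$, the subspace $\mathfrak{g}^{-1,1} \oplus \mathfrak{g}^{0,0}$ is a complex Lie subalgebra, which is the Newlander-Nirenberg criterion in this homogeneous setting. The same bookkeeping, applied to any real representation $\rho : G \to \Gl(V_\R)$, shows first that the differential of $h \mapsto \rho \circ h$ is $\C$-linear (so the resulting family of Hodge structures is holomorphic, giving the first assertion), and second that $[\mathfrak{g}^{-1,1}, V^{p,q}] \subset V^{p-1, q+1}$, which is Griffiths transversality and promotes the family to a \emph{variation} of Hodge structure. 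Uniqueness is immediate: any complex structure for which the adjoint family is holomorphic must already have $(1,0)$-tangent space at $h$ equal to $\mathfrak{g}^{-1,1}$, which pins it down.

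Finally, RSD2 asserts that $\ad h(i)$ is a Cartan involution of $G^{\ad}$, whose $\pm 1$ eigenspaces provide the Cartan decomposition $\Lie(G^{\ad})_\R = \mathfrak{k} \oplus \mathfrak{p}$ with $\mathfrak{p} \otimes \C = \mathfrak{g}^{-1,1} \oplus \mathfrak{g}^{1,-1}$; this realises $X^{\ad}$ as a Riemannian symmetric space for $G^{\ad}(\R)$, and the complex structure constructed above makes it Hermitian. Axiom RSD3 rules out compact irreducible factors, so each connected component is of non-compact type, and since $G(\R)$ has only finitely many connected components $X$ is a finite disjoint union of such Hermitian symmetric domains. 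The only subtle step is the integrability check for $J$, but this is purely a consequence of RSD1, so there is no genuine obstacle: the theorem is the geometric core of Deligne's formalism and is unaffected by allowing $\Gamma$ to be non-arithmetic.
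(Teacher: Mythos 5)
Your argument is correct and is exactly the classical Deligne construction that the paper itself does not reprove but simply cites (\cite[Proposition 1.1.14 and Corollary 1.1.17]{delignetravaux}): invariant almost complex structure from the type $\{(-1,1),(0,0),(1,-1)\}$ decomposition, integrability from $\mathfrak{g}^{-2,2}=0$, transversality from the bracket bidegrees, and the Hermitian symmetric structure from the Cartan involution axiom. The only cosmetic slip is the appeal to RSD3 at the end: as stated in the paper it only excludes compact $\Gamma$-factors, but this is immaterial since RSD2 already forces any compact simple factor to have vanishing $\mathfrak{g}^{\pm 1,\mp 1}$-part and hence to contribute trivially to $X$, so the conclusion stands on RSD1--RSD2 alone.
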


Choosing a connected component $X^+$ of $X$, we can also define \emph{connected real Shimura data} and connected Shimura varieties. In what follows we often implicitly work over some connected component.

\begin{rmk}
Let $(\mathbf{G}, \mathcal{X})$ be a Shimura datum in the sense of Deligne (i.e. $\mathbf{G}$ is assumed to be a $\Q$-group). For any faithful representation $\mathbf{G}\to \Gl(V_\Z) $, the triplet $(\mathbf{G}_\R, \mathcal{X}, \mathbf{G}(\Q)\cap \Gl(V_\Z))$ defines a real Shimura datum. Given a Shimura datum $(\mathbf{G}, \mathcal{X})$ as before and $K_{\mathbb{A}_f}$ a compact open subgroup of the finite adelic points of $\mathbf{G}$, the triplet $(\mathbf{G}_\R, \mathcal{X}, \mathbf{G}(\Q)\cap K_{\mathbb{A}_f})$ is a real Shimura datum. We refer to the latter case as \emph{congruence Shimura datum}.
\end{rmk}

\begin{thm}[Baily-Borel, Siu-Yau, Mok]\label{algebraic}
Every Shimura variety $S_\Gamma$ has a unique structure of a quasi-projective complex algebraic variety.
\end{thm}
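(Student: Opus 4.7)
The plan is to reduce to the irreducible case and then dispatch each irreducible factor according to its arithmetic nature, invoking the three classical constructions corresponding to the attribution "Baily--Borel, Siu--Yau, Mok".

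First, by Proposition \ref{irred} there is a finite (étale) covering $\prod_i S_{\Gamma_i} \to S_\Gamma$, where each $(G_i, X_i, \Gamma_i)$ is a real Shimura datum with $\Gamma_i$ an irreducible lattice in $G_i$. A quasi-projective structure on each $S_{\Gamma_i}$ then transports to a quasi-projective structure on the product, and descends along the finite covering (since the covering group acts by biholomorphisms permuting the factors, hence by algebraic automorphisms, and the quotient of a quasi-projective variety by a finite group of automorphisms is quasi-projective). So I may assume from now on that $\Gamma$ is an irreducible lattice in $G$, and that $G$ has no compact $\Gamma$-factors (RSD3).

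Now I split into three cases. \textbf{(i)} If $\Gamma$ is arithmetic, then (after fixing a $\Q$-structure provided by the arithmeticity assumption, together with axiom RSD0 for $\Sl_2$-factors) $S_\Gamma$ is (essentially) a connected component of a classical Shimura variety, and the Baily--Borel theorem \cite{MR0216035} gives a canonical projective compactification $S_\Gamma \hookrightarrow S_\Gamma^{BB}$ by Proj of the ring of automorphic forms, exhibiting $S_\Gamma$ as a Zariski open subset of a projective variety. \textbf{(ii)} If $\Gamma$ is non-arithmetic, then by Margulis' arithmeticity theorem (Theorem \ref{margulisrank}) $G$ must have real rank one; combined with the requirement that $X$ be Hermitian symmetric (Theorem \ref{vhsonsgamma}), this forces $G$ to be locally isomorphic to $\PU(1,n)$ for some $n \geq 1$ (the $n=1$ case being already covered by arithmeticity via RSD0 together with the classical uniformization of Riemann surfaces). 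If moreover $\Gamma$ is cocompact, $S_\Gamma$ is a compact complex manifold whose canonical bundle is ample (the Bergman/Kähler--Einstein metric on $\Bb^n$ has negative holomorphic sectional curvature, and passes to the quotient), so Kodaira embedding realizes $S_\Gamma$ as a smooth projective variety. \textbf{(iii)} If $\Gamma$ is non-arithmetic and non-cocompact in $\PU(1,n)$, then the result is exactly Mok's theorem \cite{MR849651}, which constructs a projective compactification by showing that a suitable pluricanonical ring (of $L^2$-integrable sections) is finitely generated and gives a projective embedding of a compactification, with $S_\Gamma$ sitting as a Zariski open. The hardest case here, and the new ingredient beyond the classical Baily--Borel machinery, is of course (iii).

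For uniqueness, suppose $S_\Gamma$ is endowed with two quasi-projective structures $S^{\mathrm{alg}}_1, S^{\mathrm{alg}}_2$ both inducing the given complex-analytic structure on $\Gamma\backslash X$. In each of the constructions above, the quasi-projective structure is obtained from a canonical (projective or compact) compactification $\overline{S}_\Gamma$ whose boundary is analytically determined (Baily--Borel boundary; empty in the cocompact case; Mok's boundary in the non-cocompact rank-one case), and the compactification is itself canonically a compact complex analytic space. Thus any two such compactifications are biholomorphic as compact complex analytic spaces, and by Chow's theorem combined with GAGA any biholomorphism of projective varieties is an algebraic isomorphism; restricting to the complement of the boundary yields an algebraic isomorphism $S^{\mathrm{alg}}_1 \cong S^{\mathrm{alg}}_2$ inducing the identity on the underlying analytic variety. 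This proves uniqueness.
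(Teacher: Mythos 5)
Your existence argument follows the same route as the paper: reduce to the irreducible case via Proposition \ref{irred}, use Margulis (Theorem \ref{margulisrank}) and RSD0 to force arithmeticity outside the case of $\PU(1,n)$ with $n>1$, and then quote Baily--Borel in the arithmetic case and Siu--Yau and Mok in the remaining ball-quotient case. That part is fine (modulo the small point that already in the descent step "acts by biholomorphisms, hence by algebraic automorphisms" is not automatic for non-compact varieties and secretly uses the uniqueness statement you are trying to prove).

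The genuine gap is in the uniqueness argument. You assume that an arbitrary quasi-projective structure on the analytic space ${S_\Gamma}^{\an}$ "is obtained from a canonical compactification whose boundary is analytically determined", and that "any two such compactifications are biholomorphic". Neither claim is justified: an arbitrary algebraic structure only comes with \emph{some} projective compactification, with no control on the boundary, and even the canonical constructions produce non-biholomorphic compactifications of the same analytic space (the Baily--Borel compactification has zero-dimensional boundary while the toroidal compactification of Section \ref{compac} has divisorial boundary), so there is no single analytically distinguished compact model to which Chow/GAGA could be applied. More fundamentally, for \emph{non-compact} quasi-projective varieties a biholomorphism need not be an algebraic isomorphism (Serre's classical examples), so the whole content of uniqueness is to show that the identity map between the two algebraic structures is algebraic; this requires an extension theorem, not GAGA. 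In the arithmetic case this is Borel's extension theorem; since $\Gamma$ may be non-arithmetic, the paper instead applies \cite[Theorem A]{deng2020big} (a big-Picard-type extension theorem valid for such ball quotients) to the identity map ${S_\Gamma}^{\an} \to {S_\Gamma}^{\an}$, the source and target carrying the two algebraic structures, to conclude that it is algebraic. Your proof needs this (or an equivalent extension statement) to close the uniqueness step; as written, the step "restricting to the complement of the boundary yields an algebraic isomorphism" does not follow.
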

\begin{proof}
If the lattice $\Gamma$ is arithmetic, it was proven by Baily and Borel \cite{MR0216035}. To prove the result, we may and do replace $\Gamma$ by a finite index subgroup. In particular, thanks to Proposition \ref{irred}, we may assume that $\Gamma$ is irreducible. Thanks to Theorem \ref{margulisrank}, if $G$ has rank bigger or equal than two, then $\Gamma$ is arithmetic. Regarding rank one groups we have that $X$ is isomorphic to the complex unit ball $\mathbb{B}^n\subset \C^n$, for some $n \geq 1$. If $n=1$ the axiom RSD0 ensures that $\Gamma$ is arithmetic. If $n>1$, Siu-Yau \cite{MR645748} and Mok \cite{MR2884042} proved that $S_\Gamma$ is biholomorphic to a quasi-projective variety. To see that $S_\Gamma$ has a unique algebraic structure, one can apply \cite[Theorem A]{deng2020big} in place of the classical Borel extension theorem to the identity map ${S_\Gamma}^{\an} \to {S_\Gamma}^{\an}$, where the left hand side is understood with a different algebraic structure from the right hand side.
\end{proof}

\begin{defi}\label{gammaspeci}
Let $W\subset S_\Gamma$ be an irreducible algebraic subvariety. We say that $W$ is $\Gamma$-\emph{special} if it is induced by a real sub-Shimura datum $(H,X_H, \Gamma_H)$ of $(G,X, \Gamma)$. We say that $W$ is \emph{weakly-$\Gamma$-special} if there exists a real sub-Shimura datum $(H,X_H, \Gamma_H)$ of $(G,X, \Gamma)$ such that its adjoint splits as a product
\begin{displaymath}
(H,X_H, \Gamma_H)^{\ad}=(H_1,X_1, \Gamma_1) \times (H_2,X_2, \Gamma_2),
\end{displaymath}
and $W$ is the image of $X_1^+\times \{x_2\}$ for some $x_2\in X_2$.
\end{defi}
It is a standard fact that $\Gamma$-special subvarieties of $S_\Gamma$ are totally geodesic.

\begin{defi}
Given a real Shimura datum $(G,X,\Gamma)$, we say that $S_\Gamma$ is an \emph{arithmetic Shimura variety} if $\Gamma$ is arithmetic. If $\Gamma$ is irreducible and non-arithmetic, we say that $S_\Gamma$ is a \emph{non-arithmetic ball quotient}.
\end{defi}

\subsection{Real sub-Shimura data in rank one}\label{embeddings}
Let $(G=\PU(1,n),X,\Gamma)$ be a real Shimura datum, for some $n>1$. We remark here that the main results of the paper remain true, with the same proofs, if one starts with $\Gamma$ an irreducible lattice in $\PU(1,n)\times K$, where $K$ is some compact factor. Let $X_H$ be a Hermitian symmetric sub-domain of $X$ with automorphism group $H$. Such spaces will be often referred to as \emph{real sub-Shimura couples} of $(G,X)$. In this section we explicitly describe the map $H \to G$ inducing the inclusion of $X_H$ in $G$. For more details regarding totally geodesic subvarieties of the complex ball, we refer to \cite[Section 3.1.11]{zbMATH01270600} and also \cite[Section 2.2]{MR3461365}. We will see that every $H$ appearing in this way can be assumed to be a semisimple group. 

Let $V_\C$ be a $n+1$-dimensional complex vector space with $\varphi : V_\C \times V_\C\to \C$ a hermitian form of signature $(1,n)$. Fix a $\C$-basis $(e_1, \dots, e_{n+1})$ such that the quadratic form associated to $\varphi$ becomes
\begin{displaymath}
z_1 \overline{z}_1 - \sum_{i=2}^{n+1} z_i \overline{z}_i.
\end{displaymath}
The group $G=\PU(1,n)$ can be defined in $\Gl (V_\C)$ as equivalence classes of matrices $M$ satisfying
\begin{displaymath}
\overline{M}^t g M =g,
\end{displaymath}
where $g=\operatorname{diag}(1, -1 , \dots , -1)$. Since $g=\overline{g}$, $G$ can be seen in $\Gl (V_\R)$. Let $W_\C$ be a sub-vector space of $V_\C$ such that $\varphi$ restricts to a hermitian form $W_\C \times W_\C\to \C$ and write $V_\C$ as the direct sum of $W_\C$ and its orthogonal complement. We can arrange a basis of $W_\C$ in such a way that the quadratic form associated to $\varphi_{| W_\C}$ corresponds to a matrix of the form $\operatorname{diag}(1, -1 , \dots , -1)$.

Let $H \subset G \subset \Gl(V_\R)$ be the subgroup stabilizing $W_\C$. Since $H$ has to stabilise also the orthogonal of $W_\C$, it is isomorphic to $\Pu(1,m)\times C$, where $m$ is the rank of $W_\C$, from some compact subgroup $C$ in $G$. Moreover every $H$ associated to an $X_H$ arises in this way. Indeed, up to conjugation by an element in $G$, any $\alpha : \DT \to G$ can be written as
\begin{displaymath}
z \mapsto  \operatorname{diag}(z,\overline{z},1,\dots,1).
\end{displaymath}
Given a subgroup $H\subset G$ as before, we may therefore assume that $\alpha(\DT) \subset H $, for some choice of a basis for $W_\C$.

Notice also that a similar description holds when $G=\operatorname{GU}(1,n)$.

\subsection{Toroidal compactification of non-arithmetic ball quotients}\label{compac}
For the length of this section, we assume that $G$ has real rank one. A compactification of $S_\Gamma=\Gamma \backslash X$ as a complex spaces with isolated normal singularities was obtained by Siu and Yau \cite{MR645748}. Hummel and Schroeder \cite{MR1374201} and then Mok \cite[Theorem 1]{MR2884042} showed that $S_\Gamma$ admits a unique smooth toroidal compactification, which we denote by $\overline{S_\Gamma}$. If the parabolic subgroups of $\Gamma$ are unipotent, the boundary $\overline{S_\Gamma}-S_\Gamma$ is a disjoint union of abelian varieties. The minimal compactification $S_{\Gamma}^{BB}$, which is proven to be projective-algebraic in \cite{MR2884042}, can be recovered by blowing down the boundary.

Given a parabolic subgroup $P\subset G$ we write:
\begin{itemize}
\item $N_P$ for the unipotent radical of $P$;
\item $U_P$ for the centre of $N_P$. We may identify $U_P$ with its Lie algebra $\Lie(U_P)\cong \R$.
\end{itemize}
\begin{defi}\label{defparabolci}
A parabolic subgroup $P\subset G$ is called $\Gamma$-\emph{rational} if its unipotent radical $N_P$ intersects $\Gamma$ as a lattice.
\end{defi}

\subsubsection{First properties of the toroidal compactification}\label{toroidalsubse}
From now on, when speaking of toroidal compactifications, we replace $\Gamma$ by a finite index subgroup and always assume that $\Gamma$ is torsion free and that the parabolic subgroups of $\Gamma$ are unipotent. The following is a consequence of \cite{MR2884042}.
\begin{thm}[{\cite[Corollary 7.6, Chapter III]{MR2884042}}]
The toroidal compactification $\overline{S_\Gamma}$ of $S_\Gamma$ is a smooth compactification with strict normal crossing divisor at infinity $B:= \overline{S_\Gamma} - S_\Gamma$.
\end{thm}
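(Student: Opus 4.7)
The plan is to invoke Mok's construction of the toroidal compactification in \cite{MR2884042} and analyse the boundary cusp by cusp, exploiting the fact that $G=\PU(1,n)$ has real rank one. First I would recall the standing assumption of Section~\ref{toroidalsubse}: after passing to a finite index subgroup (which is permissible by Selberg's lemma for the smoothness question, and by the standard assumption on unipotent parabolic subgroups for the abelian variety picture), $\Gamma$ is torsion-free and each $\Gamma$-rational parabolic $P \subset G$ intersects $\Gamma$ in a lattice of $N_P$.

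Next I would describe the compactification locally at each cusp. Because $G$ has real rank one, for every $\Gamma$-rational parabolic $P$ the unipotent radical $N_P$ is a two-step Heisenberg-type nilpotent group whose centre $U_P$ is one-dimensional; identifying $U_P$ with its Lie algebra $\cong \R$, the lattice $\Gamma \cap U_P$ is of the form $\ZZ \cdot u_0$. A Siegel domain realisation of $X$ adapted to $P$ presents $X$ as a half-plane fibration over a Euclidean space modelled on $N_P/U_P$, and the quotient by $\Gamma \cap U_P$ produces a punctured disc bundle over a complex torus (the quotient of $N_P/U_P$ by the full lattice $\Gamma \cap N_P / \Gamma \cap U_P$). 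Mok's toroidal procedure fills in the punctures of this disc bundle simultaneously, yielding a local smooth compactification whose boundary is a single smooth irreducible divisor isomorphic to the complex torus; the unipotency assumption on the parabolics of $\Gamma$ ensures that no action of a Levi factor distorts this torus, so it is in fact an abelian variety.

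To conclude, I would observe that distinct $\Gamma$-rational parabolic subgroups give rise to Siegel patches whose closures meet only along $S_\Gamma$ itself (two different cusps of a rank-one locally symmetric space have disjoint neighbourhoods in the Borel--Serre sense, which survives after the toroidal pinch). Hence the boundary $B = \overline{S_\Gamma} - S_\Gamma$ is globally a finite disjoint union of smooth irreducible abelian subvarieties of $\overline{S_\Gamma}$. In particular $B$ is a strict normal crossing divisor — the crossings being empty in real rank one — and $\overline{S_\Gamma}$ is smooth along $B$.

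The main obstacle, and the reason that one really needs Mok's theorem rather than a soft argument, is checking that the local smooth toroidal filling extends to a \emph{global} smooth compact complex manifold without requiring the arithmeticity hypothesis that underlies the classical Ash--Mumford--Rapoport--Tai construction. All the hard analytic work — controlling the Kähler--Einstein metric near the cusps and proving that the resulting compactification is projective algebraic — is carried out in \cite{MR2884042}; what remains on our side is the description of $B$ as a disjoint union of abelian varieties, which, as sketched above, follows by unpacking the Siegel domain picture in the rank-one case.
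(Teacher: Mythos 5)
Your proposal matches the paper's treatment: the paper does not reprove this statement but quotes it directly from Mok's work (Corollary 7.6, Chapter III of \cite{MR2884042}), exactly as you do, with the substantive analytic content deferred to that reference. Your unpacking of the rank-one cusp structure (one-dimensional centre $U_P$, punctured disc bundle over a torus, disjoint smooth boundary divisors so that the normal crossings condition holds vacuously) is consistent with the discussion surrounding Proposition \ref{quasiunipot} in the paper, so there is nothing to correct.
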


Denote by $\Delta\subset \C$ the complex disk and by $\Delta^*$ the punctured disk. As in the arithmetic case, there exists an open cover $\{U_\alpha\}_\alpha$ of $\overline{S_\Gamma}$ such that $U_\alpha=\Delta^n$ and $U_\alpha \cap S_\Gamma= \Delta^{n-1}\times \Delta^*$. We have the following, see also \cite[Theorem 7.2, Chapter III]{MR2884042}.
\begin{prop}\label{quasiunipot}
For any $\Gamma$-rational parabolic $P$, the set $U_P \cap \Gamma$ is isomorphic to $\Z$. The image of the fundamental group of $U_\alpha \cap S_\Gamma= \Delta^{n-1}\times \Delta^*$ in the fundamental group of $\overline{S_\Gamma}$ lies in $\Gamma_{U_P}\cong \Z$.
\end{prop}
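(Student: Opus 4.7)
The first assertion is a structural fact about lattices in simply connected nilpotent Lie groups. Since $G=\PU(1,n)$ has real rank one, for any parabolic $P\subset G$ the unipotent radical $N_P$ is isomorphic to the $(2n-1)$-dimensional Heisenberg group (and simply to $\R$ when $n=1$), and its centre $U_P$ is one-dimensional. By the definition of $\Gamma$-rational, $\Gamma_{N_P} := N_P \cap \Gamma$ is a lattice in $N_P$. By Mal'cev's theorem, if $\Lambda$ is a lattice in a simply connected nilpotent Lie group $N$, then $\Lambda \cap Z(N)$ is a cocompact lattice in $Z(N)$. Applied to $\Lambda = \Gamma_{N_P}$ and $N = N_P$, this yields that $\Gamma_{U_P} = \Gamma_{N_P} \cap U_P$ is a lattice in $U_P \cong \R$, hence isomorphic to $\Z$.

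For the second assertion, the plan is to unwind the explicit local description of the toroidal compactification near a cusp, following the construction of Mok \cite{MR2884042} recalled in \S\ref{toroidalsubse}. In a Siegel-type coordinate system $(w,\zeta)\in \C \times \C^{n-1}$ adapted to the cusp fixed by $P$, a horospherical neighbourhood of the cusp inside $X$ takes the form $\{\mathrm{Im}(w) > F(\zeta)\}$, on which $U_P \cong \R$ acts by real translations $w \mapsto w+t$. Let $\tau \in \Gamma_{U_P}$ be a generator of the lattice produced in the first step. The quotient by $\Gamma_{U_P}$ is identified, via the uniformiser $z_n := \exp(2\pi i w/\tau)$, with a product $\Delta^*\times D$, where $D$ is a polydisk in the transverse $\zeta$-directions; the remaining finite-covolume group $\Gamma_P/\Gamma_{U_P}$ then acts properly discontinuously on $\Delta^*\times D$ and extends to a holomorphic action on the filled-in $\Delta \times D$. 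The toroidal compactification is obtained precisely by inserting the divisor $\{z_n=0\}$ before passing to the quotient by $\Gamma_P/\Gamma_{U_P}$, producing a local chart $U_\alpha \simeq \Delta^n$ with $U_\alpha \cap S_\Gamma \simeq \Delta^{n-1}\times \Delta^*$. By construction, a small loop around $\{z_n=0\}$ lifts through the universal cover $X\to S_\Gamma$ to the translation $w\mapsto w+\tau$, so under the deck-transformation identification $\pi_1(S_\Gamma)\cong \Gamma$ the generator of $\pi_1(U_\alpha \cap S_\Gamma)\cong \Z$ is exactly the generator of $\Gamma_{U_P}$. The assertion for $\pi_1(\overline{S_\Gamma})$ then follows from the naturality of the surjection $\pi_1(S_\Gamma)\twoheadrightarrow \pi_1(\overline{S_\Gamma})$.

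The only non-routine point is the geometric input in the second step: one has to know that in the non-arithmetic setting Mok's toroidal compactification is really built, near each cusp, by first quotienting the Siegel horosphere by $\Gamma_{U_P}$ and inserting the puncture, and only then modding out by $\Gamma_P/\Gamma_{U_P}$. This is exactly the content of the construction of \cite{MR2884042} recalled above; once it is granted, the identification of fundamental groups is essentially formal book-keeping.
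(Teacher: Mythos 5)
Your proof is correct and follows essentially the same route as the paper: the second assertion is obtained, just as in the paper's proof, from the explicit local description of the toroidal compactification near a $\Gamma$-rational cusp, factoring the uniformising map through the quotient by $\Gamma_{U_P}$ via the exponential $z_n=\exp(2\pi i w/\tau)$, so that a loop around the boundary divisor lifts to the translation generating $\Gamma_{U_P}$. The only (harmless) difference is in the first assertion, where the paper simply cites Mok's construction, while you derive $\Gamma\cap U_P\cong\Z$ directly from Mal'cev's theorem applied to the lattice $\Gamma\cap N_P$ in the Heisenberg group $N_P$ --- a valid self-contained substitute.
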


\begin{proof}
The first part can be found in \cite[Section 1.3]{MR2884042}. For the second part, we work with the local coordinates as in \cite[pages 255-256]{MR471627}. Let $X^\vee$ be the compact dual of $X$ and $\mathcal{X}\subset X^\vee$ be the Borel embedding. Assume that we are working with a $\Gamma$-rational boundary component $F=\{b\}$ corresponding to a $\Gamma$-rational parabolic $P$, and let $V$ be the quotient of the unipotent radical of $P$ by it centre. It is a real vector space of rank $n-1$ (where $n=\dim S_\Gamma$). Set 
\begin{displaymath}
\mathcal{X}_b := \bigcup _{g\in U_P  \otimes \C}  g \cdot \mathcal{X} \subset X^\vee.
\end{displaymath} 
There exists a canonical holomorphic isomorphism
\begin{displaymath}
j: \mathcal{X} \cong  \C^{n-1}\times \C \times \{b\},
\end{displaymath}
where $\C^{n-1} =V_\C$ and the latter copy of $\C$ is $U_P \otimes \C$. We can naturally identify the universal cover of $U_P \cap S_\Gamma= \Delta^{n-1}\times \Delta^*$ with
\begin{equation}
\D \cong \{(z_1,\dots, z_{n-1},z_n , b) \in \C^{n-1}\times \C  \times F : \im(z_n) \geq 0\}.
\end{equation}
The group $U_P \otimes \C$ acts on $\D$, in these coordinates, by $(z_1,\dots, z_{n-1},z_n,b) \mapsto (z_1,\dots, z_{n-1},z_n+a,b)$. Observe that we can factorise the map $\pi: \mathcal{X}\to S_\Gamma$ as
\begin{displaymath}
\mathcal{X} \xrightarrow{\exp_F}\exp_F ( \mathcal{X}) \to S,
\end{displaymath}
where $\exp_F : \C^{n-1}\times \C \times F\to  \C^{n-1} \times \C^* \times F$ is simply $\exp(2\pi i -)$ on the $\C$-component and the identity on $\C^{n-1}$. Moreover $\exp_F ( \mathcal{X})$ is $\Gamma_{U_P} \backslash \mathcal{X}$. To conclude, we observe that the diagram
\begin{center}
\begin{tikzpicture}[scale=2]
\node (A) at (-1,1) {$\exp_F ( \mathcal{X})$};
\node (B) at (1,1) {${\exp_F ( \mathcal{X})}^\vee$};
\node (C) at (-1,0) {${S_\Gamma}$};
\node (D) at (1,0) {$\overline{S_{\Gamma}}$};
\path[right hook->,font=\scriptsize]
(A) edge node[above]{} (B)
(C) edge node[above]{} (D);
\path[->,font=\scriptsize,>=angle 90]
(A) edge node[right]{} (C)
(B) edge node[right]{} (D);
\end{tikzpicture}
\end{center}
is commutative, since the boundary of ${\exp_F ( \mathcal{X})}^\vee$ is mapped onto the boundary of $\overline{S_\Gamma}$.
\end{proof}

Finally we describe in a more explicit way what we discussed in the proof of Proposition \ref{quasiunipot}. We follow \cite[Section 1]{MR1238407} and \cite[Equation 8]{MR2884042}. Identify $X$ with the complex ball $\mathbb{B}^n\subset \C^n$. For any $b$ in the boundary of $\mathbb{B}^n$, let $\Gamma_b\subset \Gamma$ be the set of elements fixing $b$. The Siegel domain $D$ representation of $X$ with $b$ corresponding to infinity is 
\begin{displaymath}
D:=\{(z_1,\dots , z_n): \C^n \in \im (z_n) > \sum_{i=1}^{n-1}|z_i|^2\}.
\end{displaymath}
For any $N>0$ set
\begin{equation}\label{dn}
D^{(N)}:=\{(z_1,\dots , z_n)\in \C^n : \im (z_n) > \sum_{i=1}^{n-1}|z_i|^2 +N\}.
\end{equation}
Let $W_b$ be the set of parabolic isometries fixing $b$. If $N$ is large enough, two points $x,y \in D^{(N)}$ are equivalent mod $\Gamma$ if and only if they are equivalent modulo the lattice $\Gamma_{W_b}$. Moreover, thanks to the work of Margulis and Gromov (see \cite[Section 2]{MR645748}, and references therein), for $N$ sufficiently large one can take $D^{(N)}$ to be in the set 
\begin{displaymath}
A_b := \{x \in \mathbb{B}^n : \min_{\gamma \in \Gamma_b} d(x, \gamma x) < \epsilon\},
\end{displaymath}
for some $\epsilon >0$ (which depends on $\Gamma$). Consider the set
\begin{displaymath}
E:=\{ x \in \mathbb{B}^n : \min_{\gamma \in \Gamma} d(x, \gamma x) \geq  \epsilon \}.
\end{displaymath}
Notice that the image of $E$ along $\pi : \mathbb{B}^n\to S_\Gamma$ is compact. Finally we can write $S_\Gamma$ as the union of a finite number of $\pi(A_b)$s and $\pi(E)$.

\subsubsection{Metric at infinity}\label{metriinf}
Let $\omega$ be the $(1,1)$ form on $S_\Gamma$ inducing the natural Hermitian metric. In this section we explain the behaviour of $\omega$ near the boundary, as it appears in To's paper \cite[Section 2]{MR1238407}.

Fix an open cover $\{U_\alpha\}_\alpha$ of $\overline{S_\Gamma}$ such that $U_\alpha=\Delta^n$ and $U_\alpha \cap S_\Gamma= \Delta^{n-1}\times \Delta^*$.
\begin{defi}\label{poincaregrowthdef}
We say that $\omega$ is of \emph{Poincaré-growth} with respect to the toroidal compactification $\overline{S_\Gamma}$ if, for any $\alpha$, $\omega$ restricted to $U_\alpha$ is bounded by
\begin{displaymath}
C_\alpha \left( \sum_{k=1}^{n-1} |dz_k|^2 + \frac{|dz_n|^2}{|z_n|^2|\log z_n|^2} \right).
\end{displaymath}
\end{defi}
The above definition does not depend on the choice of coordinates. Whenever we say that $\omega$ is of Poincaré-growth, it will be understood that we refer to the growth of $\omega$ with respect to the toroidal compactification $S_\Gamma$.

\begin{thm}[{\cite[Section 1.2]{MR2884042}} and {\cite[Section 2]{MR1238407}}]\label{poincaregrowth}
The K\"{a}hler form $\omega$ on $S_\Gamma$ is of Poincaré-growth.
\end{thm}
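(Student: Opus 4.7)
The plan is to localise the estimate in a chart $U_\alpha$ meeting the boundary divisor of $\overline{S_\Gamma}$, reduce to the Siegel domain model recalled in Section~\ref{toroidalsubse}, and then compare the Bergman Kähler form with the Poincaré bound after exponentiating the cuspidal direction. By the decomposition of $S_\Gamma$ into the compact piece $\pi(E)$ and finitely many cuspidal ends $\pi(A_b)$, the form $\omega$ is smooth and uniformly bounded on $\pi(E)$, so only the behaviour at a cuspidal end is non-trivial.

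First, I would fix a $\Gamma$-rational parabolic $P$ with fixed point $b$ and pass to the Siegel domain
\[
D^{(N)}=\{(z',z_n)\in \C^{n-1}\times \C : \im(z_n) > |z'|^2+N\}
\]
of \eqref{dn}, with $N$ large enough so that $D^{(N)}/\Gamma_{U_P}$ embeds into $\pi(A_b)$. On $D$, the Bergman Kähler form of $\Bb^n$ is
\[
\omega_D=-\tfrac{i(n+1)}{2}\,\partial\bar\partial\log\bigl(\im(z_n)-|z'|^2\bigr),
\]
and a direct computation yields explicit formulas for its diagonal and off-diagonal coefficients as rational expressions in $\im(z_n)-|z'|^2$.

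Second, I would pass to toroidal coordinates. By Proposition~\ref{quasiunipot}, $\Gamma_{U_P}\cong\Z$ acts as $z_n\mapsto z_n+k\tau$ for some $\tau\in\R$, so setting $w_n=\exp(2\pi i z_n/\tau)$ and $w_i=z_i$ for $i<n$ realises a neighbourhood of the cusp in $\overline{S_\Gamma}$ as a chart $U_\alpha\cong\Delta^n$ in which the boundary divisor is $\{w_n=0\}$. Substituting $\im(z_n)=-\tfrac{\tau}{2\pi}\log|w_n|$ and $dz_n=\tfrac{\tau}{2\pi i}\,dw_n/w_n$ into $\omega_D$ shows that, up to a multiplicative constant, the diagonal coefficients are controlled by
\[
\frac{\sum_{k<n}|dw_k|^2}{-\log|w_n|-|w'|^2}\quad\text{and}\quad \frac{|dw_n|^2}{|w_n|^2(\log|w_n|)^2},
\]
while the mixed terms in $dw_i\,d\bar w_n$ are absorbed into the diagonal bound by Cauchy--Schwarz, using the defining inequality $\im(z_n)>|z'|^2$ of $D$. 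Shrinking $U_\alpha$ so that $-\log|w_n|$ dominates $|w'|^2$ (possible since we sit inside $D^{(N)}$ for $N$ large) yields precisely the bound of Definition~\ref{poincaregrowthdef} with a uniform $C_\alpha$.

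The main obstacle is the cross-term bookkeeping: the off-diagonal pieces of $\omega_D$ involve products $dz_i\,d\bar z_n$ whose naive estimates would blow up too quickly near $\{w_n=0\}$. Controlling these requires repeated use of the Siegel domain inequality together with Cauchy--Schwarz, and is exactly the estimate performed by Mok \cite[Sections 1--2]{MR2884042} when constructing the smooth toroidal compactification $\overline{S_\Gamma}$ in the non-arithmetic setting, and revisited by To \cite[Section 2]{MR1238407}. The finiteness of the open cover $\{U_\alpha\}$ of $\overline{S_\Gamma}$ then assembles the local estimates into a global one.
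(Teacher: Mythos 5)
Your argument is correct and is essentially the proof the paper delegates to its references: the paper gives no argument of its own for Theorem \ref{poincaregrowth}, citing precisely the Siegel-domain computation of Mok and To that you reconstruct (pass to $D^{(N)}$, expand $-\partial\bar\partial\log(\im (z_n)-|z'|^2)$, substitute $w_n=\exp(2\pi i z_n/\tau)$, and absorb the cross terms). The only point worth making explicit is that absorbing the mixed terms uses the boundedness of the $w'$-coordinates on the chart $U_\alpha\cong\Delta^{n-1}\times\Delta^*$ (equivalently the relative compactness of $Y'$ in Proposition \ref{siegelanddn}) rather than the Siegel inequality alone, but that is automatic in the chart you set up.
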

Finally we notice that the holomorphic tangent bundle $TS_{\Gamma}$ extends to a holomorphic vector bundle $\overline{TS_{\Gamma}}$ on $\overline{S_{\Gamma}}$ as follows: in an open neighbourhood $U_\alpha=\Delta^n$ of $\overline{S_{\Gamma}}$ where $U_\alpha\cap S_\Gamma=\Delta^{n-1}\times \Delta^*$, a local holomorphic basis of $\overline{TS_{\Gamma}}$ on $U_\alpha$ is given by
\begin{displaymath}
\frac{\partial}{\partial z_1}, \dots ,\frac{\partial}{\partial z_{n-1}}, z_n\frac{\partial}{\partial z_n} .
\end{displaymath}

\subsection{Some algebraicity results}
We first describe Siegel sets in $G$ and $X$, for the action of $\Gamma$. This is needed to define semialgebraic fundamental sets for the action of $\Gamma$ on $X$ and for proving that every real sub-Shimura datum of $(G,X,\Gamma)$ gives an algebraic subvariety of $S_\Gamma$. As usual, the only case we need to describe is when $G$ has rank one.

\subsubsection{Siegel sets}\label{siegelsetssec}
Garland and Raghunathan \cite{MR0267041} extended Borel's reduction theory \cite{MR0244260} to arbitrary lattices in rank one groups. Notice that Borel follows a different convention (in this paper $G$ always acts on $X$ on the left). Since we use the reverse order of multiplication, compared to \cite{MR0244260, MR0267041}, the inequalities defining the set $A_t$ are reversed.

Write $X\cong G/K_{0}$ for some fixed compact maximal subgroup $K_{0}\subset G$ and denote by $x_0$ the base point $\id K_0$ of $X$. Let $P$ be a minimal $\Gamma$-rational parabolic subgroup and let $L$ be the unique Levi subgroup $L$ of $P$ which is stable under the Cartan involution associated to $K_0$. Consider the Langlands decomposition
\begin{displaymath}
P=N_PAM,
\end{displaymath}
where $A$ is the split component of $P$ with respect to the basepoint $x_0$. That is $L=AM$, where $A$ is a split torus, $L\cap K$ is a maximal compact in $L$, and $M$ is the maximal anisotropic subgroup of the connected centralizer $Z(A)$ of $A$ in $P$. Since $Z(A)$ is compact, $M$ lies in $K$. For any real number $t>0$, let
\begin{displaymath}
A_t := \{a\in A : a^\alpha \geq t \},
\end{displaymath}
where $\alpha$ is the unique positive simple root of $G$ with respect to $A$ and $P$.

A \emph{Siegel set} for $G$ for the data $(K_0, P, A)$ is a product
\begin{displaymath}
\Sigma ' _{t, \Omega}= \Omega \cdot A_t \cdot K_0 \subset G,
\end{displaymath}
where $\Omega$ is a compact neighbourhood of the identity in $N_P$. Finally we consider Siegel sets in $X$
\begin{displaymath}
\Sigma  _{t, \Omega}= \Omega \cdot A_t \cdot x_0 \subset X,
\end{displaymath}
where $\Sigma  _{t, \Omega}$ denotes the image of $\Sigma ' _{t, \Omega}$ in $X$.

It is interesting to compare Siegel sets with the $D^{(N)}$s described in equation \eqref{dn} in the previous section. The arguments of \cite[Proposition 3.2]{MR3502100} give the following.
\begin{prop}\label{siegelanddn}
Let $\Sigma =\Sigma _{t, \Omega}$ be a Siegel set in $X$ for the action of $\Gamma$. Then $ \Sigma$ is covered by a finite union of open subsets $\Theta$ with the following properties. For each $\Theta$ there exists $\Gamma$-rational boundary component $F=\{b\}$ corresponding to a $\Gamma$-rational parabolic $P$, a positive integer $N$ large enough, relatively compact subsets $U' \subset U(P)$, $Y'\subset \C^{n-1}$ such that $\Theta$, up to a bi-isometry, can be described as
\begin{displaymath}
 \{(z_1,\dots , z_n)\in \C^{n-1}\times U(P)_\C, \operatorname{Re}(z_n)\in U', (z_1,\dots z_n)\in Y'   : \im (z_n) > \sum_{i=1}^{n-1}|z_i|^2 +N\} \subset D^{(N)}.
\end{displaymath}
\end{prop}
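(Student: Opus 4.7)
The plan is to translate the intrinsic group-theoretic description $\Sigma=\Omega\cdot A_t\cdot x_0$ into explicit Siegel-domain coordinates $(z_1,\dots,z_n)$ attached to the $\Gamma$-rational boundary component $F=\{b\}$ fixed by the parabolic $P$, following the argument of \cite[Proposition 3.2]{MR3502100} mutatis mutandis for the rank-one setting. By Garland--Raghunathan reduction theory, there are only finitely many $\Gamma$-conjugacy classes of minimal $\Gamma$-rational parabolics, so after a finite union I may fix a single $P$. Conjugating by an element of $G$, I realise $X$ as the Siegel domain $D$ so that $P$ is the stabiliser of $b=\infty$, and I make the Langlands decomposition $P=N_P A M$ concrete in these coordinates: $U_P\subset N_P$ acts by translations of $\operatorname{Re}(z_n)$; the quotient $N_P/U_P$ acts through the standard Heisenberg action on $(z_1,\dots,z_{n-1})$ coupled with a quadratic correction of $z_n$; the split one-parameter subgroup $A$ acts by the dilations $(z_1,\dots,z_{n-1},z_n)\mapsto(\lambda z_1,\dots,\lambda z_{n-1},\lambda^2 z_n)$; and $M$ together with $K_0$ fixes $x_0$, so the $K_0$-factor in the Siegel set is absorbed.

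Next, I would compute explicitly how $\Sigma$ sits inside $D$. Write the compact $\Omega\subset N_P$ as the product of a compact $\Omega_U\subset U_P$ and a relatively compact lift $\Omega_V$ of a compact subset of $N_P/U_P$. A typical element $\omega a\cdot x_0$ with $\omega\in\Omega$ and $a\in A_t$ is obtained by first moving $x_0$ by $a$ along the axis $z_1=\cdots=z_{n-1}=\operatorname{Re}(z_n)=0$ to a point whose imaginary coordinate $\im(z_n)$ tends to $+\infty$ as $a^{\alpha}\to\infty$, and then translating by $\omega$. The translation by $\Omega_U$ moves $\operatorname{Re}(z_n)$ inside a fixed compact $U'\subset U(P)$, while the action of $\Omega_V$ moves $(z_1,\dots,z_{n-1})$ inside a fixed relatively compact $Y'\subset\C^{n-1}$; the key point, exploited throughout \cite[Section 3]{MR3502100}, is that conjugation by $a\in A_t$ \emph{contracts} $N_P$ as $t\to\infty$, so these horizontal perturbations do not grow with $a$. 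Therefore the whole Siegel set lies, up to bi-isometry, in a product of the form $Y'\times U'\times\{\im(z_n)\geq c(t)\}$ with $c(t)\to\infty$ as $t\to\infty$.

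To produce the required finite cover, I would stratify $\Omega$ and the Siegel set into finitely many pieces on each of which $U'$ and $Y'$ can be taken to be fixed relatively compact sets, obtaining the candidate open subsets $\Theta$. Choosing $t$ (equivalently $N$) sufficiently large guarantees $\im(z_n)>\sum_{i=1}^{n-1}|z_i|^2+N$ pointwise on each $\Theta$, so that $\Theta\subset D^{(N)}$ as required. The main obstacle is the careful bookkeeping of the interaction between the non-abelian Heisenberg action of $N_P$ on $D$ and the contracting conjugation by $A_t$: one must verify that the $\Omega_V$-perturbation of $(z_1,\dots,z_{n-1})$ stays bounded uniformly in $a\in A_t$ rather than growing like the $A$-scaling. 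Once this contraction is confirmed explicitly in Siegel-domain coordinates, the description of each $\Theta$ as a product of the claimed form follows directly from the explicit action.
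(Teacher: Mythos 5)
Your route is the same as the paper's: the paper offers no independent argument for this proposition, but simply says that the arguments of \cite[Proposition 3.2]{MR3502100} give it, and what you write out — the Langlands decomposition made explicit in the Siegel-domain model, $U_P$ acting by real translations of $z_n$, $N_P/U_P$ by Heisenberg translations, $A$ by the dilations $(z_i,z_n)\mapsto(\lambda z_i,\lambda^2 z_n)$, $M$ and $K_0$ absorbed, and then a finite cover by pieces $\Theta$ of product type — is precisely that adaptation, and the resulting description is correct.

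However, the step you single out as ``the key point'' and ``the main obstacle'' is stated backwards, and if your argument genuinely hinged on confirming it, it would fail. Under the paper's conventions ($G$ acting on the left, $A_t=\{a\in A: a^{\alpha}\geq t\}$ with $\alpha$ the positive simple root for $P$), conjugation by $a\in A_t$ does \emph{not} contract $N_P$: $\Ad(a)$ scales the root spaces of $\Lie(N_P)$ by $a^{\alpha}\geq t$ and $a^{2\alpha}$, so it expands them (already in $\Sl_2$ one has $a n_x a^{-1}=n_{a^{\alpha}x}$). The correct, and much simpler, observation is that no conjugation enters at all: in $\Sigma_{t,\Omega}=\Omega\cdot A_t\cdot x_0$ the compact set $\Omega\subset N_P$ acts \emph{after} $A_t$, so a point is $\omega\cdot(a x_0)$ with $a x_0=(0,\dots,0,is(a))$, and the Heisenberg translation formula immediately places $(z_1,\dots,z_{n-1})$ in the fixed compact image of $\Omega$ in $N_P/U_P$, $\operatorname{Re}(z_n)$ in a fixed compact subset of $U_P$, and gives $\im(z_n)-\sum_{i<n}|z_i|^2=s(a)$ bounded below in terms of $t$, uniformly in $a$. (Contraction would only be relevant if you rewrote the point as $a\cdot(a^{-1}\omega a)\cdot x_0$, i.e.\ conjugation by $a^{-1}$.) A minor further point: $t$ is fixed by the given Siegel set, so you cannot ``choose $t$ (equivalently $N$) sufficiently large''; the containment in $D^{(N)}$ for a large integer $N$ is obtained only after the bi-isometry allowed in the statement, e.g.\ a dilation by an element of $A$ pushing the set deeper into the cusp.
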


\begin{defi}
A \emph{fundamental set} for the action of $\Gamma$ on $X$ is a connected open subset $\F$ of $X$ such that $\Gamma \cdot \overline{\F}=X$ and the set
\begin{displaymath}
\{\gamma \in \Gamma : \gamma \F \cap \F \neq \emptyset\}
\end{displaymath}
is finite.
\end{defi}

\begin{thm}[Garland-Raghunathan]\label{fundamentalset}
For any Siegel set $\Sigma_{t, \Omega}$ the set
\begin{displaymath}
\{\gamma \in \Gamma : \gamma \Sigma_{t,\Omega}  \cap \Sigma_{t,\Omega}\neq \emptyset \}
\end{displaymath}
is finite. There exists a Siegel set $\Sigma_{t_0, \Omega}$ and a finite subset $\Xi$ of $G$ such that:
\begin{itemize}
\item For all $b \in \Xi$, $\Gamma \cap b N_P b^{-1}$ is a lattice in $b N_P b^{-1}$;
\item The set $\Xi \cdot \Sigma_{t_0, \Omega}$ is a fundamental set for the action of $\Gamma$ on $X$.
\end{itemize}
Furthermore, when $\Omega$ is chosen to be semialgebraic, the associated Siegel sets $\Sigma_{t,\Omega}$ are semialgebraic. In particular the fundamental set $\F:= \Xi \cdot \Sigma_{t_0, \Omega}$ is semialgebraic.
\end{thm}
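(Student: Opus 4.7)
The plan is to reduce the first two assertions to the reduction theory of Garland--Raghunathan \cite{MR0267041}, which generalises Borel's reduction theorem from arithmetic lattices to arbitrary lattices in rank-one real semisimple groups; the semialgebraicity statement is then a direct verification from the definition of Siegel sets.

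First I would invoke the main theorem of Garland--Raghunathan, which gives the \emph{Siegel property}: for any Siegel set $\Sigma_{t,\Omega}$ the set $\{\gamma \in \Gamma : \gamma \Sigma_{t,\Omega} \cap \Sigma_{t,\Omega} \neq \emptyset\}$ is finite. This is the direct analogue of Borel's result \cite[Theorem 15.4]{MR0244260}, and its proof in the rank-one setting is carried out in \cite{MR0267041} once one has the covering property below.

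Second I would use the fact, again due to Garland--Raghunathan, that a lattice $\Gamma$ in a rank-one group has only finitely many cusps, i.e.\ finitely many $\Gamma$-conjugacy classes of proper $\Gamma$-rational parabolic subgroups. Fix one minimal $\Gamma$-rational parabolic $P$ with Langlands decomposition $P=N_P A M$ (with respect to the base point $x_0$), and choose representatives $b_1,\dots,b_k\in G$ such that the $b_i P b_i^{-1}$ exhaust all $\Gamma$-conjugacy classes of $\Gamma$-rational parabolics. Setting $\Xi := \{b_1,\dots,b_k\}$, the reduction theorem then provides $t_0 > 0$ and a compact $\Omega \subset N_P$ such that $\Gamma \cdot (\Xi \cdot \Sigma_{t_0,\Omega}) = X$; combined with the Siegel property, this gives exactly that $\Xi \cdot \Sigma_{t_0,\Omega}$ is a fundamental set. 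By construction each $b\in\Xi$ satisfies that $\Gamma \cap b N_P b^{-1}$ is a lattice in $b N_P b^{-1}$, since $bPb^{-1}$ is $\Gamma$-rational in the sense of Definition \ref{defparabolci}.

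The only step requiring independent attention is the semialgebraicity. Embedding $G \hookrightarrow \Gl_N(\R)$, all the subgroups $P, N_P, L, A, M, K_0$ are real algebraic subgroups of $G$; in particular $A$ is a real algebraic split torus, the simple root $\alpha$ is a regular character on $A$, and hence $A_t = \{a \in A : a^\alpha \geq t\}$ is cut out by a polynomial inequality and is semialgebraic. Multiplication in $G$ is a polynomial map, so for $\Omega \subset N_P$ semialgebraic the set $\Sigma'_{t,\Omega} = \Omega \cdot A_t \cdot K_0$ is semialgebraic in $G$; its image $\Sigma_{t,\Omega} \subset X$ under the algebraic quotient $G \to G/K_0 = X$ is then semialgebraic. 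Finally $\F = \Xi \cdot \Sigma_{t_0,\Omega}$ is a finite union of $G$-translates of a semialgebraic set and is therefore semialgebraic. The potentially delicate point—extending Borel's reduction theory from the arithmetic to the arbitrary-lattice setting in rank one—is precisely what is done by Garland--Raghunathan, so for this we can rely entirely on their work; everything else is formal.
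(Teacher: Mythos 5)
Your proposal is correct and follows essentially the same route as the paper: the Siegel property, the finite set $\Xi$ of cusp representatives and the covering statement are all quoted from Garland--Raghunathan \cite{MR0267041} (the paper cites Theorem 4.10 there, noting that lattices are ``admissible'' in their sense), and the semialgebraicity is checked directly exactly as you do, since $A_t$, $\Omega$ and $K_0$ are semialgebraic, products in $G$ and the projection to $X$ preserve semialgebraicity, and $\Xi$ is finite. The extra detail you supply about $\Gamma$-conjugacy classes of $\Gamma$-rational parabolics is consistent with the cited reduction theory and does not change the argument.
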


\begin{proof}
The first part of the statement is \cite[Theorem 4.10]{MR0267041}. It is formulated for \emph{admissible} discrete subgroups of $G$ (see Definition 0.4 in \emph{loc. cit.}), a class that includes any lattice $\Gamma \subset G$, as proved in Theorem 0.7 of \emph{loc. cit.}. When $\Omega$ is chosen to be semialgebraic, the Siegel set for $G$
\begin{displaymath}
 \Omega \cdot A_t \cdot K_0
\end{displaymath}
is semialgebraic since it is defined as product in $G$ of semialgebraic sets and therefore its image $\Sigma_{t,\Omega}$ in $X$ is again semialgebraic. Finally, since $\Xi$ is finite, $\Xi \cdot \Sigma_{t_0, \Omega}$ is semialgebraic.
\end{proof}

We discuss some functoriality properties of Siegel sets. Let $H$ be a semisimple group of $G$ intersecting $\Gamma$ as a lattice. As described in Section \ref{embeddings}, there is a real sub-Shimura datum $(H,X_H, \Gamma_H)$ associated to $H$. The next proposition describes the intersection of Siegel sets in $X$ with $X_H$ (assuming that the dimension of $X_H$ is non-zero).
\begin{prop}\label{funct}
Let $(H,X_H, \Gamma_H)$ be a real sub-Shimura datum of $(G=\PU(1,n), X, \Gamma)$. Any Siegel sets of $H$ is contained in a Siegel set for $G$.
\end{prop}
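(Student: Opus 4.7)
The strategy is to start from the data defining a Siegel set of $H$ and extend each ingredient to compatible data for $G$, so that the inclusion is almost tautological. Concretely, let $\Sigma'_{t,\Omega_H}=\Omega_H\cdot A_{H,t}\cdot K_{0,H}\subset H$ be a Siegel set of $H$ attached to the datum $(K_{0,H},P_H,A_H)$ with $P_H=N_{P_H}A_HM_H$. I will construct data $(K_0,P,A)$ for $G$ together with a compact neighbourhood $\Omega\supset \Omega_H$ in $N_P$, and verify the inclusion $\Sigma'_{t,\Omega_H}\subset \Sigma'_{t,\Omega}$.

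First I would use the explicit description of real sub-Shimura couples from Section~\ref{embeddings}: $H$ sits in $G=\PU(1,n)$ as the stabiliser of a complex subspace $W_\C\subset V_\C$ on which the Hermitian form has signature $(1,m)$, and $H\cong \PU(1,m)\times C$ with $C$ compact. A minimal parabolic $P_H$ is, up to the compact factor $C$, the stabiliser in $\PU(1,m)$ of an isotropic line $\ell\subset W_\C$. Since $\ell$ is still isotropic in $V_\C$, its stabiliser $P\subset G$ is a minimal parabolic with $N_{P_H}\subset N_P$ and $P_H\subset P\cap H$. Next, pick a base point $x_0\in X_H\subset X$ with $K_{0,H}=\Stab_H(x_0)$ and set $K_0=\Stab_G(x_0)$; then $K_{0,H}=K_0\cap H\subset K_0$, and the Cartan involution $\theta$ of $G$ attached to $x_0$ restricts to the Cartan involution of $H$ used to select $L_H=A_HM_H$, so the $\theta$-stable Levi decomposition $P=N_PAM$ of $G$ satisfies $A_H\subset A$ and $M_H\subset M$.

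Since both $G$ and the noncompact factor of $H$ are of real rank one, the split tori $A$ and $A_H$ are both one-dimensional, hence the inclusion $A_H\subset A$ is an equality. The unique positive simple root $\alpha$ of $G$ (acting on $\Lie N_P$ through $A$) restricts to the positive simple root $\alpha_H$ of $H$ (acting on $\Lie N_{P_H}\subset \Lie N_P$ through $A_H=A$), because the weight of $A$ on the one-dimensional subspace $\Lie U_P$ — which contains $\Lie U_{P_H}$ — is the same whether computed inside $G$ or inside $H$. Consequently $A_{H,t}=A_t$ with the common definition $\{a\in A:a^\alpha\geq t\}$.

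Finally, choose any compact neighbourhood $\Omega\subset N_P$ of the identity containing $\Omega_H$ (e.g. $\Omega=\Omega_H$ itself, viewed inside $N_P$). Then
\[
\Sigma'_{t,\Omega_H}=\Omega_H\cdot A_{H,t}\cdot K_{0,H}\subset \Omega\cdot A_t\cdot K_0=\Sigma'_{t,\Omega},
\]
and projecting through $G\to X=G/K_0$ (which sends $K_{0,H}$-cosets into $K_0$-cosets) gives the same inclusion $\Sigma_{t,\Omega_H}\subset \Sigma_{t,\Omega}$ inside $X$. The main point to check carefully — and essentially the only nontrivial step — is the compatibility of the rank-one split tori and of the positive simple roots; this is where the hypothesis that $H$ is of hermitian (hence reductive, rank-one-noncompact) type enters, via the explicit model of Section~\ref{embeddings}. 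Everything else is a matter of making compatible choices of base point, Cartan involution and unipotent neighbourhood.
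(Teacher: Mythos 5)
Your proposal is correct and is essentially the paper's argument: both proofs reduce to the explicit model of Section \ref{embeddings} and match up the Langlands data ($N_{P_H}\subset N_P$, $A_H=A$, $M_H\subset M$, $K_{0,H}\subset K_0$), the only cosmetic difference being that you extend the data from $H$ up to $G$ (via the stabiliser of the isotropic line) while the paper intersects a minimal $\Gamma$-rational parabolic of $G$ with $H$. One small imprecision: when the noncompact factor of $H$ is $\PU(1,1)$ its restricted root system has no ``short'' root and the simple root of $H$ restricts to $2\alpha$, so $A_{H,t}=A_{\sqrt{t}}$ rather than $A_t$; this only changes the parameter of the ambient Siegel set and does not affect the conclusion.
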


\begin{proof}
Since we are interested only in Siegel set in $X_H$ and $X$, as recalled in Section \ref{embeddings}, we may assume that $H=\PU(1,m)$ for some $m<n$. Let $P$ be a minimal $\Gamma$-rational parabolic subgroup of $P$ and $P_H$ be $P\cap H$. As above, consider the decompositions relative to $K_0 $ and $K_0 \cap H$
\begin{displaymath}
P=N_PAM, \ \ P_H= N_H A_H M_H.
\end{displaymath}
As explained in Section \ref{embeddings} we have that $N_P\cap P_H = N_H$, $A \cap P_H = A_H$ and $M\cap P_H=M_H$. The result then follows by the construction of Siegel sets explained above.
\end{proof}

\subsubsection{Fundamental sets and o-minimality}\label{ominintro}
We have all the ingredients to prove that the uniformising map, opportunely restricted, is definable, extending \cite[Theorem 4.1]{MR3502100} to cover arbitrary lattices. For a general introduction to \emph{o-minimal structures}, we refer to \cite{MR1633348}. Thanks to Theorem \ref{algebraic}, $S_\Gamma$ has a canonical structure of a complex algebraic manifold, and therefore of an $\R_{\operatorname{an, exp}}$ manifold, see also \cite[Appendix A]{MR3502100}. The following can be proven as \cite[Theorem 4.1]{MR3502100} replacing all references to \cite{MR2590897} with the arguments of Section \ref{compac}.

\begin{thm}\label{defini}
Let $(G,X,\Gamma)$ be a real Shimura datum. There exists a semi--algebraic fundamental set $\F$ for the action of $\Gamma$ in $X$ such that the restriction $\pi: \F \to S_\Gamma$ of the uniformising map $\pi : X \to S_\Gamma$ is definable in $\R_{\operatorname{an, exp}}$.
\end{thm}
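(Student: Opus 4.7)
The plan is to take the semi-algebraic fundamental set $\F = \Xi \cdot \Sigma_{t_0,\Omega}$ produced by Theorem \ref{fundamentalset} and verify definability of $\pi|_\F$ by checking it in charts of the toroidal compactification $\overline{S_\Gamma}$, rather than the Baily--Borel compactification used in \cite{MR3502100}. Concretely, since $\overline{S_\Gamma}$ is compact, fix a finite open cover $\{U_\alpha\}_\alpha$ with $U_\alpha \cong \Delta^n$, where either $U_\alpha \subset S_\Gamma$ or $U_\alpha \cap S_\Gamma \cong \Delta^{n-1}\times \Delta^*$ near a boundary divisor as in Section \ref{toroidalsubse}; by Theorem \ref{algebraic} each $U_\alpha$ carries its canonical $\R_{\operatorname{an}}$-structure compatible with the algebraic structure of $S_\Gamma$. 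It suffices to show that for each $\alpha$ the restriction of $\pi$ to $\F \cap \pi^{-1}(U_\alpha)$ is definable in $\R_{\operatorname{an, exp}}$, since definability is local on the target and the $U_\alpha$ are finitely many.

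For charts $U_\alpha$ contained in $S_\Gamma$, the preimage $\pi^{-1}(U_\alpha) \cap \F$ meets only finitely many $\Gamma$-translates of $\F$, and by Theorem \ref{fundamentalset} the part of $\F$ inside a relatively compact piece of $X$ is a semi-algebraic set on which the holomorphic map $\pi$ is restricted-analytic; hence $\pi$ is $\R_{\operatorname{an}}$-definable there. The substance of the argument is at the cusps, where I invoke Proposition \ref{siegelanddn}: the intersection $\Sigma_{t_0,\Omega} \cap \pi^{-1}(U_\alpha)$ is covered by finitely many pieces $\Theta$ that, in a Siegel domain realisation $D \subset \C^n$ associated to the $\Gamma$-rational parabolic $P$ corresponding to the boundary component of $U_\alpha$, sit inside $D^{(N)}$ with coordinates $(z_1,\dots,z_n)$ satisfying $\Re(z_n) \in U'$ (relatively compact), $(z_1,\dots,z_{n-1}) \in Y'$ (relatively compact), and $\im(z_n) \geq N + \sum_{i<n}|z_i|^2$.

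Now factor $\pi$ through the intermediate quotient by the unipotent centre of $P$, as in the proof of Proposition \ref{quasiunipot}: one has $\pi = q \circ \exp_F$, where $\exp_F$ is the identity on the first $n-1$ coordinates and sends $z_n \mapsto e^{2\pi i z_n}$, and $q$ is a local biholomorphism from $\exp_F(\mathcal{X})$ onto its image in $S_\Gamma$ extending across the boundary divisor. On each piece $\Theta$ the map $\exp_F$ is definable in $\R_{\operatorname{an,exp}}$: the angular factor $e^{2\pi i \Re(z_n)}$ is a restricted analytic function because $\Re(z_n)$ ranges in the relatively compact $U'$, and the radial factor $e^{-2\pi \im(z_n)}$ is the real exponential on a half-line, which is by definition in $\R_{\operatorname{an,exp}}$. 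The map $q$, being a local biholomorphism into the relatively compact chart $U_\alpha$ of $\overline{S_\Gamma}$, is restricted-analytic and hence definable. Composing and taking a finite union over $\alpha$, over the pieces $\Theta$, and over the finitely many translates by $\Xi$ concludes the argument.

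The main obstacle is the bridge between the group-theoretic fundamental set (Siegel sets relative to $P$) and the geometric toroidal coordinates on $\overline{S_\Gamma}$; in the arithmetic setting of \cite{MR3502100} this is furnished by reduction theory and toroidal compactification \`a la \cite{MR2590897}, whereas here, since $\Gamma$ need not be arithmetic, I must use Mok's toroidal compactification from Section \ref{compac} together with Proposition \ref{siegelanddn} to place each Siegel piece inside the standard $\Delta^{n-1}\times \Delta^*$ chart. Once this geometric matching is secured, the definability of the exponential cusp-uniformiser on a strip of bounded real part and unbounded imaginary part follows exactly as in the Shimura variety case.
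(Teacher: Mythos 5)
Your argument is correct and is essentially the paper's own proof: the paper simply states that Theorem \ref{defini} is proven as in \cite[Theorem 4.1]{MR3502100}, replacing the references to arithmetic toroidal compactifications with the material of Section \ref{compac}, and your write-up is precisely that adaptation — Siegel pieces placed in the cusp coordinates via Proposition \ref{siegelanddn}, factorisation of $\pi$ through $\exp_F$ as in Proposition \ref{quasiunipot}, definability of the exponential on a strip of bounded real part and imaginary part bounded below, and restricted-analyticity of the remaining map into the charts of $\overline{S_\Gamma}$.
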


The following will be implicitly used whenever speaking of $\Gamma$-special subvarieties.
\begin{cor}\label{algmorphism}
Let $(H,X_H, \Gamma_H)$ be a real sub-Shimura datum of $(G=\PU(1,n), X, \Gamma)$. The induced map $S_{\Gamma_{H}}\to S_{\Gamma}$ is algebraic.
\end{cor}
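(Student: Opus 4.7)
The plan is to descend the holomorphic inclusion $X_H \hookrightarrow X$ to a holomorphic map $f \colon S_{\Gamma_H} \to S_\Gamma$ (which is automatic from the inclusions $\Gamma_H \subset \Gamma$ and $X_H \subset X$), and then upgrade algebraicity from holomorphy via o-minimality. The target is to verify that $f$ is definable in $\R_{an,\exp}$ with respect to the algebraic structures on source and target provided by Theorem \ref{algebraic}; once this is known, the conclusion follows from the general o-minimal GAGA/definable-Chow principle that a definable holomorphic map between complex algebraic varieties is algebraic (an application of Peterzil--Starchenko, as used in related Hodge-theoretic contexts such as \cite{bakker2018tame}).

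The definability input is assembled from what has already been set up. By Theorem \ref{defini}, fix semi-algebraic fundamental sets $\F_H \subset X_H$ and $\F \subset X$ for $\Gamma_H$ and $\Gamma$, with the uniformisations $\pi_H \colon \F_H \to S_{\Gamma_H}$ and $\pi \colon \F \to S_\Gamma$ definable in $\R_{an,\exp}$; Theorem \ref{fundamentalset} lets us take each to be a finite union of Siegel sets. The key compatibility step is Proposition \ref{funct}: every Siegel set in $X_H$ for $\Gamma_H$ sits inside a Siegel set $\Sigma$ in $X$ for $\Gamma$, so $\F_H$ is contained in a finite union of Siegel sets for $G$. Applying Theorem \ref{fundamentalset} to such $\Sigma$ and $\F$, the set $\{\gamma \in \Gamma : \gamma \Sigma \cap \F \neq \emptyset\}$ is finite, and since $\Gamma\cdot \F = X$, this produces a finite list $\gamma_1,\dots,\gamma_N \in \Gamma$ with $\F_H \subset \bigcup_{i=1}^N \gamma_i \F$.

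Splitting $\F_H$ into the semi-algebraic pieces $\F_H \cap \gamma_i \F$ (discarding overlaps in a definable way) and composing with the definable $\pi \colon \F \to S_\Gamma$ via $x \mapsto \pi(\gamma_i^{-1} x)$ on each piece, the composite $\F_H \to S_\Gamma$ becomes definable in $\R_{an,\exp}$. Since $\pi_H$ is a definable surjection onto $S_{\Gamma_H}$ and this composite factors through $f \circ \pi_H$, the induced map $f \colon S_{\Gamma_H} \to S_\Gamma$ is definable. Being also holomorphic between quasi-projective algebraic varieties, $f$ is algebraic, as required. The main obstacle is really the book-keeping of the last paragraph: making sure one can cover $\F_H$ by finitely many $\Gamma$-translates of $\F$ in a semi-algebraic way; this is precisely where Proposition \ref{funct} and the finiteness assertion in Garland--Raghunathan reduction theory (Theorem \ref{fundamentalset}) are essential, and it is the only nontrivial ingredient beyond the now-standard o-minimal algebraicity machinery.
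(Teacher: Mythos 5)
Your proof follows essentially the same route as the paper: definable restrictions of the uniformising maps to semialgebraic fundamental sets (Theorem \ref{defini}), the compatibility of Siegel sets for $H$ and for $G$ (Proposition \ref{funct}), and Peterzil--Starchenko (Theorem \ref{ogaga}) applied to the definable, closed analytic graph of the descended map. The only point where you deviate is the covering of $\F_H$ by finitely many $\Gamma$-translates $\gamma_i\F$: the finiteness of $\{\gamma\in\Gamma:\gamma\Sigma\cap\F\neq\emptyset\}$ that you invoke is a Siegel property for two \emph{different} sets, while Theorem \ref{fundamentalset} as stated only compares a Siegel set with itself; this stronger statement is indeed available in Garland--Raghunathan reduction theory, but the paper sidesteps it entirely by using Proposition \ref{funct} to choose the fundamental set $\F$ for $\Gamma$ so that it already contains $\F_H$, and then pushing the semialgebraic graph of $\tilde{i}|_{\F_H}$ forward through the definable map $\F_H\times\F\to S_{\Gamma_H}\times S_\Gamma$. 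With that small repair (or by simply adopting the paper's choice of $\F$), your argument is correct.
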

\begin{proof}
By definition $(H,X_H, \Gamma_H) \to (G, X, \Gamma)$ induces a commutative diagram
\begin{center}
\begin{tikzpicture}[scale=2]
\node (A) at (-1,1) {$X_H$};
\node (B) at (1,1) {$X$};
\node (C) at (-1,0) {$S_{\Gamma_H}$};
\node (D) at (1,0) {$S_{\Gamma}$};
\path[->,font=\scriptsize,>=angle 90]
(A) edge node[above]{$\tilde{i}$} (B)
(C) edge node[above]{$i$} (D)
(A) edge node[right]{} (C)
(B) edge node[right]{} (D);
\end{tikzpicture}.
\end{center}
For some $x \in X_H$, we can write the graph of $\tilde{i}$ as 
\begin{displaymath}
\{h.(x, \tilde{i}(x)), \ h \in H\}.
\end{displaymath}
Let $\F_H$ be a fundamental set for the action of $\Gamma_H$ in $X_H$. By Proposition \ref{funct}, we have that the graph of $\tilde{i}$ restricted to $\F_H$ is a definable set. Let $\F$ be a fundamental set for $\Gamma$ in $X$, containing $\F_H$, and consider the restriction of the uniformising map $X_H \times X \to S_{\Gamma_H}\times S_{\Gamma}$ to $\F_H \times \F$:
\begin{displaymath}
\pi _{ | \F_H \times \F}: \F_H \times \F \to  S_{\Gamma_H}\times S_{\Gamma}.
\end{displaymath}
By Theorem \ref{defini}, it is a definable map and therefore $\pi _{ | \F_H \times \F} (\operatorname{graph}(\tilde{i}_{| \F_H}))$ is definable. The result then follows from Peterzil--Starchenko's \emph{o-minimal GAGA}, which is explained below. We have eventually proven that $i: S_{\Gamma_H}\hookrightarrow S_\Gamma$ is algebraic.
\end{proof}

\begin{thm}[{\cite[Theorem 4.4 and Corollary 4.5]{MR2492989}}]\label{ogaga}Let $S$ be a smooth complex algebraic variety. Let $Y \subset S$ be a closed complex analytic subset which is also a definable subset (for some o-minimal structure expanding $\R_{\an}$). Then $Y$ is an algebraic subvariety of $S$.
\end{thm}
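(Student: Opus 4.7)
The plan is to reduce to the classical Chow theorem via a careful compactification of $S$, using o-minimality to control the behavior of $Y$ at the boundary. First, I would choose a good compactification: by Nagata together with Hironaka's resolution of singularities, one can find a smooth projective variety $\overline{S}$ containing $S$ as a Zariski-open subset, whose complement $B := \overline{S}\setminus S$ is a simple normal crossings divisor. Let $\overline{Y}$ denote the topological closure of $Y$ inside $\overline{S}^{\an}$. If $\overline{Y}$ can be shown to be a closed complex analytic subset of $\overline{S}^{\an}$, then Chow's theorem applied to the projective $\overline{S}$ forces $\overline{Y}$ to be algebraic, and hence $Y = \overline{Y}\cap S$ is an algebraic subvariety of $S$.

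The core of the argument would then be the analytic extension of $Y$ across $B$. Working locally near a point of $B$, after adapting coordinates to the normal crossings structure, the task reduces to showing that a closed complex analytic subset of $(\Delta^*)^{k} \times \Delta^{n-k}$ which is definable in the ambient polydisk $\Delta^{n}$ extends analytically to all of $\Delta^{n}$. Here the definability hypothesis is crucial: without it, an analytic subset can have essential accumulation at the boundary (the graph of $\exp(1/z)$ being the canonical obstruction), but o-minimality precisely excludes such wild oscillation. The main inputs would be the o-minimal monotonicity theorem and curve selection, which guarantee that $\overline{Y}$ approaches each boundary point along finitely many well-defined branches with limits, combined with the Remmert--Stein extension theorem, which then extends $\overline{Y}$ across the lower-dimensional analytic locus $\{z_{1}\cdots z_{k} = 0\}$ once pointwise control of the dimension is available.

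The main obstacle is precisely this translation of o-minimal tameness at the boundary into analyticity of $\overline{Y}$ at points of $B$. An alternative and perhaps more robust route, following the original Peterzil--Starchenko strategy, would be to split the proof into two stages: first establish the meromorphic analogue, namely that a meromorphic function on a smooth complex algebraic variety whose graph is definable must be rational, via induction on dimension by slicing through one-parameter families and exploiting o-minimal control on growth at infinity; then deduce the analytic statement by locally writing $Y$ as the common zero locus of finitely many holomorphic functions, extending these meromorphically to $\overline{S}$, and invoking the meromorphic case to conclude that the defining equations are in fact rational, forcing $Y$ to be algebraic.
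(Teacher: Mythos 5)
The statement you are proving is not proved in the paper at all: it is quoted as Peterzil--Starchenko's definable Chow theorem, so the only possible comparison is with their argument, whose broad outline (compactify $S$, extend the closure of $Y$ across the boundary using definability, conclude by a Chow-type theorem) your first paragraph correctly reproduces. The problem is that the extension step, as you describe it, does not work. Classical Remmert--Stein extends a pure $k$-dimensional analytic set only across an \emph{analytic} exceptional set of complex dimension strictly less than $k$; here the exceptional locus is the boundary divisor $B=\overline{S}\setminus S$ (locally $\{z_1\cdots z_k=0\}$), which has complex dimension $n-1\ge\dim_\C Y$ in every nontrivial case --- already for a definable analytic curve in a surface the hypothesis fails --- so the theorem you invoke is simply not applicable. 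Nor can one repair this by replacing $B$ with the Zariski closure of the frontier $\overline{Y}\cap B$: a definable set of real dimension $2k-1$ can have Zariski closure of complex dimension $\ge k$. The o-minimal monotonicity theorem and curve selection are real, essentially one-variable statements; ``finitely many branches with limits'' is far from analyticity of $\overline{Y}$ at boundary points.

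What is actually needed, and what constitutes the real content of the cited result, is the combination of (i) the o-minimal frontier inequality $\dim_\R(\overline{Y}\setminus Y)<\dim_\R Y=2\dim_\C Y$, which reduces the problem to extending across a definable set of small \emph{real} dimension rather than across all of $B$, and (ii) Peterzil--Starchenko's definable removable-singularity theorem (their strengthening of Remmert--Stein/Bishop), in which the exceptional set is an arbitrary definable set of real dimension $<2\dim_\C Y$, with no analyticity or complex-codimension hypothesis. Your sketch contains neither a proof of (ii) nor a substitute for it (for instance a local volume bound that would make Bishop's extension theorem applicable), so the argument does not close. The alternative route you propose through meromorphic extension of local defining equations runs into the same wall: there is no a priori reason the local equations of $Y$ extend meromorphically across $B$, and controlling their growth at the boundary is exactly the point where the definability hypothesis has to do nontrivial work.
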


\section{\texorpdfstring{$\Z$}{z}-Hodge theory on non-arithmetic quotients and rigidity}\label{zhodgesection}
Let $(G,X, \Gamma)$ be a real Shimura datum and $S_\Gamma= \Gamma \backslash X$ the associated complex algebraic variety. Thanks to Theorem \ref{vhsonsgamma}, faithful linear real representations of $G$, induce real variations of Hodge structure on both $X$ and $S_\Gamma$. In this section we investigate how to construct a \emph{natural} $\Z$-variation of Hodge structure on $S_\Gamma$. The first step is proving that the traces of the image of any $\gamma \in \Gamma$ along the adjoint representation lie in the ring of integers of a totally real number field $K$.

Given a complex algebraic variety $S$, we denote by $S^{\an}$ the complex points $S(\C)$ with its natural structure of a complex analytic variety and by $\pi_1(S)$ the topological fundamental group of $S^{\an}$ (unless it is necessary, we omit the base point in the notation). Regarding variations of Hodge structure (VHS form now on) we consider only polarised and pure VHS. More precisely we consider only $\Q$, $K$, $\R$, $\C$, $\Z$ and $\Oo_K$-VHS, as defined in \cite{MR2918237}, where $K$ is a number field.

Let $(G=\PU(1,n),X,\Gamma)$ be a real Shimura datum ($n>1$). Recall that there is an isogeny from $\lambda : \SU(1,n)\to \PU(1,n)$, and we may assume that, up to replacing $\Gamma$ by a finite index subgroup, $\Gamma$ is the image along $\lambda$ of a lattice in $\SU(1,n)$. For this section we assume this is indeed the case and we let $V_\C$ be a complex vector space of dimension $n+1$, and $\SU(1,n)\to \Gl(V_\C)$ be the standard representation of $\SU(1,n)$. Let $\mathbb{V}$ be the local system associated to 
\begin{equation}\label{locsystem}
\rho : \pi_1(S_\Gamma) \to \Gl(V_\C).
\end{equation}
If $\Gamma$ is an arithmetic subgroup of $G$, it is a well-known fact that $\mathbb{V}$ is induced by a $\Z$-VHS corresponding to a family of principally polarised abelian varieties.

We use a recent work of Esnault and Groechenig to prove that lattices in $G$, not necessarily cocompact, have integral traces. As noticed in \cite[Remark 7.12]{MR3826464}, if $\Gamma$ is cocompact, such integrality follows directly from the work of Esnault and Groechenig \cite{MR3874695}. This is the first step to construct a $\Z$-VHS on $S_\Gamma$ closely related to the original one (Theorem \ref{thmzvhs}). Eventually we construct \emph{generalised modular embeddings} of $S_\Gamma$ in some period domain, see Section \ref{diagram}. 

\subsection{Cohomological rigidity and integral traces}
The first step in constructing a $\Z$-VHS is the following, which may be of independent interest (in Theorem \ref{totreal} we will prove that the trace field $K$ is in fact totally real).
\begin{thm}\label{integrality}
Let $\Gamma$ be a lattice in $G=\PU(1,n)$ for some $n>1$, $K$ its trace field and $\mathbf{G}$ the $K$-form of $G$ determined by $\Gamma$. There exists a finite index subgroup $\Gamma ' \subset \Gamma $ with integral traces. Equivalently, up to conjugation by $G$, $\Gamma'$ lies in $\mathbf{G}(\Oo_K)$, where $\Oo_K$ is the ring of integers $K$.
\end{thm}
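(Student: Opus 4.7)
The plan is to combine cohomological rigidity (which for $n>1$ is provided by Garland--Raghunathan) with the integrality theorem of Esnault--Groechenig \cite{MR3874695} applied to the local system $\mathbb{V}$ on $S_\Gamma$ coming from the standard representation of $\SU(1,n)$. Once traces are integers of $K$, the final statement follows by the standard description of the $K$-form $\mathbf{G}$ determined by $\Gamma$ recalled in Section \ref{localrig}.

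First, I would check that the hypothesis of Esnault--Groechenig is verified. The local system $\mathbb{V}$ is irreducible, since the monodromy representation $\rho$ factors through the Zariski-dense inclusion $\Gamma \hookrightarrow \SU(1,n) \to \Gl(V_\C)$ via the standard representation. By Garland--Raghunathan \cite{MR0267041} (the same result underlying Theorem \ref{coeffinnumbers}), for $n>1$ we have $H^1(\Gamma,\operatorname{Ad}\rho)=0$, and this vanishing is exactly the infinitesimal/cohomological rigidity needed to apply the integrality theorem. In the cocompact case this directly gives the conclusion, as already observed in \cite[Remark 7.12]{MR3826464}.

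The main obstacle is the non-cocompact case, where $S_\Gamma$ is only quasi-projective and one must control the monodromy at the boundary of a good compactification. Here I would use the smooth toroidal compactification $\overline{S_\Gamma}$ constructed by Mok \cite{MR2884042} and recalled in Section \ref{toroidalsubse}, together with Proposition \ref{quasiunipot}, which shows that the local monodromy of $\mathbb{V}$ around every boundary divisor lies in a cyclic subgroup $\Gamma_{U_P}\cong \Z$ coming from a $\Gamma$-rational parabolic. Since this monodromy is realised by an element of the unipotent radical $N_P$ of a parabolic in $\PU(1,n)$, it is unipotent (in particular quasi-unipotent). Hence the pair $(\overline{S_\Gamma},\mathbb{V})$ fits into the setting of the quasi-projective version of Esnault--Groechenig's theorem: $\overline{S_\Gamma}$ is smooth projective, the boundary is strict normal crossings, $\mathbb{V}$ is cohomologically rigid, and its monodromy at infinity is quasi-unipotent.

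Applying this integrality theorem, I obtain that the traces of $\rho(\gamma)$ for $\gamma \in \Gamma$ are algebraic integers, lying in the subfield of $\C$ they generate. Since passing to $\operatorname{Ad}\rho = \rho\otimes \rho^\vee$ preserves the property of having integral traces, the adjoint trace field $K$ of $\Gamma$ consists of algebraic integers, i.e.\ its traces land in $\Oo_K$. By the description of the $K$-form $\mathbf{G}$ recalled in Section \ref{localrig} (Vinberg \cite{MR0279206}, see also \cite[Chapter VIII, Proposition 3.22]{margulisbook}), the trace field together with integrality of traces on a finite-index subgroup $\Gamma'$ (obtained by clearing denominators on the finite set of generators, as in Theorem \ref{coeffinnumbers}) forces $\Gamma'$, after conjugation by an element of $G$, to lie in $\mathbf{G}(\Oo_K)$, concluding the proof.
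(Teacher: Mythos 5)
Your proposal is correct and follows essentially the same route as the paper: the standard local system $\mathbb{V}$ from $\SU(1,n)$, Mok's toroidal compactification with Proposition \ref{quasiunipot} giving unipotent monodromy at infinity, Garland--Raghunathan vanishing for rigidity, Esnault--Groechenig for integrality, and Vinberg/Bass--Serre to pass from integral traces to $\Gamma'\subset \mathbf{G}(\Oo_K)$ up to conjugation. The only point to make explicit is that Esnault--Groechenig's condition is the vanishing of $\Hh^1(\overline{S_\Gamma}, i_{!*}\End^0(\mathbb{V}))$, which follows from $H^1(\Gamma,\Ad)=0$ because the former injects into $H^1(S_\Gamma,\End^0(\mathbb{V}))$, as the paper notes via \cite[Proposition 2.3]{MR3874695}.
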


In Section \ref{localrig} we discussed locally rigid lattices. To construct a $\Z$-VHS we need a stronger rigidity, namely \emph{cohomological rigidity}. Building on a study initiated by Weil \cite{MR0169956} in the cocompact case, Garland and Raghunathan proved the following. 
\begin{thm}[{\cite[Theorem 1.10]{MR0267041}}]\label{garalndragvanishing}
Let $G$ be a semisimple Lie group, not locally isomorphic to $\Sl_2$, nor to $\Sl_2(\C)$. For any lattice $\Gamma$ in $G$, the first Eilenberg--MacLane cohomology group of $\Gamma$ with respect to the adjoint representation is zero. In symbols:
\begin{displaymath}
H^1(\Gamma, \Ad)=0.
\end{displaymath}
\end{thm}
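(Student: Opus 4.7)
The plan is to realise $H^1(\Gamma,\Ad)$ as a space of harmonic differential forms on $S_\Gamma = \Gamma\backslash G/K_0$ with coefficients in the flat bundle $\underline{\mathfrak{g}}$ attached to $\Ad\colon\Gamma\to\Gl(\mathfrak{g})$, and to invoke a Bochner--Weitzenb\"ock identity to force the vanishing. I would expect the exclusion of $\Sl_2$ and $\Sl_2(\C)$ to appear analytically as exactly the two cases in which the Casimir/curvature term in the identity fails to be strictly positive, which is consistent with the classical picture that these are precisely the groups whose lattices genuinely deform (Teichm\"uller space and Thurston's deformations of cusped hyperbolic $3$-manifolds).

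First, via the standard identification of group cohomology with sheaf cohomology of the local system $\underline{\mathfrak{g}}$, one gets $H^1(\Gamma,\Ad)\cong H^1(S_\Gamma,\underline{\mathfrak{g}})$. When $\Gamma$ is cocompact, Hodge theory with coefficients (Matsushima) identifies the right hand side with harmonic $\underline{\mathfrak{g}}$-valued $1$-forms, and the Matsushima--Murakami formula writes the Laplacian on such forms as the sum of a Casimir operator pulled from $\mathfrak{g}$ and a manifestly non-negative curvature term. A representation-theoretic computation with $\Ad|_{K_0}$, where $K_0\subset G$ is a maximal compact, shows that the Casimir is strictly positive on $1$-forms except precisely when $G$ is locally isomorphic to $\Sl_2$ or $\Sl_2(\C)$, recovering Weil--Calabi--Vesentini rigidity.

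The heart of the matter is the non-cocompact case, which needs control of the cusps. I would work with the $L^2$-cohomology $H^1_{(2)}(S_\Gamma,\underline{\mathfrak{g}})$, using Borel--Garland--Raghunathan reduction theory through the Siegel sets of Section \ref{siegelsetssec} attached to the $\Gamma$-rational parabolics $P=N_PAM$. The crucial structural point is that, for $G$ not locally $\Sl_2$ or $\Sl_2(\C)$, the adjoint representation carries no non-zero $N_P$-invariants of the kind that would produce cuspidal classes; direct estimates on Siegel sets then show that any class in $H^1(\Gamma,\Ad)$ admits a smooth $\underline{\mathfrak{g}}$-valued representative with exponential decay in the cuspidal direction, so that the comparison map $H^1_{(2)}(S_\Gamma,\underline{\mathfrak{g}}) \to H^1(S_\Gamma,\underline{\mathfrak{g}})$ is an isomorphism for the adjoint coefficients.

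With every cohomology class now represented by an $L^2$ harmonic form, the Bochner argument from the cocompact setting applies verbatim, integration by parts being justified by the decay estimates, yielding $H^1(\Gamma,\Ad)=0$. The main obstacle I anticipate is precisely this middle step: producing the $L^2$ harmonic representatives and isolating the representation-theoretic reason ($N_P$-invariants in $\mathfrak{g}$) that forces the exclusion of $\Sl_2$ and $\Sl_2(\C)$; once this is in hand, the remainder is an adaptation of Weil's original rigidity argument.
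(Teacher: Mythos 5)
The paper does not actually prove this statement: it is quoted verbatim from Garland--Raghunathan (\cite[Theorem 1.10]{MR0267041}), so the comparison has to be with that source, whose broad strategy your outline does reproduce (Calabi--Vesentini--Weil/Matsushima--Murakami Bochner vanishing in the cocompact case, then reduction theory and square-integrable representatives at the cusps). But as a proof your proposal has a genuine gap exactly at the step you flag as the ``heart of the matter''. The claim that any class in $H^1(\Gamma,\Ad)$ admits a representative with exponential decay along the Siegel sets, so that $L^2$-cohomology maps onto (you say isomorphically; surjectivity is all you need) $H^1(S_\Gamma,\underline{\mathfrak{g}})$, \emph{is} the content of the non-cocompact case: ``direct estimates on Siegel sets then show'' is not an argument. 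The absence of ``$N_P$-invariants of the kind that would produce cuspidal classes'' is not a formal observation either; one must compute the nilpotent/cusp cohomology $H^{*}(\mathfrak{n}_P,\mathfrak{g})$ together with its $A$-weights and check that the weight permitting a closed, non-square-integrable $1$-form occurs only for $\Sl_2(\R)$ and $\Sl_2(\C)$, and then justify the integration by parts on the finite-volume quotient. Until that computation and those estimates are supplied (and note your cusp analysis, with $P=N_PAM$ and one simple root, is written for rank one, which matches the cited source and the paper's use for $\PU(1,n)$ but not the general semisimple statement), the proposal is a plan rather than a proof.

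There is also a concrete inaccuracy in where you locate the exclusions. In the cocompact Bochner argument the only exceptional group is (locally) $\Sl_2(\R)$: Calabi--Weil rigidity gives $H^1(\Gamma,\Ad)=0$ for cocompact lattices in $\Sl_2(\C)$, so the Casimir/curvature term does not fail to be positive there, contrary to your opening expectation. The group $\Sl_2(\C)$ must be excluded only because of the cusp contribution in the finite-volume case --- for a cusped hyperbolic $3$-manifold $H^1(\Gamma,\Ad)$ is nonzero, of dimension the number of cusps --- which is precisely the mechanism your $N_P$-invariants remark points to; of the two mechanisms you propose for the exclusions, only that second one is correct.
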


To see why such a vanishing is related to a rigidity result, observe that the the space of first-order deformations of $ \rho: \Gamma \hookrightarrow G$ is naturally identified with $H^1(\Gamma, \Ad)$, where 
\begin{equation}\label{adjointrep}
\Ad: \Gamma \xrightarrow{\rho } G \xrightarrow{\Ad} \Aut(\mathfrak{g})
\end{equation}
is the adjoint representation.

The following is proven by Esnault and Groechenig \cite{MR3874695} and its proof relies on Drinfeld’s theorem on the existence of $\ell$-adic companions over a finite field. Theorem \ref{integrality} is a consequence of such result.
\begin{thm}[{\cite[Theorem 1.1]{MR3874695}}]\label{esgro}
Let $S$ be a smooth connected quasi-projective complex variety. Then a complex local system $\mathbb{V}$ on $S$ is integral, i.e. it comes as extension of scalars from a local system of projective $\Oo_L$-modules of finite type (for some number field $L\subset \C$), whenever it is:
\begin{enumerate}
\item Irreducible;
\item Quasi-unipotent local monodromies around the components at infinity of a compactification with normal crossings divisor $i:S \hookrightarrow \overline{S}$;
\item Cohomologically rigid, that is $\Hh^1(\overline{S}, i_{!* } \End^0(\mathbb{V}))$ vanishes;
\item Of finite determinant.
\end{enumerate}
\end{thm}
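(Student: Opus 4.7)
The strategy combines descent via rigidity with Drinfeld's theorem on $\ell$-adic companions over smooth varieties over finite fields, using the four hypotheses as follows: (1) lets one recover the representation from its traces, (2) controls boundary behaviour, (3) pins down the point on the character variety, (4) removes the abelian/determinant ambiguity.

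\textbf{Step 1: descent to a number field.} I would first interpret hypothesis (3) on the character variety. With the compactification $i:S\hookrightarrow\overline S$ fixed, $\Hh^1(\overline S, i_{!*}\End^0(\mathbb V))$ is naturally the Zariski tangent space at $[\mathbb V]$ to the moduli of irreducible local systems with the prescribed quasi-unipotent local conjugacy data at the components of $\overline S\setminus S$. Hypothesis (2) makes the intermediate extension well-defined, (3) forces $[\mathbb V]$ to be an isolated point of this moduli, and (4) removes the one-parameter ambiguity of rescaling the determinant. Since the moduli space is defined over $\Q$, $[\mathbb V]$ is a $\Qbar$-point with only finitely many $\Gal(\Qbar/\Q)$-conjugates; after enlarging to a number field $L\subset\C$ the monodromy representation takes values in $\Gl_r(L)$.

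\textbf{Step 2: spreading out and $\ell$-adic models.} Next, spread $(\overline S, S, \mathbb V_L)$ out to a model over $\Spec A$, where $A$ is $\Oo_L$ with finitely many primes inverted. Fix an auxiliary prime $\ell$ and an embedding $L\hookrightarrow\Qlbar$. At each closed point $s\in\Spec A$ of residue characteristic $p\neq\ell$, the Riemann--Hilbert correspondence, in a form compatible with the quasi-unipotent boundary condition (via Deligne's canonical extension and the Katz--Gabber type comparison for tame ramification), yields a lisse $\Qlbar$-sheaf $\mathcal L_{s,\ell}$ on the reduction $S_s$, tamely ramified along $\overline{S_s}\setminus S_s$. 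The cohomological rigidity specialises along $\Spec A$, so $\mathcal L_{s,\ell}$ is in turn rigid.

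\textbf{Step 3: companions and integrality of traces.} I would then apply Drinfeld's companion theorem for smooth open varieties over $\FF_q$, in its extension to sheaves tamely ramified along a normal crossings divisor: for every prime $\ell'\neq p$ and every embedding $\iota:\Qlbar\hookrightarrow\Qlpbar$ there is a lisse $\Qlpbar$-sheaf $\mathcal L_{s,\ell'}^\iota$ on $S_s$ whose Frobenius traces are the $\iota$-images of those of $\mathcal L_{s,\ell}$. Any continuous $\Gl_r$-valued representation of a profinite group is, up to conjugation, valued in $\Gl_r$ of the ring of integers of a local field; applying this simultaneously at every $\ell'$ forces each Frobenius trace of $\mathcal L_{s,\ell}$ to lie in $\overline{\Z}$. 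Chebotarev together with the Zariski density of Frobenius conjugacy classes in $\pi_1(S_s)$ propagates integrality to all traces of $\mathbb V_L$. Finally, hypothesis (1) (absolute irreducibility) together with the standard Bass--Serre argument lets one conjugate the representation into $\Gl_r(\Oo_{L'})$ for some finite extension $L'/L$, producing the desired projective $\Oo_{L'}$-local system.

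\textbf{Main obstacle.} The technical heart is the extension of Drinfeld's companion theorem to the tamely ramified open setting, together with the verification that the companions constructed fibre-by-fibre, reinterpreted over $\C$ via Riemann--Hilbert, globalise to Galois conjugates of $\mathbb V_L$ on $S$ rather than just matching pointwise at each $s$. Hypotheses (2) and (3) are precisely what make this compatibility rigorous: quasi-unipotence controls the boundary conjugacy data to be matched between $\ell$ and $\ell'$, while cohomological rigidity eliminates any interior ambiguity. Without either, $\ell$-integrality of Frobenius traces could not be transported simultaneously to all primes $\ell'$, and the global $\Oo_L$-structure would fail to exist.
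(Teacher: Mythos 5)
The paper does not prove this statement: Theorem \ref{esgro} is quoted verbatim from Esnault--Groechenig \cite{MR3874695}, and the only comment made on its proof is that it relies on Drinfeld's theorem on the existence of $\ell$-adic companions over a finite field. So there is no internal proof to compare against; your sketch has to be measured against the cited argument, whose broad outline (descent to a number field via rigidity, spreading out, reduction mod $p$, companions, integrality from compactness of the arithmetic fundamental group) you do reproduce.

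Measured that way, two substantive gaps remain. First, Drinfeld's companion theorem applies to lisse sheaves on a variety over a finite field equipped with a Frobenius structure (an arithmetic, or at least Weil, sheaf), whereas your Step 2 only produces a lisse sheaf on the geometric special fibre via Riemann--Hilbert and specialisation. The cited proof needs a genuine intermediate step here: Frobenius pullback permutes the finite set of cohomologically rigid local systems with the prescribed rank, boundary and determinant data, so after passing to a power of Frobenius and a twist the sheaf extends to the arithmetic fundamental group; without this, the ``Frobenius traces'' invoked in your Step 3 are not even defined. Second, companions only match characteristic polynomials of Frobenius elements, not traces of arbitrary monodromy elements. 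Your Chebotarev/density step does show that \emph{the companion's} traces are $\ell'$-adic integers (indeed compactness of the arithmetic $\pi_1$ already gives this), but the companion restricted to the geometric $\pi_1$ is a priori a different local system from $\mathbb{V}_L$, so passing from integrality of the companion to integrality of all traces of $\mathbb{V}_L$ is precisely the point at issue. Esnault--Groechenig close this by proving that companions of cohomologically rigid sheaves are again cohomologically rigid, hence lie in the finite Galois orbit of $\mathbb{V}$, and by choosing the embedding $\iota$ place by place so that the resulting integrality lands on $\mathbb{V}$ itself at each finite place; only then does the Bass--Serre/order argument produce the $\Oo_{L}$-structure. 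You gesture at this under ``main obstacle'', but as written the trace-propagation argument does not go through without it.
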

Here $ i_{!* } \End^0(\mathbb{V})$ denotes the intermediate extension seen as a perverse sheaf. 
See \cite[Remark 2.4]{MR3874695} for more details. Moreover $\Hh^1(\overline{S}, i_{!* } \End^0(\mathbb{V}))$ is the Zariski tangent space at the moduli point of $\mathbb{V}$ of the Betti moduli stack of complex local systems of given rank with prescribed determinant and prescribed local monodromies along the components of the normal crossing divisor $\overline{S}- i(S)$.

\begin{proof}[Proof of Theorem \ref{integrality}]
In the proof we are free to replace $\Gamma$ by a finite index subgroup, and so may and do assume that, as in Section \ref{toroidalsubse}, $\Gamma$ is torsion free and that the parabolic subgroups of $\Gamma$ are unipotent. Moreover, as explained before, we assume also that $\Gamma$ lifts to $\SU(1,n)$.

Let $S_\Gamma$ be the ball quotient $\Gamma \backslash X$. Let $\mathbb{V}$ be the local system on $S_\Gamma $ associated to the standard representation of $\SU(1,n)$, as in (\ref{locsystem}). Thanks to Bass--Serre theory \cite{MR586867} (or \cite[Theorem 2]{MR0279206}), $\mathbb{V}$ is integral in the sense of Theorem \ref{esgro} if and only if the image of every $\gamma \in \Gamma$ in $\Gl(V_\C)$ has traces in the ring of integers of some number field $E \subset \C$ (see also \cite[Lemma 7.1]{MR2457528}). This implies that the traces of $\Ad (\gamma)$ are in $\Oo_K$, where $K$ is the trace field of $\Gamma$ (which will be proven to be a totally real number field in the next section).

As recalled in Section \ref{compac}, $S_\Gamma$ is a smooth quasi-projective variety which admits a smooth toroidal compactification 
\begin{displaymath}
i: S_\Gamma \hookrightarrow \overline{S_\Gamma},
\end{displaymath}
with smooth boundary\footnote{As recalled in the first paragraph of Section \ref{compac}, the boundary is actually a disjoint union of $N$ abelian varieties. Moreover, since $G$ has rank one, the toroidal compactification of $S_\Gamma$ does not depend on any choices.}. By construction the local system $\End^0(\mathbb{V})$ corresponds to the adjoint representation described in equation \eqref{adjointrep}.

Notice that, since $\Gamma$ is irreducible and Zariski dense in $G$, also $\mathbb{V}$ is irreducible (this of course depends on our choice of the faithful representation of $\SU(1,n)$ in $\Gl (V_\C)$). To prove the integrality of $\mathbb{V}$ we are left to check conditions (2),  (3) and (4) of Theorem \ref{esgro}. The fourth condition is satisfied, since every element of $\SU(1,n)$ has finite determinant.

\begin{proof}[Proof of (2)]
Let $B$ be $\overline{S_\Gamma} -i(S_\Gamma)$. Write $B$ as union of its $N$ disjoint irreducible components: $B=\bigcup _i B_i$. We notice here that the singular locus of $B$ is empty, and therefore $\overline{S_\Gamma}$ is equal to what is denoted by $U$ in \cite[Section 2]{MR3874695}. Let $P_i$ be $\Gamma$-rational parabolic corresponding to $B_i$. As proven in Proposition \ref{quasiunipot} the local monodromy at every $B_i$ corresponds to an element $T_i\in \Gamma\cap U_{P_i} $, which is certainly unipotent.
\end{proof}

\begin{proof}[Proof of (3)]
Here we show that $\mathbb{V}$ is cohomologically rigid (without boundary conditions). Recall that
\begin{displaymath}
H^1(S_\Gamma , \End^0(\mathbb{V}) )\cong H^1(\Gamma, \Ad)=0,
\end{displaymath}
where the last equality follows from Theorem \ref{garalndragvanishing}. To show that $\Hh^1(\overline{S_\Gamma}, i_{!* } \End^0(\mathbb{V}))$ vanishes, it is enough to observe that it injects in $H^1(S_\Gamma , \End^0(\mathbb{V}))$. This follows from the description of the group 
\begin{displaymath}
\Hh^1(\overline{S_\Gamma}, i_{!* } \End^0(\mathbb{V}))
\end{displaymath}
 appearing in \cite[Proposition 2.3]{MR3874695}, more precisely see page 4284 line 8, and Remark 2.4 in \emph{op. cit.}.
\end{proof}
Eventually we have checked all the conditions of Theorem \ref{esgro}, concluding the proof of Theorem \ref{integrality}.
\end{proof}

\subsection{Weil restrictions after Deligne and Simpson}\label{weilres}
Let $(G,X,\Gamma)$ be an arbitrary real Shimura datum. Up to replacing $\Gamma$ by a finite index subgroup, we can write $(G,X,\Gamma)$ as a product of $(G_i,X_i, \Gamma_i)$ such that each $ \Gamma_i$ is an irreducible lattice in $G_i$ (as in Proposition \ref{irred}). As recalled in Theorem \ref{margulisrank}, if $G_i$ has rank strictly bigger than one, then $\Gamma_i$ is an arithmetic lattice, which implies that $\Gamma_i$ has integral traces. If $G_i$ is of rank one and isomorphic to $\Gl_2$, RSD0 of Definition \ref{realshimuradef} ensures that $\Gamma_i$ is also an arithmetic lattice. Therefore we may apply Theorem \ref{integrality} to the remaining $G_i$s of rank one, and conclude that each $\Gamma_i$ has integral traces. It follows that we can write, for some real number field $K$, every real Shimura datum as follows:
\begin{displaymath}
(G,X,\Gamma \subset \mathbf{G}(\Oo_K)).
\end{displaymath}
Possibly up to a finite index subgroup, and a conjugation by some $g \in G$. Thanks to the argument explained above, from now on, we assume that $G=\PU(1,n)$ and that $\Gamma$ lifts to $\SU(1,n)$. In particular $\Gamma$ comes with a standard representation on $V_\C$, a $n+1$ dimensional vector space.

After having established Theorem \ref{integrality}, to complete the proof of Theorem \ref{strategy1} we are left to show two things. Namely that the trace field $K$ is totally real (rather than just real), and that we can construct a natural $\Z$-VHS on $S_\Gamma$. For both proofs the main observation is that for each embedding $\sigma: K \to \C$ (or possibly a larger real number field), the local system $\mathbb{V}^\sigma$ associated to the representation
\begin{displaymath}
\rho_\sigma : \pi_1(S_\Gamma)\to \Gl(V_\C)
\end{displaymath}
underlies a polarised complex VHS. The arguments are fairly standard, and essentially due to Simpson, but we recall here the main steps. They appeared for example in \cite[Theorem 5]{MR1179076} and, as stated in \cite{{MR3826464}}, \emph{the basic idea goes back at least to Deligne} \cite{delignetravaux}. For completeness and related discussions we refer also to \cite[Section 10]{MR2457528} and \cite[Proposition 7.1]{MR3826464}. We point out here that, among the ones cited so far, the only paper dealing with quasi-projective varieties, rather than projective ones, is \cite{MR2457528}. For a complete and more general discussion on this issue, we refer to the monograph \cite{zbMATH05119702}. Before stating the results we need, we recall an important theorem of Corlette.
\begin{thm}[{\cite[Theorem 8.1]{MR2457528}}]\label{deformation}
Suppose $\mathbb{V}$ is local system with quasi-unipotent monodromy at infinity. If $\mathbb{V}$ is rigid, then it underlines a $\C$-VHS.
\end{thm}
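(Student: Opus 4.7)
The plan is to invoke non-abelian Hodge theory on the quasi-projective variety $S_\Gamma$, following the strategy Simpson introduced in the projective case and extended by Mochizuki (and independently Jost--Zuo) to the quasi-projective setting with quasi-unipotent monodromy at infinity. The hypothesis on monodromy at infinity is precisely what is needed for the tame / parabolic version of the correspondence to apply.

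First I would translate the local system $\mathbb{V}$ into a tame harmonic bundle on $S_\Gamma$, and then into a parabolic Higgs bundle $(E, \theta)$ on $\overline{S_\Gamma}$ with parabolic structure along the boundary divisor prescribed by the eigenvalues of the monodromy transformations at infinity. This is where the quasi-unipotence assumption enters: it guarantees that the parabolic weights are real, so the non-abelian Hodge correspondence is available and gives a homeomorphism between the moduli space $M_B$ of (semisimple) local systems and the moduli space $M_{\mathrm{Dol}}$ of (polystable) parabolic Higgs bundles with fixed parabolic data.

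Next I would exploit the $\mathbb{C}^*$-action on $M_{\mathrm{Dol}}$ given by $t \cdot (E, \theta) = (E, t\theta)$. The key classical facts are: (i) the fixed points of this action are exactly the systems of Hodge bundles, equivalently, the polarised $\mathbb{C}$-VHS; and (ii) for any point $(E,\theta) \in M_{\mathrm{Dol}}$, the limit $\lim_{t\to 0} (E, t\theta)$ exists in $M_{\mathrm{Dol}}$ and is a fixed point. Transporting the $\mathbb{C}^*$-action across the non-abelian Hodge homeomorphism yields a continuous (in fact real-analytic) flow on $M_B$ whose fixed locus consists of classes of local systems underlying a $\mathbb{C}$-VHS.

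Now comes the use of rigidity. Cohomological rigidity of $\mathbb{V}$ implies that its isomorphism class defines an isolated point of $M_B$ (with the appropriate boundary/monodromy conditions fixed, since the $\mathbb{C}^*$-flow preserves these data in the parabolic setting). Since the orbit $\{t \cdot [\mathbb{V}]\}_{t \in \mathbb{C}^*}$ is connected and passes through the isolated point $[\mathbb{V}]$ at $t=1$, the orbit must be constant; equivalently $[\mathbb{V}]$ is itself a fixed point of the $\mathbb{C}^*$-action. By the characterisation of fixed points recalled above, $\mathbb{V}$ underlies a polarised $\mathbb{C}$-VHS, which is the desired conclusion.

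The main obstacle is entirely technical and lives in the first step: one must quote (or verify the hypotheses of) the parabolic non-abelian Hodge correspondence and the existence of the limit $\lim_{t\to 0} (E, t\theta)$ in the non-compact setting. Once these are in hand the rigidity argument is formal. In the application at hand this is already arranged by the previous section: $\mathbb{V}$ has been shown to be cohomologically rigid and to have unipotent monodromy at infinity, so Theorem~\ref{deformation} applies directly and produces the desired $\mathbb{C}$-VHS structure, which in subsequent sections will be promoted (via the Galois conjugates $\mathbb{V}^\sigma$) to an integral polarised VHS on $S_\Gamma$.
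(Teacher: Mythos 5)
The paper does not actually prove this statement: it is quoted verbatim from Corlette--Simpson (the cited Theorem 8.1 of MR2457528), so there is no internal argument to compare yours against. Your sketch is essentially the proof given in that source (and, in the projective case, by Simpson originally): pass to a tame harmonic bundle / parabolic Higgs bundle, use the twist $\theta \mapsto t\theta$, identify the fixed points of the twist with polarised $\C$-VHS, and use rigidity to force the connected orbit through $[\mathbb{V}]$ to be the single isolated point, hence fixed. The one step you dispose of in a parenthesis is where the genuine work lies: scaling $\theta$ rescales the residue eigenvalues at the boundary, so it is not automatic that the deformed local systems keep the prescribed local monodromy data to which the rigidity hypothesis refers; one must either restrict to $\lvert t\rvert=1$ or exploit quasi-unipotence (nilpotent residues, as in the unipotent situation arising in this paper) to keep the family inside the relevant representation space, exactly as Corlette--Simpson do.
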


We are now ready to prove that the field $K$ appearing in Theorem \ref{integrality} is totally real. In the proof we actually work with some real number field $K'$ containing $K$.
\begin{thm}\label{totreal}
Let $\Gamma$ be a lattice in $G=\PU(1,n)$, $n>1$. Then the trace field $K$ of $\Gamma$ is totally real.
\end{thm}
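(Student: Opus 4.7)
The plan, following Simpson's approach (building on ideas of Deligne), is to construct Galois-conjugate $\C$-variations of Hodge structure and exploit the constraint that they live on the ball quotient $S_\Gamma$. I will assume $\Gamma$ lifts to $\SU(1,n)$ and denote by $\rho\colon \Gamma \to \Sl(V_\C)$ the associated standard representation. By Theorem \ref{integrality}, after passing to a finite-index subgroup and conjugating by an element of $G$, the matrix entries of $\rho$ lie in $\Oo_K$. For each embedding $\sigma\colon K \to \C$, applying $\sigma$ entrywise yields a Galois-conjugate representation $\rho_\sigma\colon \Gamma \to \Sl(V_\C)$.

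First I would verify the hypotheses of Corlette's Theorem \ref{deformation} for each $\rho_\sigma$. Cohomological rigidity is preserved under Galois conjugation, thanks to the vanishing $H^1(\Gamma, \Ad) = 0$ of Theorem \ref{garalndragvanishing}, and quasi-unipotency of the local monodromy at infinity is also Galois-invariant (Galois conjugates of unipotent matrices are unipotent, as checked in the proof of Theorem \ref{integrality}). Hence each $\rho_\sigma$ underlies a polarized $\C$-VHS $\mathbb{V}^\sigma$ on $S_\Gamma$, whose Zariski closure over $\C$ is $\mathbf{G} \otimes_{K,\sigma} \C \cong \Pgl_{n+1}(\C)$ by Borel density.

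The existence of a polarized $\C$-VHS forces the real Zariski closure of $\rho_\sigma(\Gamma)$ to be a real form of $\Pgl_{n+1}(\C)$ admitting a Cartan involution coming from the Weil operator of the Hodge decomposition. For a real embedding $\sigma$, this real form is $\mathbf{G} \otimes_{K,\sigma} \R = \PU(p_\sigma, q_\sigma)$ for some $p_\sigma + q_\sigma = n+1$, a Hermitian real form compatible with the Hermitian symmetric structure on $\mathbb{B}^n \cong X$ via the associated period map.

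The main obstacle is ruling out complex (non-real) embeddings. If such a $\sigma$ existed, then $\rho_\sigma \oplus \rho_{\bar\sigma}$ would descend to a representation defined over $\R$ whose real Zariski closure is the underlying real Lie group of $\Pgl_{n+1}(\C)$, a simple real Lie group of complex type whose associated symmetric space is not Hermitian. The period map $\widetilde{\psi}_\sigma \colon \mathbb{B}^n \to D_\sigma$ arising from $\mathbb{V}^\sigma$ would then be a holomorphic, horizontal, $\Gamma$-equivariant map into a non-Hermitian period domain; a rigidity argument (comparing the $G$-invariant Hermitian metric on $\mathbb{B}^n$ with the horizontal metric on $D_\sigma$, or invoking Siu-type results on harmonic maps from K\"ahler manifolds) would force $\widetilde{\psi}_\sigma$ to be constant, contradicting the Zariski density of $\rho_\sigma(\Gamma)$. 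Hence every embedding $\sigma\colon K \to \C$ is real, proving $K$ totally real.
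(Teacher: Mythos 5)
Your first half coincides with the paper's proof: twisting $\rho$ by an embedding, noting that cohomological rigidity (via $H^1(\Gamma,\Ad)=0$) and quasi-unipotency at infinity are preserved, and applying Corlette's Theorem \ref{deformation} to obtain a polarized $\C$-VHS $\mathbb{V}^\sigma$ for each embedding. The gap is in your step ruling out non-real embeddings. The rigidity you invoke --- that a holomorphic, horizontal, $\Gamma$-equivariant map into a non-Hermitian period domain must be constant --- is not something you can cite or sketch away: horizontality plus holomorphy does not force constancy of maps into non-classical period domains (Griffiths' theory is populated by non-constant such period maps, e.g.\ in weight two with $h^{2,0}\geq 2$), and the ``Siu-type'' harmonic-map rigidity you gesture at is precisely the kind of superrigidity statement that fails for lattices in $\PU(1,n)$ and that this paper is built to avoid. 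Worse, even if constancy held it would yield no contradiction: a constant period map only says that $\rho_\sigma(\Gamma)$ is contained in a compact group, which is perfectly compatible with Zariski density over $\C$ --- this is exactly what happens at the compact places of an arithmetic lattice, the case the theorem must allow. So the proposed contradiction does not close.

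The short way to finish --- and the paper's way, following Simpson --- is to use the polarization of the conjugate VHS directly rather than its period map. The polarization of $\mathbb{V}^\sigma$ is a flat nondegenerate Hermitian pairing, so $\overline{\sigma\rho}\cong(\sigma\rho)^*$, i.e.\ $\overline{\sigma(\tr\rho(\gamma))}=\sigma(\tr\rho(\gamma^{-1}))$ for all $\gamma$; since $\rho$ itself preserves the signature-$(1,n)$ Hermitian form, $\overline{\rho}\cong\rho^*$ and $\tr\Ad(\gamma)=\tr\rho(\gamma)\,\tr\rho(\gamma^{-1})-1$. Combining, $\sigma(\tr\Ad(\gamma))=\sigma(\tr\rho(\gamma))\cdot\overline{\sigma(\tr\rho(\gamma))}-1\in\R$ for \emph{every} embedding $\sigma$, so the adjoint trace field is totally real with no case division between real and complex embeddings and no geometric rigidity. (If you want to keep your group-theoretic framing, the fact you actually need is that the real Zariski closure of the monodromy of a polarizable VHS is a group of Hodge type, while a complex simple group viewed as a real group is not; but that structural fact is itself proved by the same polarization argument, not by constancy of a period map.) A smaller inaccuracy: Theorem \ref{integrality} places $\Gamma$ in $\mathbf{G}(\Oo_K)$ for the adjoint $K$-form, but the matrix entries of the standard representation $\rho$ lie a priori in a possibly larger number field $K'$; one twists by embeddings of $K'$ and only descends to $K$ at the end via Vinberg's theorem, as the paper does.
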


\begin{proof}
 Possibly after conjugating, the local system \eqref{locsystem} $\rho: \pi_1(S_\Gamma)\to \Gl(V_\C)$ takes values in some field $K'$. For each $\sigma : K' \to \C$, consider
\begin{equation}\label{eqsigam}
\rho_\sigma = \sigma \rho : \pi_1(S_\Gamma)\to \Gl(V_\C).
\end{equation}
The first step is to prove that the local system $\mathbb{V}^\sigma$ corresponding to $\rho_\sigma$ is in fact a VHS (either a $K'$ or a $\C$-VHS). Since
\begin{displaymath}
H^1(\Gamma , \Ad \circ \rho_\sigma)=H^1(\Gamma, \Ad) \otimes_{K',\sigma} \C=0,
\end{displaymath}
the twisted representation $\rho_\sigma$ is again cohomologically rigid (see also \cite[Lemma 6.6]{MR2457528}). Moreover $\mathbb{V}^\sigma$ has also quasi-unipotent monodromy at infinity by construction. Indeed, let $P_i$ be a $\Gamma$-rational parabolic corresponding to an irreducible divisor $B_i$ of the boundary, and denote by $T_i$ the local monodromy for $\mathbb{V}$ and by ${T_i}^\sigma$ the local monodromy for $\mathbb{V}^\sigma$. By construction of $\mathbb{V}^\sigma$, the element ${T_i}^\sigma$ is obtained by $T_i \in \Gamma_{U_{P_i}} \subset \Gamma \subset \mathbf{G}(K')$ by applying $\sigma$ to its entries. Since being unipotent is a geometric condition, it is enough to check that ${T_i}^\sigma \in G_\sigma \otimes \C \cong G \otimes \C$ is unipotent, which holds true because $T_i$ is unipotent (here we let $G_\sigma (\C)$ be complex group $\mathbf{G} \times_{K', \sigma} \operatorname{Spec}(\C)$). Eventually we can apply Theorem \ref{deformation} to conclude that each $\mathbb{V}^\sigma$ is a $\C$-VHS, as claimed above.

The second step is as follows, and it is just a matter of reproducing the argument appearing in \cite[End of page 56 and beginning of page 57]{MR1179076}. For each embedding $\sigma$ of $K'$, let $\overline{\sigma}$ denote the complex conjugated embedding. Since $\rho$ is induced by a polarized $\C$-VHS, we have
\begin{displaymath}
\overline{\sigma}\rho \cong \sigma \rho^*,
\end{displaymath}
where $\rho^*$ denotes the dual representation. As $\rho$ is a unitary representation, and therefore self dual, we observe that 
\begin{displaymath}
\overline{\sigma}\rho \cong \sigma \rho.
\end{displaymath}
By taking the traces, the above equality shows that the subfield of $K'$ generated by the traces of $\rho (\gamma)$, varying $\gamma \in \Gamma$, is totally real. Moreover, as recalled in Section \ref{localrig}, from the work of Vinberg \cite[Theorem 2]{MR0279206} we know that $K'$ contains the adjoint trace field $K_{\rho}$, for the representation $\Ad(\rho)$. Recall that $K$, as in Theorem \ref{integrality}, denotes the trace field of $\Gamma$, where traces are computed after applying the adjoint representation \eqref{adjointrep}. Since $\Gamma$ comes from $\SU(1,n)$, we see that $K_{\rho}=K$, therefore $K$ is naturally a subfield of a totally real number field. In particular it is a totally real number field, concluding the proof of Theorem \ref{totreal}.
\end{proof}

\begin{rmk}
 For each embedding $\sigma : K' \to \R$, let $G_\sigma$ be the group group $\mathbf{G} \times_{K', \sigma}\R$. Since the groups $G_\sigma$ are all isomorphic over $\C$, if $G=\PU(1,n)$, the proof of Theorem \ref{totreal} shows that we can write $G_\sigma$ as $\PU(p_\sigma, q_\sigma)$ for some positive non zero integers $p_\sigma, q_\sigma$ such that $p_\sigma+q_\sigma =n+1$. See also \cite[Lemma 4.5]{MR1179076}.
\end{rmk}
The following theorem is essentially due to Simpson \cite{MR1179076}.
\begin{thm}\label{thmzvhs}
Let $(G,X,\Gamma)$ be a real Shimura datum. There exists a totally real number field $K'$ such that:
\begin{itemize}
\item $K'$ contains $K$ the (adjoint) trace field of $\Gamma$;
\item $\Gamma \subset \mathbf{G}(\Oo_{K'})$;
\item For each embedding $\sigma : K' \to \R$, if we let $\mathbb{V}^\sigma$ be the $K'$-VHS constructed above, then
\begin{equation}\label{constru}
\widehat{\mathbb{V}} := \bigoplus_{\sigma : K'\to \R} \mathbb{V}^\sigma
\end{equation}
has a natural structure of (polarized) $\Q$-VHS. Moreover, by Theorem \ref{integrality} and Bass--Serre theory, it has a natural structure of a (polarised) $\Z$-VHS.
\end{itemize}
\end{thm}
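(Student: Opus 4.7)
The strategy is to assemble the individual $\C$-VHS's $\mathbb{V}^\sigma$ into a single $\Q$-VHS via Weil restriction of scalars, following Deligne and Simpson \cite{MR1179076}.

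First, I would arrange that $\rho : \Gamma \to \Gl(V_\C)$ is defined over a totally real finite extension $K'$ of $K$. Since the adjoint trace field $K$ is totally real by Theorem \ref{totreal}, one can enlarge $K$ within $\C$ to a totally real finite extension $K'/K$ over which, after a conjugation in $G$, the local system $\rho$ takes values in $\Gl(V_{K'})$ for some $K'$-form $V_{K'}$ of $V_\C$. Applying Theorem \ref{integrality} at the level of $K'$, together with Bass--Serre theory, one then passes to a finite index subgroup of $\Gamma$ and arranges that $\rho(\Gamma)$ preserves an $\Oo_{K'}$-lattice $V_{\Oo_{K'}} \subset V_{K'}$. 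In particular $\Gamma \subset \mathbf{G}(\Oo_{K'})$, establishing the second bullet.

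For each embedding $\sigma : K' \hookrightarrow \R$, the argument from the proof of Theorem \ref{totreal} shows that the twisted local system $\mathbb{V}^\sigma$ is cohomologically rigid (via Theorem \ref{garalndragvanishing}) and has quasi-unipotent monodromy at infinity, so by Corlette's Theorem \ref{deformation} it underlies a polarized $\C$-VHS. I would then define $\widehat{\mathbb{V}}$ as the $\Q$-local system obtained from $V_{K'}$ by restriction of scalars, namely $V_{K'}$ regarded as a $\Q$-vector space of dimension $[K':\Q](n+1)$. The canonical decomposition
\begin{displaymath}
\widehat{\mathbb{V}} \otimes_\Q \C \;=\; V_{K'} \otimes_\Q \C \;=\; \bigoplus_{\sigma : K' \hookrightarrow \R} V_{K'} \otimes_{K',\sigma} \C
\end{displaymath}
identifies the right-hand side with \eqref{constru}, and I would equip $\widehat{\mathbb{V}}_\C$ with the direct sum of the individual Hodge filtrations. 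A polarization on $\widehat{\mathbb{V}}$ is produced by applying $\Tr_{K'/\Q}$ to a $K'$-bilinear polarization of $V_{K'}$, and the $\pi_1(S_\Gamma)$-stable $\Oo_{K'}$-lattice $V_{\Oo_{K'}}$, viewed as a $\Z$-lattice of rank $[K':\Q](n+1)$, supplies the integral structure.

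The crucial point, and the step where the total real-ness of $K'$ is indispensable, is checking that the direct sum filtration really defines a $\Q$-Hodge structure, i.e.\ that $\overline{\widehat{\mathbb{V}}^{p,q}} = \widehat{\mathbb{V}}^{q,p}$ for the conjugation inherited from $\widehat{\mathbb{V}}_\Q \otimes_\Q \C$. Because every embedding $\sigma$ of $K'$ is real, complex conjugation on $K' \otimes_\Q \C = \bigoplus_\sigma \C$ acts trivially on the index set $\{\sigma\}$ and as ordinary conjugation on each $\C$ summand, so the desired Hodge symmetry reduces summand by summand to the Hodge symmetry for each $\mathbb{V}^\sigma$, which is automatic. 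Without total real-ness complex conjugation would interchange pairs $\sigma \leftrightarrow \bar\sigma$ and the sum over real embeddings alone would not descend to $\Q$; this is the point at which Theorem \ref{totreal}, together with the totally real enlargement carried out in the first step, is what makes the construction go through.
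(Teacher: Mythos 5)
Your overall architecture (restriction of scalars \`a la Deligne--Simpson) is the right one, and it is essentially what the paper does: after Theorem \ref{totreal} its proof consists of invoking \cite[Theorem 5]{MR1179076} and \cite[Proposition 7.1]{MR3826464}. But in trying to spell the construction out you hang the rationality of the Hodge structure on a mechanism that cannot work, and the step you declare ``automatic'' is false. If complex conjugation for your proposed real structure preserved each summand $V_{K'}\otimes_{K',\sigma}\C$ and Hodge symmetry could be checked summand by summand, then each $\mathbb{V}^\sigma$ would have to underlie a \emph{real} VHS; but already $\mathbb{V}^{\sigma_1}=\mathbb{V}$ is the standard rank-$(n+1)$ system with Hodge numbers $(1,n)$, and no real structure on it can satisfy $\overline{V^{p,q}}=V^{q,p}$ when $1\neq n$. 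A complex VHS whose underlying local system happens to be defined over a real field need not be a real VHS, and here it demonstrably is not. In Simpson's argument the descent datum is instead the isomorphism $\overline{\sigma\rho}\cong\sigma\rho^{*}$ supplied by the polarization (the same isomorphism exploited in the proof of Theorem \ref{totreal}): conjugation matches each factor with the \emph{dual} of a factor, equivalently it pairs conjugate embeddings $\tau\leftrightarrow\overline{\tau}$ of the genuine field of definition of $\rho$, and the Hodge filtrations glue through that pairing rather than factor by factor. This pairing is exactly the content your last paragraph skips.

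Relatedly, your opening step is unjustified: the total realness of the adjoint trace field $K$ (Theorem \ref{totreal}) does not allow you to conjugate the linear representation $\rho$ into $\Gl(V_{K'})$ for a totally real $K'$. What governs this is the trace field of $\rho$ itself, together with a possible Brauer obstruction, and for complex hyperbolic lattices the linear traces typically generate an imaginary extension of $K$ (think of lattices of Picard type, with traces in an imaginary quadratic field); it is only through the unitarity/duality relation $\rho^{*}\cong\overline{\rho}$ and the resulting conjugation symmetry of the family $\{\mathbb{V}^\sigma\}$ that one reaches the totally real field $K'$ of the statement. The parts of your proposal concerning the $\Oo_{K'}$-lattice (Theorem \ref{integrality} plus Bass--Serre) and the trace-form polarization are fine, but the two key verifications --- that the relevant field can be taken totally real and that the assembled filtration satisfies Hodge symmetry --- are precisely the nontrivial content of \cite[Theorem 5]{MR1179076}, and as written both are gaps.
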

 After having established Theorem \ref{totreal}, the proof of Theorem \ref{thmzvhs} is a simple application of \cite[Theorem 5]{MR1179076} (and the more detailed \cite[Proposition 7.1]{MR3826464}). The proof of Theorem \ref{strategy1} is eventually completed, and in Section \ref{monodromymtsection} we will explain how to get rid of $K'$ and work only with $K$.

\subsection{Integral variations of Hodge structure and period maps}\label{section46}
In this section we discuss period maps associated to $\Z$-VHS, Mumford--Tate groups and the algebraicity theorem of Cattani, Deligne and Kaplan.
\subsubsection{Recap of Mumford--Tate domains}\label{recapperioddomains}
For more details about period domains and Mumford--Tate domains we refer to \cite{MR2918237, klingler2017hodge, MR0498551}. Let $(H_\Z,q_\Z)$ be a polarised $\Z$-Hodge structure. We start by recalling the definition of \emph{period domains}. Let $\mathbf{R}$ be the $\Q$-algebraic group $\Aut(H_\Q,q_\Q)$. We have:
\begin{itemize}
\item The space $D$ of $q_\Z$-polarised Hodge structures on $H_\Z$ with specified Hodge numbers is an homogeneous space for $R$;
\item Choosing a reference Hodge structure we have $D=R/M$ where $M$ is a subgroup of the compact unitary subgroup $R\cap U(h)$ with respect to the Hodge form $h$ of the reference Hodge structure;
\item $D$ is naturally an open sub-manifold of the flag variety $D^{\vee}$.
\end{itemize}
Let $\mathbf{M}$ be the Mumford--Tate group, as recalled below, of $(H_\Z,q_\Z)$ and $h\in D$ the point corresponding to $(H_\Z,q_\Z)$. The \emph{Mumford--Tate domain} associated to $(H_\Z,q_\Z)$ is the $\mathbf{M}(\R)$-orbit of $h$ in the full period domain of polarized Hodge structures constructed above.

Let $S$ be a smooth connected quasi-projective complex variety. By \emph{period map}
\begin{displaymath}
S^{\an}\to \mathbf{R}(\Z) \backslash D
\end{displaymath} 
we mean a holomorphic locally liftable Griffiths transverse map. It is equivalent to the datum of a (polarised) $\Z$-VHS on $S$.

For future reference and to complete our brief introduction of variational Hodge theory, we recall here the definition of Mumford--Tate group. Let $\mathbb{V}=(\mathcal{V},\F,\mathcal{Q})$ a polarized variation of $\Q$-Hodge structure on $S$. Let $\lambda: \widetilde{S}\to S$ be the universal cover of $S$ and fix a trivialisation $\lambda^* \mathcal{V}\cong \widetilde{S}\times V$. Let $s\in S$, the \emph{Mumford--Tate group at s}, denoted by $\MT(s) \subset \Gl (\mathcal{V}_s)$, is the smallest $\Q$-algebraic subgroup $\mathbf{M} \subset \Gl (\mathcal{V}_s)$ such that the map 
\begin{displaymath}
h_s : \DT \to \Gl (V_{s,\R})
\end{displaymath}
describing the Hodge-structure on $V_{s}$ factors trough $\mathbf{M}_\R$. Choosing a point $\tilde{s}\in \lambda^{-1}(s)\subset \widetilde{S}$ we obtain an injective homomorphism $\MT(s) \subset \Gl(V)$.

It is well known that there exists a countable union $\Sigma \subsetneq S$ of proper analytic subspaces of $S$ such that:
\begin{itemize}
\item For $s\in S - \Sigma $, $\MT(s) \subset \Gl(V)$ does not depend on $s$, nor on the choice of $\tilde{s}$. We call this group \emph{the generic Mumford-Tate group of} $\mathbb{V}$;
\item For all $s$ and $\tilde{s}$ as above, with $s\in \Sigma$, $\MT(s)$ is a proper subgroup of $G$, the generic Mumford-Tate group of $\mathbb{V}$.
\end{itemize}

\subsubsection{Generalised modular embeddings}\label{diagram}
From the real Shimura datum $(G,X,\Gamma \subset \mathbf{G}(\Oo_K))$, in Theorem \ref{thmzvhs} we produced a $\Z$-VHS $\widehat{\mathbb{V}}$ on $S_\Gamma$. By construction, the generic Mumford--Tate group of $\widehat{\mathbb{V}}$ is a reductive $\Q$-subgroup of the Weil restriction from $K'$ to $\Q$ of $\mathbf{G}_{K'}=  \mathbf{G} \otimes_{K}K'$, which we denote by $\widehat{\mathbf{G}}'$. Regarding Weil restrictions, we refer to Section \ref{weilresbasic}.

 We now specialise the definitions of Section \ref{recapperioddomains} to our setting. As usual, we set $\widehat{G}'=\widehat{\mathbf{G}'}(\R)^+$. Associated to the pair $(S_\Gamma, \widehat{\mathbb{V}})$, we have a Mumford--Tate domain $(\widehat{\mathbf{G}}',D'=D_{\widehat{G}'}=\widehat{G}'/M')$, where $M'$ a compact subgroup of $\widehat{G}'$. As in Section \ref{weilresbasic}, we write:
\begin{itemize}
\item $r'$ for the degree of $K'$ over $\Q$;
\item $\sigma_i : K' \to \R$ the real embeddings of $K'$, ordered in such a way that $\sigma_1$ is simply the identity on $K'$;
\item $\pr_i:\widehat{\mathbf{G}}' \to \mathbf{G}'$ for the map of $K'$-group schemes obtained by projecting onto the $i$th factor of $\widehat{\mathbf{G}}'\otimes K'$.
\end{itemize}
Let $\psi : {S_\Gamma}^{\an} \to \widehat{\mathbf{G}}'(\Z)\backslash D'$ be the period map associated to the $\Z$-VHS $\widehat{\mathbb{V}}$ we constructed in \eqref{constru}. We have a commutative diagram
\begin{center}
\begin{tikzpicture}[scale=2]
\node (A) at (-1,1) {$X$};
\node (B) at (1,1) {$D'=D_{\widehat{G}'}$};
\node (C) at (-1,0) {${S_\Gamma}^{\an}$};
\node (D) at (1,0) {$\widehat{\mathbf{G}}'(\Z)\backslash D'$};
\path[right hook->,font=\scriptsize]
(A) edge node[above]{$\tilde{\psi}$} (B);
\path[->,font=\scriptsize,>=angle 90]
(C) edge node[above]{$\psi$} (D)
(A) edge node[right]{$\pi_{\Gamma}$} (C)
(B) edge node[right]{$\pi_{\Z}'$} (D);
\end{tikzpicture}.
\end{center}
By construction $D_{\widehat{G}'}$ is the product of $X$ and a homogeneous space under the group $\prod_{i=2,\dots, r'}G_{\sigma_i} $. 
Given such a decomposition of $ D_{\widehat{G}'}$ we can write:
\begin{displaymath}
\tilde{\psi}(x) = (x, x_{\sigma_2}, \dots, x_{\sigma_{r'}}),
\end{displaymath}
where $x_{\sigma_i}$ is the Hodge structure obtained by the fibre of $\mathbb{V}^{\sigma_i}$, or more precisely of its lift to $X$, at $x$. Finally it is important to observe that $\tilde{\psi}$ is holomorphic and $\Gamma$-equivariant, in the sense that for each $\gamma \in \Gamma$
\begin{displaymath}
\tilde{\psi}(\gamma x) = (\gamma x, \rho_{\sigma_2}(\gamma) x_{\sigma_2}, \dots,\rho_{\sigma_{r'}}(\gamma) x_{\sigma_{r'}}),
\end{displaymath}
where $\rho_{\sigma_i}$, as introduced in \eqref{eqsigam}, is obtained by applying $\sigma_i: K' \to \R$ to the coefficients of $\Gamma$.

\subsubsection{Definition of \texorpdfstring{$\Z$}{z}-special subvarieties}\label{defzsp}
We specialise the definitions of Klingler \cite{klingler2017hodge} for the period map $\psi :{S_\Gamma}^{\an} \to \widehat{\mathbf{G}}'(\Z)\backslash D'$ associated to the $\Z$-VHS $\widehat{\mathbb{V}}$ when $G=\PU(1,n)$ for some $n>1$. In \cite[Section 1.2]{MR3958791} the following spaces are called \emph{weak Mumford--Tate domain}. We omit the word \emph{weak} because we are ultimately interested in subvarieties of $S_\Gamma$, where there are no possibilities of moving $\Z$-special subvarieties in families.

\begin{defi}
 A \emph{Mumford--Tate sub-domain} $D_0$ of $D'$ is an orbit $M.x$ where $x\in D'$ and $M$ is the real group associated to $\mathbf{M}$ a normal algebraic $\Q$-subgroup of $\MT(x)$. In fact $D_0$ is a complex sub-manifold of $D'$ and $\pi(D_0)=\mathbf{M}(\Z)\backslash M.x$ is a complex analytic subvariety of $\widehat{\mathbf{G}}'(\Z)\backslash D'$, which we call a \emph{Mumford--Tate subvariety}. 
\end{defi}

The following is a celebrated result of Cattani, Deligne and Kaplan, see also the recent work of Bakker, Klingler and Tsimerman \cite[Theorem 1.6]{bakker2018tame}. It will be crucial in proving Corollary \ref{cor1}.
\begin{thm}[{\cite[Theorem 1.1]{MR1273413}}]\label{cdk}
Let $D_0 \subset D'$ be a Mumford--Tate sub-domain. Then the set $\psi ^{-1}(\pi (D_0))$ is an algebraic subvariety of $S_\Gamma$.
\end{thm}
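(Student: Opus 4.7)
The plan is to deduce algebraicity from the o-minimal GAGA theorem (Theorem~\ref{ogaga}), once $\psi^{-1}(\pi(D_0))$ is shown to be both closed analytic and definable in the structure $\R_{\an,\exp}$. This route, developed by Bakker--Klingler--Tsimerman in \cite{bakker2018tame}, fits naturally with the tame-geometric machinery already set up in Sections~\ref{31} and \ref{defzsp}, and avoids the classical proof of Cattani--Deligne--Kaplan via the Hodge-theoretic analysis of singularities of normal functions.

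First, I would check that $\psi^{-1}(\pi(D_0))$ is a closed complex analytic subset of $S_\Gamma$. Locally around a point $s\in S_\Gamma$, a lift $\tilde s\in X$ identifies a neighbourhood of $s$ inside the preimage with $\tilde\psi^{-1}\bigl(\bigsqcup_\gamma \gamma\cdot D_0\bigr)$ modulo the stabiliser in $\Gamma$, where $\gamma$ ranges over a discrete subset of $\widehat{\mathbf{G}}'(\Z)$. Each translate $\gamma\cdot D_0$ is a complex submanifold of $D'$: it is the orbit of a point under the complexification of $\mathbf{M}(\R)$ inside the flag variety $D'^{\vee}$ cut out by the $\Q$-algebraic condition that the Hodge filtration factor through $\mathbf{M}$. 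Since $\tilde\psi$ is holomorphic, the preimage is closed analytic locally, and hence globally.

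Second, I would establish definability in $\R_{\an,\exp}$. Fix a semialgebraic fundamental set $\F\subset X$ for $\Gamma$ as provided by Theorem~\ref{defini}, together with a semialgebraic Siegel fundamental set $\F'\subset D'$ for the arithmetic group $\widehat{\mathbf{G}}'(\Z)$, available from classical reduction theory. The core input is that the lifted period map $\tilde\psi\colon \F\to \F'$ is definable in $\R_{\an,\exp}$; this is exactly the main theorem of \cite{bakker2018tame}, and constitutes the genuine obstacle of the argument, requiring Schmid's SL$_2$ and nilpotent orbit theorems to tame the behaviour of $\tilde\psi$ near the boundary of $S_\Gamma$ and near the strata at infinity of $\widehat{\mathbf{G}}'(\Z)\backslash D'$. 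Granting this, the orbit $D_0=\mathbf{M}(\R).x$ is semialgebraic in $D'$, so $D_0\cap \F'$ is definable, and therefore $\tilde\psi^{-1}(D_0\cap \F')$ is a definable subset of $\F$. Pushing down through the definable uniformisation $\pi_\Gamma|_\F\colon \F\to S_\Gamma$ (Theorem~\ref{defini}) shows that $\psi^{-1}(\pi(D_0))$ is a definable subset of $S_\Gamma$.

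With both properties in hand, I would conclude by invoking the o-minimal Chow theorem of Peterzil--Starchenko (Theorem~\ref{ogaga}): a closed complex analytic subset of the smooth complex algebraic variety $S_\Gamma$ that is also definable in an o-minimal structure expanding $\R_{\an}$ is automatically algebraic. The entire argument thus reduces to the deep definability statement for $\tilde\psi$; once that is granted, everything else is a formal and routine application of the definability results recorded in Section~\ref{31}.
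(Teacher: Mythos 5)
You should first note that the paper itself does not prove this statement: it is imported as a black box from Cattani--Deligne--Kaplan, with a pointer to Bakker--Klingler--Tsimerman for the finiteness of connected components, so there is no internal argument to compare against. What you propose is an outline of the BKT tame-geometry proof (closed analytic $+$ definable $+$ Peterzil--Starchenko), which is a legitimate route in principle, and you correctly identify the definability of the lifted period map as the deep input.

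However, as written your definability step computes the wrong set, and this is a genuine gap. First, $\tilde\psi$ does not map $\F$ into a single Siegel fundamental set $\F'$ for $\widehat{\mathbf{G}}'(\Z)$; at best $\tilde\psi(\F)$ lies in finitely many $\widehat{\mathbf{G}}'(\Z)$-translates of $\F'$, and even that finiteness is part of the content of the BKT theorem rather than of classical reduction theory on $D'$ alone. Second, and more seriously, the set you produce, $\pi_\Gamma\bigl(\tilde\psi^{-1}(D_0\cap\F')\bigr)$, only accounts for the piece of the locus coming from $D_0$ itself, whereas
\begin{displaymath}
\psi^{-1}(\pi(D_0))\cap\pi_\Gamma(\F)=\pi_\Gamma\Bigl(\tilde\psi^{-1}\bigl(\widehat{\mathbf{G}}'(\Z)\cdot D_0\bigr)\cap\F\Bigr),
\end{displaymath}
a union over all integral translates $\gamma D_0$, of which a priori infinitely many could meet $\tilde\psi(\F)$. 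To conclude definability of this union (and even the ``closed analytic locally, hence globally'' assertion in your first step, which needs local finiteness of the family of translates) one must prove that only finitely many $\widehat{\mathbf{G}}'(\Z)$-translates of $D_0$ meet the relevant finite union of Siegel-set translates; in BKT this rests on a comparison of Siegel sets for $\mathbf{M}$ and for $\widehat{\mathbf{G}}'$ (an Orr-type statement) and is precisely the finiteness the paper cites from their Theorem 1.6. So once definability of $\tilde\psi$ is granted, the remaining descent is not ``formal and routine''; without this extra finiteness your argument only yields algebraicity of a proper definable piece of the Hodge locus.
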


\begin{defi}
Subvarieties of $S_\Gamma$ of the form $\psi ^{-1}(\pi (D_0))^0$, for some Mumford--Tate sub-domain $D_0\subset D'$ are called $\Z$-\emph{special} (here $(-)^0$ denotes some irreducible component). An irreducible subvariety $W$ of $S_\Gamma$ is $\Z$-\emph{Hodge generic} if it is not contained in any strict $\Z$-special subvariety.
\end{defi}

By definition, $S_\Gamma$ is always $\Z$-special. In the next section we will see that $\Gamma$-special subvarieties (see Definition \ref{gammaspeci}) give examples of (strict) $\Z$-special subvarieties (in the sense of Definition \ref{defzsp}), and vice versa. Moreover we explicitly describe the $\Z$-special subvarieties associated to the $\Gamma$-special ones.

\section{Comparison between the \texorpdfstring{$\Gamma$}{gamma}-special and \texorpdfstring{$\Z$}{z}-special structures}\label{comparisonsection}
Let $(G=\PU(1,n),X, \Gamma \subset \mathbf{G}(\Oo_K))$ be a real Shimura datum. In this section we introduce and compare two notions of \emph{monodromy groups} for subvarieties of $S_\Gamma= \Gamma \backslash X$: on the one hand with respect to the natural $\Oo_{K}$-VHS, and, on the other, to the $\Z$-VHS $\widehat{\mathbb{V}}$ of Theorem \ref{thmzvhs}. Thanks to the Deligne--André Theorem, such comparison translates to a computation of the Mumford--Tate group of $\widehat{\mathbb{V}}$. The main result of the section is Theorem \ref{mainthm}, which will be used to prove Theorem \ref{thmbader} in Section \ref{sectionfinit}. We also prove the non-arithmetic Ax-Schanuel conjecture in Section \ref{proofofas}. Finally several corollaries, which may be of independent interest, are discussed.

\subsection{Computing the monodromy for the \texorpdfstring{$\Z$}{z}-VHS}\label{section51}
Recall that $\Gamma$ is assumed to be torsion free, and that it defines a $K$-form $\mathbf{G}$ of $G$, where $K$ denotes the (adjoint) trace field of $\Gamma$. Let $K'$ be the totally real field of Theorem \ref{thmzvhs} (which contains $K$), and denote the Weil restriction from $K'$ to $\Q$ of $\mathbf{G}_{K'}=  \mathbf{G} \otimes_{K}K'$ by $\widehat{\mathbf{G}}'$. It naturally contains $\widehat{\mathbf{G}}$, the Weil restriction from $K$ to $\Q$ of $\mathbf{G}$.

We consider the real representation of $\Gamma \cong \pi_1(S_\Gamma)$ associated to $\widehat{\mathbb{V}}$, which, by construction, is given by 
\begin{equation}\label{equationong}
\rho_{\widehat{\mathbb{V}}}: \Gamma \to  \widehat{G}'=\prod _{\sigma \in \Omega_\infty} G_\sigma, \ \ \gamma \mapsto (\sigma_1(\gamma)=\gamma, \dots , \sigma_{r'} (\gamma)),
\end{equation}
where, as in Section \ref{diagram}, $\sigma_i$ are the $r'$ real embeddings of $K'$. In this section we compute the algebraic monodromy of subvarieties of $S_\Gamma$, for the $\Z$-VHS $\widehat{\mathbb{V}}$. Among other things, we prove that $\rho_{\widehat{\mathbb{V}}}$ has Zariski dense image in $\widehat{\mathbf{G}} \subset \widehat{\mathbf{G}}'$.

Let $W$ be an irreducible (closed) algebraic subvariety of $S_\Gamma$ of positive dimension, and let $\widetilde{W}$ be an analytic component of $\pi^{-1}(W)$, where $\pi : X \to S_\Gamma$ denotes the quotient map. As in Section \ref{zhodgesection}, we omit the base point in the notation of the fundamental group and we replace $W$ by its smooth locus. 

We denote as follows the $\Gamma$-\emph{monodromy} of $W$, that is the monodromy of $W$ with respect to the $K$-local system $\mathbb{V}$:
\begin{displaymath}
\leftidx{^\Gamma} {\mathbf{M}}_W:= \overline{\im (\pi_1(W)\to \pi_1(S_\Gamma)=\Gamma)}^{\Zar,0}\subset \mathbf{G},
\end{displaymath}
that is the identity component of the Zariski closure in $\mathbf{G}$ of the image of $\pi_1(W)$.
\begin{lemma}\label{lemmanoncomp}
The $K$-algebraic group $\leftidx{^\Gamma} {\mathbf{M}}_W$ is non-trivial and its real points are not compact.
\end{lemma}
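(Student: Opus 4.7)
The plan is to prove both assertions by contradiction, with the non-compactness part following immediately from non-triviality via the discreteness of $\Gamma$ in $G$, and the non-triviality part exploiting the boundedness of $X\cong\mathbb{B}^n$ in $\mathbb{C}^n$.

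First I would show that $\leftidx{^\Gamma}{\mathbf{M}}_W$ is non-trivial. Suppose for contradiction that its identity component is trivial. Since the Zariski closure in $G$ of any subgroup of $\Gamma$ is an algebraic group, triviality of its identity component forces the image $H$ of $\pi_1(W)\to\Gamma$ to be a finite subgroup of $\Gamma$. Let $W'\to W$ be the associated finite étale Galois cover; it exists by Riemann existence and is again a smooth, connected, positive-dimensional quasi-projective variety, with $\pi_1(W')\to\Gamma$ trivial. Hence the composition $W'\to W\hookrightarrow S_\Gamma$ lifts to a holomorphic map $f\colon W'\to X\cong\mathbb{B}^n\subset\mathbb{C}^n$.

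Next, pick any irreducible algebraic curve $C_0\subset W$ (available since $\dim W\geq 1$) and let $C\subset W'$ be an irreducible component of its preimage, so $C\to C_0$ is a finite, non-constant map of curves. Let $\overline{C^\nu}$ be the smooth projective completion of the normalization of $C$ and compose the natural map $C^\nu\to C$ with $f$ to obtain a holomorphic map $C^\nu\to\mathbb{B}^n\subset\mathbb{C}^n$, bounded in $\mathbb{C}^n$. Since $\overline{C^\nu}\setminus C^\nu$ is a finite set of points, Riemann's removable singularity theorem extends this map holomorphically to $\overline{C^\nu}\to\mathbb{C}^n$. Its image is a compact complex-analytic subset of $\mathbb{C}^n$, hence finite, so the map is constant. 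But then the composition $C\to W'\xrightarrow{f} X\to S_\Gamma$ is constant, while by construction this composition equals $C\to C_0\hookrightarrow S_\Gamma$, which has image a positive-dimensional irreducible curve in $S_\Gamma$ — a contradiction.

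For the second assertion, if $\leftidx{^\Gamma}{\mathbf{M}}_W(\mathbb{R})$ were compact, then its intersection with the discrete subgroup $\Gamma\subset G$ would be finite, so the image of $\pi_1(W)\to\Gamma$ would be finite and its Zariski closure zero-dimensional, contradicting the non-triviality just established. The only real obstacle is the first step: positive-dimensional complex-analytic subvarieties of $\mathbb{B}^n$ certainly exist (e.g.\ holomorphic discs), so it is crucial that $C$ is \emph{algebraic}, which together with the boundedness of $\mathbb{B}^n$ in $\mathbb{C}^n$ is precisely what makes Riemann extension applicable and reduces the problem to the elementary statement that compact complex-analytic subsets of $\mathbb{C}^n$ are finite.
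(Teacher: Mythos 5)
Your proof is correct and follows essentially the same route as the paper: assume the image of $\pi_1(W)$ in $\Gamma$ is finite, use the Riemann existence theorem to make the resulting finite cover of $W$ algebraic, derive a contradiction from the boundedness of $\mathbb{B}^n\subset\C^n$, and then deduce non-compactness from the fact that a compact group cannot contain an infinite discrete subgroup. The only difference is that you spell out, via restriction to a curve and removable singularities, the step the paper states directly (that a positive-dimensional algebraic variety admits no non-constant bounded holomorphic functions), which is a harmless elaboration.
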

\begin{proof}
Heading for a contradiction suppose that the intersection between $\Gamma$ and $\leftidx{^\Gamma} {M}_W$ is finite. By Riemann existence theorem, this implies that $\widetilde{W}$ is algebraic and therefore does not admit bounded holomorphic functions. This, however, contradicts the fact that $X=\mathbb{B}^n$ admits a bounded realisation. Finally, since $\Gamma \cap \leftidx{^\Gamma} {M}_W $ is a discrete and infinite subgroup of $\leftidx{^\Gamma} {M}_W$, $\leftidx{^\Gamma} {M}_W$ can not be compact.
\end{proof}
The group $\leftidx{^\Gamma} {\mathbf{M}}_W$ is defined over $K$, the adjoint trace field of $\Gamma$, but it could happen that it is defined over a smaller field. In the next definition we look at the field of definition of the adjoint of $\leftidx{^\Gamma} {\mathbf{M}}_W$, in the sense of Section \ref{localrig}.
\begin{defi}\label{deftracefield}
The field generated by the traces of the elements in $\Gamma \cap \leftidx{^\Gamma} {M}_W $, under the adjoint representation (of $M_W$), is called the \emph{trace field of} $W$ and denoted by $K_W$.
\end{defi}
\begin{rmk}\label{rmknew}
The field $K_W$ is naturally a sub-field of $K$. Indeed, by construction, the field
\begin{displaymath}
L:= \Q\{\tr \ad_{|{M_W}}(\gamma): \gamma \in \Gamma \cap \leftidx{^\Gamma} {M}_W \}
\end{displaymath}
is a sub-field of $K$, where $\ad_{|{M_W}}$ denotes the restriction of the adjoint representation of $G$ to $M_W$. By Vinberg's theorem \cite{MR0279206}, as explained in Section \ref{localrig}, we have that the adjoint trace field of $\Gamma \cap \leftidx{^\Gamma} {M}_W$ is a sub-field of $L$, and therefore of $K$.
\end{rmk}

We define the $\Z$-\emph{monodromy} of $W$, denoted by $\leftidx{^\Z} {\mathbf{M}}_W$, as the monodromy of $W$ with respect to the $\Z$-VHS $\widehat{\mathbb{V}}$ we constructed in Theorem \ref{thmzvhs}. That is
\begin{displaymath}
\leftidx{^\Z} {\mathbf{M}}_W:=\overline{\im (\pi_1(W)\to \widehat{\mathbf{G}}'(\Z))}^{\Zar,0}\subset \widehat{\mathbf{G}}'.
\end{displaymath}
It is a connected $\Q$-subgroup of $\widehat{\mathbf{G}}$, the Weil restriction from $K$ to $\Q$ of $\mathbf{G}$, which is seen diagonally as a subgroup of $\widehat{\mathbf{G}}'$.

We record here two simple observations relating the two ``special structures'', relying only on the functoriality of the fundamental group:
\begin{itemize}
\item If $W$ is contained in a $\Gamma$-special subvariety (as introduced in Definition \ref{gammaspeci}) associated to $(H,X_H,\Gamma_H\subset \mathbf{H}(\Oo_K))$, then the $\Gamma$-monodromy of $W$ is contained in $\mathbf{H}$;
\item If $W$ is contained in a $\Z$-special subvariety (as introduced in Definition \ref{defzsp}) associated to $(R,D_R, \mathbf{R}(\Z))$, then the $\Z$-monodromy of $W$ is contained in $\mathbf{R}$.
\end{itemize}

We define $\widehat{\leftidx{^\Gamma} {\mathbf{M}}_W}$ as the Weil restriction from $K_W $ to $\Q$ of the adjoint of $\leftidx{^\Gamma} {\mathbf{M}}_W$. It is a $\Q$-algebraic group whose real points can be identified as the product of the $ \left( \leftidx{^\Gamma} {\mathbf{M}}^{\ad}_W\right) _\sigma$, varying $\sigma: K_W \to \R$. Each factor has a structure of $K_W$-algebraic group. Since $K_W$ is contained in $K$, as explained in Remark \ref{rmknew}, the factors have also a structure of $K$-group scheme.

The following will be crucial for the future arguments and it is the first step towards the proof of Theorem \ref{mainthm}.
\begin{thm}\label{thmmonodromy}
The $\Z$-monodromy of $W$ is the Weil restriction from $K_W$ to $\Q$ of the $\Gamma$-monodromy of $W$. In symbols we write:
\begin{displaymath}
\widehat{\leftidx{^\Gamma} {\mathbf{M}}_W}=\leftidx{^\Z} {\mathbf{M}}^{\ad}_W.
\end{displaymath}
\end{thm}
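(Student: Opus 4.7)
The plan is to reduce to the adjoint side and then compare the $\Gamma$- and $\Z$-monodromies using the explicit diagonal description \eqref{equationong} of $\rho_{\widehat{\mathbb{V}}}$. After replacing $\pi_1(W)$ by a finite-index subgroup, call $\Lambda$ its image in $\Gamma$; by definition $\Lambda$ is Zariski dense in $\leftidx{^\Gamma}{\mathbf{M}}_W$. Passing to the adjoint, Definition \ref{deftracefield} together with Vinberg's theorem recalled in Section \ref{localrig} guarantees that $\leftidx{^\Gamma}{\mathbf{M}}_W^{\ad}$ descends to a $K_W$-form, and that (after a further finite-index reduction) $\Lambda^{\ad}$ is contained in $\leftidx{^\Gamma}{\mathbf{M}}_W^{\ad}(K_W)$ and remains Zariski dense in $\leftidx{^\Gamma}{\mathbf{M}}_W^{\ad}$.

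For the inclusion $\leftidx{^\Z}{\mathbf{M}}_W^{\ad}\subseteq\widehat{\leftidx{^\Gamma}{\mathbf{M}}_W}$, I would use that, since $\leftidx{^\Gamma}{\mathbf{M}}_W^{\ad}$ descends to $K_W$, any two embeddings $\sigma_i,\sigma_j:K'\to\R$ with common restriction $\tau:K_W\to\R$ produce the same $\R$-group $(\leftidx{^\Gamma}{\mathbf{M}}_W^{\ad})_\tau$ and coincide on $\Lambda^{\ad}$. Grouping the real places of $K'$ according to their restriction to $K_W$ then identifies the image of $\Lambda^{\ad}$ under $\rho_{\widehat{\mathbb{V}}}^{\ad}$ with the diagonal embedding of $\Lambda^{\ad}$ inside
\[
\operatorname{Res}_{K_W/\Q}(\leftidx{^\Gamma}{\mathbf{M}}_W^{\ad})(\R)=\prod_{\tau:K_W\to\R}(\leftidx{^\Gamma}{\mathbf{M}}_W^{\ad})_\tau(\R)\subset (\widehat{\mathbf{G}}')^{\ad}(\R).
\]
Taking Zariski closures of identity components yields the containment.

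The heart of the matter is the reverse inclusion, namely the Zariski density of $\{(\tau(\lambda))_\tau:\lambda\in\Lambda^{\ad}\}$ in $\operatorname{Res}_{K_W/\Q}(\leftidx{^\Gamma}{\mathbf{M}}_W^{\ad})$. Each factor projection is Zariski dense by the first step, so by a Goursat-type lemma for connected semisimple groups the Zariski closure of the diagonal is either the full product or factors through the graph of a non-trivial isogeny among some of the $\tau$-factors. A non-trivial such isogeny, combined with Galois descent, would force $\leftidx{^\Gamma}{\mathbf{M}}_W^{\ad}$ to descend to a strict subfield of $K_W$, contradicting the minimality built into Definition \ref{deftracefield}. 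Equivalently, one can invoke that the Weil restriction of an adjoint absolutely simple $K_W$-group is $\Q$-simple, so any proper $\Q$-Zariski-closed subgroup containing the diagonal would have to project trivially on some factor, contradicting Zariski density there. This \emph{independence of embeddings} step is the main obstacle; it is classical in the theory of restrictions of scalars for semisimple groups (see e.g. Margulis \cite{margulisbook}).
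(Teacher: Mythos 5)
Your main argument is correct and is essentially the paper's own proof: the containment $\leftidx{^\Z} {\mathbf{M}}^{\ad}_W\subseteq\widehat{\leftidx{^\Gamma} {\mathbf{M}}_W}$ comes from the fact that the adjoint monodromy representation of $\widehat{\mathbb{V}}_{|W}$ factors through $\Res^{K_W}_{\Q}\leftidx{^\Gamma} {\mathbf{M}}^{\ad}_W$ (via Vinberg), and the reverse containment is obtained from Zariski density of each $\sigma$-factor projection plus a Goursat analysis in which a proper Zariski closure of the diagonal would yield a graph of an isomorphism intertwining two distinct real places of $K_W$ on the dense image of $\pi_1(W)$, hence on the adjoint traces generating $K_W$ — a contradiction; the paper runs Goursat two places at a time and then inducts, which differs from your appeal to the general structure of subgroups of products of adjoint simple groups only in bookkeeping. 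Two small caveats: the claimed ``equivalent'' argument is not valid, since $\Q$-simplicity of the Weil restriction only excludes proper \emph{normal} $\Q$-subgroups, and a proper $\Q$-subgroup surjecting onto every $\overline{\Q}$-factor can perfectly well exist (e.g.\ the diagonally embedded $\Q$-form when the group is base-changed from $\Q$), so it is precisely the trace-field argument, not $\Q$-simplicity, that rules this out; and the minimality of $K_W$ is not literally ``built into'' Definition \ref{deftracefield} but is supplied by Vinberg's theorem as recalled in Remark \ref{rmknew}, which is what your contradiction (or, more directly, the observation that the intertwining isomorphism forces $\sigma_i=\sigma_j$ on the traces that generate $K_W$) should cite.
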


The representation
\begin{displaymath}
\pi_1(W)\to \leftidx{^\Gamma} {\mathbf{M}}^{\ad}_W,
\end{displaymath}
associated to the restriction of the $\Z$-VHS $\widehat{\mathbb{V}}$ from $S_\Gamma$ to (the smooth locus of) $W$ is obtained, by equation \eqref{equationong}, using each embedding $\sigma : K \to \R$:
\begin{displaymath}
\pi_1(W)\to \leftidx{^\Gamma} {\mathbf{M}}^{\ad}_W \to \widehat{\leftidx{^\Gamma} {\mathbf{M}}_W}.
\end{displaymath}
In particular the adjoint of $\leftidx{^\Z} {\mathbf{M}}_W$ is contained in $\widehat{\leftidx{^\Gamma} {\mathbf{M}}_W}$. 

\begin{proof}
If $K_W=\Q$, there is nothing to prove, so we may assume that $[K_W:\Q]>1$. For each $\sigma : K_W \to \R$, denote by $\mathbf{M}_\sigma$ the image of the adjoint of $\leftidx{^\Z} {\mathbf{M}}_W$ along the projection 
\begin{displaymath}
\pr_\sigma :  \widehat{\mathbf{G}}\to \mathbf{G}_\sigma.
\end{displaymath}
We first prove that $\mathbf{M}_\sigma$ is the adjoint of $\left( \leftidx{^\Gamma} {\mathbf{M}}_W\right) _\sigma$. By equation \eqref{equationong}, we have that $\mathbf{M}_\sigma$ contains the image of the monodromy representation associated to the VHS $\mathbb{V}^\sigma_{| W}$, which is 
\begin{displaymath}
g_W: \pi_1 (W) \to \left( \leftidx{^\Gamma} {\mathbf{M}}_W\right) _\sigma.
\end{displaymath} 
Since the image of $\pi_1(W)$ is Zariski dense in $ \leftidx{^\Gamma}{\mathbf{M}}_W$, we have that the image of $g_W$ is contained in $\mathbf{M}_\sigma$ and it is Zariski dense in $\left( \leftidx{^\Gamma} {\mathbf{M}}_W\right) _\sigma$. It follows that
\begin{equation}\label{mteq}
\left( \leftidx{^\Gamma}{\mathbf{M}}_W\right)^{\ad} _\sigma = \mathbf{M}_\sigma.
\end{equation}

For any two of the $r_W$ archimedean places $\sigma_i, \sigma_j$ of $K_W$ we prove that $\leftidx{^\Z} {\mathbf{M}}_W$ surjects onto
\begin{displaymath}
\mathbf{M} _{\sigma_i} \times\mathbf{M} _{\sigma_j}.
\end{displaymath}
This is enough to conclude when $r_W=2$, and then we argue by induction. Let $\mathbf{H}$ be the intersection of $\leftidx{^\Z} {\mathbf{M}}_W$ with $\mathbf{M} _{\sigma_i} \times\mathbf{M} _{\sigma_j}$. It is an algebraic group whose minimal field of definition is $K_W$.

 For $\ell \in \{i,j\}$, we have a short exact sequences (in the category of algebraic $K_W$-groups)
\begin{displaymath}
1 \to K_\ell \to \mathbf{H} \to  \mathbf{M} _{\sigma_\ell}\to 1.
\end{displaymath}
Since we are working with semisimple group, the only possibilities for $K_\ell $ is to be either finite or equal to $ \mathbf{M} _{\sigma_{\ell'}} $, for $\ell '$ such that $\{\ell, \ell '\}=\{i,j\}$. If $K_\ell = \mathbf{M} _{\sigma_{\ell'}}$ then 
\begin{displaymath}
\mathbf{H}=\mathbf{M} _{\sigma_i} \times\mathbf{M} _{\sigma_j},
\end{displaymath}
proving the claim. If both kernels are finite we argue using Goursat's Lemma, as recalled below.
\begin{lemma}[Goursat's Lemma]\label{goursat}
Let $G_1, G_2$ be algebraic groups and $H$ be a closed subgroup of $G_1 \times G_2$ such that the two projections from $H$ to $G_1$ and $G_2$ are surjective. For $i=1,2$, let $K_i$ be the kernel of $H\to G_i$. The image of $H$ in $G_1/K_2 \times G_2/K_1$ is the graph of an isomorphism $G_1/K_2 \to G_2/K_1$.
\end{lemma}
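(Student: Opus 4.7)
The plan is to construct the candidate isomorphism $\phi \colon G_1/K_2 \to G_2/K_1$ by hand and verify it is well-defined and bijective, then upgrade this set-theoretic bijection to an isomorphism of algebraic groups. Throughout, $K_1$ will be viewed as a closed subgroup of $G_2$ via $\{1\}\times G_2 \cong G_2$, and similarly $K_2$ inside $G_1$.

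First I would check that $K_2$ is a closed normal subgroup of $G_1$ and $K_1$ a closed normal subgroup of $G_2$, so that the quotients even make sense. Closedness is immediate since $K_1 = H \cap (\{1\}\times G_2)$ and $K_2 = H \cap (G_1 \times \{1\})$. For normality of, say, $K_1$ in $G_2$: given $g_2 \in G_2$, surjectivity of the projection $H \twoheadrightarrow G_2$ provides some $(g_1,g_2) \in H$; then for any $k \in K_1$ we have $(1,k) \in H$, and conjugation gives $(g_1,g_2)(1,k)(g_1,g_2)^{-1} = (1, g_2 k g_2^{-1}) \in H$, i.e.\ $g_2 k g_2^{-1} \in K_1$. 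The argument for $K_2 \triangleleft G_1$ is symmetric.

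Next I would define $\phi([g_1]) := [g_2]$ for any $g_2$ with $(g_1, g_2) \in H$; such a $g_2$ exists by surjectivity of the projection to $G_1$. Two admissible choices of $g_2$ differ by an element of $K_1$, so the class $[g_2] \in G_2/K_1$ does not depend on the choice. If $g_1' = g_1 k$ with $k \in K_2$, then $(k,1) \in H$ and $(g_1',g_2) = (g_1, g_2)\cdot(k,1) \in H$, hence $\phi([g_1'])=[g_2]$, so $\phi$ is well defined on $G_1/K_2$. A completely symmetric construction produces a map $\psi \colon G_2/K_1 \to G_1/K_2$, and on points $\phi$ and $\psi$ are mutually inverse homomorphisms. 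Moreover, by construction the image of $(g_1,g_2) \in H$ inside $G_1/K_2 \times G_2/K_1$ is the pair $([g_1], \phi([g_1]))$, so this image is exactly the graph of $\phi$.

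To promote $\phi$ to an isomorphism of algebraic groups, the cleanest route is to identify both $G_1/K_2$ and $G_2/K_1$ with the quotient $H/(K_1 \cdot K_2)$, where $K_1 \cdot K_2$ is viewed as a closed normal subgroup of $H$ via the inclusions $K_1 \subset \{1\}\times G_2 \subset H$ and $K_2 \subset G_1 \times \{1\} \subset H$. The algebraic group morphisms $H \to G_i$ descend to morphisms $H/(K_1\cdot K_2) \to G_i/K_{3-i}$ which are bijective on points by the kernel computation above, hence isomorphisms of algebraic groups in the characteristic zero setting of the paper (or more generally by faithfully flat descent). The map $\phi$ is then obtained as the composite $G_1/K_2 \xleftarrow{\sim} H/(K_1\cdot K_2) \xrightarrow{\sim} G_2/K_1$, which is an isomorphism of algebraic groups. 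The only real content, beyond elementary group theory, is this last algebraic-category upgrade; everything else is formal.
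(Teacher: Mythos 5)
Your proof is correct. There is in fact nothing in the paper to compare it against: Goursat's Lemma is only recalled as a standard fact inside the proof of Theorem \ref{thmmonodromy} and is not proved there, and your argument is the usual one — construct $\phi$ by hand, check well-definedness and that the image of $H$ is its graph, and then upgrade to the algebraic category by identifying both quotients with $H/(K_1\cdot K_2)$ and using that in characteristic zero a surjective homomorphism of algebraic groups with trivial kernel is an isomorphism. One small slip worth fixing: the chain $K_1 \subset \{1\}\times G_2 \subset H$ is not literally correct, since $\{1\}\times G_2$ need not be contained in $H$; what you need, and what holds by definition, is that $K_1 = H \cap (\{1\}\times G_2)$ and $K_2 = H \cap (G_1\times\{1\})$ are closed normal subgroups of $H$, so that $K_1\cdot K_2$ is a closed normal subgroup of $H$ and the two descended maps $H/(K_1\cdot K_2)\to G_1/K_2$ and $H/(K_1\cdot K_2)\to G_2/K_1$ make sense.
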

Assume that both $K_i$ and $K_j$ are finite groups. By applying Lemma \ref{goursat} we can write $\mathbf{H}$ as the graph of an isomorphism of $K_W$ group schemes
\begin{displaymath}
f: \mathbf{M} _{\sigma_i}/K_{j}\to \mathbf{M} _{\sigma_j}/K_{i}.
\end{displaymath}
In particular, up to a quotient by a finite group, we obtain a commutative square
\begin{center}
\begin{tikzpicture}[scale=2]
\node (A) at (-1,1) {$\mathbf{M} _{\sigma_i} (K_W)$};
\node (B) at (1,1) {$\mathbf{M} _{\sigma_j} (K_W)$};
\node (C) at (-1,0) {$ \mathbf{M} _{\sigma_i} (\R)$};
\node (D) at (1,0) {$\mathbf{M} _{\sigma_j} (\R)$};
\path[->,font=\scriptsize,>=angle 90]
(A) edge node[above]{$f_{K_W}$} (B)
(C) edge node[above]{$f_\R$} (D)
(A) edge node[right]{$\sigma_i$} (C)
(B) edge node[right]{$\sigma_j$} (D);
\end{tikzpicture}.
\end{center}

The diagram induces an equivalence between the two places $\sigma_i, \sigma_j$. Indeed, since $K_W$ is the field generated by the traces of the $\gamma \in \Gamma \cap \leftidx{^\Gamma} {M}_W$, and 
\begin{displaymath}
\Q\{ \tr (\ad (\gamma)) : \gamma \in \sigma_i(\Gamma \cap \leftidx{^\Gamma} {M}_W) \} = \Q\{ \tr (\ad (\gamma)) : \gamma \in \sigma_j(\Gamma \cap \leftidx{^\Gamma} {M}_W) \}
\end{displaymath}
we obtain that $\sigma_i(K_W)=\sigma_j(K_W)$. This is impossible because $\sigma_i, \sigma_j$ are places of $K_W$, the smallest field of definition of $\leftidx{^\Gamma} {\mathbf{M}}_W$.

To finish the proof we argue by induction. Assume that $\leftidx{^\Z} {\mathbf{M}}_W$ surjects onto any products of $m$ factors, for some $m<r_W$. We have to prove that it surjects onto any products of $m+1$ factors. Let $S$ be a set of $m+1$ places of $K_W$ and write $S=S'\cup \{\sigma_{\ell}\}$ for some $\sigma_\ell \in S$. Set
\begin{displaymath}
 \mathbf{M} _{S'} := \prod_{\sigma\in S'}  \mathbf{M} _{\sigma},\ \ \  \mathbf{M} _{S} := \prod_{\sigma\in S}  \mathbf{M} _{\sigma},
\end{displaymath}
and consider the exact sequence
\begin{displaymath}
1 \to K_{S'} \to \mathbf{H} \to  \mathbf{M} _{S'}\to 1.
\end{displaymath}
The kernel $K_{S'}$ is either finite, or equal to $\mathbf{M} _{\sigma_\ell}$. In the latter case, we conclude that $\mathbf{H}=\mathbf{M} _{S} $. In the former case, consider the short exact sequence
\begin{displaymath}
1 \to K_\ell \to \mathbf{H} \to  \mathbf{M} _{\sigma_\ell}\to 1.
\end{displaymath}
By applying Lemma \ref{goursat}, we can have an isomorphism of $K$-group schemes
\begin{displaymath}
\mathbf{M} _{\sigma_\ell} \cong \mathbf{M}_{S'}/ K_\ell.
\end{displaymath}
Since $M_{S'}$ is a product of simple groups, the right hand side has to be isomorphic to (a finite quotient of) $\mathbf{M} _{\sigma_{\ell'}}$, for some $\ell ' \in S'$. We therefore obtain the same commutative square as in the case where $S$ had cardinality two, concluding the proof of Theorem \ref{thmmonodromy}.
\end{proof}

\subsection{Monodromy and Mumford--Tate groups}\label{monodromymtsection}
We are ready to compute the generic Mumford--Tate groups for $\widehat{\mathbb{V}}$ restricted to subvarieties of $S_\Gamma$, in the sense of Section \ref{recapperioddomains}. We first recall the following important theorem which relates the monodromy and the Mumford--Tate groups. It can be found in \cite[Theorem 1]{MR1154159} and \cite{MR0498551} (see also \cite[Theorem 4.10]{MR3821177}).
\begin{thm}[Deligne, André]\label{deligneandre}
Let $W$ a smooth quasi-projective variety supporting a $\Z$-VHS and $s \in W$ a Hodge generic point. Let $\mathbf{M}_s$ be the algebraic monodromy of $W$ (at $s$). Then $\mathbf{M}_s$ is a normal subgroup of the derived subgroup $\MT(s)^{\der}$ of the Mumford--Tate group at $s$. In symbols we simply write $\mathbf{M}_s \triangleleft \MT(s)^{\der}$.
\end{thm}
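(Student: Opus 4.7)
The plan is to combine the tensorial characterisation of both groups with Deligne's \emph{theorem of the fixed part} for a polarised variation of Hodge structure. I would split the argument into three steps.

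First, I would establish the inclusion $\mathbf{M}_s \subseteq \MT(s)$. Since $s$ is Hodge generic, every Hodge tensor $t \in V_s^{\otimes a} \otimes (V_s^*)^{\otimes b}$ at $s$ is the value at $s$ of a flat global section of the corresponding tensor local system whose value at every point is again a Hodge tensor (one extends $t$ by parallel transport on a small analytic neighbourhood, where Hodge-genericity ensures the transported tensor is still Hodge, and then uses analytic continuation on the universal cover of $W$). This flat section is in particular fixed by the monodromy representation, so $\mathbf{M}_s$ stabilises every Hodge tensor at $s$. Since $\MT(s)$ is by definition cut out as the stabiliser of all Hodge tensors in all tensor constructions, this gives $\mathbf{M}_s \subseteq \MT(s)$.

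Second, I would upgrade this to $\mathbf{M}_s \subseteq \MT(s)^{\der}$. The quotient $\MT(s)/\MT(s)^{\der}$ is a $\Q$-torus, so the composite $\pi_1(W) \to \MT(s) \to \MT(s)/\MT(s)^{\der}$ is an abelian representation whose image acts on each Tate twist appearing in the abelianisation through characters that are themselves flat Hodge classes; by the theorem of the fixed part, these characters have finite image, and so up to passing to the finite étale cover of $W$ trivialising them (which does not affect $\mathbf{M}_s$, being defined as a connected component) one obtains the desired inclusion.

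Third, and most importantly, I would prove normality. Set $\mathfrak{m}_s := \Lie(\mathbf{M}_s) \subseteq \End(V_s)$. By the very construction of $\mathbf{M}_s$ as the identity component of the Zariski closure of the monodromy image, the subspace $\mathfrak{m}_s$ is stable under the adjoint action of $\pi_1(W)$ on $\End(V_s)$. Therefore, translating $\mathfrak{m}_s$ over the universal cover yields a well-defined flat sub-local system $\mathfrak{m} \subseteq \End(\mathbb{V})$. Deligne's theorem of the fixed part now implies that $\mathfrak{m}$ underlies a sub-$\Z$-VHS of $\End(\mathbb{V})$; in particular $\mathfrak{m}_s$ is a sub-Hodge structure of $\End(V_s)$. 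Since $\MT(s)$ is characterised as the smallest $\Q$-algebraic subgroup of $\GL(V_s)$ through whose base change to $\R$ the Hodge structure at $s$ factors, it preserves every sub-Hodge structure of $\End(V_s)$, and in particular normalises $\mathfrak{m}_s$. Passing from Lie algebras to connected algebraic groups, $\MT(s)$ normalises $\mathbf{M}_s$; combined with the second step, this yields $\mathbf{M}_s \triangleleft \MT(s)^{\der}$.

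The main obstacle is the careful application of the theorem of the fixed part on a (possibly non-compact) quasi-projective base: one must check that $\mathfrak{m}$ is intrinsically defined, independently of the base point, so that it is truly monodromy-equivariant and not just monodromy-invariant at a single fibre; this is essentially tautological from the definition of $\mathbf{M}_s$ but must be written down carefully to invoke the fixed-part theorem in its correct form for polarised $\Z$-VHS over a smooth quasi-projective base.
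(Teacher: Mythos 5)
The paper does not actually prove this statement: it is quoted as the classical Deligne--Andr\'e theorem, with references to Andr\'e's paper and to Deligne, so your proposal has to be judged on its own merits. The skeleton you chose (monodromy inside $\MT(s)$ via Hodge tensors at a Hodge-generic point, killing the abelian quotient, then normality from Hodge-theoretic stability of $\Lie \mathbf{M}_s$) is of the right kind, but two steps are unjustified, and the key one rests on a false general principle. In Step 1, the flat continuation of a Hodge tensor at a Hodge-generic point is indeed Hodge at every point of the universal cover, but it is \emph{not} thereby ``fixed by the monodromy representation'': monodromy invariance is exactly the nontrivial point. One needs the extra argument that the flat tensors which are Hodge everywhere form a local subsystem of type $(0,0)$ in weight $0$, on which the polarisation is definite; since monodromy preserves the polarisation and an integral lattice, it acts on the space of Hodge tensors at $s$ through a \emph{finite} group, and only then does the connected group $\mathbf{M}_s$ fix them. (The same remark applies to Step 2: the finiteness of the character images comes from the polarisation and the lattice, not from the fixed part theorem as such; the step is repairable.)

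The serious gap is Step 3. It is simply not true that every rational flat sub-local system of a polarisable VHS underlies a sub-VHS: take a constant VHS on any base and a rational subspace of the fibre which is not a sub-Hodge structure (e.g.\ a rational line in the weight-one Hodge structure of an elliptic curve without CM); it is a flat rational subsystem but not a sub-VHS. The theorem of the fixed part gives this conclusion only for the monodromy-\emph{invariant} part (and, by Deligne, for isotypic components), not for an arbitrary monodromy-\emph{stable} subspace such as $\mathfrak{m}_s=\Lie\mathbf{M}_s$. Moreover, ``$\mathfrak{m}_s$ is a sub-Hodge structure of $\End(V_s)$'' is essentially equivalent to the normality you are trying to prove (it is a consequence of it), so deriving it from this false general principle begs the question; this is precisely the point you dismiss at the end as ``essentially tautological''. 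The standard argument (Andr\'e's) goes differently: apply the fixed part theorem (after a finite \'etale cover) to the monodromy invariants $\bigl(V_s^{\otimes a}\otimes (V_s^{*})^{\otimes b}\bigr)^{\mathbf{M}_s}$ in every tensor construction, conclude that these subspaces are stable under $h_s(\DT)$ and hence, by minimality, under $\MT(s)$; then use that $\mathbf{M}_s$ is reductive (Deligne's semisimplicity theorem) together with Chevalley's theorem to realise $\mathbf{M}_s$ as the pointwise fixer of finitely many such invariant tensors $t$; for $g\in\MT(s)$ the tensor $g\cdot t$ is again $\mathbf{M}_s$-invariant, which yields $\mathbf{M}_s\subseteq g\mathbf{M}_s g^{-1}$ and hence $g\mathbf{M}_s g^{-1}=\mathbf{M}_s$. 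Combined with your (corrected) Steps 1 and 2, this gives $\mathbf{M}_s\triangleleft\MT(s)^{\der}$.
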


Combining Theorem \ref{deligneandre} and Theorem \ref{thmmonodromy}, we obtain two important corollaries. The former will be used to prove that $\Gamma$-special subvarieties are $\Z$-special, and the latter to establish the converse implication.

\begin{cor}\label{gammadense}
The $\Z$-monodromy group of $S_\Gamma$ is $\widehat{\mathbf{G}}$, i.e. $\Gamma$ has Zariski dense image in $\widehat{G}$. Moreover the derived subgroup of the generic Mumford--Tate group of $\widehat{\mathbb{V}}$ is $\widehat{\mathbf{G}}$.
\end{cor}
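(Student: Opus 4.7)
The plan is to deduce both assertions from Theorem \ref{thmmonodromy} applied to $W = S_\Gamma$, together with the Deligne--André theorem. The starting observation will be that, although the representation $\rho_{\widehat{\mathbb{V}}}$ of equation \eqref{equationong} a priori takes values in the larger group $\widehat{G}'$, the integrality statement of Theorem \ref{integrality} forces $\rho_{\widehat{\mathbb{V}}}(\Gamma)$ to lie in the diagonal copy of $\widehat{\mathbf{G}}$ inside $\widehat{\mathbf{G}}'$: since $\gamma \in \mathbf{G}(\Oo_K)$, the factor $\sigma_i(\gamma)$ depends only on the restriction $\sigma_i|_K$. Hence already $\leftidx{^\Z}{\mathbf{M}}_{S_\Gamma} \subset \widehat{\mathbf{G}}$.

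Next I would invoke Borel density, which gives that $\Gamma$ is Zariski dense in $\mathbf{G}$, so $\leftidx{^\Gamma}{\mathbf{M}}_{S_\Gamma} = \mathbf{G}$; by Definition \ref{deftracefield} the trace field $K_{S_\Gamma}$ coincides with the adjoint trace field $K$ of $\Gamma$. Feeding this into Theorem \ref{thmmonodromy} yields
\[
\leftidx{^\Z}{\mathbf{M}}^{\ad}_{S_\Gamma} \;=\; \widehat{\leftidx{^\Gamma}{\mathbf{M}}^{\ad}_{S_\Gamma}} \;=\; \widehat{\mathbf{G}},
\]
using that $\mathbf{G}$ (a $K$-form of $\PU(1,n)$) is adjoint and that Weil restriction commutes with taking the adjoint. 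Combined with the containment $\leftidx{^\Z}{\mathbf{M}}_{S_\Gamma} \subset \widehat{\mathbf{G}}$ and the adjointness of $\widehat{\mathbf{G}}$ itself, this forces $\leftidx{^\Z}{\mathbf{M}}_{S_\Gamma} = \widehat{\mathbf{G}}$, giving the first assertion and in particular the Zariski density of $\Gamma$ in $\widehat{G}$.

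For the Mumford--Tate statement, the Deligne--André theorem (Theorem \ref{deligneandre}) gives that $\widehat{\mathbf{G}} = \leftidx{^\Z}{\mathbf{M}}_{S_\Gamma}$ is a normal $\Q$-subgroup of the generic derived Mumford--Tate group $\MT^{\der}$, which itself sits inside $\widehat{\mathbf{G}}'$. It therefore suffices to show that the normalizer of the diagonal $\widehat{\mathbf{G}}$ in $\widehat{\mathbf{G}}'$ reduces to $\widehat{\mathbf{G}}$ itself. Grouping the real factors of $\widehat{\mathbf{G}}'$ by the embedding of $K$ they extend, one has $\widehat{\mathbf{G}}'(\R) = \prod_{\tau : K \hookrightarrow \R} \mathbf{G}_\tau(\R)^{[K':K]}$, inside which $\widehat{\mathbf{G}}(\R)$ appears as a product of ordinary diagonals. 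A tuple $(g_\sigma)$ normalizing this diagonal must then satisfy $g_\sigma g_{\sigma'}^{-1} \in Z(\mathbf{G}_\tau)$ whenever $\sigma|_K = \sigma'|_K = \tau$, and this centre is trivial by adjointness of $\mathbf{G}$. Hence $\MT^{\der} = \widehat{\mathbf{G}}$, as desired.

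The main obstacle, as I see it, is the normalizer computation in the last step: one must carefully unwind the diagonal embedding $\widehat{\mathbf{G}} \hookrightarrow \widehat{\mathbf{G}}'$ induced by the chain of field extensions $\Q \subset K \subset K'$ and use the adjointness of $\mathbf{G}$ in an essential way. By contrast, the monodromy computation in Step 2 reduces quickly, being a formal combination of Theorem \ref{thmmonodromy} with Borel density and Definition \ref{deftracefield}.
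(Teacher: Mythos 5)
Your proof is correct and follows essentially the same route as the paper: the paper likewise notes that the $\Z$-monodromy lies diagonally in $\widehat{\mathbf{G}}\subset\widehat{\mathbf{G}}'$, applies Theorem \ref{thmmonodromy} to $W=S_\Gamma$ with $K_W=K$ (Borel density giving $\leftidx{^\Gamma}{\mathbf{M}}_{S_\Gamma}=\mathbf{G}$), and concludes via the Deligne--Andr\'e theorem. Your final normalizer computation inside $\widehat{\mathbf{G}}'$ merely makes explicit a step the paper leaves implicit (and which parallels the centralizer argument it does spell out in Corollary \ref{cor123}), so it is a welcome but not divergent addition.
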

\begin{proof}
By definition, the $\Gamma$-monodromy of $S_\Gamma$ is given by $\mathbf{G}$. Theorem \ref{thmmonodromy}, applied to $W=S_\Gamma$, shows that the $\Z$-monodromy of $S_\Gamma$ is $\widehat{\mathbf{G}}$, since $K_W$ is just $K$, the trace field of $\Gamma$. By Theorem \ref{deligneandre}, we conclude that generic Mumford--Tate group of $\widehat{\mathbb{V}}$ is $\widehat{\mathbf{G}}$.
\end{proof}

\begin{rmk}\label{remaknew}
As a direct consequence of Corollary \ref{gammadense}, we have the following. Let $S_{\Gamma_H}$ be a $\Gamma$-special subvariety of $S_\Gamma$ associated to a triplet $(H,X_H,\Gamma_H\subset \mathbf{H}(\Oo_K))\subset (G,X,\Gamma\subset \mathbf{G}(\Oo_K))$. The Mumford--Tate group of $\widehat{\mathbb{V}}$ restricted to the (smooth locus of) $S_{\Gamma_H}$ is the Weil restriction from $K_H$ to $\Q$ of $\mathbf{H}$, where $K_H$ is the subfield of $K$ given by the trace field of the lattice $\Gamma_H$.
\end{rmk}

Now that we have finally computed the generic Mumford--Tate group of $\widehat{\mathbb{V}}$, we can upgrade the diagram presented in Section \ref{diagram}, to a one targeted in $(\widehat{\mathbf{G}},D_{\widehat{G}})$ (rather than its decorated version). After having explained this, $K'$ and $\widehat{\mathbf{G}}'$ will no longer appear in the paper. First notice that
\begin{displaymath}
\widehat{G}'= \prod _{\sigma : K \to \R}{ G_\sigma}^{[K':K]}.
\end{displaymath}
Write
\begin{displaymath}
\widehat{\mathbb{V}} = \bigoplus_{\sigma : K \to \R} \left(  \bigoplus_{ \ell : K' \to \R , \ \ell | \sigma} \mathbb{V}^\ell \right),
\end{displaymath}
and notice that the original $K$-VHS $ \mathbb{V}$ is one of the factors appearing above. We work at the places $\ell$ above ``the identity'' $\sigma_1 : K \to \R$. The lift of the period map $\tilde{\psi}$ induces a holomorphic map $\tilde{\psi}_{\id}$ from $X$ to a homogeneous space under $G^{[K':K]}$, that we denote by $D_{\id}$. The image of $\tilde{\psi}_{\id}$ lies in a $\Delta(G)$-orbit, where 
\begin{displaymath}
\Delta : G \to G^{[K':K]}
\end{displaymath}
is the diagonal embedding. Since, in the coordinates of $D'$, we have a component of $\tilde{\psi}$ corresponding to the identity, we have that, for any $g\in G$
\begin{displaymath}
\tilde{\psi}_{\id}(gx)=(gx,gx_{\ell_2}, \dots , gx_{\ell_{[K':K]}}).
\end{displaymath}
As a consequence of the above $G$-equivariance, we observe that
\begin{displaymath}
\tilde{\psi}_{\id}: X \to D_{\id} \cong X\times \cdots \times X,
\end{displaymath}
where on the right hand side we have $[K':K]$-factors. In such coordinates we can indeed write
\begin{displaymath}
\tilde{\psi}_{\id}(x)=(x, g_{\ell_1}x, \dots, g_{\ell_{[K':K]}}x).
\end{displaymath}
By choosing the opportune biholomorphism $ D_{\id} \cong X^{[K':K]}$, $\tilde{\psi}_{\id}$ becomes just the diagonal embedding. This explains why, from now on, $K'$ and $\widehat{\mathbf{G}}'$ will no longer appear. We therefore refine the notations of Section \ref{diagram} by ``undecorating it''; we let:
\begin{itemize}
\item $r$ be the degree of $K$ over $\Q$;
\item $\sigma_i : K \to \R$ the real embeddings of $K$, ordered in such a way that $\sigma_1$ is simply the identity on $K$;
\end{itemize}
and we let $\psi : {S_\Gamma}^{\an} \to \widehat{\mathbf{G}}(\Z)\backslash D$ be the period map associated to the $\Z$-VHS $\widehat{\mathbb{V}}$ we constructed in \eqref{constru}. We have a commutative diagram
\begin{center}
\begin{tikzpicture}[scale=2]
\node (A) at (-1,1) {$X$};
\node (B) at (1,1) {$D=D_{\widehat{G}}$};
\node (C) at (-1,0) {${S_\Gamma}^{\an}$};
\node (D) at (1,0) {$\widehat{\mathbf{G}}(\Z)\backslash D$};
\path[right hook->,font=\scriptsize]
(A) edge node[above]{$\tilde{\psi}$} (B);
\path[->,font=\scriptsize,>=angle 90]
(C) edge node[above]{$\psi$} (D)
(A) edge node[right]{$\pi_{\Gamma}$} (C)
(B) edge node[right]{$\pi_{\Z}$} (D);
\end{tikzpicture},
\end{center}
in such a way that $D\cong X \times D_{>1}$, where $D_{>1}$ is a homogeneous space under $\prod_{i>1}G_{\sigma_i}$, and the first component of $\tilde{\psi}$ is simply the identity.

\begin{rmk}
We notice here that, by Theorem \ref{arithcrit}, the group $\prod_{i=2,\dots, r}G_{\sigma_i} $ is compact if and only if $\Gamma$ is an arithmetic lattice. This will be a fundamental step in Section \ref{sectionfinit}, whose starting point is the following:
\begin{center}
$\Gamma$ is non-arithmetic if and only if $\codim_{\widehat{\mathbf{G}}(\Z)\backslash D} ({\psi({S_\Gamma}^{\an})})>0$.
\end{center}
\end{rmk}

\begin{cor}\label{cor123}
Let $\mathbf{H}/\Q$ be the the generic Mumford--Tate group of a subvariety (of positive dimension) $W\subset S_\Gamma$. Then $\mathbf{H}/\Q$ is a Weil restriction from $K_W$ to $\Q$ of some subgroup $\mathbf{F}$ of $\mathbf{G}$.
\end{cor}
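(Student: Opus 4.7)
The plan is to combine Theorem \ref{thmmonodromy} with the Deligne--André Theorem \ref{deligneandre}. Fix a Hodge-generic point $s\in W$ for $\widehat{\mathbb{V}}|_W$ and set $\mathbf{H}:=\MT(s)\subset\widehat{\mathbf{G}}$. By Deligne--André, $\leftidx{^\Z}{\mathbf{M}}_W\triangleleft \mathbf{H}^{\der}$, and by Theorem \ref{thmmonodromy} we already know
\[
\leftidx{^\Z}{\mathbf{M}}_W^{\ad}\;=\;\Res_{K_W/\Q}\bigl(\leftidx{^\Gamma}{\mathbf{M}}_W^{\ad}\bigr);
\]
so, after taking adjoints, $\mathbf{H}^{\der,\ad}$ sits between this Weil restriction and $\widehat{\mathbf{G}}^{\ad}=\Res_{K/\Q}\mathbf{G}^{\ad}$.

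Next I would base-change to $\overline{\Q}$, decomposing $\widehat{\mathbf{G}}_{\overline{\Q}}\cong\prod_{\tau:K\hookrightarrow\overline{\Q}}\mathbf{G}_\tau$, with the $\Q$-structure recovered from the natural $\Gal(\overline{\Q}/\Q)$-action permuting the factors. Let $\mathbf{F}_\tau$ denote the image of $\mathbf{H}_{\overline{\Q}}$ in $\mathbf{G}_\tau$. The key geometric input is the explicit $\Gamma$-equivariance of $\tilde{\psi}$ from Section \ref{diagram}, namely $\tilde{\psi}(\gamma x)=(\gamma x,\sigma_2(\gamma)x_{\sigma_2},\dots,\sigma_r(\gamma)x_{\sigma_r})$: the Hodge-theoretic data in the $\tau$-factor is obtained from that in the identity factor by applying $\tau$ to the entries of $\Gamma$. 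Consequently all $\mathbf{F}_\tau$ with a common restriction $\tau|_{K_W}$ are Galois-twists of one another, so they come from a single subgroup $\mathbf{F}\subset\mathbf{G}$ which descends to $K_W$.

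It then remains to upgrade the tautological inclusion $\mathbf{H}\subset \Res_{K_W/\Q}\mathbf{F}$ to an equality. For the derived part I would replay the Goursat-style induction from the proof of Theorem \ref{thmmonodromy} (via Lemma \ref{goursat}): any strict intermediate $\Q$-subgroup containing $\leftidx{^\Z}{\mathbf{M}}_W$ would force an algebraic identification between two distinct complex embeddings of $K_W$, contradicting the fact (Remark \ref{rmknew}) that $K_W$ is the minimal field of definition of $\leftidx{^\Gamma}{\mathbf{M}}_W^{\ad}$.

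The main obstacle will be passing from $\mathbf{H}^{\der}$ to all of $\mathbf{H}$, since Deligne--André controls only the derived subgroup. For this I would exploit the diagonal form of $\tilde{\psi}$ once more to observe that $h_s(\DT)\subset\widehat{G}$ also splits as a product across the factors indexed by embeddings of $K_W$; hence the reductive group $\mathbf{H}$, generated by its derived subgroup together with $h_s(\DT)$ (and thus with its own centre), inherits the Weil-restriction structure from its derived part. This yields $\mathbf{H}=\Res_{K_W/\Q}\mathbf{F}$ as required.
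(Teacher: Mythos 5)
Your opening moves (Deligne--André plus Theorem \ref{thmmonodromy}) coincide with the paper's, but the way you finish has a genuine gap, and you never invoke the one ingredient the paper's proof actually turns on. In the paper, once one knows $\leftidx{^\Z}{\mathbf{M}}_W \triangleleft \MT(w)^{\der}$ and $\leftidx{^\Z}{\mathbf{M}}_W=\widehat{\mathbf{F}}$ with $\mathbf{F}=\leftidx{^\Gamma}{\mathbf{M}}_W$, the only way $\MT(w)^{\der}$ could be strictly larger is by an extra almost-direct normal factor, and any such factor centralizes $\widehat{\mathbf{F}}$; since $G$ has rank one and $\mathbf{F}_\R$ is nontrivial and noncompact (Lemma \ref{lemmanoncomp}), the centraliser of $\mathbf{F}_\R$ in $G$ is finite, so $\MT(w)^{\der}=\widehat{\mathbf{F}}$ and the corollary follows (the paper proves, and later uses, the statement at the level of $\MT(w)^{\der}$). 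Your step 3 replaces this with a replay of the Goursat induction, claiming that a strict intermediate $\Q$-subgroup containing $\leftidx{^\Z}{\mathbf{M}}_W$ would force an identification of two embeddings of $K_W$. That is the wrong mechanism: the Goursat/trace-field argument in Theorem \ref{thmmonodromy} rules out the monodromy being a proper \emph{graph} inside the product of its projections, i.e.\ it prevents factors from being fused, whereas here the danger is the opposite one --- an extra factor commuting with the monodromy --- and such a factor identifies no embeddings whatsoever. Moreover the trace-field contradiction cannot even be set up for $\mathbf{H}=\MT(w)$: it requires a Zariski-dense supply of elements of the form $(\sigma_i(\gamma),\sigma_j(\gamma))$ with $\gamma\in\Gamma\cap\leftidx{^\Gamma}{M}_W$, and the Mumford--Tate group, unlike the monodromy group, is not the Zariski closure of any subgroup of the image of $\Gamma$.

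Two further soft spots. The inclusion you call tautological, $\mathbf{H}\subset\Res^{K_W}_{\Q}\mathbf{F}$, is not: from the definition of the $\mathbf{F}_\tau$ as images you only get $\mathbf{H}_{\overline{\Q}}\subset\prod_{\tau:K\hookrightarrow\overline{\Q}}\mathbf{F}_\tau$, a product with $[K:\Q]$ factors, while $(\Res^{K_W}_{\Q}\mathbf{F})_{\overline{\Q}}$ has only $[K_W:\Q]$ factors, each sitting \emph{diagonally} across the embeddings of $K$ lying over a given embedding of $K_W$; that diagonal structure, and the descent of $\mathbf{F}$ to $K_W$ (which Galois-stability of $\mathbf{H}$ together with the equivariance of $\tilde{\psi}$ does not by itself provide), is precisely what has to be proved. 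Similarly, your step 4 --- that the centre of $\mathbf{H}$ ``inherits'' the Weil-restriction structure from the derived part --- does not follow from the diagonal shape of $\tilde{\psi}$; if one wants a statement about all of $\mathbf{H}$ rather than $\mathbf{H}^{\der}$, the natural route is again that the connected centre commutes with $\widehat{\mathbf{F}}$ and so is controlled by the same centraliser computation. The repair is simply to insert the paper's centraliser argument, after which the Galois-twisting and Goursat apparatus becomes unnecessary.
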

\begin{proof}
Let $W$ be a (smooth, irreducible, closed) subvariety of $S_\Gamma$. Applying Theorem \ref{deligneandre} to $W$, we see that
\begin{displaymath}
\leftidx{^\Z} {\mathbf{M}}_W \triangleleft \MT(w)^{\der}  \subseteq \widehat{\mathbf{G}},
\end{displaymath}
where $w\in W$ is a point whose Mumford--Tate group is $\mathbf{H}$, the generic Mumford--Tate of $W$. That is a Hodge generic point, in the sense of Section \ref{recapperioddomains}. Theorem \ref{thmmonodromy} shows that the left hand side is the Weil restriction, from $K_W$ to $\Q$, of some subgroup $\mathbf{F} \subset \mathbf{G}$. Since the centraliser of $\mathbf{F}_\R$ in $\mathbf{G}_\R$ is finite (being $G$ of rank one), we conclude that 
\begin{displaymath}
\widehat{\mathbf{F}}= \leftidx{^\Z} {\mathbf{M}}_W = \MT(w)^{\der},
\end{displaymath}
as desired.
\end{proof}

\subsection{Proof of Theorem \ref{mainthm}}\label{proofmainthm} We first prove one of the two implications of Theorem \ref{mainthm} (the fact that $\Gamma$-special subvarieties are the same as the totally geodesic ones was already discussed in Section \ref{sectiongeneral}).
 \begin{prop}[$\Gamma$-special $\Rightarrow$ $\Z$-special]\label{Zspecial}
Let $(H,X_H,\Gamma_H\subset \mathbf{H}(\Oo_K))$ be a real sub-Shimura datum of $(G,X,\Gamma\subset \mathbf{G}(\Oo_K))$. Denote by $\widehat{\mathbf{H}}$ the Weil restriction from $K$ to $\Q$ of $\mathbf{H}$. The subvariety $S_{\Gamma_H}$ of $S_\Gamma$ can be written an irreducible component of the preimage along the period map 
\begin{displaymath}
\psi : {S_\Gamma }^{\an}\to \widehat{\mathbf{G}}(\Z) \backslash D_{\widehat{G}}
\end{displaymath}
of the Mumford-Tate sub-domain $\widehat{\mathbf{H}}(\Z) \backslash D_{\widehat{H}} \subset \widehat{\mathbf{G}}(\Z) \backslash D_{\widehat{G}}$. 
\end{prop}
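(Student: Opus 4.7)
The plan is to take the target Mumford--Tate sub-domain to be
\[
D_{\widehat{H}} := \widehat{\mathbf{H}}(\R) \cdot \tilde{\psi}(x_0) \subseteq D_{\widehat{G}},
\]
where $x_0 \in X_H$ is any fixed point, and then to verify three things: (i) $\tilde{\psi}(X_H) \subseteq D_{\widehat{H}}$, (ii) $\tilde{\psi}^{-1}(D_{\widehat{H}}) \subseteq X_H$, and (iii) $D_{\widehat{H}}$ is genuinely a Mumford--Tate sub-domain in the sense of Section \ref{defzsp}. Granted these, $\pi_\Gamma(X_H) = S_{\Gamma_H}$ is a connected analytic, hence irreducible, component of $\psi^{-1}\bigl(\widehat{\mathbf{H}}(\Z) \backslash D_{\widehat{H}}\bigr)$, because $X_H$ is a connected $H$-orbit in $X$.

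For (i), first observe that the restriction of the $\Z$-VHS $\widehat{\mathbb{V}}$ to $S_{\Gamma_H}$ has monodromy factoring through $\widehat{\mathbf{H}}(\Z) \subseteq \widehat{\mathbf{G}}(\Z)$: by formula \eqref{equationong}, $\rho_{\widehat{\mathbb{V}}}(\gamma) = (\sigma_i(\gamma))_i$, and the hypothesis $\Gamma_H \subseteq \mathbf{H}(\Oo_K)$ places each $\sigma_i(\gamma)$ in $\mathbf{H}_{\sigma_i}(\R)$. The period map associated to this sub-VHS lifts to $X_H$ and, by construction, lands in the $\widehat{\mathbf{H}}(\R)$-orbit through $\tilde{\psi}(x_0)$, namely $D_{\widehat{H}}$; by uniqueness of the lift this restricted period map coincides with $\tilde{\psi}|_{X_H}$.

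For (ii), I would exploit the product decomposition $D_{\widehat{G}} \cong X \times D_{>1}$ of Section \ref{monodromymtsection}, in which $\tilde{\psi}$ has first component equal to the identity on $X$, and $\widehat{\mathbf{H}}(\R) \cong \prod_\sigma \mathbf{H}_\sigma(\R)$ acts factorwise. The first-coordinate projection of $D_{\widehat{H}}$ then equals $\mathbf{H}_{\sigma_1}(\R) \cdot x_0 = H \cdot x_0 = X_H$. Hence if $\tilde{\psi}(x) \in D_{\widehat{H}}$ its first coordinate $x$ already lies in $X_H$, which is the desired reverse inclusion.

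The most subtle point is (iii). By definition of Mumford--Tate sub-domain it is enough to exhibit some $y \in D_{\widehat{H}}$ with $\MT(y) = \widehat{\mathbf{H}}$; such Hodge-generic $y$ form the complement of a countable union of proper analytic subsets of the $\widehat{\mathbf{H}}(\R)$-orbit $D_{\widehat{H}}$ and are well-known to exist for any reductive $\Q$-subgroup of the generic Mumford--Tate. The apparent conceptual obstacle here is that by Remark \ref{remaknew} the generic Mumford--Tate along $\tilde{\psi}(X_H)$ is only the Weil restriction of $\mathbf{H}$ from $K_H$ to $\Q$, which can be strictly smaller than $\widehat{\mathbf{H}}$ when $K_H \subsetneq K$; what saves the argument is that $D_{\widehat{H}}$ is strictly larger than $\tilde{\psi}(X_H)$ and contains Hodge-generic points outside it at which the full Mumford--Tate group $\widehat{\mathbf{H}}$ is realised.
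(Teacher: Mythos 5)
Your skeleton is essentially the paper's: restrict $\widehat{\mathbb{V}}$ to $S_{\Gamma_H}$, use the monodromy computation to place its period image in $D_{\widehat{H}}$, show $\tilde{\psi}^{-1}(D_{\widehat{H}})=X_H$ (your use of the decomposition $D\cong X\times D_{>1}$ with first component of $\tilde{\psi}$ the identity is exactly the intended argument, made more explicit than in the text), and then identify $S_{\Gamma_H}$ with a component of the preimage. The genuine gap is in your step (iii). The statement you invoke as well-known --- that for any reductive $\Q$-subgroup $\mathbf{M}$ of the generic Mumford--Tate group and any point $\varphi$ of the period domain factoring through $\mathbf{M}(\R)$, the orbit $\mathbf{M}(\R)\cdot\varphi$ contains points with Mumford--Tate group equal to $\mathbf{M}$ --- is false in general: if $\varphi$ factors through a proper $\Q$-subgroup $\mathbf{M}_1$ of $\mathbf{M}=\mathbf{M}_1\times\mathbf{M}_2$, then so does every conjugate $m\varphi m^{-1}$, hence every point of the orbit has Mumford--Tate group inside $\mathbf{M}_1$. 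In general the generic Mumford--Tate group of the orbit is only the smallest $\Q$-subgroup whose real points contain all $\widehat{H}$-conjugates of $\tilde{\psi}(x_0)(\DT)$, and since every point of $D_{\widehat{H}}$ factors through the $\Q$-group $\widehat{\mathbf{H}}$, the definition of Mumford--Tate sub-domain really does force you to produce a point of the orbit with Mumford--Tate group exactly $\widehat{\mathbf{H}}$; this is precisely the point you flag but do not prove.

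The claim is true here, but for reasons specific to the Weil-restriction structure, and an argument is needed: the Galois action permutes the archimedean places of $K$ transitively, so any $\Q$-subgroup of $\Res^K_\Q\mathbf{H}$ whose real points contain one full real simple factor of a $K$-simple factor $\mathbf{H}_j$ must contain all of $\Res^K_\Q\mathbf{H}_j$; and the fact that every $K$-simple factor of $\mathbf{H}$ is hit nontrivially by $\tilde{\psi}(x_0)$ at some place follows from $\MT(\widehat{\mathbb{V}}_{|S_{\Gamma_H}})=\Res^{K_H}_\Q\mathbf{H}$ (Theorem \ref{thmmonodromy} together with Remark \ref{remaknew}), since otherwise the Hodge cocharacter would lie in a proper $\Q$-subgroup omitting that factor. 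Either supply this, or do what the paper does: it never claims that generic points of $D_{\widehat{H}}$ have full Mumford--Tate group, but instead uses the computation $\MT(\widehat{\mathbb{V}}_{|S_{\Gamma_H}})=\Res^{K_H}_\Q\mathbf{H}$ directly and, in Remark \ref{twofields}, observes that one may equally work with the genuinely Mumford--Tate sub-domain $\tilde{D}$ attached to $\Res^{K_H}_\Q\mathbf{H}$, whose preimage has the same component. A second, smaller point: your final sentence concludes that $S_{\Gamma_H}$ is a component of $\psi^{-1}(\widehat{\mathbf{H}}(\Z)\backslash D_{\widehat{H}})$ ``because $X_H$ is a connected $H$-orbit'', but the preimage is $\pi_\Gamma$ of the union of $\tilde{\psi}^{-1}(\hat{g}D_{\widehat{H}})$ over all $\hat{g}\in\widehat{\mathbf{G}}(\Z)$, not just of $\tilde{\psi}^{-1}(D_{\widehat{H}})$; you need the paper's dimension comparison, which your own first-coordinate argument supplies after noting that $\tilde{\psi}^{-1}(\hat{g}D_{\widehat{H}})$ lies in the translate $g_1X_H$, so every component through a point of $S_{\Gamma_H}$ has dimension at most $\dim X_H$ and hence coincides with it.
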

\begin{proof}
As explained in Theorem \ref{algebraic} and Corollary \ref{algmorphism}, $S_{\Gamma_H}$ is an irreducible algebraic subvariety of $S_\Gamma$. We can restrict the $\Z$-VHS $\widehat{\mathbb{V}}$ from $S_\Gamma$ to (the smooth locus of\footnote{The variety $\Gamma_H \backslash X_H$ is smooth (at least when $\Gamma$ is torsion free), but it may be that its image in $S_\Gamma$ is not smooth.}) $S_{\Gamma_H}$. From Corollary \ref{gammadense} (see indeed Remark \ref{remaknew}), we know that the
\begin{equation}\label{eqmtnew}
\MT(\widehat{\mathbb{V}}_{| S_{\Gamma_H}})^{\der}= \widehat{\mathbf{H}}^{\der}.
\end{equation} Here $\widehat{\mathbf{H}}$ denotes the Weil restriction from $K$ to $\Q$ of $\mathbf{H} \subset \mathbf{G}/K$. To be more precise we should restrict from the $K_H$, the trace field of $\Gamma_H$, but, as will be explained in Remark \ref{twofields}, this difference plays no role in the sequel.

From \eqref{eqmtnew}, we see that $\widehat{\mathbf{H}}(\Z) \backslash D_{\widehat{H}}$ is, as claimed in the statement of the proposition, indeed a Mumford-Tate sub-domain of $\widehat{\mathbf{G}}(\Z) \backslash D_{\widehat{G}}$. By construction and the definition of generic Mumford--Tate group recalled in Section \ref{section46}, $\widehat{\mathbf{H}}(\Z) \backslash D_{\widehat{H}}$ is indeed the smallest sub-domain of $\widehat{\mathbf{G}}(\Z) \backslash D_{\widehat{G}}$ containing $\psi(S_{\Gamma_H})$. We therefore have a commutative diagram
\begin{center}
\begin{tikzpicture}[scale=2]
\node (A) at (-1,1) {$X_H$};
\node (B) at (1,1) {$D_{\widehat{H}}\subset  D_{\widehat{G}}$};
\node (C) at (-1,0) {${S_{\Gamma_H}}^{\an}$};
\node (D) at (1,0) {$\widehat{\mathbf{H}}(\Z)\backslash D_{\widehat{H}} \subset \widehat{\mathbf{G}}(\Z) \backslash D_{\widehat{G}}$};
\path[right hook->,font=\scriptsize]
(A) edge node[above]{$\tilde{\psi}_{| X_H}$} (B);
\path[->,font=\scriptsize,>=angle 90]
(C) edge node[above]{$\psi_{| {S_{\Gamma_H}}^{\an}}$} (D)
(A) edge node[right]{} (C)
(B) edge node[right]{$\pi$} (D);
\end{tikzpicture}.
\end{center}
 Observe that 
\begin{displaymath}
\tilde{\psi}^{-1}(D_{\widehat{H}})=X_H.
\end{displaymath}
This is enough to see that 
\begin{displaymath}
S_{\Gamma_H} \subseteq \psi^{-1}(\widehat{\mathbf{H}}(\Z)\backslash D_{\widehat{H}})^0.
\end{displaymath}
The right hand side is a closed irreducible analytical subvariety of ${S_\Gamma}^{\an}$. Since both objects have the same dimension, being locally isomorphic to $X_H$, we conclude that $S_{\Gamma_H}$ is of the desired shape.

\end{proof}

\begin{rmk}\label{twofields}
It could happen that $\mathbf{H}$ is defined over a smaller field (but not over a bigger one, as explained in Remark \ref{rmknew}). The trace field of $\Gamma_H$, which we denote by $K_H=K_{S_{\Gamma_H}}$, may be strictly contained in $K$ (it could even happen that $K_H=\Q$). In this case we can consider the Weil restriction $\Res ^{K_H}_\Q \mathbf{H}$ and the associated period domain 
\begin{displaymath}
\tilde{D}:=  D_{\Res ^{K_H}_\Q \mathbf{H}(\R)} \subset D_{\widehat{G}}.
\end{displaymath}
We remark here that $S_{\Gamma_H}$ can also be written as $\psi^{-1}\pi(\tilde{D})^0$. Indeed $\tilde{D}$ is contained in $D_{\widehat{H}}$ and $\dim \psi (S_{\Gamma_H})\cap \pi(\tilde{D})= \dim \psi (S_{\Gamma_H})\cap \widehat{\mathbf{H}}(\Z)\backslash D_{\widehat{H}}$, since, as in the proof of Proposition \ref{Zspecial}, both are locally isomorphic to $X_H$.
\end{rmk}

The following is the converse of Proposition \ref{Zspecial} and their combination gives Theorem \ref{mainthm}. An important input in the proof is Corollary \ref{cor123}.
\begin{prop}[$\Z$-special $\Rightarrow$ $\Gamma$-special]\label{mainthmproof}
Let $(G=\PU(1,n),X,\Gamma)$ be a real Shimura datum and let $S_\Gamma$ be the associated Shimura variety. Any $\Z$-special subvariety $W$ of $S_\Gamma$ is $\Gamma$-special.
\end{prop}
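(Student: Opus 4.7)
The plan is to reverse-engineer a real sub-Shimura datum from the $\Z$-special structure of $W$ via Corollary \ref{cor123}, and then recognise $W$ as the resulting $\Gamma$-special subvariety using the explicit form of $\tilde\psi$ obtained in Section \ref{monodromymtsection}.

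First, I would replace the ambient Mumford--Tate sub-domain $D_0$ provided by the $\Z$-special structure of $W$ by $D_{\mathbf{M}_W(\R)}$, where $\mathbf{M}_W$ is the generic Mumford--Tate group of $\widehat{\mathbb{V}}_{|W}$; after this replacement $W$ is still an irreducible component of $\psi^{-1}(\pi_\Z(D_{\mathbf{M}_W(\R)}))$. Corollary \ref{cor123} then supplies a $K_W$-subgroup $\mathbf{F} \subseteq \mathbf{G}$ with $\mathbf{M}_W = \widehat{\mathbf{F}}$; set $F := \mathbf{F}(\R)^+$.

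Second, I would exploit the product decomposition $D \cong \prod_{\sigma} X_\sigma$ from Section \ref{monodromymtsection}, in which $\tilde\psi$ acts as the identity on the $\sigma_1$-factor, together with the corresponding splitting $D_{\widehat{F}} \cong \prod_\sigma X_{F,\sigma}$, where $X_{F,\sigma} = F_\sigma \cdot x_\sigma^0$. If $\tilde W$ is an analytic component of $\pi_\Gamma^{-1}(W)$ chosen so that $\tilde\psi(\tilde W) \subseteq D_{\widehat F}$, then projecting onto the $\sigma_1$-factor immediately gives $\tilde W \subseteq X_F$, where $X_F := F \cdot x_0$ is the $F$-orbit of the chosen base point in $X$.

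Third, I would establish the equality $\tilde W = X_F$. By Theorem \ref{thmmonodromy} combined with the genericity of $\mathbf{M}_W$, the $\Gamma$-monodromy $\leftidx{^\Gamma}{\mathbf{M}}_W$ equals $\mathbf{F}^{\ad}$, so the image of $\pi_1(W)$ in $\Gamma$ is a Zariski-dense subgroup of $F$. This image preserves $\tilde W \subseteq X_F$, and the stabiliser in $F$ of the germ of $\tilde W$ at any smooth point is a Zariski-closed subgroup of $F$ containing this dense image, hence all of $F$. Transitivity of the $F$-action on $X_F$ then forces $\tilde W$ to be $F$-invariant and so equal to $X_F$.

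Finally, I would set $H := F$, $X_H := X_F$, $\Gamma_H := \Gamma \cap H$. The equality $\tilde W = X_H$ realises $W$ as $\Gamma_H \backslash X_H$, a closed algebraic and hence finite-volume subvariety of the finite-volume quotient $S_\Gamma$, so $\Gamma_H$ is a lattice in $H$. Remark \ref{rmkfactorisa} then supplies axioms RSD1, RSD2, RSD3 for $(H, X_H, \Gamma_H)$, since the Deligne homomorphism $\DT \to G$ at any $x \in X_H$ factors through $H$ by construction of $X_H$ as an $F$-orbit; hence $(H, X_H, \Gamma_H)$ is a real sub-Shimura datum witnessing $W$ as $\Gamma$-special. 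I expect the main technical obstacle to be the finite-volume/lattice verification in the non-cocompact setting: while intuitively clear from the algebraicity of $W$, a careful argument must compare the intrinsic Bergman volume of $W$ with the volume induced from $X_H$ and control behaviour at the cusps of $\overline{S_\Gamma}$; the Zariski-density step in paragraph three is an appealing alternative bottleneck, but can be bypassed by instead comparing dimensions directly via the defining property of the generic Mumford--Tate group.
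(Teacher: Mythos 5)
Your overall architecture is the paper's: identify the generic Mumford--Tate group of $\widehat{\mathbb{V}}_{|W}$ as a Weil restriction $\widehat{\mathbf{F}}$ via Corollary \ref{cor123}, use the product structure of $D$ with $\tilde{\psi}$ equal to the identity on the first factor to trap a lift $\tilde{W}$ inside the orbit $X_F=F.w$, and then produce the datum $(F,X_F,\Gamma\cap F)$. The break is in your third step. The monodromy image does not stabilise the germ of $\tilde{W}$ at a fixed point (it moves the point), and the global stabiliser $\{g\in F: g\tilde{W}=\tilde{W}\}$ is only closed for the analytic topology, not visibly Zariski closed, so Zariski density of the monodromy gives nothing by itself. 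More decisively, any argument of the shape ``Zariski-dense monodromy preserving $\tilde{W}$ forces $\tilde{W}=X_F$'' proves too much: by the Deligne--Andr\'e theorem (Theorem \ref{deligneandre}) a Hodge generic, non--totally geodesic curve $C$ in an arithmetic ball quotient surface has monodromy Zariski dense in $G$, and this monodromy preserves a component $\tilde{C}$ of $\pi^{-1}(C)$, yet $\tilde{C}\neq \mathbb{B}^2$. What separates a $\Z$-special $W$ from such a $C$ is precisely that $W$ is a full irreducible component of $\psi^{-1}(\pi_\Z(D_{\widehat{F}}))$, and after your first step you never use this again; your proposed bypass (``comparing dimensions via the defining property of the generic Mumford--Tate group'') does not repair it, since the generic Mumford--Tate group does not control $\dim W$ (the curve $C$ above has generic Mumford--Tate group all of $\widehat{\mathbf{G}}$). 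The paper gets the equality exactly from componenthood: since the first coordinate of $\tilde{\psi}$ is the identity, $\tilde{\psi}^{-1}(D_{\widehat{F}})=F.w$, so the relevant component of $\psi^{-1}(\pi_\Z(D_{\widehat{F}}))$ is $\pi(F.w)$, and the identity $W=\psi^{-1}(\widehat{\mathbf{F}}(\Z)\backslash D_{\widehat{F}})^0$ yields $W=\pi(F.w)=\Gamma_F\backslash X_F$ with no stabiliser argument at all.

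The second gap is the lattice verification, which you correctly flag but leave open, and your sketched route (comparing induced and intrinsic volumes and controlling the cusps of $\overline{S_\Gamma}$ via Poincar\'e growth) is not what the paper does. The paper isolates this point as Lemma \ref{finallemma} and settles it intrinsically: $W$ is algebraic by Cattani--Deligne--Kaplan (Theorem \ref{cdk}), hence $\Gamma_F\backslash X_F$ is quasi-projective; Griffiths' theorem says a quasi-projective quotient of a bounded domain has finite Kobayashi--Eisenman volume, and on the ball this volume form agrees with the K\"ahler one, so $\Gamma_F$ is a lattice --- no boundary analysis and no metric comparison inside $S_\Gamma$ is needed. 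Your final appeal to Remark \ref{rmkfactorisa} for the axioms RSD1--RSD3 is fine and matches what the paper does implicitly (the Deligne homomorphism at $w$ factors through $F$). So the proposal is salvageable, but as written both the equality $\tilde{W}=X_F$ and the finite-volume step are unproved, and the argument you offer for the former would fail.
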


\begin{proof}
Let $W \subset S_\Gamma$ be a $\Z$-special subvariety, as usual assumed to be of positive dimension. Let $w \in \widetilde{W}$ be such that $\pi(w)$ is smooth and $\MT(w)=\MT(W)$. By Corollary \ref{cor123}, we can assume that $\MT(w)$ is the Weil restriction of some $K$-subgroup of $\mathbf{G}$. To keep the notation more compact, we write $\mathbf{M}$ for the associated $K$-subgroup of $\mathbf{G}$, $M$ for its real points, and $\widehat{\mathbf{M} }$ for its Weil restriction to $\Q$. Being $W$ a $\Z$-special subvariety with generic Mumford--Tate group $\widehat{\mathbf{M} }$ means that
\begin{equation}\label{eqW}
W=\psi^{-1}(\widehat{\mathbf{M} }(\Z)\backslash D_{\widehat{M}})^0.
\end{equation}
Here $\widehat{M}$ denotes the associated real group and $(-)^0$ some analytic component. Notice also that $D_{\widehat{M}}$ is the $\widehat{M}$-orbit of $\tilde{\psi}(w)$ .

Our goal is to show that $W$ is associated to some sub-Shimura datum of $(G=\PU(1,n),X,\Gamma)$. The natural candidate appears to be $(M,X_M, \Gamma_M)$, where 
\begin{displaymath}
X_M= \tilde{\psi}^{-1}(D_{\widehat{M}})=M.w \subset X
\end{displaymath} is an Hermitian symmetric subspace of $X$ (since $w: \DT \to G$ factorises trough $M$). The only problem is that, a priori, we do not know that $\Gamma_M= \Gamma \cap M$ has finite covolume in $M$. Equation \eqref{eqW} is enough to show that the first of the following equalities holds true
\begin{displaymath}
W=\pi(M . w)=\Gamma_M \backslash X_M.
\end{displaymath}
As $W$ is an algebraic subvariety of $S_\Gamma$, by the work of Cattani, Deligne and Kaplan (recalled before as Theorem \ref{cdk}), we can apply Lemma \ref{finallemma} below to conclude that $\Gamma_M$ is a lattice in $M$. Therefore we proved that the $\Z$-special subvariety $W$ is $\Gamma$-special and associated to $(M,X_M, \Gamma_M)$.
\end{proof}
To conclude the proof of Proposition \ref{mainthmproof}, we need the following.
\begin{lemma}\label{finallemma}
Let $(G=\PU(1,n),X, \Gamma)$ be a real Shimura datum. Let $X_H$ be a Hermitian symmetric subspace of $X$, associated to a subgroup $H\subset G$, such that $\pi(X_H)$ is an algebraic subvariety of $S_\Gamma$. Then $(H,X_H, \Gamma_H)$ is a real sub-Shimura datum of $(G,X, \Gamma)$. That is $\Gamma_H$ is a lattice in $H$.
\end{lemma}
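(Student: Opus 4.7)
The plan is to translate the algebraicity of $\pi(X_H)$ into finiteness of its Kähler volume, and then into $\Gamma_H$ being a lattice in $H$.

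First, using the explicit description of Hermitian symmetric subspaces in Section \ref{embeddings}, I would identify $H$ with the full set-theoretic stabilizer $\Stab_G(X_H) = \{g \in G : g X_H = X_H\}$: if $X_H$ is attached to a complex subspace $W_\C \subset V_\C$ of signature $(1,m)$, then $X_H$ spans $\mathbb{P}(W_\C)$, so $g \in G = \PU(V_\C)$ preserves $X_H$ iff it preserves $W_\C$, iff $g \in \PU(W_\C) \times \PU(W_\C^\perp) = H$. In particular $\Gamma_H = \Gamma \cap H$ equals $\{\gamma \in \Gamma : \gamma X_H = X_H\}$. Since $\Gamma$ is torsion-free it intersects the compact factor $C = \PU(W_\C^\perp)$ trivially, hence $\Gamma_H$ injects into and acts faithfully on $X_H$ through the non-compact factor $H' = \PU(1,m)$.

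Next, I would compare $\pi(X_H)$ with $\Gamma_H \backslash X_H$. If $x, y \in X_H$ satisfy $y = \gamma x$ for some $\gamma \in \Gamma \setminus \Gamma_H$, then $\gamma^{-1} X_H \neq X_H$ and $x$ lies in the proper totally geodesic complex submanifold $X_H \cap \gamma^{-1} X_H$ of $X_H$. Since $\Gamma$ is countable, the locus of such $x$ is a countable union of proper submanifolds, hence of measure zero. The canonical surjection $\Gamma_H \backslash X_H \twoheadrightarrow \pi(X_H)$ is therefore bijective off measure-zero sets, so both spaces have the same volume with respect to the restriction of the Kähler form $\omega$ of $S_\Gamma$.

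The crucial step is the finite-volume assertion. Because $\pi(X_H)$ is algebraic in $S_\Gamma$, it extends to a closed algebraic subvariety of the toroidal compactification $\overline{S_\Gamma}$ from Section \ref{toroidalsubse}, and by Theorem \ref{poincaregrowth} the form $\omega$ has Poincaré growth along the SNC boundary $\overline{S_\Gamma} \setminus S_\Gamma$. A standard integrability estimate for Poincaré-growth forms on SNC pairs, applied after pulling $\omega$ back to a log resolution of singularities of $\overline{\pi(X_H)}$, yields $\int_{\pi(X_H)} \omega^{\dim \pi(X_H)} < \infty$. Combining with the previous step, $\Gamma_H \backslash X_H$ has finite volume with respect to the restriction of $\omega$ to $X_H$, which is (up to a positive scalar) the canonical $H'$-invariant metric on the Hermitian symmetric space $X_H$. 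Thus $\Gamma_H$ is a lattice in $H'$; since $H \to H'$ has compact kernel $C$, $\Gamma_H$ is a lattice in $H$, as claimed.

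The main obstacle I anticipate is precisely the finite-volume assertion: while it is folklore that algebraic subvarieties of a quasi-projective variety equipped with a Poincaré-growth Kähler form have finite volume, one must handle possible singularities of $\pi(X_H)$ and, should $\overline{\pi(X_H)}$ meet the toroidal boundary, estimate the integral near there — typically by passing to a log resolution and invoking the classical integrability of Poincaré-type forms on products of punctured disks.
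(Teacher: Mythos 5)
Your argument is correct, but it follows a genuinely different route from the paper's. The paper disposes of the finite-volume step in one stroke: since $\pi(X_H)$ is algebraic, $\Gamma_H\backslash X_H$ is (identified with) a quasi-projective quotient of the bounded domain $\mathbb{B}^m$, so by Griffiths' theorem \cite[Proposition 8.12]{MR310284} it has finite Kobayashi--Eisenman volume, and by \cite[Proposition 2.4]{MR0259165} the Kobayashi--Eisenman volume form on the ball coincides with the volume form of the invariant K\"ahler metric; hence $\Gamma_H$ is a lattice. You instead obtain finiteness of $\int_{\pi(X_H)}\omega^{\dim}$ directly from the algebraicity of $\pi(X_H)$ together with the Poincar\'e growth of $\omega$ along the toroidal boundary (Theorem \ref{poincaregrowth}), via the classical integrability of Poincar\'e-type forms restricted to subvarieties that are closed in the compactification (log resolution plus the punctured-disk computation), and you then transfer this to $\Gamma_H\backslash X_H$ by the generic-injectivity/measure-zero comparison. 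Each approach has its merits: the paper's is shorter but outsources the analysis to the intrinsic-measure machinery and passes silently over the identification of $\Gamma_H\backslash X_H$ with the quasi-projective $\pi(X_H)$, as well as over the compact factor of the stabilizer; your version stays within tools already set up in the paper (To's metric estimate of Section \ref{metriinf}), makes explicit both the generically injective comparison and the reduction from $H'=\PU(1,m)$ to $H\cong\PU(1,m)\times C$ (a discrete subgroup whose image modulo the compact kernel is a lattice is itself a lattice), and correctly uses that the induced metric on the totally geodesic $X_H$ is the invariant one by uniqueness up to scalar; the price is that you must prove or properly cite the ``folklore'' finite-volume lemma, and your argument exploits that in rank one the toroidal boundary is a smooth (normal crossings) divisor, which is indeed available here.
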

\begin{proof}
 Let $\Omega \subset \C^m$ be a bounded domain such that $\Gamma_\Omega\backslash \Omega$ is a quasi-projective variety form some discrete subgroup $\Gamma_\Omega$ of the automorphisms of $\Omega$ acting freely on $\Omega$. Recall that Griffiths \cite[Proposition 8.12]{MR310284} proved that $\Gamma_\Omega\backslash \Omega$ has finite Kobayashi-Eisenman volume \cite{MR0259165} (see also discussion after \cite[Question 8.13]{MR310284}). Applying such a result to $\Omega=X_H=\mathbb{B}^m$ (for some $m<n$) and $\Gamma_\Omega=\Gamma_H$ we see that $\Gamma_H\backslash X_H$ has finite Kobayashi-Eisenman volume. Thanks to \cite[Proposition 2.4 (page 57)]{MR0259165}, the Kobayashi-Eisenman volume form is equal to the volume associated to K\"{a}hler form on $\Bb^m$. Therefore $\Gamma_H \backslash \mathbb{B}^m$ has finite volume with respect to the volume form associated to the standard K\"{a}hler form on $\Bb^m$. That is, $\Gamma_H$ is a lattice in $H$.
\end{proof}
After having established Theorem \ref{mainthm}, which says that $\Gamma$-special subvarieties are the same thing as the $\Z$-special ones, we give the following.
\begin{defi}\label{defispecial}
A smooth irreducible closed algebraic subvariety $W\subset S_\Gamma$ (of dimension $>0$) is \emph{special} if it is $\Gamma$-special or equivalently $\Z$-special or equivalently a totally geodesic subvariety. We define the \emph{special closure of} $W$ as the smallest special subvariety of $S_\Gamma$ containing $W$. We say that $W$ is \emph{Hodge generic} if it is not contained in any strict special subvariety of $S_\Gamma$.
\end{defi}
The reader interested only in the proof of Theorem \ref{thmbader} may skip the next two subsections and go directly to Section \ref{sectionfinit}. Notice however that in Proposition \ref{mainpropwithas} we will use Theorem \ref{zasthm} which is recalled below.

\subsection{Proof of the non-arithmetic Ax-Schanuel conjecture}\label{proofofas} 
In this section we prove Theorem \ref{nonarithasconjfinal}, which, thanks to Theorem \ref{mainthm} (proven in Section \ref{proofmainthm}), becomes the following. 
\begin{thm}[Non-arithmetic Ax-Schanuel Conjecture]\label{nonarithas7}
Let $(G,X,\Gamma)$ be an irreducible Shimura datum. Let $W \subset X \times  S_\Gamma$ be an algebraic subvariety and $\Pi \subset X \times S_\Gamma$ be the graph of $\pi : X \to S_\Gamma$. Let $U$ be an irreducible component of $W \cap \Pi$ such that
\begin{displaymath}
\codim U < \codim W + \codim \Pi,
\end{displaymath}
the codimension being in $X \times S_\Gamma$ or, equivalently,
\begin{displaymath}
\dim W < \dim U + \dim S_\Gamma.
\end{displaymath}
If the projection of $U$ to $S_\Gamma$ is not zero dimensional, then it is contained in a strict special subvariety of $S_\Gamma$.
\end{thm}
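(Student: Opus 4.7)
\medskip

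\textbf{Proof proposal.}
The plan is to reduce Theorem \ref{nonarithas7} to the Hodge-theoretic Ax--Schanuel of Bakker--Tsimerman (Theorem \ref{zasthm}) applied to the pair $(S_\Gamma, \widehat{\mathbb{V}})$, using the period map $\psi : {S_\Gamma}^{\an} \to \widehat{\mathbf{G}}(\Z)\backslash D$ and its lift $\tilde{\psi} : X \hookrightarrow D$ constructed in Section \ref{zhodgesection}. First, invoking Theorem \ref{mainthm}, it is enough to show that the projection of $U$ to $S_\Gamma$ is contained in a strict $\Z$-special subvariety, since these coincide with the (totally geodesic) special subvarieties.

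The key geometric input is the decomposition $D \cong X \times D_{>1}$ recorded at the end of Section \ref{monodromymtsection}, in which the first coordinate of $\tilde{\psi}$ is the identity, so that $\tilde{\psi}(x)=(x,\tilde{\psi}_{>1}(x))$. Writing $\check{D}=\check{X}\times \check{D}_{>1}$ for the compact dual, we promote $W\subset X\times S_\Gamma$ (algebraic in $\check{X}\times S_\Gamma$) to an algebraic subvariety
\[
W' := W \times \check{D}_{>1} \subset \check{D}\times S_\Gamma,
\]
of dimension $\dim W + \dim D_{>1}$. Let $\Pi_\psi \subset \check{D}\times S_\Gamma$ denote the graph of the period map, namely the image of $(\tilde{\psi},\pi): X \to \check{D}\times S_\Gamma$; this image equals $(\tilde{\psi},\mathrm{id})(\Pi)$ because $(\tilde{\psi},\mathrm{id})$ is a closed embedding on $X$. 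In particular $(\tilde{\psi},\mathrm{id})(U)$ is contained in some irreducible component $U'$ of $W'\cap \Pi_\psi$, with $\dim U' \geq \dim U$.

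Next I check that the atypicality condition is preserved. Using $\dim \Pi_\psi = \dim X = n = \dim S_\Gamma$ and $\dim(\check{D}\times S_\Gamma) = \dim D + n$, the Bakker--Tsimerman inequality $\dim W' + \dim \Pi_\psi < \dim U' + \dim(\check{D}\times S_\Gamma)$ reduces, after cancelling $\dim D_{>1}$ and $n$, to $\dim W < \dim U' + \dim S_\Gamma$, which follows from the hypothesis $\dim W < \dim U + \dim S_\Gamma$ together with $\dim U \leq \dim U'$. Theorem \ref{zasthm} then yields that $\mathrm{pr}_{S_\Gamma}(U')$ is contained in a strict weak Mumford--Tate subvariety of $S_\Gamma$, that is, in a strict $\Z$-special subvariety. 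Since $\mathrm{pr}_{S_\Gamma}(U) \subset \mathrm{pr}_{S_\Gamma}(U')$ and has positive dimension by assumption, the conclusion follows via Theorem \ref{mainthm}.

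The main technical points to verify carefully are: (i) that the product construction $W' = W\times \check{D}_{>1}$ is genuinely an algebraic subvariety of the ambient compact dual (which uses that $\check{D}$ splits as $\check{X}\times \check{D}_{>1}$, guaranteed by the $G$-equivariance of $\tilde{\psi}$ established in Section \ref{monodromymtsection}), and (ii) that the strictness of the Mumford--Tate subvariety output by Theorem \ref{zasthm} is preserved under the passage from $U'$ back to $U$. The strictness transfer is automatic because, by Corollary \ref{gammadense}, the generic Mumford--Tate group of $\widehat{\mathbb{V}}$ is $\widehat{\mathbf{G}}$, so a strict $\Z$-special subvariety is strictly contained in $S_\Gamma$; the hypothesis that $\mathrm{pr}_{S_\Gamma}(U)$ is not a point ensures we are not in the trivial special-point case. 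The most delicate point is ensuring the codimension count really passes through the enlargement $W \leadsto W'$, and that is why the factorisation $D\cong X\times D_{>1}$ with the identity in the first factor is essential --- it allows the product construction to enlarge $W'$ and $\Pi_\psi$ by exactly the same amount $\dim D_{>1}$, leaving the atypicality untouched.
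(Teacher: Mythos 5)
Your argument is correct and follows essentially the same route as the paper: promote $W$ to $W\times D_{>1}$ via the splitting $D\cong X\times D_{>1}$ in which the first component of $\tilde{\psi}$ is the identity, check that atypicality transfers through the enlargement, apply the $\Z$-Ax--Schanuel (Theorem \ref{zasthm}), and conclude with Theorem \ref{mainthm}. The only cosmetic differences are that the paper disposes of the arithmetic case separately by citing the Mok--Pila--Tsimerman theorem and works with the fibre product $D\times_{\widehat{\mathbf{G}}(\Z)\backslash D}S_\Gamma$ rather than the single graph component $\Pi_\psi$; to invoke Theorem \ref{zasthm} verbatim you should enlarge your $U'$ to an irreducible component of $W'\cap\bigl(D\times_{\widehat{\mathbf{G}}(\Z)\backslash D}S_\Gamma\bigr)$ containing it, which only increases the dimension and hence preserves both the atypicality inequality and the containment of projections.
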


Before starting the proof of Theorem \ref{nonarithas7}, we recall the Ax-Schanuel for the $\Z$-VHS $\widehat{\mathbb{V}}$ constructed in Theorem \ref{strategy1}. The next theorem follows from the work of Bakker and Tsimerman \cite[Theorem 1.1]{MR3958791}, Theorem \ref{mainthm} and the fact that $G$ has rank one. We refer to it as the $\Z$-Ax--Schanuel (or simply as $\Z$-AS).
\begin{thm}\label{zasthm}
Let $\widehat{W} \subset {D_{\widehat{G}}} \times  S_\Gamma$ be an algebraic subvariety. Let $\widehat{U}$ be an irreducible component of $\left(\widehat{W} \cap D_{\widehat{G}} \times_{ \widehat{\mathbf{G}}(\Z)\backslash D}  S_\Gamma \right)\subset   D_{\widehat{G}} \times S_\Gamma $ such that
\begin{displaymath}
\codim \widehat{U} < \codim \widehat{W} + \codim D_{\widehat{G}} \times_{ \widehat{\mathbf{G}}(\Z)\backslash D}  S_\Gamma,
\end{displaymath}
the codimension being in $ D_{\widehat{G}} \times S_\Gamma $. Then the projection of $\widehat{U}$ to $S_\Gamma$ is contained in a strict special subvariety of $S_\Gamma$, unless it is zero dimensional.
\end{thm}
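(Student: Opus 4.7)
The plan is to derive Theorem \ref{zasthm} as a direct application of the Ax--Schanuel theorem of Bakker and Tsimerman \cite{MR3958791} applied to the pair $(S_\Gamma, \widehat{\mathbb{V}})$, combined with the identification of $\Z$-special and $\Gamma$-special subvarieties established in Theorem \ref{mainthm}. The setup is exactly the one for which their theorem is designed: $S_\Gamma$ is a smooth complex quasi-projective variety (by Theorem \ref{algebraic}), $\widehat{\mathbb{V}}$ is a polarised $\Z$-VHS on $S_\Gamma$ (by Theorem \ref{thmzvhs}), and the associated period map is $\psi : {S_\Gamma}^{\an} \to \widehat{\mathbf{G}}(\Z)\backslash D$, where $\widehat{\mathbf{G}}$ is, by Corollary \ref{gammadense}, the generic Mumford--Tate group. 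Under these hypotheses, $D_{\widehat{G}}\times_{\widehat{\mathbf{G}}(\Z)\backslash D} S_\Gamma$ is precisely the graph of the period map as used in \cite[Theorem 1.1]{MR3958791}.

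Concretely, I would take an algebraic subvariety $\widehat{W} \subset D_{\widehat{G}} \times S_\Gamma$ and a component $\widehat{U}$ of $\widehat{W}\cap (D_{\widehat{G}}\times_{\widehat{\mathbf{G}}(\Z)\backslash D} S_\Gamma)$ satisfying the atypical codimension inequality
\[
\codim \widehat{U} < \codim \widehat{W} + \codim \bigl(D_{\widehat{G}}\times_{\widehat{\mathbf{G}}(\Z)\backslash D} S_\Gamma\bigr).
\]
Bakker--Tsimerman then guarantee that, unless the projection of $\widehat{U}$ to $S_\Gamma$ is zero dimensional, this projection is contained in a strict \emph{weak Mumford--Tate subvariety} of $S_\Gamma$ for the VHS $\widehat{\mathbb{V}}$. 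By our Definition \ref{defzsp}, such a subvariety is precisely a strict $\Z$-special subvariety of $S_\Gamma$ (we write $\Z$-special rather than weakly $\Z$-special because, working in the single fixed variety $S_\Gamma$, there is no room to move the Mumford--Tate datum in a family).

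To translate this conclusion to the statement of Theorem \ref{zasthm}, I would invoke Theorem \ref{mainthm}: in the rank-one setting $G=\PU(1,n)$ the $\Z$-special subvarieties of $S_\Gamma$ coincide with the $\Gamma$-special (equivalently, totally geodesic) subvarieties. Hence the strict $\Z$-special subvariety provided by Bakker--Tsimerman is a strict special subvariety of $S_\Gamma$ in the sense of Definition \ref{defispecial}, which is exactly what Theorem \ref{zasthm} asserts. Strictly speaking, one must check that ``strict'' is preserved under this identification, but this is immediate since both notions of specialness agree on the whole of $S_\Gamma$: a proper $\Z$-special subvariety corresponds to a proper sub-real-Shimura datum. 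The main conceptual content therefore lies entirely in Theorem \ref{mainthm} and in invoking \cite{MR3958791}; no new functional-transcendence input is required. The only step that is not purely formal is verifying that the hypotheses of \cite[Theorem 1.1]{MR3958791} apply verbatim to our quasi-projective $S_\Gamma$ (which is covered by Theorem \ref{algebraic} and the general formalism of period maps), and this is the point I would write out most carefully.
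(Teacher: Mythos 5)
Your proposal is correct and follows essentially the same route as the paper, which deduces Theorem \ref{zasthm} directly from Bakker--Tsimerman's Ax--Schanuel \cite[Theorem 1.1]{MR3958791} applied to $(S_\Gamma,\widehat{\mathbb{V}})$, together with Theorem \ref{mainthm} and the fact that $G$ has rank one (so that the weak Mumford--Tate subvarieties in their conclusion are exactly the $\Z$-special, hence special, subvarieties). Your write-up simply makes explicit the translation steps that the paper leaves implicit.
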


\begin{proof}[Proof of Theorem \ref{nonarithas7}]
If $\Gamma$ is arithmetic, the result is a special case of \cite[Theorem 1.1]{as}, so we may and do assume that $\Gamma$ is non-arithmetic. Thanks to Theorem \ref{strategy1} and the discussion after Corollary \ref{gammadense}, we have the following commutative diagram
\begin{center}
\begin{tikzpicture}[scale=2]
\node (A) at (-1,1) {$X$};
\node (B) at (1,1) {$D=D_{\widehat{G}}$};
\node (C) at (-1,0) {${S_\Gamma}^{\an}$};
\node (D) at (1,0) {$\widehat{\mathbf{G}}(\Z)\backslash D$};
\path[right hook->,font=\scriptsize]
(A) edge node[above]{$\tilde{\psi}$} (B);
\path[->,font=\scriptsize,>=angle 90]
(C) edge node[above]{$\psi$} (D)
(A) edge node[right]{$\pi_\Gamma$} (C)
(B) edge node[right]{$\pi_\Z$} (D);
\end{tikzpicture}.
\end{center}
In the proof we actually use that the above diagram is cartesian. Finally we can write $D\cong X \times D_{>1}$, for some homogeneous space $D_{>1}$ under $\prod_{i>1}^rG_{\sigma_i}$, where $r=[K:\Q]$. Let $W \subset   X\times S_\Gamma$ be an algebraic subvariety, and consider the algebraic subvariety of $D\times S_\Gamma$
\begin{displaymath}
\widehat{W}:= W \times D_{>1}
\end{displaymath}
(by applying the natural isomorphism $X\times S_\Gamma \cong S_\Gamma \times X$). We claim that $W \cap \Pi$ can be identified with $\widehat{W} \cap (D \times _{\widehat{\mathbf{G}}(\Z)\backslash D} S_\Gamma)\subset D \times S_\Gamma$. To see this, notice that
\begin{displaymath}
W \cap \Pi=\{w=(w_0,w_1)\in W\subset X \times S_\Gamma  : \pi_\Gamma(w_1)=w_0\},
\end{displaymath}
and
\begin{displaymath}
\widehat{W} \cap (D \times _{\widehat{\mathbf{G}}(\Z)\backslash D} S_\Gamma)\cong \widehat{W} \cap \{ (\pi_\Gamma(x), \tilde{\psi}(x)): x \in X\}.
\end{displaymath}
The natural map identifying the two complex analytic varieties is given by
\begin{equation}\label{identification}
 W \cap \Pi \ni w \mapsto (w_0, \tilde{\psi}(w_1))\in \widehat{W} \cap (D_{\widehat{G}} \times_{ \widehat{\mathbf{G}}(\Z)\backslash D}  S_\Gamma).
\end{equation}

Let $U$ be an irreducible component of $W \cap \Pi$ such that
\begin{equation}\label{atypeq}
\codim_{X\times S_\Gamma} U < \codim_{X\times S_\Gamma} W + \codim_{X\times S_\Gamma} \Pi.
\end{equation}
Thanks to the identification appearing in \eqref{identification}, we see that $U$ corresponds to $\widehat{U}$, an irreducible component of $\widehat{W} \cap (D \times_{ \widehat{\mathbf{G}}(\Z)\backslash D}  S_\Gamma)$. From \eqref{atypeq}, we obtain that
\begin{displaymath}
\codim \widehat{U} < \codim \widehat{W} + \codim D \times_{ \widehat{\mathbf{G}}(\Z)\backslash D}  S_\Gamma,
\end{displaymath}
the codimension being in $ D \times S_\Gamma\cong D_{>1} \times X \times S_\Gamma$. Eventually we may apply Theorem \ref{zasthm} to conclude.
\end{proof}

\subsection{Some corollaries}\label{sectioncor}
Let $(G=\PU(1,n),X,\Gamma)$ be a real Shimura datum and
\begin{displaymath}
\pi = \pi_\Gamma : X \to S_\Gamma= \Gamma \backslash X
\end{displaymath}
 be the projection map. Let $Y\subset X$ be a positive dimensional algebraic subvariety. Applying the non-arithmetic Ax--Schanuel, Theorem \ref{nonarithas7}, to
\begin{displaymath}
W:=Y \times \overline{\pi(Y)}^{\Zar}
\end{displaymath} 
we obtain the following, which recovers Mok's result \cite[Main Theorem]{mokalw}.
\begin{cor}[Non-arithmetic Ax-Lindemann-Weierstrass]\label{gammaalw}
Let $(G=\PU(1,n),X,\Gamma)$ be a real Shimura datum and $Y\subset X$ be an algebraic subvariety. Then any irreducible component of the Zariski closure of $\pi(Y)$ in $S_\Gamma$ is special.
\end{cor}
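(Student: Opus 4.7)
The plan is to apply the non-arithmetic Ax--Schanuel theorem (Theorem \ref{nonarithas7}) directly to $W := Y \times Z \subset X \times S_\Gamma$, where $Z := \overline{\pi(Y)}^{\Zar}$, and argue by induction on $n = \dim X$. Passing to an irreducible component, I may assume $Y$, and hence $Z$, is irreducible; since $\pi$ is a local biholomorphism, $\dim Z = \dim Y$. I restrict to the nontrivial case of positive-dimensional $Y$.

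Let $\Pi \subset X \times S_\Gamma$ be the graph of $\pi$. Then $U := W \cap \Pi = \{(y,\pi(y)) : y \in Y\}$ is an irreducible component of $W \cap \Pi$ of dimension $\dim Y$, and
\[
\dim U - \bigl(\dim W + \dim\Pi - \dim(X \times S_\Gamma)\bigr) = \dim Y - (\dim Y + \dim Z - n) = n - \dim Z,
\]
so the intersection is atypical precisely when $\dim Z < n$. If $\dim Z = n$ then $Y = X$ and $Z = S_\Gamma$ is special. Otherwise, $\pi(Y)$ is positive-dimensional, so Theorem \ref{nonarithas7} yields a strict special subvariety $\Sigma \subsetneq S_\Gamma$ containing $\pi(Y)$, and therefore containing $Z$.

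Now I iterate. By Theorem \ref{mainthm}, $\Sigma$ is realised by a real sub-Shimura datum $(H, X_H, \Gamma_H)$ of $(G, X, \Gamma)$, with $X_H \cong \Bb^m$ a sub-ball of $X = \Bb^n$ and $m < n$ (Section \ref{embeddings}). The preimage $\pi^{-1}(\Sigma)$ decomposes as $\bigcup_{\gamma} \gamma X_H$ into irreducible algebraic translates of $X_H$, so the irreducible $Y$ lies in one of them; after conjugating by $\gamma^{-1}$ I assume $Y \subseteq X_H$. Applying the inductive hypothesis to the algebraic subvariety $Y$ of $X_H$ inside the ball quotient $\Gamma_H \backslash X_H$ of dimension $m < n$ (the axiom RSD0 is vacuous here because $\PU(1,m)$ is not of $\Gl_2$-type when $m \geq 2$, and the case $m = 1$ is trivial since $Y$ must equal $X_H$), any irreducible component of $\overline{\pi_H(Y)}^{\Zar}$ is induced by a real sub-Shimura datum of $(H, X_H, \Gamma_H)$, which is automatically a sub-Shimura datum of $(G, X, \Gamma)$. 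Since $Z \subseteq \Sigma$, the finite map $\Gamma_H \backslash X_H \to \Sigma \subseteq S_\Gamma$ carries $\overline{\pi_H(Y)}^{\Zar}$ onto $Z$, so $Z$ is special.

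The main obstacle is the codimension computation for Ax--Schanuel and tracking the sub-Shimura structure through the recursion, in particular verifying that the strictness $\Sigma \subsetneq S_\Gamma$ indeed forces $m < n$, which is what ensures termination. A secondary point is identifying the irreducible components of $\pi^{-1}(\Sigma)$ with the $\Gamma$-translates $\gamma X_H$ of the algebraic subvariety $X_H$ (a consequence of the description of real sub-Shimura data in Section \ref{embeddings}).
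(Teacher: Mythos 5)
Your proof is correct and is essentially the paper's argument: the paper's proof of this corollary consists precisely of applying Theorem \ref{nonarithas7} to $W=Y\times\overline{\pi(Y)}^{\Zar}$, and the induction through the sub-Shimura datum $(H,X_H,\Gamma_H)$ that you spell out (reduction of $Y$ into a single translate $\gamma X_H$, the $m=1$ case, and pushing the special subvariety forward along the algebraic map $\Gamma_H\backslash X_H\to S_\Gamma$ of Corollary \ref{algmorphism}) is exactly the implicit content of that one-line proof. One slip you should delete: the assertions ``$\dim Z=\dim Y$'' and ``if $\dim Z=n$ then $Y=X$'' are false in general, since the Zariski closure of $\pi(Y)$ can have strictly larger dimension than $Y$; fortunately neither is used, as your atypicality computation only needs $\dim U=\dim Y$, and when $\dim Z=n$ the needed conclusion is just $Z=S_\Gamma$.
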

Another direct consequence of the Ax-Lindemann-Weierstrass theorem is the following characterisation of special subvarieties of $S_\Gamma$.
\begin{cor}\label{maincor}
Let $W\subset S_\Gamma$ be an irreducible algebraic subvariety (of dimension $>0$). The following are equivalent:
\begin{enumerate}
\item $W$ is totally geodesic;
\item $W$ is bi-algebraic, i.e. some (equivalently any) analytic component of the preimage of $W$ along $\pi : X \to S_\Gamma$ is algebraic; 
\item $W$ is a special subvariety (in the sense of Definition \ref{defispecial});
\item $W=\psi^{-1}(\pi_{\Z} (Y))^0$ for some algebraic subvariety $Y$ of $D_{\widehat{G}}$.
\end{enumerate}
\end{cor}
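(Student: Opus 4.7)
The plan is to close the cycle of equivalences using results already established in the paper, with Theorem \ref{zasthm} and Corollary \ref{gammaalw} doing the heavy lifting. The equivalence (1)$\Leftrightarrow$(3) is essentially tautological after Theorem \ref{mainthm}, since Definition \ref{defispecial} identifies totally geodesic, $\Gamma$-special, and $\Z$-special subvarieties. The implication (3)$\Rightarrow$(4) is immediate from Definition \ref{defzsp}: every Mumford--Tate sub-domain is by construction algebraic in $D$.

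For (1)$\Rightarrow$(2), I would invoke the explicit description recalled in Section \ref{embeddings}: under the Borel embedding $X = \mathbb{B}^n \hookrightarrow X^\vee = \mathbb{P}^n$, totally geodesic complex subvarieties of $\mathbb{B}^n$ are precisely intersections with projective linear subspaces, hence algebraic in $X$. Any analytic component of $\pi^{-1}(W)$ for $W$ totally geodesic is of this form. Conversely, (2)$\Rightarrow$(1) follows from the non-arithmetic Ax--Lindemann--Weierstrass (Corollary \ref{gammaalw}): if some component $\tilde W$ of $\pi^{-1}(W)$ is algebraic in $X$, then $\pi(\tilde W)$ is analytically open in the irreducible algebraic subvariety $W$, so the Zariski closure of $\pi(\tilde W)$ equals $W$, and Corollary \ref{gammaalw} forces $W$ to be special.

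The main obstacle is (4)$\Rightarrow$(3), for which Theorem \ref{zasthm} is essential. Given $W = \psi^{-1}(\pi_\Z(Y))^0$ with $Y \subset D$ algebraic, fix an analytic lift $\tilde W \subset X$ of $W$ and shrink $Y$ to $\overline{\tilde\psi(\tilde W)}^{\Zar}$; this preserves the identity in (4). Form $\widehat W := Y \times W \subset D \times S_\Gamma$ and let $\widehat U$ be the irreducible component of $\widehat W \cap (D \times_{\widehat{\mathbf{G}}(\Z)\backslash D} S_\Gamma)$ containing the graph $\{(\tilde\psi(x), \pi(x)) : x \in \tilde W\}$; it has dimension at least $\dim W$. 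A direct codimension count shows that the atypicality hypothesis of Theorem \ref{zasthm} amounts to $\dim Y < \dim D$, so either $W = S_\Gamma$ (trivially $\Z$-special) or $W$ is contained in a strict $\Z$-special subvariety $W_1 \subsetneq S_\Gamma$.

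To upgrade from ``contained in'' to ``is'', I would iterate: $W$ is still an irreducible component of $\psi_{|W_1}^{-1}(\pi_\Z(Y \cap D_{W_1}))$, and since Bakker--Tsimerman's theorem \cite{MR3958791} applies to any polarized $\Z$-VHS on a smooth quasi-projective variety, one can repeat the argument for $(W_1, \widehat{\mathbb{V}}_{|W_1})$ to produce $W_2 \subsetneq W_1$ $\Z$-special containing $W$, unless the atypicality condition fails at this stage---in which case $D_{W_1} \subset Y$, forcing $W = W_1$. Since the dimensions strictly decrease and are bounded below by $\dim W$, the procedure terminates in finitely many steps at $W = W_k$, which is therefore $\Z$-special.
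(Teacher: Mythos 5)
Your proof is correct, and it fills in an argument the paper leaves almost entirely implicit: Corollary \ref{maincor} is stated there as a ``direct consequence'' of the Ax--Lindemann--Weierstrass statement (Corollary \ref{gammaalw}) together with Theorem \ref{mainthm}, with no written proof. Your chain (1)$\Leftrightarrow$(3) via Theorem \ref{mainthm}, (1)$\Rightarrow$(2) via linear sections under the Borel embedding, (2)$\Rightarrow$(3) via Corollary \ref{gammaalw}, and (3)$\Rightarrow$(4) via Definition \ref{defzsp}/Proposition \ref{Zspecial} is exactly the intended route. Where you genuinely add something is (4)$\Rightarrow$(3): Corollary \ref{gammaalw} lives on $X$ and cannot directly see an algebraic $Y\subset D_{\widehat{G}}$, since $\tilde{\psi}$ is only holomorphic, so some $\Z$-side functional transcendence is unavoidable; your move---replace $Y$ by $\overline{\tilde{\psi}(\tilde{W})}^{\Zar}$, apply Theorem \ref{zasthm} to $\widehat{W}=Y\times W$, and descend inductively through the strict special subvarieties it produces---is precisely the $\Z$-Ax--Lindemann argument implicit in the paper and mirrors the codimension bookkeeping of Proposition \ref{mainpropwithas}. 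What the paper's terseness buys is brevity; what your route buys is an actual verification of the only non-formal implication. Two details you gloss over, both repairable in a line: (i) before shrinking $Y$ you must first replace it by a suitable $\widehat{\mathbf{G}}(\Z)$-translate so that $\tilde{\psi}(\tilde{W})\subset Y$ (a fixed component $\tilde{W}$ of $\pi^{-1}(W)$ only maps into $\bigcup_{\gamma}\gamma Y$; irreducibility of $\tilde{W}$ and countability of the union give a single translate, and $\pi_{\Z}(\gamma Y)=\pi_{\Z}(Y)$); (ii) in the inductive step you invoke Ax--Schanuel for $(W_1,\widehat{\mathbb{V}}_{|W_1})$, where $W_1$ may be singular as a subvariety of $S_\Gamma$---either apply Bakker--Tsimerman on $\Gamma_H\backslash X_H$ itself (smooth, since $\Gamma$ is torsion free) or note that $W_1$ comes from a real sub-Shimura datum so Theorem \ref{zasthm} applies to it verbatim; finally, as elsewhere in the paper, the statement is implicitly about positive-dimensional $W$, which your use of the ``not zero dimensional'' clause requires.
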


The fact that a non-arithmetic $\Gamma$ gives rise to a thin subgroup of $\widehat{\mathbf{G}}(\Z)$ follows already from Corollary \ref{gammadense}. As announced in the introduction, we can prove Corollary \ref{cor2} (at least up to the finiteness of maximal special subvarieties, Theorem \ref{thmbader}, which is proven in the next section).
\begin{cor}
Let $(G=\PU(1,n),X,\Gamma\subset \mathbf{G}(\Oo_K))$ be a real Shimura datum. Let $W$ be a Hodge generic subvariety of $S_\Gamma$ (in the sense of Definition \ref{defispecial}). If $\Gamma$ is non-arithmetic, the image of 
\begin{displaymath}
\pi_1(W)\to \Gamma
\end{displaymath}
gives rise to a thin subgroup of $\widehat{\mathbf{G}}(\Z)$.
\end{cor}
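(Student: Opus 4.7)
The plan is to verify the two defining properties of a thin subgroup separately: (i) the image of $\pi_1(W)\to\Gamma\subset\widehat{\mathbf{G}}(\Z)$ is Zariski dense in $\widehat{\mathbf{G}}$, and (ii) this image has infinite index in the lattice $\widehat{\mathbf{G}}(\Z)$. First I would note that $\widehat{\mathbf{G}}(\Z)\cong\mathbf{G}(\Oo_K)$ is indeed a lattice in $\widehat{G}=\widehat{\mathbf{G}}(\R)$ (recalled in Section \ref{weilresbasic}), so the notion of thinness is meaningful here.

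For (ii), the argument is immediate from Remark \ref{infiniteindex}: since $\Gamma$ is non-arithmetic and contained in $\mathbf{G}(\Oo_K)=\widehat{\mathbf{G}}(\Z)$ with $K$ the (adjoint) trace field, $\Gamma$ itself has infinite index in $\widehat{\mathbf{G}}(\Z)$. As the image of $\pi_1(W)\to\Gamma$ is a subgroup of $\Gamma$, it inherits infinite index in $\widehat{\mathbf{G}}(\Z)$.

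For (i), by definition the image of $\pi_1(W)\to\widehat{\mathbf{G}}(\Z)$ has Zariski closure (up to finite index) equal to the $\Z$-monodromy $\leftidx{^\Z}{\mathbf{M}}_W\subset\widehat{\mathbf{G}}$. I would first show that the generic Mumford--Tate group of the restriction $\widehat{\mathbb{V}}_{|W}$ coincides with the generic Mumford--Tate group of $\widehat{\mathbb{V}}$ on $S_\Gamma$, whose derived subgroup is $\widehat{\mathbf{G}}$ by Corollary \ref{gammadense}. Indeed, $W$ being Hodge generic means it is not contained in any strict special subvariety of $S_\Gamma$; by Theorem \ref{mainthm} this is equivalent to not being contained in any strict $\Z$-special subvariety. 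Since, by Cattani--Deligne--Kaplan (Theorem \ref{cdk}), the non--Hodge-generic locus of $S_\Gamma$ is a countable union of proper algebraic subvarieties none of which contains $W$, a very general point of $W$ is Hodge generic in $S_\Gamma$, whence the claim about generic Mumford--Tate groups.

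The Deligne--André theorem (Theorem \ref{deligneandre}) then gives that $\leftidx{^\Z}{\mathbf{M}}_W$ is a normal $\Q$-subgroup of $\MT(\widehat{\mathbb{V}}_{|W})^{\der}=\widehat{\mathbf{G}}$. Since $G=\PU(1,n)$ is simple, $\mathbf{G}$ is absolutely simple over $K$, and hence $\widehat{\mathbf{G}}=\Res^K_\Q\mathbf{G}$ is $\Q$-simple (its $\R$-factors are permuted transitively by $\Gal(\overline{\Q}/\Q)$). Consequently $\leftidx{^\Z}{\mathbf{M}}_W$ is either trivial or all of $\widehat{\mathbf{G}}$. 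To exclude the trivial case I would use Theorem \ref{thmmonodromy}, which identifies $\leftidx{^\Z}{\mathbf{M}}_W$ with the Weil restriction of the $\Gamma$-monodromy $\leftidx{^\Gamma}{\mathbf{M}}_W$, combined with Lemma \ref{lemmanoncomp} showing that $\leftidx{^\Gamma}{\mathbf{M}}_W$ is non-trivial (since $W$ has positive dimension, which is implicit in the statement). Thus $\leftidx{^\Z}{\mathbf{M}}_W=\widehat{\mathbf{G}}$, establishing Zariski density. No serious obstacle is expected: all the deep input (cohomological rigidity, the Mumford--Tate computation, and the CDK algebraicity theorem) has been done in the preceding sections, and what remains is this short deduction.
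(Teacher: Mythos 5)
Your proof is correct and follows essentially the same route as the paper: infinite index comes from Remark \ref{infiniteindex} since the image sits inside the non-arithmetic $\Gamma\subset\widehat{\mathbf{G}}(\Z)$, and Zariski density comes from the Hodge genericity of $W$ together with the Deligne--André monodromy theorem. The extra steps you supply (identifying the generic Mumford--Tate group of $\widehat{\mathbb{V}}_{|W}$ with $\widehat{\mathbf{G}}$, and using $\Q$-simplicity of the Weil restriction plus Lemma \ref{lemmanoncomp} to rule out trivial monodromy) merely make explicit what the paper's two-line argument leaves implicit.
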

\begin{proof}
As in Remark \ref{infiniteindex}, since $\Gamma$ is non-arithmetic, $\Gamma$ has infinite index in $\mathbf{G}(\Oo_K)$ and so the same holds for its image in $\widehat{\mathbf{G}}(\Z)$. Because of the Hodge genericity assumption, the André--Deligne monodromy theorem (Theorem \ref{deligneandre}) implies that the image of $\pi_1(W)\to \Gamma \to \widehat{\mathbf{G}}(\Z)$ is Zariski dense in $\widehat{\mathbf{G}}$ and therefore a thin subgroup of $\widehat{\mathbf{G}}(\Z)$.
\end{proof}

Finally the following was announced in the introduction as Corollary \ref{cor1}. The proof is similar to the one of Proposition \ref{mainthmproof}.
\begin{cor}
Let $(F,X_F)$ be a real sub-Shimura couple of $(G=\PU(1,n),X)$ (in the sense of Section \ref{embeddings}). If $\Gamma_F$ is Zariski dense in $F$, then $\Gamma_{F}$ is a lattice in $F$.
\end{cor}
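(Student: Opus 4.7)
\emph{Proof plan.} The strategy mirrors the proof of Proposition \ref{mainthmproof}, adapted to start from a subgroup of $G$ rather than from a subvariety of $S_\Gamma$. Since $\Gamma_F$ is Zariski dense in $F$, Vinberg's theory (Section \ref{localrig}) furnishes a totally real subfield $K_F\subseteq K$, namely the adjoint trace field of $\Gamma_F$, together with a $K_F$-form $\mathbf{F}$ of $F$ such that, after conjugating by an element of $G$ if necessary, $\Gamma_F\subset \mathbf{F}(K_F)$. Denote by $\widehat{\mathbf{F}}:=\Res^{K_F}_{\Q}\mathbf{F}$ the Weil restriction, a $\Q$-subgroup of $\widehat{\mathbf{G}}$, and, following Remark \ref{twofields}, let $D_{\widehat{F}}\subset D_{\widehat{G}}$ be the Mumford--Tate subdomain it cuts out, based at a Hodge-generic point $x_0\in X_F$.

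The geometric heart of the argument is the equality $\tilde{\psi}^{-1}(D_{\widehat{F}})=X_F$. The inclusion $\tilde{\psi}^{-1}(D_{\widehat{F}})\subseteq X_F$ is immediate from the decomposition $D_{\widehat{G}}\cong X\times D_{>1}$, in which the first component of $\tilde{\psi}$ is the identity of $X$ and the first factor of $D_{\widehat{F}}$ is $X_F$. For the reverse inclusion, observe that under the identification $\widehat{\mathbf{F}}(\R)=\prod_{\sigma\colon K_F\to\R}F_\sigma$, the monodromy of $\widehat{\mathbb{V}}$ restricted to $\Gamma_F$ takes values in $\widehat{\mathbf{F}}(\Z)$ by the very construction of $\widehat{\mathbb{V}}$ via Galois conjugation, so by $\Gamma$-equivariance of $\tilde{\psi}$ the orbit $\Gamma_F\cdot x_0$ is sent into $\widehat{\mathbf{F}}(\R)\cdot\tilde{\psi}(x_0)=D_{\widehat{F}}$. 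Since $\Gamma_F\cdot x_0$ is Zariski dense in $X_F=F\cdot x_0$ by Zariski density of $\Gamma_F$ in $F$, a Hodge-theoretic identity-principle argument, combining the holomorphicity of $\tilde{\psi}$ with the algebraicity of the associated Hodge locus via Cattani--Deligne--Kaplan, propagates the containment to all of $X_F$.

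Granted this equality, Theorem \ref{cdk} guarantees that $\psi^{-1}(\pi_\Z(D_{\widehat{F}}))$ is an algebraic subvariety of $S_\Gamma$, and the matching of dimensions forces its analytic component containing $\pi(X_F)$ to coincide with $\pi(X_F)=\Gamma_F\backslash X_F$. Hence $\pi(X_F)$ is an irreducible algebraic subvariety of $S_\Gamma$, and Lemma \ref{finallemma} applied to the Hermitian sub-domain $X_F\subset X$ yields that $\Gamma_F$ is a lattice in $F$, as required.

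The main obstacle is the second paragraph: bridging Zariski density of $\Gamma_F$ in $F$ with a full containment $\tilde{\psi}(X_F)\subseteq D_{\widehat{F}}$, despite $\tilde{\psi}$ being only holomorphic (and not algebraic) on $X$. A secondary technical point is to ensure that $D_{\widehat{F}}$ is a genuine Mumford--Tate subdomain in the sense of Section \ref{defzsp}, i.e.\ that $\widehat{\mathbf{F}}$ is normal in $\MT(\tilde{\psi}(x_0))^{\der}$; this is why one takes $x_0$ Hodge-generic along $X_F$, and, should this fail, the weak Mumford--Tate variant of Cattani--Deligne--Kaplan from \cite{bakker2018tame} suffices for our purposes.
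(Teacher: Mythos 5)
Your overall skeleton is the same as the paper's proof: use Zariski density of $\Gamma_F$ to produce the form $\mathbf{F}$, pass to the Weil restriction $\widehat{\mathbf{F}}$ and the orbit $D_{\widehat{F}}=\widehat{F}(\R)\cdot\tilde{\psi}(x_0)$, identify $\tilde{\psi}^{-1}(D_{\widehat{F}})$ with $X_F$, apply Theorem \ref{cdk} to get algebraicity of $\psi^{-1}(\pi_{\Z}(D_{\widehat{F}}))$, and conclude with Lemma \ref{finallemma}. The divergence, and the problem, is the step you yourself single out as the geometric heart: the passage from $\tilde{\psi}(\Gamma_F\cdot x_0)\subset D_{\widehat{F}}$ to $\tilde{\psi}(X_F)\subset D_{\widehat{F}}$.

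That passage does not follow from Zariski density plus an identity principle. The set $A:=\tilde{\psi}^{-1}(D_{\widehat{F}})\cap X_F$ is merely a closed complex-analytic subset of $X_F$ ($\tilde{\psi}$ is transcendental in the factors indexed by $\sigma_i$, $i\geq 2$), while $\Gamma_F\cdot x_0$ is a countable, discrete, hence analytically nowhere dense, subset of $X_F$. A proper closed analytic subset can contain a countable set that is Zariski dense in $X_F$ (for instance, a countable analytically dense subset of a transcendental analytic curve in $\mathbb{B}^2$ is Zariski dense in $\mathbb{B}^2$), so Zariski density of the orbit gives no hold whatsoever on the analytic locus $A$: there is no identity principle to invoke. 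Nor does Cattani--Deligne--Kaplan rescue the step: Theorem \ref{cdk} lives downstairs, and the entire orbit $\Gamma_F\cdot x_0$ maps to the single point $\pi(x_0)\in S_\Gamma$, so all you learn is that the algebraic Hodge locus contains $\pi(x_0)$ --- whereas what you need is precisely that it contains $\pi(X_F)$. Note also that the mechanism which produces the analogous containment in Proposition \ref{Zspecial}, namely the monodromy/Mumford--Tate computation via Theorem \ref{deligneandre}, is unavailable here, because it requires the base $\Gamma_F\backslash X_F$ to be quasi-projective, which is exactly the conclusion being proved; the same circularity affects your request that $x_0$ be ``Hodge-generic along $X_F$''. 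The paper does not argue via density at all: it bases $D_{\widehat{F}}$ at $\tilde{\psi}(x_0)$ and records the equality $\tilde{\psi}^{-1}(D_{\widehat{F}})=X_F$ as an observation built into the construction (your first-coordinate argument gives the inclusion into $X_F$), then proceeds exactly as you do. As written, your justification of the reverse inclusion is the gap, and it is the one point where the proof genuinely needs an argument that does not presuppose the algebraicity of $\Gamma_F\backslash X_F$.
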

\begin{proof}
The fact that $(F,X_F)$ is a real sub-Shimura couple of $(G=\PU(1,n),X)$ implies that there exists some $x\in X$ such that the map
\begin{equation}\label{eq1234}
x: \DT \to G
\end{equation}
factorises trough $F$. Since $\Gamma_F \subset F $ is Zariski dense in $F$, it determines a the $K$-form of $F$, which we denote by $\mathbf{F}$. Let $\widehat{\mathbf{F}}$ be the Weil restriction from $K$ to $\Q$ of $\mathbf{F}$. Consider the sub-pair 
\begin{displaymath}
(\widehat{\mathbf{F}}, D_{\widehat{F}}=\widehat{\mathbf{F}}(\R). \tilde{\psi}(x)) \subset (\widehat{\mathbf{G}}, D=D_{\widehat{G}}).
\end{displaymath}
 Because \eqref{eq1234} factorises trough $F$, $(\widehat{\mathbf{F}}, D_{\widehat{F}})$ is a sub-Mumford-Tate domain of $(\widehat{\mathbf{G}}, D=D_{\widehat{G}})$.

As usual we argue using the fundamental cartesian diagram (coming from Theorem \ref{strategy1} and Corollary \ref{gammadense})
\begin{center}
\begin{tikzpicture}[scale=2]
\node (A) at (-1,1) {$X$};
\node (B) at (1,1) {$D=D_{\widehat{G}}$};
\node (C) at (-1,0) {${S_\Gamma}^{\an}$};
\node (D) at (1,0) {$\widehat{\mathbf{G}}(\Z)\backslash D$};
\path[right hook->,font=\scriptsize]
(A) edge node[above]{$\tilde{\psi}$} (B);
\path[->,font=\scriptsize,>=angle 90]
(C) edge node[above]{$\psi$} (D)
(A) edge node[right]{$\pi_\Gamma$} (C)
(B) edge node[right]{$\pi_\Z$} (D);
\end{tikzpicture}.
\end{center}
We observe that
\begin{displaymath}
\tilde{\psi}^{-1}(D_{\widehat{F}})=X_F,
\end{displaymath}
and so
\begin{displaymath}
\Gamma_F \backslash X_F = \pi_{\Gamma}(X_F)=\psi^{-1}(\widehat{\mathbf{F}}(\Z)\backslash D_{\widehat{F}} )^0,
\end{displaymath}
for the component $\psi^{-1}(\widehat{\mathbf{F}}(\Z)\backslash D_{\widehat{F}} )^0$ of $\psi^{-1}(\widehat{\mathbf{F}}(\Z)\backslash D_{\widehat{F}} )$ containing $\Gamma_F \backslash X_F$. Theorem \ref{cdk} implies that the right hand side is algebraic. Eventually Lemma \ref{finallemma} applies and shows that $\Gamma_F$ is a lattice in $F$, as desired.
\end{proof}

\section{Finiteness of special subvarieties}\label{sectionfinit}
Let $n$ be an integer $>1$. Let $(G=\PU(1,n),X,\Gamma )$ be a real Shimura datum, as in Definition \ref{realshimuradef}, and $S_\Gamma=\Gamma \backslash X$ the associated Shimura variety (not necessarily of arithmetic type). Recall that, thanks to Theorem \ref{strategy1}, the lattice $\Gamma \subset G$ determines a totally real number field $K$, and a $K$-form $\mathbf{G}$ of the real group $G$ for which, up to $G$-conjugation, $\Gamma $ lies in $\mathbf{G}(\Oo_K)$.

Here we prove Theorem \ref{thmbader}, which, thanks to Definition \ref{defispecial} and Theorem \ref{mainthm}, reads as follows. Moreover, in Section \ref{sectionmarguliscomm}, we explain how a similar strategy can be used to reprove the Margulis commensurability criterion for arithmeticity for lattices in $\PU(1,n)$, which was recalled as Theorem \ref{commcriterion}.

\begin{thm}\label{thm123}
If $S_\Gamma$ is non-arithmetic, then it contains only finitely many special subvarieties. 
\end{thm}
A special geodesic subvariety $S'\subset S_\Gamma$ is called \emph{maximal} if the only special subvariety of $S_\Gamma$ strictly containing $S'$ is $S_\Gamma$ itself. Recall that, from Definition \ref{defispecial}, (positive dimensional) special subvarieties of $S_\Gamma$ are the same as totally geodesic subvarieties, $\Gamma$-special and $\Z$-special subvarieties. The maximality condition can not be dropped in the statement, unless $n=2$, where it is always satisfied. For example $S_\Gamma$ could contain a special subvariety $S'\subsetneq S_\Gamma$ associated to a real Shimura datum $(H,X_H,\Gamma_H)$ where $\Gamma_H$ is arithmetic and $S'$ could contain countably many special subvarieties. An equivalent way of phrasing the above theorem is the following.

\begin{thm}\label{thm1234}
Let $(Y_i)_{i\in \N}$ be a family of special subvarieties of $S_\Gamma$. Then every irreducible component of the Zariski closure of the union
\begin{displaymath}
\bigcup _{i \in \N} Y_i
\end{displaymath}
is either one of the $Y_i$s, or it is a special and arithmetic subvariety of $S_\Gamma$.
\end{thm}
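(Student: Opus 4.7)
The plan is to follow the template of Section \ref{sketch}: interpret maximal special subvarieties as unlikely intersections inside $\widehat{\mathbf{G}}(\Z)\backslash D$, parametrise them by a definable set in the o-minimal structure $\R_{\an,\exp}$, and apply the $\Z$-Ax-Schanuel theorem (Theorem \ref{zasthm}) to force this set to be countable, hence finite by o-minimality. I will prove the equivalent statement Theorem \ref{thm123}; passing from there to Theorem \ref{thm1234} requires only an induction on $\dim W$: if a component $W$ of $\overline{\bigcup_i Y_i}^{\Zar}$ is not one of the $Y_i$, then it properly contains infinitely many $Y_i$'s, and applying Theorem \ref{thm123} inside $W$ (which is a ball quotient of smaller dimension in its own right) forces $W$ to be arithmetic and special. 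By Theorem \ref{mainthm} every special subvariety of $S_\Gamma$ arises from a sub-Shimura datum $(H,X_H,\Gamma_H)$, and by Section \ref{embeddings} the pairs $(H, X_H)$ fall into finitely many $G$-conjugacy classes (indexed by the dimension of $X_H$ and the hermitian embedding data); it therefore suffices to fix such a class, represented by a $K$-subgroup $\mathbf{H} \subset \mathbf{G}$ with Weil restriction $\widehat{\mathbf{H}} \subset \widehat{\mathbf{G}}$, and bound the number of maximal special subvarieties of the fixed type.

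For the definable parametrisation, let $\mathcal{F} \subset X$ be the semialgebraic fundamental domain from Theorem \ref{defini}, so that $\pi_\Gamma|_\mathcal{F}$ is definable in $\R_{\an,\exp}$; the explicit description of $\tilde\psi$ after Corollary \ref{cor123} then shows that $\tilde\psi|_\mathcal{F}$ is likewise definable. Set
\[
\Pi_0(\mathbf{H}) := \{(x,\hat{g}) \in \mathcal{F} \times \widehat{G} : \im(\tilde\psi(x) : \DT \to \widehat{G}) \subset \hat{g}\widehat{H}\hat{g}^{-1}\}
\]
and
\[
\Sigma^1(\mathbf{H}) := \{\hat{g}\widehat{H}\hat{g}^{-1} : (x,\hat{g}) \in \Pi_0(\mathbf{H}),\ \dim_{\tilde\psi(x)}\bigl(\hat{g}\widehat{H}\hat{g}^{-1}.\tilde\psi(x) \cap \tilde\psi(X)\bigr) \geq 1\},
\]
viewed as a definable subset of the space of conjugates of $\widehat{H}$ in $\widehat{G}$. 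The containment condition on the Hodge cocharacter and the dimension condition are both semialgebraic, so $\Pi_0(\mathbf{H})$ and $\Sigma^1(\mathbf{H})$ are definable in $\R_{\an,\exp}$; by construction $\Sigma^1(\mathbf{H})$ parametrises the positive-dimensional special subvarieties of $S_\Gamma$ of the fixed type.

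The heart of the argument is to show that $\Sigma^1(\mathbf{H})$ is countable. Non-arithmeticity of $\Gamma$, via the Mostow--Vinberg criterion (Theorem \ref{arithcrit}), ensures that $\widehat{G} = \prod_{\sigma \in \Omega_\infty} G_\sigma$ has at least one non-compact factor $G_{\sigma_i}$ with $i > 1$, and hence $\psi(S_\Gamma)$ has strictly positive codimension in $\widehat{\mathbf{G}}(\Z)\backslash D$. A short bookkeeping in codimensions, exactly as in the $n=2$ case of Section \ref{sketch}, then shows that the component $\psi(W)$ corresponding to an element $\hat{g}\widehat{H}\hat{g}^{-1} \in \Sigma^1(\mathbf{H})$ satisfies the strict inequality $\codim \psi(W) < \codim \psi(S_\Gamma) + \codim (\text{projection of } \hat{g}.D_{\widehat{H}})$ inside $\widehat{\mathbf{G}}(\Z)\backslash D$. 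This atypical-intersection inequality is exactly the hypothesis of Theorem \ref{zasthm}, whose conclusion forces the real subgroup $\hat{g}\widehat{H}\hat{g}^{-1}$ to be the real locus of a $\Q$-algebraic subgroup of $\widehat{\mathbf{G}}$. Since $\widehat{\mathbf{G}}$ admits only countably many $\Q$-subgroups, $\Sigma^1(\mathbf{H})$ is countable; but a definable subset of an o-minimal structure has only finitely many connected components, so a countable definable set is necessarily finite. Hence $\Sigma^1(\mathbf{H})$ is finite, and summing over the finitely many types of sub-Shimura couples yields the claimed finiteness of maximal special subvarieties.

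The principal obstacle is the codimension bookkeeping and the subsequent application of Ax-Schanuel: one must verify the strict codimension inequality for atypical intersections in full generality, and this is exactly where the maximality hypothesis enters crucially. Without maximality, $W$ could be contained in a strict arithmetic intermediate $W' \subsetneq S_\Gamma$ in which the intersection becomes typical and Theorem \ref{zasthm} gives no constraint on $\hat{g}\widehat{H}\hat{g}^{-1}$. A secondary technical subtlety is verifying definability of $\Pi_0(\mathbf{H})$: the Hodge cocharacter $\tilde\psi(x)$ depends on $x \in \mathcal{F}$ through the Galois conjugates $\rho_{\sigma_i}(\gamma)$ of lifts $\gamma \in \Gamma$, but this follows from Theorem \ref{defini} combined with the diagonal form of $\tilde\psi$ derived after Corollary \ref{cor123}.
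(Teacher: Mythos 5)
Your overall template for Theorem \ref{thm123} is the paper's, but as written the argument has two genuine gaps.

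First, the reduction of Theorem \ref{thm1234} to Theorem \ref{thm123} is circular: you apply Theorem \ref{thm123} ``inside $W$'', asserting that the component $W$ ``is a ball quotient of smaller dimension in its own right'' --- but that $W$ is totally geodesic is exactly what has to be proved; a priori $W$ is just an irreducible component of a Zariski closure. The paper instead passes to the special closure $S''$ of $W$: since $W$ is irreducible and not one of the $Y_i$, the $Y_i$ inside $W$ are proper in $S''$ and cannot all lie in finitely many maximal proper special subvarieties of $S''$ (otherwise $W$ would lie in one of them, contradicting minimality of $S''$), so Theorem \ref{thm123} applied to $S''$ shows $S''$ is arithmetic. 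One must then still prove $W=S''$, i.e.\ that in the \emph{arithmetic} ball quotient $S''$ the Zariski closure of a union of special subvarieties is special; this is not a consequence of Theorem \ref{thm123} (whose hypothesis is non-arithmeticity) nor of your o-minimal count, and the paper imports it from the arithmetic theory, namely \cite[Theorem 4.1]{MR3177266}. Your proposal contains no substitute for this ingredient, and it cannot be dispensed with: arithmetic special subvarieties really do contain infinitely many special subvarieties, so the non-arithmetic finiteness statement alone cannot close the argument.

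Second, inside the proof of Theorem \ref{thm123} the step ``Ax--Schanuel forces $\hat{g}\widehat{H}\hat{g}^{-1}$ to be the real locus of a $\Q$-subgroup'' is not justified for arbitrary elements of your $\Sigma^1(\mathbf{H})$: the conclusion of Theorem \ref{zasthm} is only that the projection of $\widehat{U}$ is contained in \emph{some} strict special subvariety $S'$, and rationality of the group requires identifying $S_{x,\hat{g}}$ with $S'$. The paper obtains this by a dimension comparison, first for the set $\Sigma^m$ attached to the maximal dimension $m$ of strict special subvarieties, and then by a downward induction using the refined sets $\Sigma_2(\mathbf{F})^j$ which exclude subvarieties contained in the (finitely many, by the inductive step) maximal special subvarieties of larger dimension. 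Your $\Sigma^1$ lumps together all positive-dimensional subvarieties of a fixed type, and for those sitting inside an arithmetic intermediate special subvariety the countability/finiteness you claim is false --- note also that in that situation the intersection remains atypical (atypicality only uses non-arithmeticity of $\Gamma$); what breaks is the passage from ``contained in a strict special $S'$'' to ``equal to $S'$'', not the applicability of Theorem \ref{zasthm}. Finally, a smaller but real issue: definability of $\tilde{\psi}$ on all of $\F$ does not follow from Theorem \ref{defini} (which concerns $\pi$) nor from the diagonal description of the identity components; the paper deliberately replaces $\F$ by the compact set $\pi^{-1}(C)\cap\F$, with $C$ a compact set meeting every special subvariety, where holomorphy gives $\R_{\an}$-definability, and otherwise one would need the definability of period maps from \cite{bakker2018tame}.
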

To establish the equivalence of the two statements we only need to show that Theorem \ref{thm123} implies Theorem \ref{thm1234}. Let $S'$ be an irreducible component of $\overline{\bigcup _{i \in \N} Y_i}^{\text{Zar}}$. If $S'$ is not one of the $Y_i$, we can consider the smallest special subvariety $S''$ of $S_\Gamma$ containing $S'$. By construction, $S''$ contains infinitely many maximal totally geodesic subvarieties and therefore, by Theorem \ref{thm123}, it is arithmetic. To see that $S'=S''$, just notice that $S'$ is a subvariety of an arithmetic Shimura variety containing a Zariski dense set of sub-Shimura varieties. It follows that $S'$ is special \cite[Theorem 4.1]{MR3177266} and, by minimality of $S''$, that $S'=S''$.

We summarise here the strategy of the proof of Theorem \ref{thm123}, explaining what we are left to do.
\begin{itemize}
\item In Section \ref{zhodgesection}, we constructed a polarised integral variation of Hodge structure ($\Z$-VHS) $\widehat{\mathbb{V}}$ on $S_\Gamma$, corresponding to a period map
\begin{displaymath}
\psi : {S_{\Gamma}}^{\an}\to \widehat{\mathbf{G}}(\Z)\backslash D_{\widehat{G}},
\end{displaymath}
where $\widehat{\mathbf{G}}$ denotes the Weil restriction from $K$ to $\Q$ of $\mathbf{G}$, and $\widehat{G}=\widehat{\mathbf{G}} \times_\Q \R$. See Section \ref{section46} for the Hodge theoretic preliminaries needed, and Corollary \ref{gammadense} for the proof of the fact that $\widehat{\mathbf{G}}$ is the generic Mumford--Tate group of $\widehat{\mathbb{V}}$;
\item In Section \ref{comparisonsection}, we proved that the totally geodesic subvarieties of $S_\Gamma$ are $\Z$-special, that is they can be described as intersections between $S_{\Gamma}$ and some explicit sub-period domains of $\widehat{\mathbf{G}}(\Z)\backslash D_{\widehat{G}}$ (see Section \ref{proofmainthm});
\item Here we show that such $\Z$-special subvarieties are actually \emph{unlikely intersection}, and prove, using tools from functional transcendence for $\Z$-VHS, that the maximal ones appear in a finite number, unless $\psi$ is an isomorphism (which happens if and only if $\Gamma$ is arithmetic). See also the sketch of the proof presented in Section \ref{sketch}.
\end{itemize}

The last item, which is inspired by the arguments of \cite{MR3177266, MR3552014, MR3867286}, is proven in Section \ref{sectionwithproof}. We will show that maximal $\Z$-special subvarieties are parametrised by a countable and definable set in some o-minimal structure (as introduced in Section \ref{ominintro}). This is enough to conclude that they form a finite set, by the very definition of o-minimal structure. A key input to show the countability is the Ax-Schanuel conjecture for the pair $(S_\Gamma, \widehat{\mathbb{V}})$, as recently proven by Bakker and Tsimerman \cite{MR3958791} (recalled before as Theorem \ref{zasthm}). From now on we refer simply to special subvarieties of $S_\Gamma$, bearing in mind that we want to conclude something about the $\Gamma$-special subvarieties, by arguing on the $\Z$-special ones. This is where we employ Theorem \ref{mainthm}, which was proven in Section \ref{proofmainthm}.

\subsection{Proof of Theorem \ref{thmbader}}\label{sectionwithproof}

If $S_\Gamma$ contains no special subvariety, there is nothing to prove. Assume there is a special subvariety $W$ of $S_\Gamma$ of maximal dimension among special subvarieties. We will use several times Theorem \ref{mainthm} to describe $W$. First we see that $W$ is associated to $(H,X_H, \Gamma_H \subset \mathbf{H}(\Oo_K))$, a real sub-Shimura datum of $(G,X,\Gamma)$, where $\mathbf{H}$ is a strict semisimple $K$-subgroup of $\mathbf{G}$, since $W$ is in particular $\Gamma$-special. Denote by $\widehat{\mathbf{H}}$ the $\Q$-subgroup of $\widehat{\mathbf{G}}$ obtained as Weil restriction, from $K$ to $\Q$, of $\mathbf{H}$, and by $\widehat{H}$ the associated real group\footnote{Here we could consider the Weil restriction from the trace field $K_H$ of $\Gamma_H$, rather than from $K$, of $\mathbf{H}$, but, as explained in Remark \ref{twofields}, both groups describe the same $\Gamma$-special subvariety. The argument of Section \ref{mainpropsec} will moreover show that the smaller $K_H$ is, the more atypical $W$ is.}. 
\subsubsection{Preliminaries}
Before moving on with the argument, we need the following.
\begin{lemma}
Let $(G=\PU(1,n),X,\Gamma)$ be a real Shimura datum. There exists a compact subset $C$ of $S_\Gamma$ such that every special subvariety $S' \subset S_\Gamma$ has non-empty intersection with $C$.
\end{lemma}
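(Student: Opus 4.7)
If $S_\Gamma$ is compact we take $C=S_\Gamma$, so assume $S_\Gamma$ has at least one cusp. The plan is to reuse the thick--thin decomposition from Section \ref{toroidalsubse}: for $\epsilon>0$ smaller than the Margulis constant of $\mathbb{B}^n$, the $\epsilon$-thick part $E\subset \mathbb{B}^n$ projects to a compact $\pi(E)\subset S_\Gamma$, and $S_\Gamma\setminus \pi(E)$ decomposes as a finite disjoint union $\bigsqcup_{[b]}\pi(A_b)$ of cuspidal regions indexed by $\Gamma$-conjugacy classes of $\Gamma$-rational parabolics. I would set $C:=\pi(E)$ and aim to show that every positive-dimensional special subvariety $S'\subset S_\Gamma$ meets $C$.

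Suppose for contradiction that $S'\cap C=\emptyset$. Since $S'$ is connected and the $\pi(A_b)$ are pairwise disjoint, $S'$ is contained in a single $\pi(A_b)$. By Theorem \ref{mainthm} together with the description in Section \ref{embeddings}, any connected component $\widetilde{S'}$ of $\pi^{-1}(S')$ is a totally geodesic copy of the complex ball $\mathbb{B}^m$ inside $\mathbb{B}^n$, with $m=\dim_\C S'\geq 1$. The preimage $\pi^{-1}(\pi(A_b))$ equals $\bigsqcup_{\gamma\in \Gamma/\Gamma_b}\gamma A_b$, and the Margulis lemma ensures that these $\gamma A_b$ are pairwise disjoint in $\mathbb{B}^n$; hence the connected set $\widetilde{S'}$ lies inside a single $\gamma A_b$, and after replacing $\widetilde{S'}$ by an appropriate $\Gamma$-translate we may assume $\widetilde{S'}\subset A_b$. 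The same lemma gives $A_b\subset D^{(N_0)}$ for some horoball $D^{(N_0)}$ at $b$ in the Siegel coordinates of Section \ref{toroidalsubse}.

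The contradiction then becomes a clean geometric statement in $\mathbb{B}^n$. In the Siegel coordinates at $b$, the horoball
\[
D^{(N_0)}=\left\{(z_1,\dots,z_n)\in \C^n \;:\; \im(z_n)>\sum_{i=1}^{n-1}|z_i|^2+N_0\right\}
\]
has Euclidean closure inside $\overline{\mathbb{B}^n}$ meeting the boundary sphere $\partial \mathbb{B}^n$ only at the single point at infinity, namely $b$. On the other hand, by the explicit description of totally geodesic embeddings in Section \ref{embeddings}, the closure of a totally geodesic $\mathbb{B}^m\subset \mathbb{B}^n$ in $\overline{\mathbb{B}^n}$ meets $\partial \mathbb{B}^n$ in a round sphere of real dimension $2m-1\geq 1$, in particular in strictly more than one point. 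Since $\widetilde{S'}\subset D^{(N_0)}$, this boundary sphere would have to be contained in $\{b\}$, a contradiction.

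I expect the main obstacle to be only the careful packaging of the Margulis-lemma inputs, namely the pairwise disjointness of the $\Gamma/\Gamma_b$-translates of $A_b$ inside $\mathbb{B}^n$ and the containment $A_b\subset D^{(N_0)}$ in an explicit horoball; once these are in place, the geometric core of the argument (a positive-dimensional totally geodesic subball of $\mathbb{B}^n$ has a positive-dimensional boundary sphere, whereas the boundary of a horoball at $b$ touches $\partial \mathbb{B}^n$ only at $b$) follows at once from the cusp analysis of Section \ref{toroidalsubse} and the description of sub-balls in Section \ref{embeddings}.
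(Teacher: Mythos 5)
Your argument is correct in substance, but it is not the route the paper takes: the paper proves this lemma by citation, invoking \cite[Lemma 4.5]{MR2180407} in the arithmetic case and \cite[Proposition 3.2]{MR3936536} in general, together with the observation that in rank one totally geodesic subvarieties coincide with the ``strongly special'' ones considered there; no thick--thin argument appears. You instead give a direct geometric proof from the cusp analysis of Section \ref{toroidalsubse}: take $C=\pi(E)$, note that a positive-dimensional special subvariety avoiding $C$ would (by Theorem \ref{mainthm} and Section \ref{embeddings}) admit a totally geodesic lift $X_H\cong\mathbb{B}^m$, $m\geq 1$, contained in a single cusp region and hence in a horoball based at the corresponding parabolic fixed point $b$, which is impossible since the closure of such a sub-ball meets $\partial\mathbb{B}^n$ in a sphere of real dimension $2m-1\geq 1$ while the closure of a horoball meets $\partial\mathbb{B}^n$ only at $b$ (one can even shorten the endgame: a complete geodesic of $X_H$ would have both endpoints equal to $b$). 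What your route buys is a self-contained, purely rank-one geometric argument that does not appeal to the cited lemmas; what it costs is exactly the two Margulis-lemma inputs you flag, which are \emph{not} recorded in the paper and must be supplied or cited (e.g.\ from Hummel--Schroeder or Goldman): pairwise disjointness of the cusp regions $A_b$ and their $\Gamma$-translates for $\epsilon$ below the Margulis constant, and the inclusion $A_b\subset D^{(N_0)}$ for some $N_0>0$ (the paper only states the reverse inclusion $D^{(N)}\subset A_b$); both are standard but need justification. Two minor points of hygiene: the lemma must be read for positive-dimensional special subvarieties (special points are analytically dense, so no compact set can meet all of them), which is the only case used later and which you correctly restrict to; and a connected component of $\pi^{-1}(S')$ need not be a single translate of $X_H$ if distinct translates meet---it suffices, and is all your argument uses, that some connected totally geodesic lift $X_H$ lies in the thin part.
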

\begin{proof}
If $\Gamma$ is arithmetic this appears as \cite[Lemma 4.5]{MR2180407}, for the general case see \cite[Proposition 3.2]{MR3936536}. Here is important that $G$ has rank one, and therefore that there is no difference between special (or totally geodesic) subvarieties and what is called in \cite{MR2180407} \emph{strongly special} subvarieties (see Section 4.1 in \emph{loc. cit.}).
\end{proof}
Let $\F\subset X$ be a fundamental set for the action of $\Gamma$ as in Section \ref{siegelsetssec} and set
\begin{displaymath}
\mathcal{C}:= \pi^{-1}(C) \cap \F,
\end{displaymath}
where $\pi : X \to S_\Gamma$ is the canonical projection map.
\begin{lemma}\label{623}
The set
\begin{equation}\label{eqdefsigm}
\Pi_0(\mathbf{H}):=\{(x,\hat{g}) \in \mathcal{C} \times \widehat{G}: \im (\tilde{\psi}(x): \DT \to \widehat{G})\subset \hat{g} \widehat{H} \hat{g}^{-1} \}
\end{equation}
is definable in an o-minimal structure.
\end{lemma}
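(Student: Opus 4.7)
The plan is to write $\Pi_0(\mathbf{H})$ as the preimage under a definable map of a semialgebraic subset of $D_{\widehat{G}} \times \widehat{G}$. Concretely, I would factor the membership condition as follows: define
\[
Z(\mathbf{H}) := \bigl\{\,(d,\hat{g}) \in D_{\widehat{G}} \times \widehat{G} \;:\; \operatorname{im}(h_d)\subset \hat{g}\widehat{H}\hat{g}^{-1}\,\bigr\},
\]
where $h_d\colon \DT \to \widehat{G}$ is the homomorphism of real algebraic groups parametrised by the point $d\in D_{\widehat{G}}$. Then
\[
\Pi_0(\mathbf{H}) = (\tilde\psi\times \mathrm{id})^{-1}(Z(\mathbf{H}))\cap (\mathcal{C}\times \widehat{G}),
\]
so the statement reduces to (i) showing that $Z(\mathbf{H})$ is semialgebraic, (ii) showing that $\tilde\psi$ restricted to $\mathcal{C}$ (or to the ambient $\F$) is definable in $\R_{\an,\exp}$, and (iii) verifying that $\mathcal{C}$ itself is definable.

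For (i), I would use the homogeneous description $D_{\widehat{G}} = \widehat{G}/M$ with basepoint $d_0$ corresponding to a reference homomorphism $h_0\colon \DT\to \widehat{G}$. Writing $d = \hat{g}_1\cdot d_0$, the condition $\operatorname{im}(h_d)\subset \hat{g}\widehat{H}\hat{g}^{-1}$ becomes $\hat{g}^{-1}\hat{g}_1\cdot \operatorname{im}(h_0)\cdot \hat{g}_1^{-1}\hat{g}\subset \widehat{H}$, which is an algebraic condition on the pair $(\hat{g}_1,\hat{g})\in \widehat{G}\times\widehat{G}$ (for instance by checking inclusion on a pair of topological generators of a maximal compact of $\DT(\R)=\C^*$, or at the level of Lie algebras, where the condition becomes $\operatorname{Ad}(\hat{g}^{-1}\hat{g}_1)\cdot\operatorname{im}(dh_0)\subset \operatorname{Lie}(\widehat{H})$). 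Since $M$ is a compact real algebraic subgroup, the natural semialgebraic structure on $\widehat{G}/M$ makes the descent to $D_{\widehat{G}}\times\widehat{G}$ semialgebraic.

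For (ii), I would invoke the definability of period maps restricted to semialgebraic fundamental sets. The map $\psi\colon {S_\Gamma}^{\an}\to \widehat{\mathbf{G}}(\Z)\backslash D$ is definable in $\R_{\an,\exp}$ by \cite{bakker2018tame}, and on $D$ we have a definable fundamental set for the action of $\widehat{\mathbf{G}}(\Z)$; composing with the definable section determined by a fundamental set (or simply noting that $\tilde\psi|_{\F}$ lifts a definable map through a definable covering) yields that $\tilde\psi|_{\F}\colon \F\to D_{\widehat{G}}$ is definable. Since $\F\subset X$ is semialgebraic by Theorem \ref{fundamentalset}, this is legitimate. For (iii), we may choose $C$ to be a semialgebraic compact subset of $S_\Gamma$ with the intersection property (enlarging if necessary); then $\mathcal{C}=(\pi|_{\F})^{-1}(C)$ is definable by Theorem \ref{defini}.

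Combining (i)-(iii), the set $\Pi_0(\mathbf{H})$ is the intersection of $\mathcal{C}\times \widehat{G}$ with the preimage of the semialgebraic set $Z(\mathbf{H})$ under the definable map $\tilde\psi\times\mathrm{id}$, hence definable in $\R_{\an,\exp}$. The main conceptual point — and the only mildly delicate step — is (i): one has to check that the ``containment of images of tori'' condition genuinely descends to a semialgebraic condition on $D_{\widehat{G}}\times\widehat{G}$, rather than merely a condition on the larger space $\widehat{G}\times\widehat{G}$. Everything else is a routine application of the definability results already established in Section \ref{ominintro}.
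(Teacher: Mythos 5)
Your overall decomposition---semialgebraicity of the containment condition on $D_{\widehat{G}}\times\widehat{G}$, definability of the relevant restriction of $\tilde\psi$, and definability of $\mathcal{C}$---has the same shape as the paper's argument, and your steps (i) and (iii) are fine: the paper disposes of (i) in one sentence (``the condition on the image of $\tilde\psi(x)$ is a definable condition''), and your Tarski--Seidenberg/Lie-algebra elaboration is exactly what is meant. The genuine gap is in your step (ii). You deduce definability of $\tilde\psi|_{\F}$ from the definability of $\psi$ in \cite{bakker2018tame} by ``composing with a definable section'' of $\pi_\Z$, equivalently by asserting that a lift of a definable map through a definable covering is definable. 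That inference is not valid: $\pi_\Z\colon D_{\widehat{G}}\to\widehat{\mathbf{G}}(\Z)\backslash D_{\widehat{G}}$ is an infinite covering, $\tilde\psi(\F)$ has no reason to lie in the chosen fundamental set for $\widehat{\mathbf{G}}(\Z)$ (nor in any prescribed finite union of its translates), and without such control a lift of a definable map need not be definable (branches of a logarithm already show this). Controlling which translates $\tilde\psi$ meets near the cusps is precisely the nontrivial point, and the paper explicitly declines to address it: the remark immediately after Lemma \ref{623} says that definability of $\tilde\psi|_{\F}$ \emph{should} follow from the arguments of \cite{bakker2018tame} together with a covering of the cusp neighbourhoods by the sets $D^{(N)}$ of \eqref{dn}, but that this is exactly what working with $\mathcal{C}$ is designed to avoid.

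The avoidance is the actual proof, and it is much simpler than your (ii): $\mathcal{C}$ is compact (one may take it to be the closure of an open semialgebraic subset of $\F$, by the description of Siegel sets in Section \ref{siegelsetssec} and Theorem \ref{defini}), and $\tilde\psi$ is holomorphic on $X$; hence $\tilde\psi|_{\mathcal{C}}$ is given by restricted analytic functions and is definable in $\R_{\an}$, with no appeal to any definability theorem for period maps. Since your argument only ever uses $\tilde\psi$ on $\mathcal{C}$ (you even note the option parenthetically), replacing your step (ii) by this compactness observation closes the gap; as written, however, the justification you give for (ii) does not work.
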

To unpack the definition of $\Pi_0(\mathbf{H})$, recall that $\tilde{\psi}: X \to D$ denotes the lift of the period map 
\begin{displaymath}
\psi : {S_{\Gamma}}^{\an}\to \widehat{\mathbf{G}}(\Z)\backslash D_{\widehat{G}}.
\end{displaymath}
By the definition of period domains and Mumford--Tate domains, recalled in Section \ref{recapperioddomains}, every element in $D$ corresponds to a Hodge structure (whose associated Mumford--Tate group is a reductive $\Q$-subgroup of $\widehat{\mathbf{G}}$). In particular, given $x\in X$, $\tilde{\psi}(x)$ corresponds to a map of real algebraic groups
\begin{displaymath}
\tilde{\psi}(x): \DT \to \widehat{G},
\end{displaymath}
where $\DT$, as in Section \ref{31}, is the Deligne torus $\Res^{\C}_{\R} \Gm$. From now on, by \emph{definable}, we always mean definable in the o-minimal structure $\R_{\an, \exp}$ (which was introduced in Section \ref{ominintro}).
\begin{proof}[Proof of Lemma \ref{623}]
Thanks to the description of Siegel sets of Section \ref{fundamentalset}, we can assume that $\mathcal{C}$ is the closure of an open semialgebraic subset of $\F$. Note that the restriction of $\tilde{\psi}: X \to D_{\widehat{G}}$ to $\mathcal{C}$ is holomorphic and therefore definable in $\R_{\operatorname{an}}$. Eventually the condition on the image of $\tilde{\psi}(x)$ is a definable condition.
\end{proof}
\begin{rmk}
The reason why we work with the compact $\mathcal{C}$, rather than with the fundamental set $\F$, is to avoid discussing the definability of (opportune restrictions of) $\tilde{\psi}$. However it should be possible to apply the arguments of \cite[Theorem 1.5]{bakker2018tame} to prove that $\tilde{\psi}_{| \F}$ is definable. It would be a consequence of the fact that $\tilde{\psi}$ restricted to ${D^{(N)}}$, where $D^{(N)}\subset X$ was defined in \ref{dn}, is definable, at least for $N$ large enough, and that the complement of a compact subset of $\F$ is covered by a finite union of $D^{(N)}$s.
\end{rmk}
The condition on the image of $\tilde{\psi} (x)$ appearing in \eqref{eqdefsigm} ensures that we are only considering conjugated of $\widehat{H}$ by elements $\hat{g}\in \widehat{G}$ such that 
\begin{displaymath}
\hat{g} \widehat{H} \hat{g}^{-1}. \tilde{\psi}(x)
\end{displaymath}
has a complex structure. In particular $\hat{g} \widehat{H} \hat{g}^{-1}(\C). \tilde{\psi}(x)$ is an algebraic subvariety of the flag variety of $D_{\widehat{G}}$ whose intersection with $D_{\widehat{G}}$ is $\hat{g} \widehat{H} \hat{g}^{-1}. \tilde{\psi}(x)$. That is $\hat{g} \widehat{H} \hat{g}^{-1}. \tilde{\psi}(x)$ is an algebraic subvariety of $D_{\widehat{G}}$ in the sense of \cite[Appendix B]{MR3502100}. Associated to $(x,\hat{g}) \in \Pi_0(\mathbf{H})$ there is
\begin{displaymath}
S_{x,\hat{g}}:= \psi^{-1}\pi (\hat{g} \widehat{H} \hat{g}^{-1}. \tilde{\psi}(x))^0 \subset {S_\Gamma}^{\an},
\end{displaymath}
that could happen to be a special subvariety of $S_\Gamma$, or not. Indeed $S_{x,\hat{g}}$ is a special subvariety of $S_\Gamma$ only when the real group $\hat{g} \widehat{H} \hat{g}^{-1}$ has a structure of a $\Q$-subgroup of $\widehat{\mathbf{G}}$.

In what follows, by dimension we always mean the complex dimension. For $(x,\hat{g}) \in \Pi_0(\mathbf{H}) $ consider the function computing the local dimension at $\tilde{\psi}(x)$:
\begin{displaymath}
d_X(x,\hat{g}):= \dim_{\tilde{\psi}(x)} \left(\hat{g} \widehat{H} \hat{g}^{-1}. \tilde{\psi}(x)  \cap \tilde{\psi}(X)\right).
\end{displaymath}
Finally, given $0< j < n= \dim S_\Gamma$, we consider the sets
\begin{displaymath}
\Pi^j_1(\mathbf{H}):=\{(x,\hat{g}) \in \Pi_0(\mathbf{H}) : d_X(x,\hat{g}) \geq j\},
\end{displaymath}
\begin{equation}\label{sigmadef}
\Sigma^j=\Sigma(\mathbf{H})^j := \{\hat{g} \widehat{H}\hat{g}^{-1}: (x,\widehat{g})	\in \Pi^j_1(\mathbf{H}) \text{  for some  } x \in \mathcal{C} \}.
\end{equation}
\subsubsection{Main proposition and the \texorpdfstring{$\Z$}{z}-Ax--Schanuel}\label{mainpropsec} Let $W$ be a special subvariety of $S_\Gamma$ of maximal dimension among special subvarieties, say associated to $(H,X_H, \Gamma_H \subset \mathbf{H}(\Oo_K))$.
\begin{prop}\label{mainpropwithas}
Let $m= \dim W$ be the maximum of the dimensions of the strict special subvarieties of $S_\Gamma$. The set $\Sigma^m=\Sigma(\mathbf{H})^m$ is finite.
\end{prop}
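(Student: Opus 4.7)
The plan is to follow the three-step strategy outlined in Section \ref{sketch}: show that $\Sigma^m$ is definable in an o-minimal structure, show that it is countable by means of the $\Z$-Ax--Schanuel theorem (Theorem \ref{zasthm}), and then combine these two properties to deduce finiteness.

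First I would verify definability. By Lemma \ref{623}, $\Pi_0(\mathbf{H})$ is definable in $\Ranexp$. The local dimension function $d_X(x,\hat{g})$ is definable because it computes the complex-analytic dimension at the definable point $\tilde{\psi}(x)$ of the intersection between the real analytic set $\tilde{\psi}(\mathcal{C})$ (definable since $\tilde{\psi}_{|\mathcal{C}}$ is holomorphic and $\mathcal{C}$ is the closure of a semialgebraic set) and the algebraic variety $\hat{g}\widehat{H}\hat{g}^{-1}\cdot \tilde{\psi}(x)$, which varies definably in $(x,\hat{g})$. Consequently $\Pi^m_1(\mathbf{H})$ is definable, and $\Sigma^m$ is its image under the definable map $(x,\hat{g}) \mapsto \hat{g}\widehat{H}\hat{g}^{-1}$ (where the target is the variety of conjugates of $\widehat{H}$, itself a quotient of $\widehat{G}$), so $\Sigma^m$ is a definable subset of this parameter space.

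Next I would establish countability via $\Z$-Ax--Schanuel. Given $(x_0,\hat{g}_0)\in \Pi^m_1(\mathbf{H})$, set $Y := \overline{\hat{g}_0\widehat{H}(\C)\hat{g}_0^{-1}\cdot \tilde{\psi}(x_0)}^{\Zar}\subset D^{\vee}$ and form the algebraic subvariety $\widehat{W} := (Y\cap D)\times S_\Gamma$ of $D\times S_\Gamma$. The irreducible component $\widehat{U}$ of $\widehat{W}\cap (D\times_{\widehat{\mathbf{G}}(\Z)\backslash D} S_\Gamma)$ passing through $(\tilde{\psi}(x_0),\pi_\Gamma(x_0))$ projects onto a subvariety of $S_\Gamma$ of dimension at least $m$. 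The crucial codimension inequality required by Theorem \ref{zasthm} reads
\begin{equation*}
\dim D - \dim D_{\widehat{H}} > \dim X - m,
\end{equation*}
and holds precisely because $\Gamma$ is non-arithmetic: by Theorem \ref{arithcrit} at least one factor $G_{\sigma_i}$ with $i>1$ is non-compact, and contributes a strictly positive codimension in the non-identity factors of $D$, matching and in fact exceeding the corresponding codimension of $X_H$ in $X$. Theorem \ref{zasthm} then forces the projection of $\widehat{U}$ to $S_\Gamma$ to lie inside a strict special subvariety $S'$; by the maximality of $m$, $S'$ has dimension exactly $m$ and coincides with that projection. By Theorem \ref{mainthm} and Corollary \ref{cor123}, $S'$ is $\Z$-special and the derived Mumford--Tate group of $\widehat{\mathbb{V}}_{|S'}$ is a $\Q$-subgroup $\widehat{\mathbf{H}}'$ of $\widehat{\mathbf{G}}$. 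Since $\tilde{\psi}(X_{H'})$ lies in both $D_{\widehat{H}'}$ and in $\hat{g}_0\widehat{H}\hat{g}_0^{-1}\cdot \tilde{\psi}(x_0)$, a dimension count (both orbits have dimension $d_H$, using that $H'$ is a real form of the rank-one group $\PU(1,m)$ with $K_{H'}\subseteq K$) forces $\hat{g}_0\widehat{H}\hat{g}_0^{-1} = \widehat{\mathbf{H}}'(\R)$. Hence every element of $\Sigma^m$ comes from a $\Q$-subgroup of $\widehat{\mathbf{G}}$, and since $\widehat{\mathbf{G}}$ has only countably many $\Q$-subgroups, $\Sigma^m$ is countable.

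The proof concludes by combining the two properties: a definable subset of an o-minimal structure has only finitely many connected components, so a countable definable set must be finite. The main obstacle will be the last two paragraphs of step two, namely verifying the codimension inequality in sufficient generality (carefully decomposing $D$ and $D_{\widehat{H}}$ via the real places of $K$ and exploiting the non-arithmeticity) and matching dimensions between $\hat{g}_0\widehat{H}\hat{g}_0^{-1}$ and the $\Q$-group $\widehat{\mathbf{H}}'$ produced by Ax--Schanuel. Everything else is either a routine o-minimality statement or a direct invocation of results already established in Sections \ref{comparisonsection} and \ref{proofofas}.
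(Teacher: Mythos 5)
Your proposal matches the paper's own proof in all essential respects: definability of $\Pi^m_1(\mathbf{H})$ and of $\Sigma^m$ as the image under the map to conjugates of $\widehat{H}$ (the paper uses $\widehat{G}/N_{\widehat{G}}(\widehat{H})$), countability by forming $\widehat{W}$ from the orbit $\hat{g}\widehat{H}\hat{g}^{-1}\cdot\tilde{\psi}(x)$, verifying atypicality via the place-by-place dimension count and the Mostow--Vinberg criterion (your inequality $\dim D-\dim D_{\widehat{H}}>\dim X-m$ is exactly the paper's condition $\sum_{i\geq 2}(d_i-d_{H_i})>0$), applying the $\Z$-Ax--Schanuel theorem together with the maximality of $m$ to conclude that $\hat{g}\widehat{H}\hat{g}^{-1}$ is defined over $\Q$, and finishing with the o-minimality finiteness argument. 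The only differences are cosmetic (Zariski closure in $D^\vee$ versus the paper's direct use of the orbit as an algebraic subvariety of $D$, and a slightly more explicit dimension-matching remark at the end), so this is essentially the paper's argument.
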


\begin{proof}[Proof of Proposition \ref{mainpropwithas}]
It is enough to prove that $\Sigma^m$ is definable and countable. First observe that, since the local dimension of a definable set at a point is a definable function, $\Pi^m_1(\mathbf{H})$ is a definable set. Consider the map
\begin{displaymath}
\Pi^m_1(\mathbf{H}) \to \widehat{G}/N_{\widehat{G}}(\widehat{H}), \ \ \ (x,\widehat{g}) \mapsto \widehat{g} \cdot N_{\widehat{G}}(\widehat{H}),
\end{displaymath}
where $N_{\widehat{G}}(\widehat{H})$ denotes the normaliser of $\widehat{H}$ in $\widehat{G}$. Such a map is definable, and therefore its image, which is in bijection with $\Sigma^m$, is definable.

To prove that $\Sigma^m$ is countable it is enough to show that each $\hat{g} \widehat{H}\hat{g}^{-1}$ is defined over the rational numbers. We show that such a $(x,\hat{g})\in \Sigma^m$ gives rise to a special subvariety of $S_\Gamma$, that is 
\begin{displaymath}
S_{x,\hat{g}}= \hat{g} \widehat{H}\hat{g}^{-1}. \tilde{\psi}(x)\cap \tilde{\psi}(X)
\end{displaymath}
is a special subvariety of $S_\Gamma$. For simplicity simply write $D= D_{\widehat{G}}$. Consider the algebraic subset of $S_\Gamma \times {D}^\vee$, where ${D}^\vee$ denotes the flag variety associated to $D$
\begin{displaymath}
\widehat{W}:= S_\Gamma \times \left( \hat{g} \widehat{H} \hat{g}^{-1}. \tilde{\psi}(x)\right).
\end{displaymath}
Let $\widehat{U}$ be a component at $\tilde{\psi}(x)$ of the intersection
\begin{displaymath}
\widehat{W} \cap S_\Gamma \times_{ \widehat{\mathbf{G}}(\Z)\backslash D} D,
\end{displaymath}
such that the projection of $\widehat{U}$ to $S_\Gamma$ contains $S_{x,\hat{g}}$. We claim that, if $\Gamma$ is non-arithmetic, $\widehat{U}$ is an unlikely/atypical intersection. That is
\begin{equation}\label{equ}
\codim_{S_\Gamma \times D}\widehat{U} <  \codim_{S_\Gamma \times D}\widehat{W}  + \codim_{S_\Gamma \times D}  \left(  S_\Gamma \times_{ \widehat{\mathbf{G}}(\Z)\backslash D} D \right).
\end{equation}
Assuming the claim, the $\Z$-Ax-Schanuel, which was introduced before as Theorem \ref{zasthm}, asserts that the projection of $\widehat{U}$ to $S_\Gamma$ is contained in a strict special subvariety $S'$ of $S_\Gamma$. We have
\begin{displaymath}
S_{x,\hat{g}}\subset S' \subsetneq S_\Gamma.
\end{displaymath}
By definition of $\Sigma^m$, $\dim S_{x,\hat{g}} \geq m$ and, since $m$ is the maximum of the dimensions of the strict special subvarieties of $S_\Gamma$, $\dim S' \leq m$. It follows that $S_{x,\hat{g}}= S' $. That is $S_{x,\hat{g}}$ is a special subvariety of $S_\Gamma$ and so $\hat{g} \widehat{H}\hat{g}^{-1}$ is defined over $\Q$. This concludes the proof, assuming equation \eqref{equ}.

To prove the validity of equation \eqref{equ}, for each embedding $\sigma_1, \sigma_2,\dots, \sigma_r : K \to \R$ (ordered in such a way that $\sigma_1$ is just the identity on $K$) we set 
\begin{displaymath}
d_i := \dim D_{G_{\sigma_i}},
\end{displaymath}
\begin{displaymath}
d_{H_i} := \dim D_{H_{\sigma_i}},
\end{displaymath}
for any $i=1, \dots, r$. Observe that each $d_{H_i}$ is smaller or equal than $d_i$. We have
\begin{displaymath}
\dim \widehat{W} = d_1 + \sum_{i=1}^r d_{H_i},
\end{displaymath}
and, thanks to the definition of the set $\Pi^m_1(\mathbf{H})$, we also have
\begin{displaymath}
\dim \widehat{U} = d_X(x,\hat{g}) \geq d_{H_1}.
\end{displaymath}
Putting everything together, we obtain that
\begin{equation}
\codim_{S_\Gamma \times D}\widehat{U} =  \codim_{S_\Gamma \times D}\widehat{W}  + \codim_{S_\Gamma \times D}  \left(  S_\Gamma \times_{ \widehat{\mathbf{G}}(\Z)\backslash D} D \right)
\end{equation}
if and only if
\begin{displaymath}
\codim \widehat{U} = \dim D - \dim D_{\widehat{H}} +\dim D \leq d_{1}+\dim D - d_{H_1}.
\end{displaymath}
Or simply if and only if, for all $i\neq 1$, $d_{i}=d_{H_i}=0$. This happens if and only if each $H_{\sigma_i}$ and $G_{\sigma_i}$ are compact groups. By the Mostow–Vinberg arithmeticity criterion, Theorem \ref{arithcrit}, it is equivalent to $\Gamma$ being an arithmetic lattice in $G$. This concludes the proof of Proposition \ref{mainpropwithas}.
\end{proof}

\subsubsection{End of the proof and induction}
We have all the tools to finally prove Theorem \ref{thmbader}, arguing by induction as in the proof of \cite[Theorem 4.1]{MR3177266}.
\begin{proof}[Proof of Theorem \ref{thmbader}]
Let $m$ be the maximum of the dimensions of the strict special subvarieties of $S_\Gamma$. For any $j\leq m$ consider the set
\begin{center}
$\mathcal{E}_j := \{$maximal special subvarieties of $S_\Gamma$ of dimension $j\}$.
\end{center}Up to conjugation by elements of $\widehat{G}$, there are only a finite number of semisimple subgroups of $\widehat{G}$. Therefore, Proposition \ref{mainpropwithas} implies that there are only a finite number of special subvarieties of $S_\Gamma$ of maximal dimension $m$. That is, the set $\mathcal{E}_m$ is finite. We argue by induction (downward) on the $j$s for which $\mathcal{E}_j\neq \emptyset$. Let $\mathbf{F}$ be a semisimple subgroup of $\mathbf{G}$ for which there exists a special subvariety associated to $(F,X_F,\Gamma_F \subset \mathbf{F}(K))$ lying in $\mathcal{E}_j$. Consider the set
\begin{displaymath}
\Sigma_2(\mathbf{F})^j:= \{\hat{g} \widehat{F}\hat{g}^{-1}: (x,\widehat{g})\in	\Sigma(\mathbf{F})^j, S_{x,\widehat{g}}\notin \mathcal{E}_{j'}, \forall j'>j\},
\end{displaymath}
where $\Sigma(\mathbf{F})^j$ is defined as in \eqref{sigmadef}. The inductive assumption implies that the second condition is definable (being each $\mathcal{E}_{j'}$ finite). The proof of Proposition \ref{mainpropwithas} shows that $\Sigma_2(\mathbf{F})^j$ is countable. Since, up to conjugation by elements of $\widehat{G}$, there are only a finite number of possibilities for $\mathbf{F}$, we proved that $\mathcal{E}_j$ is finite. This concludes the proof of Theorem \ref{thmbader}.
\end{proof}

\begin{rmk}\label{klingrmk}
Recently Klingler \cite{klingler2017hodge} has proposed a generalisation of the Zilber--Pink Conjecture for arbitrary irreducible smooth quasi-projective complex varieties supporting a $\Z$-VHS. It is interesting to notice that his conjecture for the pair $(S_{\Gamma}, \widehat{\mathbb{V}})$ implies Theorem \ref{thmbader}. Given an irreducible subvariety $W\subset S_\Gamma$, Klingler introduced a notion of \emph{Hodge codimension}\footnote{Namely the rank of the horizontal part of the Lie algebra of the generic Mumford--Tate group of $W$ minus the dimension of $\psi(W)$.} of $W$ and a corresponding notion of \emph{Hodge optimality}. Conjecture 1.9 in \emph{loc. cit.} asserts that there are only finitely many Hodge optimal subvarieties of $S_\Gamma$. Now \eqref{equ}, appearing in Section \ref{mainpropsec}, shows indeed that, if $\Gamma$ is non-arithmetic, then the maximal special subvarieties (of positive dimension) are Hodge optimal, and therefore Klingler's conjecture predicts Theorem \ref{thmbader}.
\end{rmk}

\subsection{Commensurability criterion for arithmeticity of complex hyperbolic lattice}\label{sectionmarguliscomm}
We conclude this chapter interpreting the commensurability criterion for arithmeticity of Margulis as an unlikely intersection phenomenon. We offer a new proof Theorem \ref{commcriterion}, in the complex hyperbolic case, following the strategy used in the proof of Theorem \ref{thmbader}, therefore using no equidistribution/superrigidity techniques.

Let $(G=\PU(1,n),X,\Gamma\subset \mathbf{G}(\Oo_K))$ be a real Shimura datum and $S_\Gamma$ be the associated Shimura variety.
\begin{defi}
A $\Gamma$-\emph{special correspondence} for $S_\Gamma$ is a $\Gamma$-special subvariety of $S_\Gamma \times S_{\Gamma}$ whose projections $\pi_1, \pi_2 : S_\Gamma \times S_\Gamma\to S_\Gamma$ are finite and surjective.
\end{defi}

\begin{thm}\label{marguliscomm}
Let $(G=\PU(1,n),X,\Gamma)$ be a real Shimura datum. The lattice $\Gamma$ is arithmetic if and only if $S_\Gamma$ admits infinitely many $\Gamma$-special correspondences.
\end{thm}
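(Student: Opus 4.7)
The ``only if'' direction is standard and classical: if $\Gamma$ is arithmetic, there is a $\Q$-algebraic group $\mathbf{G}'$ such that $\Gamma$ is commensurable with $p(\mathbf{G}'(\Z))$, so $\Comm(\Gamma)$ is dense in $G$. Each double coset $\Gamma g \Gamma$ with $g\in \Comm(\Gamma)\setminus \Gamma$ yields a Hecke correspondence $(\Gamma \cap g\Gamma g^{-1})\backslash X \to S_\Gamma \times S_\Gamma$, $x\mapsto (\pi(x),\pi(gx))$, which is a $\Gamma$-special correspondence in our sense; arithmeticity gives infinitely many such cosets and hence infinitely many correspondences. The real content lies in proving the converse, for which we adapt to the product situation the strategy used to prove Theorem~\ref{thmbader}.

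Assume $\Gamma$ is non-arithmetic; we shall show that $S_\Gamma$ has only finitely many $\Gamma$-special correspondences. Apply the results of Sections \ref{zhodgesection}--\ref{comparisonsection} to the real Shimura datum $(G\times G, X\times X, \Gamma\times \Gamma)$: the associated $\Z$-VHS on $S_\Gamma\times S_\Gamma$ is $\widehat{\mathbb{V}}\boxplus \widehat{\mathbb{V}}$, with generic Mumford--Tate group $\widehat{\mathbf{G}}\times \widehat{\mathbf{G}}$ and period map $\psi\times \psi \colon S_\Gamma\times S_\Gamma \to (\widehat{\mathbf{G}}(\Z)\backslash D_{\widehat{G}})^2$. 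By Theorem \ref{mainthm} (in its product form), every $\Gamma$-special correspondence is $\Z$-special, associated to a $\Q$-subgroup $\widehat{\mathbf{H}}\subset \widehat{\mathbf{G}}\times \widehat{\mathbf{G}}$ which is the Weil restriction from $K$ to $\Q$ of a $K$-subgroup $\mathbf{H}\subset \mathbf{G}\times \mathbf{G}$. Since the correspondence is finite and surjective over each factor, $H=\mathbf{H}(\R)$ projects surjectively with finite kernel onto each factor of $G\times G$, and because $G$ is simple this forces $H$ to be the graph of an isomorphism $G\to G$ (the only other possibility, $H=G\times G$, corresponds to the trivial correspondence $S_\Gamma\times S_\Gamma$). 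Galois equivariance then gives that each conjugate $H_\sigma=\mathbf{H}\times_{K,\sigma}\R$ is the graph of the corresponding isomorphism $G_\sigma\to G_\sigma$, so $\dim X_{H_\sigma}=\dim X_{G_\sigma}$ for every real embedding $\sigma$ of $K$.

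With these geometric inputs, the key computation is to check that each such correspondence is an unlikely intersection for the pair $(S_\Gamma\times S_\Gamma,\ \widehat{\mathbb{V}}\boxplus \widehat{\mathbb{V}})$, precisely as in the proof of Proposition \ref{mainpropwithas}. Writing $d_G=\dim_\C D_{\widehat{G}}$ and noting that the fibre product $S_\Gamma^2\times_{(\widehat{\mathbf{G}}(\Z)\backslash D_{\widehat{G}})^2} D_{\widehat{G}}^2$ has codimension $2d_G$ in $S_\Gamma^2\times D_{\widehat{G}}^2$, a direct count shows that a component $\widehat{U}$ projecting to the $n$-dimensional correspondence yields
\[
\codim \widehat{U} < \codim \widehat{W} + \codim\bigl(S_\Gamma^2\times_{(\widehat{\mathbf{G}}(\Z)\backslash D_{\widehat{G}})^2} D_{\widehat{G}}^2\bigr)
\]
if and only if $n+\dim D_{\widehat{H}} < 2 d_G$. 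Substituting $\dim X_{H_\sigma}=\dim X_{G_\sigma}$ and $\dim X_{G_{\id}}=n$, this inequality simplifies to $\sum_{\sigma\ne \id}\dim X_{G_\sigma}>0$, which by Theorem \ref{arithcrit} (Mostow--Vinberg) is equivalent to $\Gamma$ being non-arithmetic. Hence under our hypothesis every $\Gamma$-special correspondence is genuinely atypical.

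The final step is to run the parametrization argument of Section \ref{mainpropsec} for the product: define the definable set
\[
\Pi_0(\mathbf{H}) = \{(x,\hat g)\in \mathcal{C}\times (\widehat{G}\times \widehat{G}) : \operatorname{im}((\tilde\psi\times \tilde\psi)(x)) \subset \hat g \widehat{H}\hat g^{-1}\},
\]
where $\mathcal{C}$ is a compact meeting every special subvariety (by the analogue of Ullmo's lemma for products), and the associated set $\Sigma^n$ of conjugates $\hat g\widehat{H}\hat g^{-1}$ such that $(\tilde\psi\times \tilde\psi)(X\times X)$ meets $\hat g\widehat{H}\hat g^{-1}\cdot (\tilde\psi\times \tilde\psi)(x)$ in dimension $\geq n$. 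Definability of $\Sigma^n$ follows as in Lemma \ref{623}; countability follows because the $\Z$-Ax--Schanuel (Theorem \ref{zasthm}) applied to the product, combined with the atypicality computation above, forces each $\hat g\widehat{H}\hat g^{-1}$ in $\Sigma^n$ to be a $\Q$-subgroup of $\widehat{\mathbf{G}}\times \widehat{\mathbf{G}}$. A definable countable set in an o-minimal structure is finite, so $\Sigma^n$ is finite, concluding the proof. The main obstacle in this plan is the bookkeeping of the codimension inequality in the product setting and the verification that $H_\sigma$ remains a graph of isomorphism under Galois conjugation; both are resolved by the simplicity of $G$ and the $K$-rationality of $\mathbf{H}$.
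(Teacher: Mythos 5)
Your proposal is correct and follows essentially the same route as the paper: the "only if" direction via Hecke correspondences from the commensurator, and the converse by viewing $\Gamma$-special correspondences as $\Z$-special subvarieties of $S_\Gamma\times S_\Gamma$ via $\psi\times\psi$, checking they are atypical intersections exactly when $\Gamma$ is non-arithmetic (Mostow--Vinberg), and deducing finiteness from the $\Z$-Ax--Schanuel theorem plus definability and countability in an o-minimal structure. The only cosmetic difference is your parametrization by $(\widehat{G}\times\widehat{G})$-conjugates of a fixed graph subgroup, where the paper uses the equivalent family of twisted diagonals $\widehat{\Delta}_{\hat{g}}$ indexed by $\hat{g}\in\widehat{G}$ and anchors the definable set at points $(x,g_1.x)$ with both coordinates in the compact $\mathcal{C}$.
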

\begin{rmk}
If $n=1$, the axiom RSD0 of real Shimura datum implies that $\Gamma$ is arithmetic and therefore that $S_\Gamma$ admits infinitely many $\Gamma$-special correspondences. The proof of Theorem \ref{marguliscomm} carries on assuming only that $\Gamma$ is (cohomologically\footnote{Over a one dimensional base, all rigid systems are known to be cohomologically rigid \cite[Theorem 1.1.2, Corollary 1.2.4]{MR1366651}.}) rigid. However we are not able to recover Theorem \ref{commcriterion} for \emph{all} lattices in $\PU(1,1)$.
\end{rmk}

The diagonal embedding $\Delta: S_\Gamma\to S_{\Gamma}\times S_{\Gamma}$ realises $S_\Gamma$ as a $\Gamma$-special correspondence associated to the real sub-Shimura datum
\begin{displaymath}
(\Delta(G),\Delta(X),\Delta(\Gamma))\subset (G\times G, X\times X, \Gamma \times \Gamma).
\end{displaymath}
Notice also that $\Delta( S_\Gamma)$ is a strict maximal $\Gamma$-special subvariety of $S_{\Gamma}\times S_{\Gamma}$. Given $g\in G$, we consider the maps
\begin{displaymath}
i_g: G \to G\times G, \ \ f \mapsto (f, gfg^{-1}),
\end{displaymath}
\begin{displaymath}
i_{g}: X \to X\times X, \ \ x \mapsto(x, g.x).
\end{displaymath}
Let $\pi: X \to S_\Gamma$ be the quotient map. The subvariety $\pi\times \pi(i_g(X))\subset S_{\Gamma}\times S_{\Gamma}$ is a $\Gamma$-special subvariety, in the sense of Definition \ref{defsubshimura}, if and only if $g$ lies in $\Comm(\Gamma)$. In this case $\pi\times \pi(i_g(X))$ is indeed the image of
\begin{displaymath}
i_g : S_{\Gamma \cap i_g (\Gamma )}\to S_{\Gamma}\times S_{\Gamma}.
\end{displaymath}
If $\Gamma$ is arithmetic then it has infinite index in its commensurator, and therefore, in the proof of Theorem \ref{marguliscomm} we may assume that $\Gamma$ is non-arithmetic.

Using the map
\begin{displaymath}
\psi \times \psi : {S_\Gamma}^{\an}\times{S_\Gamma}^{\an} \to \widehat{\mathbf{G}}(\Z) \backslash D_{\widehat{G}}\times  \widehat{\mathbf{G}}(\Z) \backslash D_{\widehat{G}},
\end{displaymath}
we have also a definition of $\Z$-special correspondences and a similar description.

\begin{defi}
A $\Z$-\emph{special correspondence} for $S_\Gamma$ is a $\Z$-special subvariety of $S_\Gamma \times S_{\Gamma}$ whose projections $\pi_i : S_\Gamma \times S_\Gamma\to S_\Gamma$ are finite and surjective.
\end{defi}

The arguments appearing in the proof of Theorem \ref{mainthm}, namely in Section \ref{proofmainthm}, show also the following.
\begin{prop}
The $\Gamma$-special correspondences are precisely the $\Z$-special ones.
\end{prop}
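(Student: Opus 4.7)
The plan is to apply verbatim the machinery developed for Theorem \ref{mainthm} to the product real Shimura datum $(G \times G,\, X \times X,\, \Gamma \times \Gamma)$. First I would observe that $S_\Gamma \times S_\Gamma$ carries the external direct sum $\Z$-VHS $\widehat{\mathbb{V}} \boxplus \widehat{\mathbb{V}}$, whose generic Mumford--Tate group, by Corollary \ref{gammadense} applied to each factor combined with Goursat-style arguments (as in the proof of Theorem \ref{thmmonodromy}) applied to the diagonal action, is $\widehat{\mathbf{G}} \times \widehat{\mathbf{G}}$. The associated period map is $\psi \times \psi$, targeted in $\widehat{\mathbf{G}}(\Z)\backslash D_{\widehat{G}} \times \widehat{\mathbf{G}}(\Z)\backslash D_{\widehat{G}}$, and the lift is $\tilde\psi \times \tilde\psi$.

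For the implication $\Gamma$-special $\Rightarrow$ $\Z$-special, let $C \subset S_\Gamma \times S_\Gamma$ be a $\Gamma$-special correspondence associated to a real sub-Shimura datum $(H, X_H, \Gamma_H)$ of $(G\times G, X\times X, \Gamma \times \Gamma)$ with $\Gamma_H \subset \mathbf{H}(\Oo_K)$ for the appropriate $K$-form $\mathbf{H} \subset \mathbf{G} \times \mathbf{G}$. The argument of Proposition \ref{Zspecial} applies identically: the Mumford--Tate group of the restriction of $\widehat{\mathbb{V}} \boxplus \widehat{\mathbb{V}}$ to $C$ is the Weil restriction $\widehat{\mathbf{H}}$ from $K$ to $\Q$ of $\mathbf{H}$, and $C$ is an irreducible component of $(\psi \times \psi)^{-1}(\widehat{\mathbf{H}}(\Z)\backslash D_{\widehat{H}})$, using that $(\tilde\psi \times \tilde\psi)^{-1}(D_{\widehat{H}}) = X_H$ and a dimension count.

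For the converse $\Z$-special $\Rightarrow$ $\Gamma$-special, let $C$ be a $\Z$-special correspondence with generic Mumford--Tate group $\mathbf{M}$. Corollary \ref{cor123}, applied to the product $\Z$-VHS (whose validity in this setting rests on Corollary \ref{gammadense} above and on the rank-one centralizer argument applied factor by factor), identifies $\mathbf{M}$ with the Weil restriction from some $K_C \subset K$ to $\Q$ of a $K$-subgroup $\mathbf{F}$ of $\mathbf{G} \times \mathbf{G}$. Writing $F = \mathbf{F}(\R)^+$ and choosing a Hodge-generic point $c$, the analytic component $X_F := F \cdot c \subset X \times X$ satisfies $C = \pi(X_F)$ where $\pi$ is the quotient by $\Gamma \times \Gamma$, exactly as in the proof of Proposition \ref{mainthmproof}. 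It remains to check that $\Gamma_F := (\Gamma \times \Gamma) \cap F$ is a lattice in $F$, which follows from Lemma \ref{finallemma} in the product setting: $X \times X = \mathbb{B}^n \times \mathbb{B}^n$ is still a bounded domain in $\C^{2n}$, Griffiths' finiteness of Kobayashi--Eisenman volume applies, and on a product of balls the Kobayashi--Eisenman volume is comparable to the product K\"ahler volume, giving the required finite-covolume property. The finiteness and surjectivity of the projections $C \to S_\Gamma$ pass through this correspondence between the two characterisations, since they are equivalent to the projections $\mathbf{F} \to \mathbf{G}$ being surjective with finite kernel, a condition independent of the chosen special structure.

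The only step requiring mild care is the dimension/codimension bookkeeping needed to apply Lemma \ref{finallemma} and Corollary \ref{cor123} to the product: one must check that the centraliser in $\mathbf{G} \times \mathbf{G}$ of a subgroup arising from a correspondence is again finite, so that $\widehat{\mathbf{F}} = \leftidx{^\Z}{\mathbf{M}}_C = \MT(c)^{\der}$. This is automatic once the projections $\mathbf{F} \to \mathbf{G}$ are known to be surjective (which is forced by the surjectivity of the projections of the correspondence onto each factor of $S_\Gamma$), since then $\mathbf{F}$ is semisimple with no compact factors in each coordinate and $G$ has rank one.
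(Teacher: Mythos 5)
Your proposal is correct and follows essentially the same route as the paper, which simply observes that the arguments of Section \ref{proofmainthm} (Propositions \ref{Zspecial} and \ref{mainthmproof}, together with Corollary \ref{cor123} and Lemma \ref{finallemma}) carry over verbatim to the product datum $(G\times G, X\times X,\Gamma\times\Gamma)$; your explicit treatment of the finite-centraliser point via surjectivity of the projections is exactly the needed adaptation. The only cosmetic remark is that Lemma \ref{finallemma} is best applied directly to $X_F$ itself, which for a correspondence is the graph of an automorphism of $\mathbb{B}^n$ and hence already a bounded ball, so no comparison of Kobayashi--Eisenman and K\"ahler volumes on the product is needed.
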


From now on we just speak of \emph{special} correspondences for $S_\Gamma$. We are ready to prove Theorem \ref{marguliscomm}.
\subsubsection{Proof of Theorem \ref{marguliscomm}}
As in Section \ref{sectionwithproof}, let $\mathcal{C}\subset \F \subset X$ be a definable compact set such that $X_H\cap \mathcal{C}\neq \emptyset$, for any real sub-Shimura datum $(H,X_H, \Gamma_H)\subset (G,X,\Gamma)$. Given $\hat{g} \in \widehat{G}$, we write
\begin{displaymath}
\hat{g}=(g_1,\dots, g_r)\in \prod_{i=1,\dots, r} G_{\sigma_i},
\end{displaymath}
where $r=[K:\Q]$ and $\sigma_i: K \to \R$ are ordered in such a way that $\sigma_1$ induces the identity on $K$. Consider the map
\begin{displaymath}
i_{\hat{g}}:\widehat{G}\to \widehat{G}\times \widehat{G}, \ \ \ \hat{f}\mapsto (\hat{f},\hat{g} \hat{f} \hat{g}^{-1}), 
\end{displaymath}
and write $\widehat{\Delta}_{\hat{g}}$ for its image. Consider the set
\begin{equation}
\Pi_0:=\{(x,\hat{g}) \in \mathcal{C} \times \widehat{G}: g_1.x\in \mathcal{C},  \im ((\tilde{\psi}\times \tilde{\psi})(x,g_1.x): \DT \to \widehat{G}\times \widehat{G})\subset \widehat{\Delta}_{\hat{g}} \}.
\end{equation}
By Lemma \ref{623}, $\Pi_0$ is a definable set. Associated to $(x,\hat{g}) \in \Pi_0$ there is
\begin{displaymath}
C_{x,\hat{g}}:= \psi^{-1}\times \psi^{-1}\left(\pi \times \pi(\widehat{\Delta}_{\hat{g}}. (\tilde{\psi}(x),\tilde{\psi}(g_1 . x))\right)^0 \subset {S_\Gamma}^{\an}\times {S_\Gamma}^{\an},
\end{displaymath}
that could happen to be a special correspondence for $S_\Gamma$, or not. Moreover any special correspondence for $S_\Gamma$ arises as a $C_{x,\hat{g}}$, for some $(x,\hat{g}) \in \Pi_0$. Define the local dimension
\begin{displaymath}
c(x,\hat{g}):=\dim_{(\tilde{\psi}(x),\tilde{\psi}(g_1 .x))} \left( \widehat{\Delta}_{\hat{g}}. (\tilde{\psi}(x),\tilde{\psi}(g_1.x))\cap \tilde{\psi}\times\tilde{\psi} (X \times X)\right).
\end{displaymath}

The set 
\begin{displaymath}
\Sigma := \{\widehat{\Delta}_{\hat{g}}: (x,\widehat{g})	\in \Pi_0 \text{  for some  } x \in \mathcal{C}, c(x,\hat{g}) =n\}
\end{displaymath}
is definable. To show that $\Sigma$ is countable, we argue as in the proof of Proposition \ref{mainpropwithas}. More precisely we use $\Z$-AS, Theorem \ref{zasthm}, to prove that $\widehat{\Delta}_{\hat{g}}$ comes from a $\Q$-subgroup of $\widehat{\mathbf{G}}\times\widehat{\mathbf{G}}$.

Assume that $\Gamma$ is non-arithmetic and consider the algebraic subset of $S_\Gamma \times S_{\Gamma} \times  D_{\widehat{G}}\times D_{\widehat{G}}$
\begin{displaymath}
\widehat{W}:= S_\Gamma\times S_\Gamma\times \left( \widehat{\Delta}_{\hat{g}}.(\tilde{\psi}(x),\tilde{\psi}(g_1.x) )\right).
\end{displaymath}
Let $\widehat{U}$ be a component at $(\tilde{\psi}(x),\tilde{\psi}(g_1. x)) $ of the intersection
\begin{displaymath}
\widehat{W} \cap \left( S_\Gamma \times S_\Gamma \times_{ \widehat{\mathbf{G}}(\Z)\backslash D \times \widehat{\mathbf{G}}(\Z)\backslash D} D_{\widehat{G}} \times D_{\widehat{G}}\right),
\end{displaymath}
such that the projection of $\widehat{U}$ to $S_\Gamma \times S_\Gamma$ contains $C_{x,\hat{g}}$. A direct computation shows that, if $\Gamma$ is non-arithmetic, such an intersection is unlikely and the $\Z$-Ax-Schanuel, namely Theorem \ref{zasthm}, implies that $C_{x,\hat{g}}$, for $(x,\hat{g})\in \Sigma$, is a special correspondence of $S_\Gamma$. It follows that $\widehat{\Delta}_{\hat{g}}$ is defined over the rational numbers. We proved that $\Sigma$ is definable and countable, therefore finite. That is, if $\Gamma$ is non-arithmetic, then $S_\Gamma$ contains only finitely many special correspondences.

\section{Special points and their Zariski closure}\label{pointsection}
Let $(G=\PU(1,n),X,\Gamma \subset \mathbf{G}(\Oo_K))$ be a real Shimura datum and $S_\Gamma=\Gamma \backslash X$ be the associated Shimura variety. We end the paper presenting several notions of \emph{special point} of $S_\Gamma$, inspired by the distribution of special points of arithmetic Shimura varieties. We propose four different definitions of special points and we formulate four André--Oort like conjectures relating special points to special subvarieties in the sense of Definition \ref{defispecial} (that were studied in details in the previous sections). For an introduction to the André--Oort conjecture for (arithmetic) Shimura varieties, we refer for example to \cite{MR3821177}.

\subsection{\texorpdfstring{$\Gamma$}{gamma}-special points}
We first introduce the notion of zero dimensional $\Gamma$-\emph{special subvariety} of $S_\Gamma$. Here we denote by $K$ the totally real field associated to $\Gamma$, and $\mathbf{G}$ the $K$-form of $G$, given by Theorem \ref{strategy1}.
\begin{defi}\label{gammasppoint}
A point $x \in X$ is \emph{pre}-$\Gamma$-\emph{special} if the smallest $K$-subgroup of $\mathbf{G}$ whose extension to the real numbers contains the image of 
\begin{displaymath}
x: \DT \to \mathbf{G}_{\R}
\end{displaymath}
is commutative. A point $s\in S_\Gamma$ is $\Gamma$-\emph{special} if it is the image along $\pi: X \to S_\Gamma$ of a pre-$\Gamma$-special point $x\in X$.
\end{defi}
The following can be proven as in the classical case of arithmetic Shimura varieties. From now on, we fix an algebraic closure of the field of rational numbers, denoted by $\Qbar$.
\begin{prop}
Every $\Gamma$-special subvariety contains an analytically dense set of $\Gamma$-special points. Pre-$\Gamma$-special point are defined over $\Qbar$, with respect to the $\Qbar$-structure given by looking at $X$ inside its associated flag variety $X^\vee$.
\end{prop}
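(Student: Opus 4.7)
The plan is to adapt the classical argument for Shimura varieties, as presented by Deligne, to our real setting. For the first assertion, fix a $\Gamma$-special subvariety $W\subset S_\Gamma$ arising from a real sub-Shimura datum $(H,X_H,\Gamma_H)$ with $\Gamma_H\subset \mathbf{H}(\Oo_K)$, where $\mathbf{H}\subset \mathbf{G}$ is the $K$-form of $H$ produced by Theorem \ref{strategy1}. Since the projection $X_H\to W$ is open for the analytic topology, it suffices to exhibit an analytically dense subset of pre-$\Gamma$-special points in $X_H$.

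First I would construct one such point. Standard structure theory of reductive $K$-groups yields a maximal $K$-torus $\mathbf{T}_0\subset\mathbf{H}$ whose base change via the distinguished embedding $\sigma_1\colon K\hookrightarrow\R$ is a compact torus contained in a maximal compact subgroup $K_H\subset H$; after conjugating in $H$, I may assume the basepoint $x_0\in X_H$, viewed as a morphism $x_0\colon\DT\to H$, factors through $\mathbf{T}_0(\R)$, so that the smallest $K$-subgroup of $\mathbf{G}$ whose real points contain $x_0(\DT)$ is contained in $\mathbf{T}_0$ and is therefore commutative. Hence $x_0$ is pre-$\Gamma$-special. Next, the set of pre-$\Gamma$-special points in $X_H$ is stable under $\mathbf{H}(K)$: for any $g\in \mathbf{H}(K)$, the point $g\cdot x_0$ factors through the $K$-torus $g\mathbf{T}_0 g^{-1}$. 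Density of the orbit $\mathbf{H}(K)\cdot x_0$ in $X_H$ then follows from weak approximation for the connected semisimple $K$-group $\mathbf{H}$ at the single archimedean place $\sigma_1$, which asserts that $\mathbf{H}(K)$ is dense in $\mathbf{H}(K_{\sigma_1})=H$ for the real topology; projecting to $W=\Gamma_H\backslash X_H$ yields the required dense set of $\Gamma$-special points.

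For the second statement, the compact dual $X^\vee$ is a flag variety of the form $\mathbf{G}_\C/P$ for a suitable parabolic $P\subset \mathbf{G}_\C$, and since $\mathbf{G}$ is defined over $K$ by Theorem \ref{strategy1}, $X^\vee$ inherits a natural $K$-structure compatible with the open immersion $X\hookrightarrow X^\vee$; a fortiori it is defined over $\Qbar$. If $x\in X$ is pre-$\Gamma$-special, then by definition $x\colon\DT\to\mathbf{G}_\R$ factors through $\mathbf{T}_\R$ for some $K$-torus $\mathbf{T}\subset\mathbf{G}$. The image of $x$ in $X^\vee$ is determined by the associated Hodge cocharacter $\mu_x\colon\Gm\hookrightarrow\DT_\C\xrightarrow{x_\C}\mathbf{T}_\C$; since every $K$-torus splits over a finite Galois extension $K'/K$, the cocharacter $\mu_x$ is defined over $K'$, the parabolic $P_x\subset\mathbf{G}_{K'}$ stabilizing the corresponding filtration is $K'$-rational, and thus $x$ corresponds to a $K'$-point of $\mathbf{G}/P_x$, hence to a $\Qbar$-point of $X^\vee$.

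The main obstacle is likely to be the weak approximation step in full generality: one may need to pass to the simply connected cover of $\mathbf{H}$, apply weak approximation there, and descend via the central isogeny, verifying that the lifted orbit still consists of pre-special points (which is harmless since conjugation by a lift of $g$ still carries $\mathbf{T}_0$ to a $K$-torus). Everything else reduces to routine manipulations with cocharacters, parabolics, and flag varieties.
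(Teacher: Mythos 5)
Your proof is correct and follows essentially the same route as the paper: produce one pre-$\Gamma$-special point in $X_H$ by conjugating $x$ into a maximal $K$-torus of $\mathbf{H}$ that is compact at the identity embedding, spread it out by the $\mathbf{H}(K)$-orbit using real (weak) approximation, and get $\Qbar$-rationality from the Hodge cocharacter factoring through a torus defined over a number field. The only difference is cosmetic: where you invoke as standard the existence of a maximal $K$-torus anisotropic at $\sigma_1$, the paper proves it directly, following Deligne, by perturbing a regular element $\lambda$ of $\Lie(T)$ to some $\lambda'\in\mathbf{H}(K)$ and taking its centraliser.
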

\begin{proof}
Let $S_{\Gamma_H}$ be a $\Gamma$-special subvariety associated to $(H,X_H, \Gamma_H)$. First notice that if one $\Gamma$-special point exists in $S_{\Gamma_H}$, then there is a dense set of $\Gamma$-special points. Indeed each point in $\pi(\mathbf{H}(K).x) \subset S_{\Gamma_H}$ is again $\Gamma$-special and the set $\mathbf{H}(K).x$ is dense by the usual approximation property.

For the existence of a $\Gamma$-special point we argue as in \cite[Section 5]{delignetravaux}. Let $x\in X_H$ and $T\subset G$ be a maximal torus containing the image of
\begin{displaymath}
x: \DT \to H.
\end{displaymath}
Write $T$ as the centraliser of some regular element $\lambda$ of the Lie algebra of $T$. Choose $\lambda' \in \mathbf{H}(K)$ sufficiently close to $\lambda$. It is still a regular element and its centralizer $\mathbf{T}'$ in $\mathbf{G}$ is a maximal torus in $\mathbf{G}$. Since there are only finitely many conjugacy classes of maximal real tori in $G$, we may assume that $T'$ and $T$ are conjugated by some $h\in H$. The point $\pi(hx)$ is a $\Gamma$-special point in $\pi(X_H)$.

Finally if $x\in X$ is pre-$\Gamma$-special, we can factorise its associated Hodge cocharacter as 
\begin{displaymath}
h_x: \C^* \to (\mathbf{T}_x )_{\C} ,
\end{displaymath}
where $\mathbf{T}_x$ is a maximal $K$-torus. The above factorisation has to hold even over $\Qbar$. Therefore the point associated to $x$ in the flag variety is fixed by $\mathbf{T}_x (\Qbar)$. This is enough to conclude that $x$ is defined over $\Qbar$. For a complete discussion, we refer to \cite[Proposition 3.7]{MR2825237}.
\end{proof}

As in the case of arithmetic Shimura varieties, the $\Gamma$-Ax-Schanuel proven in Section \ref{proofofas}, or more precisely Corollary \ref{gammaalw}, could be helpful in proving the following.
\begin{conj}[$\Gamma$-André--Oort]
An irreducible subvariety $W\subset S_\Gamma$ is $\Gamma$-special if it contains a Zariski dense set of $\Gamma$-special points.
\end{conj}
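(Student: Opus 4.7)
The plan is to argue by induction on $\dim S_\Gamma$. The case $\dim W = 0$ is trivial: $W$ is then a single point, which is $\Gamma$-special by the density hypothesis. When $\Gamma$ is arithmetic, $S_\Gamma$ coincides (up to a compact factor and finite covers) with a classical Shimura variety; via Weil restriction from $K$ to $\Q$, $K$-tori of $\mathbf{G}$ correspond to $\Q$-tori of $\Res_{K/\Q}\mathbf{G}$, so a $\Gamma$-special point is exactly a classical special point. The statement then reduces to the classical André--Oort conjecture, which is now a theorem by the work of Pila, Shankar, Tsimerman and collaborators.

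Suppose $\Gamma$ is non-arithmetic and $\dim W \geq 1$. Theorem \ref{thmbader} yields finitely many maximal strict $\Gamma$-special subvarieties $S_1,\dots,S_k$ of $S_\Gamma$. In particular, the set $F$ of \emph{isolated} $\Gamma$-special points (those that are themselves maximal $\Gamma$-special subvarieties of $S_\Gamma$) is finite, and every other $\Gamma$-special point lies in some $S_i$. The $\Gamma$-special points of $W$ therefore all lie in the closed subset $F \cup \bigcup_i S_i$ of $S_\Gamma$. Since they are Zariski dense in the irreducible $W$ of positive dimension, $W$ cannot be contained in the finite set $F$; hence $W \subseteq S_{i_0}$ for some index $i_0$. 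A $\Gamma$-special point of $S_\Gamma$ lying in $S_{i_0} = S_{\Gamma_{H_{i_0}}}$ is automatically a $\Gamma_{H_{i_0}}$-special point of $S_{i_0}$: its Hodge cocharacter factors through a commutative $K$-subgroup $\mathbf{T}$ of $\mathbf{G}$ and simultaneously through $\mathbf{H}_{i_0}$, hence through the commutative $K$-subgroup $\mathbf{T}\cap\mathbf{H}_{i_0}$ of $\mathbf{H}_{i_0}$. We apply the inductive hypothesis to $(H_{i_0},X_{H_{i_0}},\Gamma_{H_{i_0}})$ and $W\subset S_{i_0}$; since $\dim S_{i_0}<\dim S_\Gamma$ the induction terminates, and $\Gamma_{H_{i_0}}$-special subvarieties of $S_{i_0}$ lift to $\Gamma$-special subvarieties of $S_\Gamma$ by transitivity of sub-Shimura data (Definition \ref{defsubshimura}).

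The main obstacle is the reliance on the classical André--Oort conjecture as the terminating step of the induction: the finiteness of Theorem \ref{thmbader} cleanly strips off the non-arithmetic layers, but the arithmetic base is itself a very deep input. A secondary technical point is to verify that the inductive hypothesis applies to $(H_{i_0},X_{H_{i_0}},\Gamma_{H_{i_0}})$ as a bona fide real Shimura datum, which requires axiom RSD0 on any $\Gl_2$-irreducible factor of $H_{i_0}$; this is automatic when $H_{i_0}$ has no $\PU(1,1)$ factor, and otherwise reduces to the one-dimensional non-arithmetic setting, where the assertion is controlled by a finite set of isolated points and needs a separate (straightforward) treatment.
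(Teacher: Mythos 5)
First, note that the statement you are proving is stated in the paper as a \emph{conjecture} ($\Gamma$-André--Oort): the paper offers no proof of it, and in fact only remarks that the functional transcendence results (Corollary \ref{gammaalw}) ``could be helpful'' towards it. So your proposal is an attempt at an open problem, and it contains a fatal gap in the non-arithmetic step. Your key claim is that, by Theorem \ref{thmbader}, all but finitely many $\Gamma$-special points lie on the finitely many maximal strict totally geodesic subvarieties $S_1,\dots,S_k$. This is false. Theorem \ref{thmbader} is a statement about (positive-dimensional) maximal totally geodesic subvarieties and says nothing about $\Gamma$-special points; indeed, by the first Proposition of Section \ref{pointsection}, every $\Gamma$-special subvariety --- in particular $S_\Gamma$ itself --- contains an \emph{analytically dense} set of $\Gamma$-special points (they come from the density of $\mathbf{H}(K)$-orbits and the abundance of maximal $K$-tori), and this holds whether or not $\Gamma$ is arithmetic. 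Hence the set you call $F$ is infinite (Zariski dense, even), the inclusion of the $\Gamma$-special locus into $F\cup\bigcup_i S_i$ fails, and for a Hodge generic $W$ --- which is exactly the essential case of the conjecture --- your induction never gets off the ground.

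The deeper reason the argument cannot be repaired along these lines is that the unlikely-intersection machinery of the paper is blind to points: the $\Z$-Ax--Schanuel theorem (Theorem \ref{zasthm}) explicitly excludes zero-dimensional projections, and the finiteness proof of Section \ref{sectionfinit} uses $d_X(x,\hat g)\geq 1$ to recognise the relevant real groups as $\Q$-groups. Controlling the Zariski closure of a set of special \emph{points} requires an additional arithmetic input of Pila--Zannier type (point counting together with lower bounds for Galois orbits of $\Gamma$-special points), and no such ingredient is available in the non-arithmetic setting --- this is precisely why the statement is left as a conjecture. Your reduction of the arithmetic case to classical André--Oort is essentially sound (one must check that $\Gamma$-special points coincide with classical special points for the $\Q$-group $\widehat{\mathbf{G}}$, which works because the components of the Shimura cocharacter at the compact places are trivial for the adjoint group, so the Mumford--Tate group is commutative if and only if the smallest $K$-subgroup through which $x:\DT\to\mathbf{G}_\R$ factors is commutative), but this base case cannot be reached from the non-arithmetic layer by your argument.
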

\subsection{\texorpdfstring{$\Z$}{z}-special points}
Another natural possibility is to look at zero dimensional intersections between $\psi ({S_\Gamma}^{\an})$ and sub-domains of the Mumford--Tate domain $\widehat{\mathbf{G}}(\Z) \backslash D_{\widehat{G}}$ constructed in Theorem \ref{strategy1}, see also Remark \ref{klingrmk}. That is the zero dimensional components of the Hodge locus of $(S,\widehat{\mathbb{V}})$, where $\widehat{\mathbb{V}}$ is the $\Z$-VHS introduced in Theorem \ref{strategy1}.
\begin{defi}
A point $s\in S_\Gamma$ is $\Z$-\emph{special} if it is a zero dimensional $\Z$-special subvariety. 
\end{defi}

A first relation between $\Z$-special point and $\Gamma$-special is given by the following.
\begin{prop}\label{propspepoints}
Let $s=\pi(x)\in S_\Gamma$ be a $\Z$-special point, then $s$ is $\Gamma$-special (in the sense of Definition \ref{gammasppoint}).
\end{prop}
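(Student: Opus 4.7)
The plan is to specialise the argument of Proposition \ref{mainthmproof} to zero-dimensional subvarieties. Let $\pi(x) \in S_\Gamma$ be a $\Z$-special point, so that $\{\pi(x)\}$ is itself a zero-dimensional $\Z$-special subvariety. Write $\mathbf{M} := \MT_{\widehat{\mathbb{V}}}(\tilde{\psi}(x)) \subset \widehat{\mathbf{G}}$ for the Mumford--Tate group at $\tilde{\psi}(x)$, and $D_\mathbf{M} := M \cdot \tilde{\psi}(x) \subset D_{\widehat{G}}$ for the associated minimal Mumford--Tate sub-domain. Any Mumford--Tate sub-domain through $\tilde{\psi}(x)$ must contain $D_\mathbf{M}$, so the $\Z$-speciality of $\pi(x)$ is equivalent to $\tilde{\psi}^{-1}(D_\mathbf{M})$ being reduced to $\{x\}$ in a neighbourhood of $x$.

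Set $\mathbf{F} := \pr_1(\mathbf{M}) \subset \mathbf{G}$. As at the beginning of Section \ref{monodromymtsection}, $\mathbf{F}$ is a $K$-subgroup of $\mathbf{G}$ containing $x(\DT)$ in its $\R$-points (indeed $\pr_1 \circ \tilde{\psi}(x) = x$), and $\mathbf{M}$ is a $\Q$-subgroup of the Weil restriction $\widehat{\mathbf{F}} = \Res^K_\Q \mathbf{F}$ with $\pr_{\sigma_i}(\mathbf{M}) = \mathbf{F}^{\sigma_i}$ for every archimedean place $\sigma_i$ of $K$. I would then run the Goursat-type argument of Theorem \ref{thmmonodromy} to upgrade this to the equality $\mathbf{M} = \widehat{\mathbf{F}}$. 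Invoking the $\mathbf{G}(K)$-equivariance of $\tilde{\psi}$, for every $f \in \mathbf{F}(K)$ we obtain $\tilde{\psi}(f \cdot x) = \widehat{f} \cdot \tilde{\psi}(x) \in \mathbf{M}(\Q) \cdot \tilde{\psi}(x) \subset D_\mathbf{M}$, hence $\mathbf{F}(K) \cdot x \subseteq \tilde{\psi}^{-1}(D_\mathbf{M})$.

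Since $\mathbf{F}(K)$ is dense in $F := \mathbf{F}(\R)$ in the real topology (weak approximation) and $\tilde{\psi}^{-1}(D_\mathbf{M})$ is a closed analytic subset in a neighbourhood of $x$, I would pass to the closure to deduce $F \cdot x \subseteq \tilde{\psi}^{-1}(D_\mathbf{M}) = \{x\}$ locally. Thus $F$ fixes $x$, so $F \subseteq \Stab_G(x)$ is compact. To conclude that $\mathbf{F}$ is commutative, which is precisely the pre-$\Gamma$-special condition on $x$, I would observe that $\mathbf{F}$ is by construction the $K$-Zariski closure of $x(\DT)$ inside the $K$-algebraic stabiliser of $x$ in $\mathbf{G}$. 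In the rank-one case $G = \PU(1,n)$, the stabiliser of $x$ is the compact Levi-like subgroup $P(U(1) \times U(n))$, and a short structural argument (in the spirit of Lemma \ref{finallemma}, exploiting that $\mathbf{F}$ is generated over $K$ by the single $2$-dimensional commutative image $x(\DT)$) shows $\mathbf{F}$ is necessarily a $K$-torus.

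The step I expect to be the main obstacle is the Goursat equality $\mathbf{M} = \widehat{\mathbf{F}}$: Theorem \ref{thmmonodromy} is proved under a semisimplicity hypothesis which is not automatic here, and one has to replace it by the minimality of $\mathbf{F}$ as the $K$-Mumford--Tate of $x$. A secondary subtlety is the final step ensuring that compactness of $F$ plus minimality as the $K$-closure of $x(\DT)$ forces $\mathbf{F}$ to be commutative rather than merely contained in a compact $K$-form; this is where the rank-one structure of $\mathbf{G}$ is used crucially.
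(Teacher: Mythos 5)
The decisive step in your plan is the inclusion $\mathbf{F}(K)\cdot x\subseteq\tilde{\psi}^{-1}(D_{\mathbf{M}})$, which you obtain from a claimed $\mathbf{G}(K)$-equivariance of $\tilde{\psi}$, and this equivariance is false. As recorded at the end of Section \ref{diagram}, only the first component of $\tilde{\psi}$ is the identity (hence $G$-equivariant); the components $x\mapsto x_{\sigma_i}$ for $i\geq 2$ are lifts of the period maps of the twisted local systems $\mathbb{V}^{\sigma_i}$ and are equivariant only under $\Gamma$, acting through $\rho_{\sigma_i}$. For $g\in\mathbf{G}(K)$ outside $\Gamma$ there is no reason that $\tilde{\psi}_{\sigma_i}(gx)=\sigma_i(g)\,\tilde{\psi}_{\sigma_i}(x)$; in fact, if this held for all of $\mathbf{G}(K)$ one could choose $g_n\in\mathbf{G}(K)$ with $g_n\to e$ in $G$ while $\sigma_i(g_n)$ is unbounded in $G_{\sigma_i}$ (possible exactly when $G_{\sigma_i}$ is non-compact, i.e.\ in the non-arithmetic case of interest) and contradict the continuity of $\tilde{\psi}_{\sigma_i}$. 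Since $\mathbf{F}(K)\cap\Gamma$ need not be infinite (for a candidate special point one expects it to be finite), no dense subset of $F$ is available for which the inclusion holds, and the deduction that $F$ fixes $x$ collapses. The two points you flag are also genuine and not repairable as stated: the Goursat argument of Theorem \ref{thmmonodromy} uses semisimplicity, and the equality $\mathbf{M}=\widehat{\mathbf{F}}$ is simply false for the Mumford--Tate group of a single Hodge structure, which in the relevant case is a torus and is typically a proper subtorus of a Weil restriction (as for CM points); moreover, compactness of $F$ together with ``$\mathbf{F}$ is generated over $K$ by the commutative group $x(\DT)$'' does not yield commutativity, because the smallest $K$-subgroup whose real points contain a commutative subgroup need not be commutative --- otherwise every point of $X$ would be pre-$\Gamma$-special.

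The paper's proof proceeds quite differently and avoids moving $x$ by rational points altogether: from the zero-dimensionality of the intersection it deduces that $R\cap G_{\sigma_1}$ is a \emph{compact} subgroup of $G$ containing $x(\DT)$; since $G$ is adjoint (every compact subgroup lies in the stabiliser of a point of $X$, and $x$ is the unique fixed point of $x(\DT)$), the image $x(\DT)$ is forced to lie in the \emph{centre} of $R\cap G_{\sigma_1}$; finally $R\cap G_{\sigma_1}$ carries a natural $K$-structure, so its centre is a commutative $K$-subgroup of $\mathbf{G}$ containing $x(\DT)$, which is precisely the pre-$\Gamma$-special condition. If you want to salvage your approach, the compactness of the relevant real group and the centrality of $x(\DT)$ inside it are the statements to aim for directly, rather than a density argument through $\mathbf{F}(K)$-orbits.
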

\begin{proof}
Let $\mathbf{R}$ be a $\Q$-subgroup of $\widehat{\mathbf{G}}$ inducing a Mumford--Tate sub-domain $D_{R}$ of $D_{\widehat{G}}$. If $\mathbf{R}(\Z)\backslash D_R$ intersects $\psi ({S_\Gamma}^{\an})$ in a finite number of points, then $R \cap G_{\sigma_1}$ is a compact subgroup of $\widehat{\mathbf{G}}(\R)$. Let $x\in \tilde{\psi}^{-1}(D_{R})\subset X$. It corresponds to a map
\begin{displaymath}
x: \DT \to G,
\end{displaymath} and, by construction $x(\DT)$ is contained in $R \cap G_{\sigma_1}$. Since $G$ is of adjoint type, $x (\DT)$ is contained in the centre of $R \cap G_{\sigma_1}$. However $R \cap G_{\sigma_1}$ has a natural structure of $K$-subgroup of $\mathbf{G}$, which we denote by $\mathbf{R}_{\sigma}$. It follows that $x(\DT)$ is contained in a commutative $K$-subgroup of $\mathbf{G}$. That is $\MT(x)$ is commutative and $x$ is pre-$\Gamma$-special.
\end{proof}

In rank one, the $\Gamma$-AO Conjecture, thanks to Proposition \ref{propspepoints}, predicts that the irreducible components of the Zariski closure of a set of $\Z$-special points are $\Z$-special subvarieties of $S_\Gamma$.

\begin{prop}
If $\Gamma$ is non-arithmetic, $\Z$-special points of $S_\Gamma$ are atypical intersections in $\widehat{\mathbf{G}} (\Z) \backslash D_{\widehat{G}}$ between $\psi ({S_\Gamma}^{\an})$ and Mumford--Tate sub-domains of $D_{\widehat{G}}$.
\end{prop}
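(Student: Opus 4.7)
The plan is to exhibit, for each $\Z$-special point of $S_\Gamma$, a Mumford--Tate sub-domain $D_0 \subset D_{\widehat{G}}$ through $\tilde{\psi}(x)$ witnessing atypicality, and then to see that atypicality comes down to a single codimension inequality which, by the Mostow--Vinberg criterion, is equivalent to non-arithmeticity. Let $s = \pi(x) \in S_\Gamma$ be a $\Z$-special point. By Proposition \ref{propspepoints}, $s$ is $\Gamma$-special, so $x\colon \DT \to G$ factorises through the real points of a commutative $K$-subgroup $\mathbf{T} \subset \mathbf{G}$. It follows that the Mumford--Tate group $\MT(\tilde{\psi}(x))$ sits inside the Weil restriction $\widehat{\mathbf{T}}$ and is itself commutative.

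I then take the natural Mumford--Tate sub-domain $D_0 := \MT(\tilde{\psi}(x))(\R) \cdot \tilde{\psi}(x)$, associated to $\MT(\tilde{\psi}(x))$ viewed as a normal $\Q$-subgroup of itself. Since this group is abelian and acts on the Hodge cocharacter $\tilde{\psi}(x)$ by conjugation, it fixes $\tilde{\psi}(x)$, and the orbit degenerates to the single point $\{\tilde{\psi}(x)\}$; in particular $\dim D_0 = 0$.

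Working inside the decomposition $D_{\widehat{G}} \cong X \times D_{>1}$ (where $D_{>1}$ is the homogeneous space under $\prod_{i \geq 2} G_{\sigma_i}$, and the first component of $\tilde{\psi}$ is the identity on $X$), the intersection $\tilde{\psi}(X) \cap D_0$ reduces to $\{\tilde{\psi}(x)\}$, and so the corresponding intersection $\psi({S_\Gamma}^{\an}) \cap \pi_{\Z}(D_0)$ in the quotient $\widehat{\mathbf{G}}(\Z)\backslash D_{\widehat{G}}$ is zero-dimensional. This intersection will be atypical precisely when
\[
\dim \tilde{\psi}(X) + \dim D_0 < \dim D_{\widehat{G}},
\]
that is, when $\dim D_{>1} > 0$. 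By the Mostow--Vinberg arithmeticity criterion (Theorem \ref{arithcrit}), this last inequality holds if and only if at least one real form $G_{\sigma_i}$ with $i \geq 2$ is non-compact, which is equivalent to $\Gamma$ being non-arithmetic. The only substantive input is Proposition \ref{propspepoints}, which forces $D_0$ to collapse to a point; after that, the argument is purely a codimension count, so I do not anticipate a serious obstacle.
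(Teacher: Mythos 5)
The central step of your argument --- that a $\Z$-special point $s=\pi(x)$ has commutative Mumford--Tate group $\MT(\tilde{\psi}(x))$ --- does not follow from Proposition \ref{propspepoints}. That proposition only says that $x$ is pre-$\Gamma$-special, i.e.\ that the smallest $K$-subgroup $\mathbf{T}\subset\mathbf{G}$ whose real points contain the image of $x:\DT\to G$ is commutative; this is a condition on the first factor $G_{\sigma_1}$ alone. The group $\MT(\tilde{\psi}(x))$ is a $\Q$-subgroup of $\widehat{\mathbf{G}}$ constraining all the components of $\tilde{\psi}(x)=(x,x_{\sigma_2},\dots,x_{\sigma_r})$ simultaneously, and the components $x_{\sigma_i}$ for $i\geq 2$ are values of transcendental period maps, not Galois conjugates of the point $x$; nothing forces them to land in $\mathbf{T}_{\sigma_i}(\R)$, so there is no inclusion $\MT(\tilde{\psi}(x))\subset\widehat{\mathbf{T}}$. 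What you are in effect asserting is that every $\Z$-special point is a CM point in the sense of Definition \ref{defcm}, which the paper deliberately does not claim (CM points are described as ``even more special'' than $\Gamma$- and $\Z$-special points). Without the commutativity, your orbit $D_0=\MT(\tilde{\psi}(x))(\R)\cdot\tilde{\psi}(x)$ need not be a point and your codimension count collapses.

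There is also a mismatch with the content of the proposition. Even granting commutativity, your witness $D_0=\{\tilde{\psi}(x)\}$ is an artificially minimal sub-domain: the same construction (for instance using the trivial normal subgroup of the Mumford--Tate group) produces a point-domain through \emph{any} point of $S_\Gamma$, and your inequality $d_1+\dim D_0<\dim D_{\widehat{G}}$ then holds for every point once $\Gamma$ is non-arithmetic, so it carries no information specific to $\Z$-special points. The paper's proof instead takes an arbitrary Mumford--Tate sub-domain $\mathbf{R}(\Z)\backslash D_R$ exhibiting $s$ as a $\Z$-special point, uses the zero-dimensionality of the intersection (as in the proof of Proposition \ref{propspepoints}) to see that $R\cap G_{\sigma_1}$ is compact, so the $\sigma_1$-factor of $D_R$ contributes nothing, and then the codimension count shows the intersection is typical only if $d_i=d_{R_i}$ for all $i\geq 2$, which non-arithmeticity (via Theorem \ref{arithcrit}) rules out for proper $\mathbf{R}$. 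It is that intersection --- with the sub-domain actually defining the $\Z$-special point --- which the proposition declares atypical, and your argument does not address it.
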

\begin{proof}
Let $\mathbf{R}\subset \widehat{\mathbf{G}}$ be as in the proof of Proposition \ref{propspepoints}. The intersection between $\mathbf{R}(\Z)\backslash D_R$ and $\psi ({S_\Gamma}^{\an})$ is typical when 
\begin{displaymath}
\codim_{\widehat{\mathbf{G}} (\Z) \backslash D_{\widehat{G}}}  \{s\} = \codim _{\widehat{\mathbf{G}} (\Z) \backslash D_{\widehat{G}}}S_{\Gamma} +  \codim_{\widehat{\mathbf{G}} (\Z) \backslash D_{\widehat{G}}} \mathbf{R}(\Z)\backslash D_R,
\end{displaymath}
where $s$ is one of the finite points lying in $\mathbf{R}(\Z)\backslash D_R \cap \psi ({S_\Gamma}^{\an})$. Let $r=[K:\Q]$. Equivalently 
\begin{displaymath}
\sum_{i=1}^r d_{i}=\sum_{i=2}^r d_{i}+ \left( d_1 + \sum_{i=2}^r d_{i} - d_{R_i}\right),
\end{displaymath}
where $d_{i}$ and $d_{R_i}$ were defined in the proof of Proposition \ref{mainpropwithas}. That is $s$ is typical if and only if $d_{i} =d_{R_i}$ for all $i\geq 2$, which is impossible unless $\mathbf{R}=\widehat{\mathbf{G}}$.
\end{proof}
As usual we expect that a subvariety of $S_\Gamma$ having a dense set of atypical points is atypical. 

\begin{conj}\label{zconj}
Let $(G,X,\Gamma)$ be a real Shimura datum and $W\subset S_\Gamma$ an irreducible algebraic subvariety. $W$ contains a Zariski dense set of $\Z$-special points if and only if it is special and arithmetic.
\end{conj}

We can also observe that the intersection of $\Gamma$-special subvarieties gives rise to $\Z$-special, and therefore $\Gamma$-special, points.
\begin{prop}
Let $S_{\Gamma_1}$, $S_{\Gamma_2}$ be $\Gamma$-special subvarieties of positive dimension in $S_\Gamma$ intersecting in a finite number of points. If $s \in S_{\Gamma_1}\cap S_{\Gamma_2}$, then $s$ is $\Z$-special.
\end{prop}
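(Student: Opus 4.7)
The plan is to exhibit the smallest $\Z$-special subvariety through $s$ and argue that it must reduce to $\{s\}$. Fix a lift $x\in X$ of $s$ and set $\tilde{s}:=\tilde{\psi}(x)\in D_{\widehat{G}}$; let $\mathbf{M}:=\MT(\tilde{s})\subseteq\widehat{\mathbf{G}}$ be its Mumford--Tate group, and let $D_\mathbf{M}:=\mathbf{M}(\R).\tilde{s}$ be the associated Mumford--Tate sub-domain. By the theorem of Cattani--Deligne--Kaplan (Theorem \ref{cdk}), the irreducible component
\begin{displaymath}
S_0 := \psi^{-1}\bigl(\pi_\Z(D_\mathbf{M})\bigr)^0
\end{displaymath}
containing $s$ is an algebraic $\Z$-special subvariety of $S_\Gamma$, and by Theorem \ref{mainthm} it is in fact $\Gamma$-special.

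Next I would use Proposition \ref{Zspecial} to realise each $S_{\Gamma_i}$ ($i=1,2$) as an irreducible component of $\psi^{-1}(\pi_\Z(D_{\widehat{H}_i}))$, where $\widehat{\mathbf{H}}_i$ is the Weil restriction of the $K$-subgroup $\mathbf{H}_i\subseteq\mathbf{G}$ attached to $S_{\Gamma_i}$. Up to replacing each $H_i$ by a $G$-conjugate (choosing representatives of $s$ compatibly), one may assume that $x\in X_{H_i}$, so that the morphism $\tilde{s}:\DT\to\widehat{G}$ factors through $\widehat{H}_i$. The universal property defining the Mumford--Tate group then forces $\mathbf{M}\subseteq\widehat{\mathbf{H}}_i$, hence $D_\mathbf{M}\subseteq D_{\widehat{H}_i}$, and passing to the irreducible components through $s$ gives
\begin{displaymath}
S_0\subseteq S_{\Gamma_1}\cap S_{\Gamma_2}.
\end{displaymath}

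By hypothesis the right-hand side is finite, while $S_0$ is irreducible and contains $s$; therefore $S_0=\{s\}$. Consequently $D_\mathbf{M}$ reduces to the single point $\tilde{s}$ (equivalently, $\mathbf{M}$ is commutative), and $\{s\}$ is a zero-dimensional $\Z$-special subvariety of $S_\Gamma$, i.e.\ $s$ is $\Z$-special. The only mild subtlety is the book-keeping of lifts and conjugates, so that both Mumford--Tate sub-domains $D_{\widehat{H}_1}$ and $D_{\widehat{H}_2}$ are realised through the same point $\tilde{s}=\tilde{\psi}(x)$; once this is arranged, the argument is a direct application of Proposition \ref{Zspecial} together with the minimality of the Mumford--Tate group.
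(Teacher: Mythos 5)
Your argument is in substance the paper's own: the paper's (very short) proof realises $S_{\Gamma_1}$ and $S_{\Gamma_2}$ as preimage components of the Mumford--Tate sub-domains $D_{\widehat{H}_1},D_{\widehat{H}_2}$ via Proposition \ref{Zspecial} and concludes by looking at the intersection of the two sub-domains, while you package the same mechanism through the pointwise Mumford--Tate group, taking the minimal sub-domain $D_{\mathbf{M}}=\MT(\tilde{\psi}(x))(\R).\tilde{\psi}(x)$ through the point and noting it is contained in both, together with Theorem \ref{cdk}.

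Two steps need tightening. First, the conjugation arranging $x\in X_{H_1}\cap X_{H_2}$ must be by elements of $\Gamma$ (available because $s\in\pi(X_{H_i})$), not by arbitrary elements of $G$: a general $G$-conjugate of $\mathbf{H}_i$ need not be defined over $K$, and its associated subvariety need not be $S_{\Gamma_i}$. Second, and more seriously, the inference ``passing to the irreducible components through $s$ gives $S_0\subseteq S_{\Gamma_1}\cap S_{\Gamma_2}$'' does not follow as written: $\psi^{-1}(\pi_{\Z}(D_{\widehat{H}_i}))$ is a countable union of special subvarieties, one for each $\widehat{\mathbf{G}}(\Z)$-translate of $D_{\widehat{H}_i}$ met by $\tilde{\psi}(X)$, and several of these components may pass through $s$, with $S_{\Gamma_i}$ only one of them; so nesting of the ambient preimages does not by itself place $S_0$ inside $S_{\Gamma_1}\cap S_{\Gamma_2}$, and for the same reason one must also make sure that no positive-dimensional component of $\psi^{-1}(\pi_{\Z}(D_{\mathbf{M}}))$ through $s$ arises from a translate $gD_{\mathbf{M}}$ with $g\in\widehat{\mathbf{G}}(\Z)$, since otherwise $\{s\}$ would not be an irreducible component of that preimage at all. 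The repair, at the level of rigour of the paper's own two-line proof, is to argue on $X$ with the fixed lift $x$: since $\MT(\tilde{\psi}(x))$ is contained in the generic Mumford--Tate group of $\widehat{\mathbb{V}}$ restricted to $S_{\Gamma_i}$ (whose derived group is $\widehat{\mathbf{H}}_i$ and whose centre fixes $\tilde{\psi}(x)$), one gets $D_{\mathbf{M}}\subseteq D_{\widehat{H}_i}$, hence $\tilde{\psi}^{-1}(D_{\mathbf{M}})\subseteq \tilde{\psi}^{-1}(D_{\widehat{H}_1})\cap\tilde{\psi}^{-1}(D_{\widehat{H}_2})=X_{H_1}\cap X_{H_2}$ (the equality $\tilde{\psi}^{-1}(D_{\widehat{H}_i})=X_{H_i}$ is established in the proof of Proposition \ref{Zspecial}), and the image of this set in $S_\Gamma$ lies in the finite set $S_{\Gamma_1}\cap S_{\Gamma_2}$; this is exactly the assertion the paper's proof makes, and the remaining bookkeeping about translates is glossed there as well.
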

\begin{proof}Since $S_{\Gamma_1}$ and $S_{\Gamma_2}$ are also $\Z$-special subvariety, every point in the intersection can be written as the the preimage along the period map of the intersection of two Mumford--Tate sub-domains. The point $s$ is therefore $\Z$-special.
\end{proof}

\subsection{Complex multiplication points}
Recall that, as in Theorem \ref{strategy1}, we denote by $\widehat{\mathbb{V}}$ for the $\Z$-VHS on $S_\Gamma$ induced by $\mathbb{V}$. Another interesting class of points is given by the following.
\begin{defi}\label{defcm}
A point $s\in S_\Gamma$ is called a \emph{CM-point} if the Mumford--Tate group of $\widehat{\mathbb{V}}$ at $s$ is commutative.
\end{defi}
Such points are both $\Z$ and $\Gamma$-special, but they are even more special. The following is a special case of \cite[
Conjecture 5.6]{klingler2017hodge} and it is indeed predicted by, the more difficult, Conjecture \ref{zconj}.
\begin{conj}
Let $(G,X,\Gamma)$ be a real Shimura datum and $W\subset S_\Gamma$ an irreducible algebraic subvariety. Then $W$ contains a Zariski dense set of CM-points if and only if it is special and arithmetic.
\end{conj}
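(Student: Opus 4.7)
The plan is to treat the two directions separately, using the relationship between CM-points and $\Z$-special points that follows from the construction of $\widehat{\mathbb{V}}$. The key observation is that any CM-point $s \in S_\Gamma$ is automatically a $\Z$-special point in the sense of Definition \ref{defzsp}: commutativity of the Mumford--Tate group of $\widehat{\mathbb{V}}$ at $s$ implies that the corresponding Mumford--Tate sub-domain is zero-dimensional, hence that $\{s\}$ itself is a $\Z$-special subvariety of $S_\Gamma$.

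For the easier (``if'') direction, I would suppose $W$ is special and arithmetic, associated to a real sub-Shimura datum $(H,X_H,\Gamma_H)$ with $\Gamma_H$ arithmetic. By Remark \ref{remaknew}, the generic Mumford--Tate group of $\widehat{\mathbb{V}}$ restricted to $W$ is the Weil restriction $\widehat{\mathbf{H}}$ of $\mathbf{H}$ from the trace field $K_H$ of $\Gamma_H$ to $\Q$. Since $\Gamma_H$ is arithmetic, $W$ is isomorphic to a classical arithmetic Shimura variety and admits a Zariski-dense set of points at which the Mumford--Tate group inside $\mathbf{H}$ is commutative (this is the classical density of CM-points). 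At any such point $s$, the Mumford--Tate group of $\widehat{\mathbb{V}}$ is contained in the Weil restriction of this commutative torus, and hence is itself commutative. Therefore $s$ is a CM-point in the sense of Definition \ref{defcm}, and these points form a Zariski-dense subset of $W$.

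For the ``only if'' direction, my strategy is to reduce to the preceding Conjecture \ref{zconj}: by the initial observation, a Zariski-dense set of CM-points inside $W$ yields a Zariski-dense set of $\Z$-special points, and Conjecture \ref{zconj} then forces $W$ to be special and arithmetic. Alternatively, one could pursue a direct Pila--Zannier style attack, combining the definability results of Section \ref{ominintro} (notably Theorem \ref{defini}) with the Ax--Schanuel statement of Theorem \ref{zasthm} to control algebraic relations among the preimages of CM-points inside $\widehat{\mathbf{G}}(\Z)\backslash D_{\widehat{G}}$, followed by Pila--Wilkie counting applied to a fundamental set.

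The main obstacle for a direct proof is an arithmetic input, namely suitable lower bounds on the size of the ``Galois orbit'' of a CM-point of $\widehat{\mathbb{V}}$. In the classical arithmetic case such bounds flow from the existence of a canonical model of the Shimura variety over a number field together with Tsimerman-type bounds on Galois orbits of CM-points; in the non-arithmetic setting no such canonical model exists on $S_\Gamma$, and one would instead need a substitute invariant arising purely from the $\Z$-VHS (for instance, a discriminant of the CM Mumford--Tate torus together with the size of its Hecke orbit under $\widehat{\mathbf{G}}(\Z)$). Supplying such an estimate is precisely what would promote Conjecture \ref{zconj}, and hence the statement under consideration, from a conjecture to a theorem, consistent with the paper's remark that this CM-point conjecture is predicted by Conjecture \ref{zconj}.
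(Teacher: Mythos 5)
Be aware first that the statement you were asked to prove is stated in the paper as a \emph{conjecture}: the authors give no proof, only the remarks that it is a special case of Klingler's conjecture, that it is predicted by the (harder) Conjecture \ref{zconj}, that for arithmetic $\Gamma$ it is the classical Andr\'e--Oort conjecture, and that it is known when $\widehat{\mathbf{G}}(\Z)\backslash D_{\widehat{G}}$ happens to be a Shimura variety of abelian type. Your proposal does not close this gap, and in fact it cannot be read as a proof at all: the ``only if'' direction is explicitly reduced to Conjecture \ref{zconj}, which is itself open in the paper, and your alternative Pila--Zannier route is acknowledged to be missing its essential arithmetic ingredient (lower bounds for ``Galois orbits'' of CM-points, for which the non-arithmetic $S_\Gamma$ offers no canonical model over a number field). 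So the hard direction remains exactly as open as the paper leaves it; what you have written there is a restatement of the paper's own remark that Conjecture \ref{zconj} predicts the CM statement, not an argument.

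What you do add beyond the paper's discussion is essentially correct but should be flagged as such. The observation that a CM-point in the sense of Definition \ref{defcm} is a zero-dimensional $\Z$-special subvariety is sound (a commutative $\MT(s)$ fixes $h_s$, so the associated Mumford--Tate sub-domain is the single point $\tilde{\psi}(s)$, and $\psi$ has discrete fibres since the first component of $\tilde{\psi}$ is the identity), and it is the content behind the paper's assertion that Conjecture \ref{zconj} implies the CM conjecture. Your ``if'' direction is also plausible, but the step asserting that at a classical CM-point of the arithmetic special subvariety $W$ the Mumford--Tate group of the full fibre $\widehat{\mathbb{V}}_s=\bigoplus_\sigma \mathbb{V}^\sigma_s$ is commutative needs justification: you must know that the Galois-twisted factors $\mathbb{V}^\sigma$ have, at $s$, Hodge structures whose cocharacters factor through the Weil restriction of the same $K_H$-torus, i.e.\ that $\tilde{\psi}(x)$ lands in $\Res_{K_H/\Q}\mathbf{T}(\R)$ and not merely that its first component does. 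This is standard for arithmetic ball quotients (it is how one identifies $\widehat{\mathbb{V}}|_W$ with the tautological family of the arithmetic Shimura structure on $W$), but it is not a formal consequence of Remark \ref{remaknew} and should be argued, e.g.\ along the lines of Proposition \ref{propspepoints} run in reverse for a $\Q$-torus of $\widehat{\mathbf{H}}$. In summary: the easy direction is fine modulo that point, and the hard direction contains a genuine, unavoidable gap --- it rests on an open conjecture, exactly as the paper itself records.
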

If $\Gamma$ is an arithmetic lattice, this is the classical André-Oort conjecture. If $\Gamma$ is non-arithmetic, but $\widehat{\mathbf{G}}(\Z)\backslash D_{\widehat{G}}$ is a Shimura variety, then the above conjecture follows from André--Oort conjecture for Shimura varieties of abelian type, which is now a theorem \cite{MR3744855}.

\subsection{Bi-arithmetic points}
In the classical case of arithmetic Shimura varieties, another option is to look at \emph{bi-arithmetic points} and \emph{bi-arithmetic subvarieties}. That is points $x\in X(\Qbar)$ such that $\pi(x)\in S_\Gamma(\Qbar)$ and subvarieties $W\subset S_\Gamma$ which are defined over $\Qbar$ and such that some component of $\pi^{-1}(W)$ is algebraic and defined over some number field. See also \cite[Section 4.2]{MR3821177}. Let $(G=\PU(1,n), X, \Gamma)$ be a real Shimura datum\footnote{From axiom RSD0 a real Shimura datum $(G=\PU(1,1), X, \Gamma)$ corresponds to a Shimura curve (for the standard definition).}. We briefly show how to prove that $S_\Gamma$, and its positive dimensional special subvarieties $S'\subset S_\Gamma$ have a model over a number field. This follows again from the rigidity of lattices. 

\subsubsection{Models}
Let $S$ be a smooth complex quasi-projective variety. We say that $S$ admits a $\Qbar$-\emph{model} if there exists $Y/\Qbar$ such that $Y \times _{\Qbar} \C \cong S$, with respect to some embedding $\Qbar \hookrightarrow \C$. Two $\Qbar$-models $Y,Y'$ are certainly isomorphic, both over $\C $ and $\Qbar$. We say that $S$ has a \emph{unique} $\Qbar$-structure if $S$ admits a $\Qbar$-model and any two $\Qbar$-models of $S$ are uniquely isomorphic.

For example if $\Gamma$ is cocompact, \cite[Theorem 1]{MR0223368} and \cite[Theorem 1]{MR0111058} imply the following\footnote{Actually Shimura does not discuss the uniqueness of the $\Qbar$-structure.}.
\begin{thm}[Shimura, Calabi-Vesentini]
Let $(G=\PU(1,n),X,\Gamma)$ be a real Shimura datum such that $S_\Gamma$ is projective. Let $\Theta$ be the sheaf of germs of holomorphic sections of the tangent bundle of $S_\Gamma$. If the dimension of $S_\Gamma$ is greater or equal to two, then, for $i=0,1$
\begin{displaymath}
H^i(S_\Gamma, \Theta)=0.
\end{displaymath}
It follows that $S_\Gamma$ has a unique $\Qbar$-structure.
\end{thm}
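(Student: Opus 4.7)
The plan is to establish the two cohomology vanishings separately and then to deduce the uniqueness of the $\Qbar$-structure from them via a standard deformation-theoretic descent argument.

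For the vanishing of $H^0(S_\Gamma,\Theta)$, a global holomorphic vector field on $S_\Gamma$ pulls back to a $\Gamma$-invariant holomorphic vector field on the universal cover $X=\mathbb{B}^n$. The Lie algebra of holomorphic vector fields on the bounded symmetric domain $\mathbb{B}^n$ is identified with $\mathfrak{g}=\Lie(\PU(1,n))$ acting by infinitesimal automorphisms, so such a field corresponds to an element of $\mathfrak{g}$ centralising the image of $\Gamma$ in $G$. By the Borel density theorem $\Gamma$ is Zariski dense in $G$, hence the element lies in the centre of $\mathfrak{g}$, which is trivial since $G=\PU(1,n)$ is of adjoint type. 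The vector field therefore vanishes. (Alternatively, since $S_\Gamma$ is projective with ample canonical bundle, the vanishing is a direct consequence of Kobayashi's theorem on holomorphic vector fields on manifolds of general type.)

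For the vanishing of $H^1(S_\Gamma,\Theta)$, I would invoke the Calabi--Vesentini infinitesimal rigidity theorem for compact quotients of irreducible Hermitian symmetric domains of non-compact type of complex dimension at least two. Via Matsushima's formula, $H^1(S_\Gamma,\Theta)$ is identified with the $(1,0)$ Hodge component in the Hodge decomposition of $H^1(\Gamma,\mathfrak{g}_\C)$, whose vanishing is already recorded as Theorem \ref{garalndragvanishing}. Alternatively, one can proceed analytically: applying the Bochner--Kodaira--Nakano identity to harmonic $(0,1)$-forms with values in $T^{1,0}S_\Gamma$ and exploiting the curvature of the Bergman metric on $\mathbb{B}^n$, the positivity of the resulting curvature term when $n\geq 2$ forces vanishing. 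The hypothesis $\dim_\C S_\Gamma\geq 2$ is essential: for $n=1$ one recovers the classical $3g-3$ dimensional deformation space of a compact Riemann surface of genus $g$. I expect this step to be the main obstacle to a self-contained account, but since Calabi--Vesentini's paper suffices, I would simply cite it.

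For the uniqueness of the $\Qbar$-structure, let $Y_1, Y_2$ be two $\Qbar$-models of $S_\Gamma$. The functor of isomorphisms is representable by a $\Qbar$-scheme $\operatorname{Isom}_{\Qbar}(Y_1,Y_2)$, and since $S_\Gamma$ is a compact ball quotient of dimension at least two it has ample canonical bundle, which in turn forces $\operatorname{Isom}_{\Qbar}(Y_1,Y_2)$ to be of finite type. At the complex point given by the fixed isomorphism $Y_1\otimes_{\Qbar}\C \cong S_\Gamma \cong Y_2\otimes_{\Qbar}\C$, the tangent space is identified with $H^0(S_\Gamma,\Theta)=0$ and the obstruction lies in $H^1(S_\Gamma,\Theta)=0$, so $\operatorname{Isom}_{\Qbar}(Y_1,Y_2)$ is \'{e}tale of dimension zero at this point. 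Since any closed point of a finite type scheme over the algebraically closed field $\Qbar$ has residue field equal to $\Qbar$, the complex isomorphism descends to an isomorphism $Y_1\cong Y_2$ defined over $\Qbar$, which proves uniqueness.
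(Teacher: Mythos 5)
Your proposal has a genuine gap: it never proves \emph{existence} of a $\Qbar$-model. In the paper's terminology, ``$S_\Gamma$ admits a unique $\Qbar$-structure'' means that a $\Qbar$-model exists \emph{and} that any two such models are ($\Qbar$-)isomorphic; you only treat the second half. The existence half is precisely the content of the cited Theorem 1 of Shimura, deduced from $H^1(S_\Gamma,\Theta)=0$: one spreads $S_\Gamma$ out to a projective family over a finitely generated $\Qbar$-algebra and uses rigidity (vanishing of $H^1(\Theta)$, hence local triviality of the Kodaira--Spencer deformation) to conclude the family is isotrivial, so some fibre over a $\Qbar$-point is a model of $S_\Gamma$; alternatively one can argue, as the paper does in the non-compact case, that the $\Aut(\C/\Qbar)$-orbit of $S_\Gamma$ contains only countably many isomorphism classes (local rigidity of lattices) and invoke the criterion of \cite{MR2253591}. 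Without one of these arguments your proof only shows uniqueness conditional on existence. For context: the paper itself gives no proof at all here, citing \cite[Theorem 1]{MR0223368} and \cite[Theorem 1]{MR0111058} (and footnoting that Shimura does not address uniqueness), so your Isom-scheme argument for uniqueness --- finiteness of $\operatorname{Isom}$ via the ample canonical bundle, zero-dimensionality from $H^0(\Theta)=0$, and the fact that closed points of a finite-type $\Qbar$-scheme have residue field $\Qbar$ --- is a genuinely useful supplement; and your treatment of $H^1$ (Calabi--Vesentini, or equivalently Matsushima--Weil--Garland--Raghunathan as in Theorem \ref{garalndragvanishing}) matches the paper's intended source.

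A secondary, fixable flaw: your primary argument for $H^0(S_\Gamma,\Theta)=0$ asserts that the holomorphic vector fields on $\mathbb{B}^n$ are identified with $\mathfrak{g}$. That identification holds only for \emph{complete} vector fields (infinitesimal automorphisms); a bounded domain carries an infinite-dimensional space of holomorphic vector fields (e.g.\ $f(z)\partial_z$ on the disk for any holomorphic $f$), so a $\Gamma$-invariant field need not a priori lie in $\mathfrak{g}$. Your parenthetical alternative is the correct route: $S_\Gamma$ compact with ample canonical bundle (or negative Ricci curvature and Bochner's argument) has no nonzero holomorphic vector fields, so keep that as the actual proof of the $H^0$ vanishing.
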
 
If $\Gamma$ is arithmetic, the above proof can be generalised to cover the case when $S_\Gamma$ is not compact, by using Mumford's theory \cite{MR471627}. See indeed \cite{MR791585, MR3725498}. With the tools described in Section \ref{compac} it should be possible to generalise such arguments to the case of non-arithmetic lattices. For length reasons, we prefer to give a different argument which uses only the fact that all lattices in $\PU(1,n)$, for $n>1$, have entries in some number field, as explained in Section \ref{localrig}. Even if such a point of view does not address the uniqueness of the $\Qbar$-structure.

\subsubsection{The action of the automorphisms of \texorpdfstring{$\C$}{C} on a Shimura variety}
\begin{thm} \label{actionofc}
Let $(G,X,\Gamma)$ be a real Shimura datum and $\sigma \in \Aut(\C)$. There exists a real Shimura datum $(\leftidx{^\sigma}{G},\leftidx{^\sigma}{X},\leftidx{^\sigma}{\Gamma})$ such that
\begin{displaymath}
\leftidx{^\sigma}{S}{_\Gamma} \cong S_{^\sigma \Gamma}.
\end{displaymath}
Moreover:
\begin{itemize}
\item $G$ has rank one if and only if $\leftidx{^\sigma}{G}$ has rank one;
\item $\Gamma$ is arithmetic if and only if $\leftidx{^\sigma}{\Gamma}$ is arithmetic.
\end{itemize} 
\end{thm}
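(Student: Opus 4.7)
The plan is to reduce to the case where $(G,X,\Gamma)$ is irreducible via Proposition \ref{irred}, and then to split along arithmeticity of $\Gamma$. By Theorem \ref{margulisrank} and axiom RSD0, non-arithmeticity of an irreducible $\Gamma$ forces $G=\PU(1,n)$ with $n>1$, so the framework developed in Section \ref{zhodgesection} is available in this case.

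If $\Gamma$ is arithmetic, then $S_\Gamma$ is a connected component of a congruence Shimura variety $\Sh_K(\mathbf{G},\mathcal{X})$. The classical theory of canonical models, together with the conjugation-of-Shimura-varieties theorem (Borovoi--Milne--Shih), implies that ${}^\sigma\Sh_K(\mathbf{G},\mathcal{X})$ is again a Shimura variety, for a conjugate Shimura datum $({}^\sigma\mathbf{G},{}^\sigma\mathcal{X})$ with an appropriate $\sigma$-conjugated level structure. Extracting the real Shimura datum of the relevant connected component yields $({}^\sigma G,{}^\sigma X,{}^\sigma\Gamma)$. Rank invariance follows from the fact that conjugate Shimura data differ by an inner twist, preserving real ranks; arithmeticity is preserved by construction.

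For non-arithmetic $\Gamma$, I would apply Theorem \ref{strategy1} to obtain the period map $\psi:S_\Gamma\to\Sh:=\widehat{\mathbf{G}}(\Z)\backslash D$ attached to $\widehat{\mathbb{V}}$. The decisive observation is that every real form of $\mathbf{G}$ is of the shape $\PU(p_\sigma,q_\sigma)$, so $D$ is Hermitian symmetric and $\Sh$ is itself a Shimura variety, to which the previous paragraph applies. The period map $\psi$ is algebraic by the definability of period maps (Bakker--Klingler--Tsimerman) combined with o-minimal GAGA (Theorem \ref{ogaga}); applying $\sigma$ therefore yields an algebraic morphism ${}^\sigma\psi:{}^\sigma S_\Gamma\to{}^\sigma\Sh$ whose target is again a Shimura variety with Hermitian universal cover ${}^\sigma D$. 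Then extract $({}^\sigma G,{}^\sigma X,{}^\sigma\Gamma)$ as follows: let ${}^\sigma X\subset{}^\sigma D$ be an analytic irreducible component of the preimage of ${}^\sigma\psi({}^\sigma S_\Gamma)$, let ${}^\sigma G$ be its stabilizer in ${}^\sigma\widehat{\mathbf{G}}(\R)$, and set ${}^\sigma\Gamma:={}^\sigma\widehat{\mathbf{G}}(\Z)\cap{}^\sigma G$. Lemma \ref{finallemma}, applied to the algebraic image ${}^\sigma\psi({}^\sigma S_\Gamma)$, ensures that ${}^\sigma\Gamma$ is a lattice in ${}^\sigma G$. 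The rank is one since ${}^\sigma G$ is a real form of $\PU(p,q)$ with $pq=\dim{}^\sigma X=n$ and $p+q=n+1$, forcing $\{p,q\}=\{1,n\}$; non-arithmeticity of ${}^\sigma\Gamma$ follows by the symmetric construction applied to $\sigma^{-1}$, which would otherwise produce an arithmetic $\Gamma$.

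The main obstacle will be tracking how $\sigma$ acts on the Weil restriction structure of $\widehat{\mathbf{G}}$ so as to identify ${}^\sigma\widehat{\mathbf{G}}$ explicitly with $\Res_{{}^\sigma K/\Q}{}^\sigma\mathbf{G}$ for a totally real number field ${}^\sigma K$; this identification is needed so that the extracted real Shimura datum genuinely fits inside the framework of Section \ref{zhodgesection} rather than merely existing as an abstract triple.
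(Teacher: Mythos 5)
Your arithmetic case is fine and is essentially what the paper does (it simply cites Borovoi--Kazhdan for the conjugation of arithmetic quotients). The non-arithmetic case, however, contains a genuine gap at the step you call the ``decisive observation''. From the fact that each real form $G_{\sigma}$ is some $\PU(p_\sigma,q_\sigma)$ you conclude that the Mumford--Tate domain $D=D_{\widehat G}$ is Hermitian symmetric and that $\widehat{\mathbf G}(\Z)\backslash D$ is a Shimura variety. This is false in general: whether the $\widehat G$-orbit $D$ is Hermitian depends on the Hodge cocharacter, not only on the group, and the conjugate variations $\mathbb{V}^{\sigma}$ produced by Simpson's construction can have Hodge filtrations of length $>1$ (for instance Hodge numbers of type $(1,1,\dots,1)$), in which case $D_{G_{\sigma}}$ is a non-Hermitian flag domain and Griffiths transversality is a genuine constraint. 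The paper itself flags this in the sketch of Section \ref{sketch}, where $\dim \widehat{\mathbf G}(\Z)\backslash D$ is $4$ or $5$ ``depending on whether it is a Shimura variety or not''. When $D$ is not Hermitian, $\widehat{\mathbf G}(\Z)\backslash D$ is merely a complex analytic quotient, not known to be algebraic, so there is no canonical model, no action of $\Aut(\C)$ on it, and ``${}^{\sigma}\psi:{}^{\sigma}S_{\Gamma}\to{}^{\sigma}\Sh$'' is not defined; o-minimal GAGA cannot rescue this, since Theorem \ref{ogaga} needs an algebraic ambient variety. A secondary problem is your recovery of ${}^{\sigma}\Gamma$ as ${}^{\sigma}\widehat{\mathbf G}(\Z)\cap{}^{\sigma}G$: already before conjugating, $\Gamma$ has infinite index in $\mathbf G(\Oo_K)=\widehat{\mathbf G}(\Z)$ when it is non-arithmetic (Remark \ref{infiniteindex}), and $\psi$ is not claimed to be injective, so the lattice you want is the image of $\pi_1({}^{\sigma}S_\Gamma)$ and cannot be read off as an intersection with the arithmetic group.

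The paper's route for the non-arithmetic case is entirely different and avoids the period domain: it uses the characterization of ball quotients by the logarithmic Chern number equality $c_1^n=\frac{2n+2}{n}c_1^{n-2}c_2$ for the pair $(\overline{S_\Gamma},B)$ (Yau in the compact case, Tsuji in general). The hypotheses and the equality are algebro-geometric conditions visibly preserved by $\Aut(\C)$, so ${}^{\sigma}\overline{S_\Gamma}$ is again the toroidal compactification of a ball quotient $S_{{}^{\sigma}\Gamma}$ with ${}^{\sigma}G=G$ and ${}^{\sigma}X=\mathbb B^n$; this gives both the existence of the conjugate datum and the rank statement at once. For the arithmeticity bullet the paper does not argue ``by construction'' but shows that special correspondences on $S_\Gamma\times S_\Gamma$, which biject with $\Comm(\Gamma)$ modulo $\Gamma$, are permuted by $\Aut(\C)$, and then invokes the Margulis commensurability criterion (Theorem \ref{commcriterion}). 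If you want to salvage a Hodge-theoretic argument you would first have to prove algebraicity and a Galois equivariance statement for the image of $\psi$ in a non-algebraic period space, which is much harder than the Chern class argument; as written, the proposal does not prove the theorem.
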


\begin{proof}
In the arithmetic case, the result follows from the work of Borovoi and Kazdhan \cite{MR693357}. To prove the result, we may and do replace $\Gamma$ by a finite index subgroup. In the non-arithmetic case, thanks to the work of Margulis (Theorem \ref{margulisrank}), the first part of the above statement reduces to the case of ball quotients (of dimension strictly bigger than one). The result follows from fact that the Chern numbers of $S_\Gamma$ satisfy the equality from Yau's solution of the Calabi conjecture (in the cocompact case, and by Tsuji \cite{zbMATH04097530} in the general case), which characterise ball quotients by their Chern numbers. See for example \cite[Proof of the reduction, page 257]{MR1211885} and references therein. Indeed let $\overline{S}$ be a smooth complex projective algebraic of dimension $n>1$, and let $D$ be a smooth divisor on $\overline{S}$. Assume that
\begin{itemize}
\item[1.] $ K_{\overline{S}} + (1-\epsilon)D$ is ample for every sufficiently small positive rational number $\epsilon$;
\item[2.] $K_{\overline{S}} +D $ is numerically trivial on $D$ and it is ample modulo $D$ and semi-ample (see \cite[Definition 1.2]{zbMATH04097530}).
\end{itemize}
Tsuji \cite[Theorem 1]{zbMATH04097530} proved that the inequality holds 
\begin{equation}\label{eqqqqq123}
c_1^n (\Omega^1_{\overline{S}}(\log D)) \leq \frac{2n+2}{n}c_1^{n-2}(\Omega^1_{\overline{S}}(\log D))c_2(\Omega^1_{\overline{S}}(\log D))
\end{equation}
and the equality holds if and only if $\overline{S}-D$ is an unramified quotient of the unit ball $\mathbb{B}^n$ in $\mathbf{\C}^n$. That is, if and only if $\overline{S}-D$ is a ball quotient of the form $S_{\Gamma '}$ for some lattice $\Gamma '$ in $\PU(1,n)$.

Let $\overline{S}$ be the toroidal compactification of $S_\Gamma$, as described in Section \ref{compac}, and let $\sigma \in \Aut (\C)$. We have that $\leftidx{^\sigma}{\overline{S}}$ satisfies 1. and 2. of above, since such properties were satisfied by $\overline{S}$ and are preserved by the action of $\Aut(\C)$. Since the Chern numbers of $\overline{S}$ are also preserved under the conjugate of a Galois action, also $\leftidx{^\sigma}{\overline{S}}$ satisfies the equality in \eqref{eqqqqq123}. Therefore it is the toroidal compactification of another ball quotient $S_{^\sigma \Gamma}$, associated to $(\leftidx{^\sigma}{G}=G,\leftidx{^\sigma}{X}=X=\mathbb{B}^n,\leftidx{^\sigma}{\Gamma})$ for some other lattice $\leftidx{^\sigma}{\Gamma}$ in $G$. This proves the first part of Theorem \ref{actionofc}, and the first bullet of the second part (since $G$ has rank one if and only if $X$ is a complex ball).

We are left to check that $\Gamma$ is arithmetic if and only if $\leftidx{^\sigma}{\Gamma}$ is arithmetic. Notice that elements in the commensurator of $\Gamma$, denoted by $\Comm (\Gamma)$, are in bijection with special algebraic correspondences of $S_\Gamma \times S_\Gamma$, in the sense of Section \ref{sectionmarguliscomm}. Special correspondences are preserved by the action of $\Aut(\C)$, as observed in the proof of \cite[Proposition 1.2]{MR3424477}. Therefore $\sigma^{-1}$ induces an isomorphism
\begin{displaymath}
\Comm (\Gamma) / \Gamma \cong \Comm (\leftidx{^\sigma}{\Gamma}) /\leftidx{^\sigma}{\Gamma}.
\end{displaymath}
By the Margulis commensurability criterion (as recalled in Sections \ref{commcriterion} and \ref{sectionmarguliscomm}), we conclude that $\Gamma$ is arithmetic if and only if $ \leftidx{^\sigma}{\Gamma}$ is arithmetic. Theorem \ref{actionofc} is eventually proven.
\end{proof}

\begin{rmk}\label{propmod}
To complete our discussion of the action of $\Aut(\C)$, we should explain what happens to special subvarieties of positive dimension of $S_\Gamma$ (i.e. in the sense of Definition \ref{defispecial}). Indeed it is a well known fact that, in the theory of arithmetic Shimura varieties, $\Aut(\C)$ maps special subvarieties of $S_\Gamma$ to special subvarieties of $S_{^\sigma \Gamma}$. Thanks to the description of special subvarieties of $S_\Gamma$ as the Hodge locus of a $\Z$-VHS, as we established in Corollary \ref{maincor}, we can see that $\sigma \in \Aut(\C)$ maps $\Z$-special subvarieties of dimension $>0$ of $S_\Gamma$ to $\Z$-special subvarieties of $S_{^\sigma \Gamma}$ as a special case of the main result of \cite{2020arXiv201003359K}.
\end{rmk}

\begin{thm}
Let $(G,X,\Gamma)$ be a real Shimura datum. Then $S_\Gamma$ admits a model over a number field.
\end{thm}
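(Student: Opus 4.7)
The plan is to show that the Galois orbit of the isomorphism class of $S_\Gamma$ under $\Aut(\C)$ is finite, invoke Theorem~\ref{actionofc}, and then appeal to a standard descent argument for polarised varieties. First, using Proposition~\ref{irred}, it suffices to treat the case of an irreducible real Shimura datum. The arithmetic factors admit number field models by the classical theory (Baily--Borel together with Shimura and Deligne). Thanks to Theorem~\ref{margulisrank} and axiom RSD0 of Definition~\ref{realshimuradef}, the only remaining case is $(G,X,\Gamma)$ with $G=\PU(1,n)$, $n>1$, and $\Gamma$ non-arithmetic.

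Next, by Theorem~\ref{strategy1} (which rests on Theorem~\ref{coeffinnumbers} and the integrality statement Theorem~\ref{integrality}), up to conjugation by an element of $G$ and passage to a finite-index subgroup we may assume $\Gamma \subset \mathbf{G}(\Oo_K)$ for a totally real number field $K$. Since $\Gamma$ is finitely generated, it is defined by finitely many matrices with coefficients in $\Oo_K$, hence for any $\sigma\in \Aut(\C)$ the conjugate lattice $\leftidx{^\sigma}\Gamma$ depends (up to conjugation in $G$) only on the restriction $\sigma\mid_K$. By Theorem~\ref{actionofc}, $\leftidx{^\sigma}{S_\Gamma} \cong S_{^\sigma \Gamma}$. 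Because $K$ admits only finitely many embeddings into $\C$, the Galois orbit
\[
\{[\leftidx{^\sigma}{S_\Gamma}]\ : \ \sigma \in \Aut(\C)\}
\]
is finite. Consequently, the field of moduli $L$ of $S_\Gamma$, defined as the fixed field of the stabiliser of $[S_\Gamma]$ in $\Aut(\C)$, is a number field.

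To pass from a finite Galois orbit to an actual model over a number field, I would work with the toroidal compactification $\overline{S_\Gamma}$ of Section~\ref{compac} equipped with the boundary divisor $B=\overline{S_\Gamma}-S_\Gamma$. The pair $(\overline{S_\Gamma}, B)$ is canonically polarised by $K_{\overline{S_\Gamma}}+B$ (using the ampleness of $K_{S_\Gamma}^{\log}$ that underlies the arguments of Mok and Tsuji employed in Theorem~\ref{actionofc}), and the automorphism group of the polarised pair is finite. Under these hypotheses, a standard descent argument (Matsusaka--Mumford, or equivalently, the fact that the moduli stack of such canonically polarised pairs is of finite type over $\Q$) shows that a polarised variety whose isomorphism class has finite Galois orbit descends to a model over a number field. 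Removing the boundary gives the desired model of $S_\Gamma$.

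The main obstacle is the descent step: extracting an actual model (rather than merely a number field of moduli) from the finiteness of the $\Aut(\C)$-orbit. This is where the canonical polarisation of $(\overline{S_\Gamma}, B)$ and the finiteness of its automorphism group are essential; without them there could be a Brauer-type obstruction. Both inputs are available in our setting, so the argument goes through.
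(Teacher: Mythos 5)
There is a genuine gap at the crucial step where you claim that the Galois orbit of the isomorphism class of $S_\Gamma$ is finite. You assert that, because $\Gamma\subset\mathbf{G}(\Oo_K)$ is generated by finitely many matrices with entries in $\Oo_K$, the lattice $\leftidx{^\sigma}{\Gamma}$ of Theorem \ref{actionofc} ``depends only on $\sigma\mid_K$''. This conflates two very different operations: $\leftidx{^\sigma}{S_\Gamma}$ is obtained by applying $\sigma$ to the (a priori transcendental) coefficients of defining equations of the algebraic variety $S_\Gamma$, and the lattice $\leftidx{^\sigma}{\Gamma}$ produced by Theorem \ref{actionofc} arises only indirectly, from uniformising the conjugate variety via the Chern--number (Yau--Tsuji) characterisation; nothing in that theorem identifies it with the entry-wise conjugate $\sigma(\Gamma)$. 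Indeed the entry-wise conjugate cannot be the answer: for $\sigma$ inducing a non-identity embedding of $K$, $\sigma(\Gamma)$ lands in the real form $\mathbf{G}\otimes_{K,\sigma}\R$, which may be compact or of a different signature, and for a non-arithmetic $\Gamma$ the image at a noncompact non-identity place is not even discrete (this is exactly the content of the Mostow--Vinberg criterion, Theorem \ref{arithcrit}). So the finiteness of the orbit is not established, and without it your field-of-moduli plus Matsusaka--Mumford descent has no starting point. (The finiteness is of course true a posteriori, since it is equivalent to $\Qbar$-definability, but that is what is being proved.)

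The paper's proof avoids this entirely: it only needs \emph{countability} of the set of isomorphism classes $\{\leftidx{^\sigma}{S_\Gamma}\}$, which follows from Theorem \ref{actionofc} together with the fact that, by local rigidity, there are at most countably many lattices in $\PU(1,n)$ up to conjugation (their entries lie in number fields). It then invokes the criterion of \cite{MR2253591} for the Baily--Borel compactification, which says that a countable Galois orbit already forces definability over $\Qbar$ (hence over a number field), and observes that $\Aut(\C)$ preserves the smooth locus. This sidesteps both the finiteness question and the field-of-moduli versus field-of-definition issue that your descent step would otherwise have to address. If you want to salvage your route, you would either have to prove finiteness of the orbit by some independent argument, or replace the field-of-moduli step by a criterion of the countable-orbit type, which is in effect the paper's argument.
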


\begin{proof}
If $\Gamma$ is arithmetic, the result follows for example from Faltings \cite{MR791585}. Therefore we may and do assume that $G=\PU(1,n)$, for some $n>1$. Let ${S_\Gamma}^{BB}$ be Baily-Borel compactification of $S_\Gamma$, as discussed in Section \ref{compac}. From \cite[Criterion 1 (page 3)]{MR2253591}, the following are known to be equivalent:
\begin{itemize}
\item[(a)] ${S_\Gamma}^{BB}$ can be defined over $\Qbar$;
\item[(b)] The set $\{\leftidx{^\sigma}{{S_\Gamma}}^{BB}: \sigma \in \Aut(\mathbb{C} /  \Qbar)\}$ contains only finitely many isomorphism classes of complex projective varieties;
\item[(c)] The set $\{\leftidx{^\sigma}{{S_\Gamma}}^{BB}: \sigma \in \Aut(\mathbb{C} /  \Qbar)\}$ contains only countably many isomorphism classes of complex projective varieties.
\end{itemize}
Since the action of $\Aut(\C)$ on the Baily-Borel compactification of $S_\Gamma$ preserves the smooth locus, it is enough to check that $\{\leftidx{^\sigma}{S_\Gamma}: \sigma \in \Aut(\mathbb{C} /  \Qbar)\}$ contains countably many isomorphism classes. By Theorem \ref{actionofc}, it is enough to show that there are most countably many lattices in $G=\PU(1,n)$ (for some $n>1$ fixed), up to conjugation by elements in $G$. But this follows from the fact that complex hyperbolic lattices enjoy local rigidity, as discussed in Section \ref{localrig}.
\end{proof}

Finally, Remark \ref{propmod} shows that the positive dimensional special subvarieties of $S_\Gamma$ are also defined over some number field. 

\subsubsection{Zariski closure of bi-arithmetic points}
The last conjecture we propose is as follows.
\begin{conj}
An irreducible subvariety $W\subset S_\Gamma$ contains a dense set of bi-arithmetic points if and only if it is bi-arithmetic.
\end{conj}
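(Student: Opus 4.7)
The statement splits into two directions, one essentially formal and one substantive. In the easy direction, if $W$ is bi-arithmetic, then by definition some analytic component $\widetilde{W}$ of $\pi^{-1}(W)$ is algebraic in $X^\vee$ and defined over a number field $L$; its $\Qbar$-points are Zariski dense in $\widetilde{W}$, and their images under $\pi$ yield the required Zariski-dense subset of bi-arithmetic points of $W$. For the substantive converse, the natural strategy is Pila--Zannier, exploiting the definability of the uniformization $\pi : \F \to S_\Gamma$ on a semialgebraic fundamental set $\F$ in $\R_{\operatorname{an,exp}}$ (Theorem \ref{defini}), together with our functional transcendence results from Section \ref{proofofas}.

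Concretely, I would fix a number field $L$ over which both $W$ and $S_\Gamma$ admit a model, and study the definable set
\begin{displaymath}
Z := \{ x \in \F \cap X(\Qbar) : \pi(x) \in W(\Qbar)\}.
\end{displaymath}
Assume the image of $Z$ under $\pi$ is Zariski dense in $W$. Parametrising the $\Qbar$-points of $\F$ by degree and logarithmic height and applying the Pila--Wilkie counting theorem for algebraic points of bounded degree to the definable graph of $\pi_{|\F}$, one expects to obtain a sub-polynomial upper bound on the number of bi-arithmetic points outside of positive-dimensional semialgebraic blocks contained in $\pi^{-1}(W)\cap \F$. Combined with a Galois-theoretic lower bound on the size of the orbit $\Gal(\Qbar/L)\cdot \pi(x)$, this would force the existence of a positive-dimensional connected semialgebraic subset $B \subset \F$ with $\pi(B)\subset W$. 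By the non-arithmetic Ax--Lindemann theorem (Corollary \ref{gammaalw}), the Zariski closure of $\pi(B)$ in $S_\Gamma$ is a special subvariety, hence bi-arithmetic by Corollary \ref{maincor} and the fact that special subvarieties are defined over $\Qbar$ (see Remark \ref{propmod}). Iterating over components and using that $W$ is irreducible, one obtains that $W$ itself is bi-arithmetic.

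The main obstacle is the required Galois lower bound. In the arithmetic setting one uses the reciprocity law for Shimura varieties to show that $|\Gal(\Qbar/L)\cdot s|$ grows at least as a positive power of the height and degree of a lift $x\in \F$ of $s$; in the non-arithmetic setting no analogous reciprocity is available, and a new arithmetic input is needed. A plausible route is to combine the integrality theorem (Theorem \ref{integrality}) and the $\Aut(\C)$-action of Theorem \ref{actionofc} to control the denominators of the coordinates of bi-arithmetic points lifted to $\F$, but converting this into a polynomial Galois lower bound appears delicate and may well require either a form of reciprocity law for the period map $\psi: S_\Gamma\to \widehat{\mathbf{G}}(\Z)\backslash D$ or an entirely new transcendence/equidistribution input. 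I expect this Galois bound to be the genuine source of difficulty, while the Pila--Wilkie step and the subsequent application of Ax--Lindemann should be reasonably routine consequences of the machinery developed in Sections \ref{comparisonsection} and \ref{proofofas}.
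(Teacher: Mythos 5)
The statement you are trying to prove is not proved in the paper at all: it is stated as an open conjecture (the last conjecture of Section \ref{pointsection}), and the authors explicitly remark that even its arithmetic analogue is known to be equivalent to the Andr\'e--Oort conjecture only for Shimura varieties of abelian type, via W\"ustholz' analytic subgroup theorem. So there is no paper proof to match, and your proposal should be judged as a strategy sketch. To your credit, you yourself flag the Galois-orbit lower bound as the missing ingredient in the Pila--Zannier direction, and that is indeed the genuine obstruction: without a reciprocity law or some substitute giving polynomial growth of $|\Gal(\Qbar/L)\cdot s|$ in terms of the height and degree of a lift of $s$ to $\mathcal{F}$, the Pila--Wilkie counting step has nothing to play against, and no such input is available in the non-arithmetic setting. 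So the substantive direction of your argument is, as you say, not a proof.

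There is, however, also a genuine error in what you call the easy direction. A bi-arithmetic point is a point $x\in X(\Qbar)$ (algebraic with respect to the $\Qbar$-structure of the flag variety $X^\vee$) such that \emph{in addition} $\pi(x)\in S_\Gamma(\Qbar)$. If $W$ is bi-arithmetic, a component $\widetilde{W}$ of $\pi^{-1}(W)$ is algebraic and defined over a number field, so its $\Qbar$-points are Zariski dense upstairs; but $\pi$ is a transcendental map, and there is no reason whatsoever that $\pi$ sends a $\Qbar$-point of $\widetilde{W}$ to a $\Qbar$-point of $W$. Simultaneous algebraicity upstairs and downstairs is precisely the transcendence-theoretic content of the notion, and producing even one bi-arithmetic point is nontrivial: the paper closes by posing as an open problem whether CM-points of $S_\Gamma$ (Definition \ref{defcm}) are bi-arithmetic, i.e.\ even for $W=S_\Gamma$ the existence of a dense (or any) set of bi-arithmetic points is not established in the non-arithmetic case. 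So the ``if'' direction of the conjecture is also open, and your argument for it does not work as written.
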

This is known to be equivalent to the André--Oort conjecture only for arithmetic Shimura varieties of abelian type (thanks to W\"{u}stholz' analytic subgroup theorem). See \cite[Example 4.17]{MR3821177} and references therein. Finally it would be interesting to show that CM-points in $S_\Gamma$, in the sense of Definition \ref{defcm}, are bi-arithmetic.

\appendix
\section{Erratum on the published version}
The proof provided for \cite[Cor. 1.3.3]{BUann} (which appears as the proof of Cor. 5.5.4, on page 199, and Corollary \ref{cor1} on the ArXiv version) is not complete, and in fact we do not know whether the claimed statement holds true or not. The corollary in question is not applied anywhere else in the text and therefore the other results are left untouched. Below we explain the incriminated step of the proof.

In the second half of the proof on page 199, it is claimed that 
\begin{displaymath}
\tilde{\psi}^{-1}(D_{\widehat{F}})=X_F,
\end{displaymath}
and no justification is given. This is true, and used multiple times before, when $\Gamma_F$ is lattice (which is the desired conclusion). If so, $\Gamma_F \backslash X_F$ is a smooth quasi-projective variety and the claim essentially follows from the \emph{Fixed Part Theorem} (which requires an algebraic base). Assuming only that $\Gamma_F$ is Zariski dense in $F$, we believe new ideas are required to justify such claim, if true. Finally we thank N. Miller and M. Stover for several related discussions that helped us spotting the mistake.

\bibliographystyle{abbrv}
\bibliography{biblio.bib}

\Addresses

\end{document}